\theoremstyle{plain}
\newtheorem{theorem}{Theorem}[section]
\newtheorem{lemma}{Lemma}[section]
\newtheorem{corollary}{Corollary}[section]
\newtheorem{proposition}{Proposition}[section]
\theoremstyle{definition}
\newtheorem{remark}{Remark}[section]
\newtheorem{example}{Example}
\DeclareMathOperator{\diag}{diag}
\newcommand{\Gn}{{\mathbb{G}_n}}
\newcommand{\R}{\mathbb{R}}
\newcommand{\RR}{\mathbb{R}}
\renewcommand{\qed}{\hfill{\tiny \ensuremath{\blacksquare} }}%
\newcommand{\Ep}{{\mathrm{E}}}
\newcommand{\En}{{\mathbb{E}_n}}
\renewcommand{\Pr}{{\mathrm{P}}}
\newcommand{\uu}{u}
\newcommand{\eps}{\varepsilon}
\numberwithin{equation}{section}
\begin{document}

\title[HD econometrics and Regularized GMM]{High-Dimensional Econometrics and Regularized GMM}

\author[]{By Alexandre Belloni, Victor Chernozhukov, Denis Chetverikov, Christian Hansen, and Kengo Kato}

\begin{abstract}
This chapter presents key concepts and theoretical results for analyzing estimation and inference in high-dimensional models.  High-dimensional models are characterized by having a number of unknown parameters that is not vanishingly small relative to the sample size.  We first present results in a framework where estimators of parameters of interest may be represented directly as approximate means.  Within this context, we review fundamental results including high-dimensional central limit theorems, bootstrap approximation of high-dimensional limit distributions, and moderate deviation theory.  We also review key concepts underlying inference when many parameters are of interest such as multiple testing with family-wise error rate or false discovery rate control.  We then turn to a general high-dimensional minimum distance framework with a special focus on generalized method of moments problems where we present results for estimation and inference about model parameters.  The presented results cover a wide array of econometric applications, and we discuss several leading special cases including high-dimensional linear regression and linear instrumental variables models to illustrate the general results. 
\end{abstract}
\maketitle

\tableofcontents

\section{Introduction}
In this chapter, we review some of the main ideas and concepts from the literature on estimation and inference in high dimensions. High-dimensional models naturally arise in many contexts.  First, empirical researchers may want to build more flexible models in an effort to approximate real phenomena better.  Second, they may want to use more ``flexible" controls to make conditional exogeneity more plausible in an effort to (more plausibly) identify causal/structural effects.  Third, researchers may want to analyze  policy effects on very high-dimensional outcomes and/or across many groups.  Fourth, researchers may wish to  leverage high-dimensional  exclusion restrictions (``many instruments") in an effort to pin down structural parameters better.  These and other contexts motivate the set of methods and results we overview in this chapter.  In addition to providing an overview of useful tools, we develop some new results in order to make existing results more useful for applications in econometrics. We note that, since the literature on high-dimensional estimation and inference is large, we have opted to review only {\em some} of the main results from this literature. In this regard, our exposition complements other reviews, e.g. \cite{BC11b}, \cite{FLQ11}, \cite{CS17}, \cite{CHS}. For a textbook-level treatment, we refer an interested reader, for example, to \cite{BvdG11}, \cite{G15}, \cite{HTW15}, and \cite{G16}.

High-dimensionality typically refers to a setting where the number of parameters in a model is non-negligible compared to the sample size available.  The presence of a large number of parameters often necessitates us to design estimation and inference methods that are different from those used in classical, low-dimensional, settings. High-dimensional models have always been of interest in econometrics and have recently been gaining in popularity.  The recent interest in these models is due to both the availability of rich, modern data sets and to advances in the analysis of high-dimensional settings, such as the emergence of high-dimensional central limit theorems and regularization and post-regularization methods for estimation and inference.


\subsection{Inference with Many Approximate Means}
We split the chapter into two parts. In the first part, we consider inference using the {\em Many Approximate Means} (MAM) framework. In particular, we assume that we have a potentially high-dimensional vector of parameters
$$
\theta_0:=(\theta_{0 1},\dots,\theta_{0 p})^\prime\in\RR^p
$$
and its estimator
$$
\hat{\theta}:=(\hat{\theta}_1,\dots,\hat{\theta}_p)^\prime\in\RR^p,
$$
having an approximately linear form,
\begin{equation}\label{linearizeIntro}
\sqrt{n}(\hat{\theta}-\theta_0) = \frac{1}{\sqrt{n}}\sum_{i=1}^n Z_{i}+r_n,
\end{equation}
where $Z_1,\dots,Z_n$ are independent zero-mean random vectors in $\RR^p$, sometimes referred to as ``influence functions'', and  $r_n\in\RR^p$ is a vector of linearization errors that are asymptotically negligible; see the next section for the formal requirement. The vectors $Z_1,\dots,Z_n$ are either directly observable or can be consistently estimated. Here, we allow for the case $p\gg n$.



This framework is rather general and covers, in particular, the case of testing multiple means with $r_{n}=0$.  More generally, this framework covers multiple linear and non-linear $M$-estimators and also accommodates many de-biased estimators; see, e.g., \cite{HeShao00} for explicit conditions giving rise to linearization (\ref{linearizeIntro}) in low-dimensional settings and  \cite{BCK:biometrika} for conditions in the high-dimensional settings with $p \gg n$.

While conceptually easy-to-understand, the MAM framework allows us to present fundamental concepts in high-dimensional settings:
\begin{itemize}
\item[1.] Simultaneous inference.
\item[2.] Inference with False Discovery Rate control.
\item[3.] Estimation based on $\ell_1$-regularization.  
\end{itemize}
The first concept here includes simultaneous confidence interval construction for all (or some) components of the vector $\theta_0 = (\theta_{0 1},\dots,\theta_{0 p})'$. As we explain in the next section, constructing {\em simultaneous} confidence intervals is especially important in the high-dimensional settings and we explain how to construct such intervals. This concept also includes multiple testing with family-wise error rate (FWER) control, where we simultaneously test hypotheses about different components of the vector $\theta_0$ and we want to make sure that the probability of at least one false null rejection does not exceed the pre-specified level $\alpha$. The second concept includes multiple testing with false discovery rate (FDR) control, where we simultaneously test hypotheses about different components of $\theta_0$ and we want to make sure that the fraction of falsely rejected null hypotheses among all rejected null hypotheses does not exceed the pre-specified level $\alpha$, at least in expectation. FDR control is more liberal than FWER control, so procedures with FDR control typically have larger power than those with FWER control. This higher power may be particularly important, for example, in genoeconomics, where procedures with FWER control often fail to find any association between the outcome variables and genes; see Example \ref{ex: genes} below for the details. The third concept includes estimation of linear functionals of the vector $\theta_0$. We will show that estimating such functionals sometimes requires forms of regularization, and we will explain the details of $\ell_1$-regularization. This discussion will prepare us for the more ambitious problems arising in the second part of the chapter.



To perform the tasks described above, we will use some fundamental tools:
\begin{itemize}
\item[I.]  High-Dimensional Central Limit Theorem (with $p \gg n$)
\item[II.]  Moderate Deviation Theorem (Central Limit Theorem over Tail Areas)
\item[III.]  Regularization (focusing on $\ell_1$-type regularization)
\end{itemize}
One of the main goals of the first part of this chapter will therefore be to provide statements and discussion of these key tools in a simple but interesting framework. Outside of being useful in the MAM framework, these tools play an important role in the theory of high-dimensional estimation and inference more generally.  


We next review some simple motivating examples that fall into the MAM framework.
\begin{example}[Randomized Control Trials with Many Outcome Variables]\label{ex: rct many outcomes}
Consider a randomized control trial with $n$ participants, where each participant $i = 1,\dots, n$ is randomly assigned to either the treatment group ($D_i = 1$) or the control group ($D_i = 0$). Let $\gamma$ denote the probability of being assigned to the treatment group and suppose that for each participant $i$, we observe a large number of outcome variables represented by a vector $Y_i = (Y_{i1},\dots,Y_{i p})'\in\mathbb R^p$, which is often the case in practice. For each outcome variable $j = 1,\dots, p$, we then can estimate the average treatment effect
$$
\theta_{0 j} = \Ep[Y_{i j} \mid D_i = 1] - \Ep[Y_{i j} \mid D_i = 0]
$$ 
by
$$
\hat\theta_j = \frac{1}{n}\sum_{i = 1}^n\left( \frac{D_i Y_{i j}}{\gamma} - \frac{(1 - D_i)Y_{i j}}{1 - \gamma} \right).
$$
Clearly, this setting falls into the MAM framework with
$$
Z_{i j} = \left( \frac{D_i Y_{i j}}{\gamma} - \Ep[Y_{i j}\mid D_i = 1] \right) - \left( \frac{(1 - D_i)Y_{i j}}{1 - \gamma} - \Ep[Y_{i j}\mid D_i = 0] \right)
$$
for all $i = 1,\dots, n$ and $j = 1,\dots, p$. We note also that the MAM framework covers many other, more complicated, treatment effect settings, beyond randomized control trials, since treatment effect estimators are often asymptotically linear; e.g. see \cite{IR15}, \cite{HIR03}, \cite{A05}, and \cite{CCDDHNR18}, among many others.
\qed
\end{example}
\begin{example}[Randomized Control Trials with Many Groups]\label{ex: rct many groups}
Consider the same randomized control trial as in the previous example and suppose that for each participant $i = 1,\dots, n$, we have only one outcome variable $Y_i\in\mathbb R$ but we observe several discrete covariates, represented by a vector $X_i = (X_{i 1},\dots, X_{i d})'\in\mathbb R^d$. For simplicity, we can assume that each covariate is binary, so that $X_{i k} \in \{0,1\}$ for all $k = 1,\dots, d$. In this case, the vector $X_i$ can take $p = 2^d$ different values, denoted by $x_1,\dots, x_p$, and we can split all participants into $p$ groups depending on their values of $X_i$. Suppose, also for simplicity, that each group consists of $\bar n = n/p$ participants, i.e. all groups are equal in size. We then can estimate group-specific average treatment effects
$$
\theta_{0 j} = \Ep[Y_i\mid X_i = x_j, \ D_i = 1] - \Ep[Y_i\mid X_i = x_j, \ D_i = 0]
$$
by
$$
\hat\theta_j = \frac{1}{\bar n}\sum_{i\colon X_i = x_j} \left( \frac{D_i Y_i}{\gamma } - \frac{(1 - D_i)Y_i}{1 - \gamma} \right),
$$
and this setting again falls into the MAM framework, with $n$ replaced by $\bar n$. Note also that since the number of groups, $p = 2^d$, is exponential in the number of covariates, $d$, it is likely that $p \gg \bar n$ or at least $p\sim n$, making our analysis in this chapter particularly relevant.

To give a specific example of an experiment with many groups, consider the Tennessee Student Teacher Achievement Ratio (STAR) project conducted from 1985-89 and studied, e.g. in \cite{K99} among many others. In this project, over 11000 students from kindergarten to third grade in 79 schools were randomly assigned into small (13 to 17 students) or regular (22 to 25 students) classes. Classroom teachers were also randomly assigned to classes. Different student achievements were subsequently measured over many years. The project also collected many demographic variables characterizing students, teachers, and schools. For example, available data include gender (male or female) and race (white, black, asian, hispanic, native american, or other) for both students and teachers, month of birth (from Jan to Dec) for students, and educational achievement (associate, bachelor, master, master+, specialist, or doctoral) and years of teaching (from 0-42) for teachers. All these characteristics can be used to form a large number of groups of student-teacher pairs.
\qed
\end{example}

\begin{example}[Genoeconomics]\label{ex: genes}
In genoeconomics, a field that combines genetics and economics, researchers are interested in studying how genes affect economic behavior. This field is of interest because genetic information, for example, can provide direct measures of preferences of economic agents and can serve as a source of exogenous variation. The vast majority of the humane genome is the same among all humans, and the differences occur ``only'' in around 52 million SNPs (single-nucleotide polymorphisms). Most SNPs take only three values, (0, 1, 2), and modern technologies allow measuring the values of many, if not all, of these SNPs with minimal costs. The datasets in genoeconomics, therefore, often take the following form: We have a random sample of $n$ humans, where $n$ is of order of hundreds or thousands, and for each human $i = 1,\dots, n$, we have an outcome variable $Y_i\in\mathbb R$ and the vector of SNP values, $X_i = (X_{i 1},\dots, X_{i p})' \in\mathbb R^p$, where $p$ can be of order of thousands or even millions. To measure the association between the outcome variable and the SNP $j = 1,\dots, p$, we can use the slope coefficient $\theta_j$ in the linear regression
$$
Y_i = \alpha_j + \theta_{0 j} X_{i j} + \epsilon_{i j},\quad \Ep[\epsilon_{i j}\mid X_{i j}] = 0,
$$
which can be estimated by
$$
\hat\theta_j = \frac{\sum_{i=1}^n(X_{i j} - \bar X_j)(Y_i - \bar Y)}{\sum_{i = 1}^n (X_{i j} - \bar X_j)^2},
$$
where $\bar X_j = \sum_{i=1}^n X_{i j}/n$ and $\bar Y = \sum_{i=1}^n Y_i$. Clearly, this setting falls into the MAM framework with
$$
Z_{i j} = \frac{(X_{i j} - \Ep[X_{i j}])(Y_{i} - \Ep[Y_i])}{\text{Var}(X_{i j})}
$$
for all $i = 1,\dots, n$ and $j = 1,\dots, p$. We refer the reader to \cite{BCCGL12} for more detailed discussion of genoeconomics.
\qed
\end{example}
\begin{example}[Structural Models with Many Parameters]
The examples above outline simple cases where parameters are estimated either by sample means (Examples \ref{ex: rct many outcomes} and \ref{ex: rct many groups}) or by quantities that can be easily approximated by sample means (Example \ref{ex: genes}). In structural econometrics, we often use more sophisticated estimators of structural parameters, such as GMM. In the second part of the chapter, we therefore develop a high-dimensional regularized GMM estimator. This could be of interest, for example, in demand elasticity estimation, where the elasticity parameter $\theta_{0 j}$ varies across product groups (or product characteristics) $j = 1,\dots, p$. We show that it is possible to construct asymptotically unbiased estimators of these parameters using the double/de-biased regularized GMM approach. These estimators are asymptotically linear and fall into the MAM framework. We therefore can use inferential tools developed for the MAM framework to construct simultaneous confidence intervals and conduct multiple hypothesis testing using various approaches we discuss in the first part of the chapter.\qed
\end{example}



\subsection{Inference with Many Parameters and Moments} In the second part of the chapter, we study estimation and inference in the high-dimensional GMM setting, where both the number of moment equations and the dimensionality of the parameter of interest may be large. Specifically, we consider a random vector $X\in\mathbb R^{d_x}$, a vector of parameters $\theta\in\mathbb R^p$, and a vector-valued score function $g(X, \theta)$ mapping $\RR^{d_x}\times\mathbb R^p$ into $\mathbb R^m$, for some $m\geq p$. For the moment function
\begin{equation}\label{eq: gmm moment conditions}
 g(\theta) : =  \Ep [g(X, \theta)],
\end{equation}
we assume that the true parameter value $\theta_0$ satisfies
\begin{equation}\label{eq: general moment condition}
g(\theta_0) = 0.
\end{equation}
We are then interested in estimating $\theta_0$ and carrying out inference on $\theta_0$ using a random sample $X_1,\dots, X_n$ from the distribution of $X$. We allow for the case $m\gg n$ and $p\gg n$.

We develop a Regularized GMM estimator (RGMM) of $\theta_0$ and study its properties under various structural assumptions, such as sparsity or approximate sparsity of $\theta_0$. This novel estimator extends the Dantzig selector of Cand\`{e}s and Tao in \cite{CT07} that was developed specifically for estimating linear mean regression models. 

To gain intuition behind the RGMM estimator, we also consider a general minimum distance estimation problem, where the parameter $\theta_0$ is known to satisfy \eqref{eq: general moment condition} but the function $g(\theta)$ does not necessarily take the form \eqref{eq: gmm moment conditions}. Assuming that an estimator of $g(\theta)$ is available, we formulate a Regularized Minimum Distance (RMD) estimator of $\theta_0$ and develop its properties under easily-interpretable high-level conditions. Specializing these conditions for the GMM setting then allows us to derive properties of the RGMM estimator under relatively low-level conditions. 

Like other estimators developed for high-dimensional models, such as Lasso, the RGMM estimator is suitable for coping high-dimensionality of the problem but has a complicated asymptotic distribution, making inference based on this estimator problematic. We therefore also develop a Double/Debiased RGMM estimator (DRGMM) that is asymptotically linear, and thus fits into the MAM framework, reemphasizing the role of the MAM framework, and reducing the problem of inference on $\theta_0$ to our analysis in the first part of the paper. Importantly, for our DRGMM estimator, we also consider a version with the optimal weighting matrix.

\begin{example}[Linear Mean Regression Model]\label{ex: linear regression introduction}
One of the simplest examples falling into the GMM framework is the linear mean regression model,
\begin{equation}\label{eq: linear regression introduction}
Y = W'\theta_0 + \epsilon,\quad\Ep[\epsilon \mid W] = 0,
\end{equation}
where $Y\in\mathbb R$ is an outcome variable, $W\in\mathbb R^p$ a vector of covariates, $\epsilon\in\mathbb R$ noise, and $\theta_0\in\mathbb R^p$ a parameter of interest. 
This model fits into the GMM framework with
$$
g(X,\theta) = (Y - W'\theta)W,\quad X = (W,Y),
$$
which corresponds to the most widely studied case in the literature on high-dimensional models, e.g. the Lasso estimator of Tibshirani \cite{T96} and the Dantzig Selector of Cand\`{e}s and Tao \cite{CT07}. More generally, we can take a vector-valued function $t\colon \mathbb R^p\to\mathbb R^m$ with $m \geq p$, consider a vector $t(W)$ of transformations of $W$, and set
$$
g(X,\theta) = (Y - W'\theta) t(W),\quad X = (W,Y).
$$
By considering a sufficiently rich vector of functions $t$ and using optimally-weighted GMM, we can construct an estimator that achieves the semiparametric efficiency bound.

In this example, as well as in Examples \ref{ex: linear IV regression model} and \ref{Ex:NL-IV} below, we are often interested in a {\em low}-dimensional sub-vector of $\theta_0$ corresponding to the covariates of interest in the vector $W$ but sometimes the whole vector $\theta_0$ or some {\em high}-dimensional sub-vector of $\theta_0$ is of interest as well. For example, suppose that $W = (P',D\cdot P')'$, where $D\in\{0,1\}$ is a binary treatment variable and $P\in\mathbb R^{p/2}$ is a high-dimensional vector of controls, so that the model \eqref{eq: linear regression introduction} becomes
$$
Y = P'\alpha_0 + D\cdot P'\beta_0 + \epsilon,\quad \Ep[\epsilon\mid P, D] = 0
$$
with $\theta_0 = (\alpha_0',\beta_0')'$, where both $\alpha_0$ and $\beta_0$ are $(p/2)$-dimensional vectors of parameters.  Assuming that $D$ is randomly assigned conditional on $P$ then implies that $P'\beta_0$ is the Conditional Average Treatment Effect (CATE) for the outcome variable $Y$ and that $\beta_0$ is the vector of derivatives of the CATE with respect to the vector of controls $P$. Thus, the whole vector $\beta_0$ may be of interest.
\qed
\end{example}
\begin{example}[Linear IV Regression Model]\label{ex: linear IV regression model}
Consider the linear IV regression model
$$
Y = W'\theta_0 + \epsilon,\quad \Ep[\epsilon\mid Z] = 0,
$$
where we use the same notation as above and, in addition, $Z\in\mathbb R^{d_Z}$ is a vector of instruments. This model fits into the GMM framework with
$$
g(X,\theta) = (Y - W'\theta)Z,\quad X = (W,Z,Y),
$$
or, more generally,
$$
g(X,\theta) = (Y - W'\theta)t(Z),\quad X = (W,Z,Y),
$$
where $t\colon \mathbb R^{d_Z}\to\mathbb R^m$ is a vector-valued function. The case where $p$ is small and $m$ is large (larger than the sample size $n$) was originally studied in \cite{BCCH12} and the case where both $p$ and $m$ are large is considered in \cite{CHS}, \cite{GT14}, \cite{BCHN17}, and \cite{GLT17}. One of the novel parts of our analysis is that we can allow for the optimal weighting of the moment conditions.
\qed
\end{example}

\begin{example}[Nonlinear IV Regression Model]\label{Ex:NL-IV}
More generally, consider a non-linear IV regression model
$$
\Ep[f(Y,W'\theta_0)\mid Z] = 0,
$$
where we use the same notation as in Example \ref{ex: linear IV regression model} and, in addition, $f\colon \mathbb R^2\to\mathbb R$ is some function. This can be of interest, for example, in the analysis of discrete choice models where $Y$ is binary (or, more generally, discrete). In the same fashion as above, this model fits into the GMM framework with
$$
g(X,\theta) = f(Y,W'\theta)Z,\quad X = (W,Z,Y)
$$
or, more generally,
$$
g(X,\theta) = f(Y,W'\theta)t(Z),
$$
where $t\colon \mathbb R^{d_Z}\to\mathbb R^m$ is a vector-valued function.
\qed
\end{example}

\noindent
{\bf Notation.} In what follows, all models and probability measures $P$ can be indexed by the sample size $n$, so that models and their dimensions can change with $n$, allowing dimensionality to increase with $n$. We use ``wp $\to 1$" to abbreviate the phrase ``with probability that converges to 1", and we use arrows $\to_{\Pr}$ and $\leadsto_{\Pr}$ to denote convergence in probability and in distribution, respectively. The symbol $\sim$
means ``distributed as". The notation $a \lesssim b$ means $a = O(b)$ and $a \lesssim_{\Pr} b$ means $a = O_{\Pr}(b)$. We also use the notation $a \vee b = \max \{  a, b \}$ and $a \wedge b = \min \{ a , b \}$. For any $a\in\mathbb R$, $\lfloor a \rfloor$ denotes the largest integer that is smaller than or equal to $a$, and $\lceil a\rceil$ denotes the smallest integer that is larger than or equal to $a$. For a positive integer $m$,  $[m] = \{1,\dots, m\}$. 

Next, for any vector $x = (x_1,\dots,x_p)'$, we denote the $\ell_{1}$ and $\ell_{2}$ norms of $x$ by $\|x\|_1 = \sum_{j=1}^p|x_j|$ and $\| x \|_{2} = (\sum_{j=1}^p x_j^2)^{1/2}$, respectively. The $\ell_{0}$-``norm" of $x$, $\|x\|_0$, denotes the number of non-zero components of the vector $x$. Moreover, for any vector $x = (x_1,\dots,x_p)'$ in $\mathbb R^p$ and any set of indices $T\subset\{1,\dots,p\}$, we use $x_T = (x_{T 1},\dots,x_{T p})'$ to denote the vector in $\mathbb R^p$ such that $x_{T j} = x_j$ for $j\in T$ and $x_{T j} = 0$ for $j\in T^c$, where $T^c = \{ 1,\dots, p \} \setminus T$. For any matrix $A$ of $p$ columns, we use $\|A\|$ to denote the operator norm of $A$: $\| A \| = \sup_{x \in \RR^{p}, \| x \|_{2}=1} \| Ax \|_{2}$. 

The transpose of a column vector $x$ is denoted by $x'$. For a differentiable map $\RR^{d} \ni x \mapsto f(x) \in \RR^{k}$, we use $\partial_{x'} f$ to denote the $k\times d$ Jacobian matrix  $\partial f / \partial x' = (\partial f_{i}/\partial x_{j})_{1 \le i \le k,1 \le j \le d}$, and we correspondingly use the expression
$\partial_{x'} f(x_0)$ to denote $\partial_{x'} f (x) \mid_{x = x_0}$, etc. When we have an event $A$ whose occurrence depends on two independent random vectors, $X$ and $Y$, we use $\Pr_X(A)$ to denote the probability of $A$ with respect to the distribution of $X$, holding $Y$ fixed. For given $Z_{1},\dots,Z_{n}$, we use the notation $Z_{1}^{n} = (Z_{1},\dots,Z_{n})$.  We use $\Phi$ and $\phi$ to denote the cdf and pdf of the standard normal distribution.

Finally, we use standard empirical process theory notation. In particular, $\En[\cdot]$ abbreviates the average $n^{-1}\sum_{i=1}^n[\cdot]$ over index $i=1,\dots,n$, e.g. $\En[f(z_i)]$ denotes $n^{-1}\sum_{i=1}^n f(z_i)$. Also, if $Z$ is a random vector with law $P$ and support $\mathcal Z$, $(Z_i)_{i\in[n]}$ is a random sample from the distribution of $Z$, and $\mathcal F$ is a class of functions $f\colon\mathcal Z\to\mathbb R$, then $\mathbb{G}_n f  := \mathbb{G}_n f  (Z) := n^{-1/2} \sum_{i=1}^n (f(Z_i) - \Ep[f(Z)])$
for all $f\in\mathcal F$.



\section{Inference with Many Approximate Means}

\subsection{Setting}

Suppose that we have a parameter $\theta_0 = (\theta_{0 1},\dots,\theta_{0 p})' \in\mathbb R^p$ and an estimator $\hat{\theta}=(\hat{\theta}_1,\dots,\hat{\theta}_p)^\prime\in\RR^p$ of this parameter that has an approximately linear form:
\begin{equation}\label{linearize}
\sqrt{n}(\hat{\theta}-\theta_0) = \frac{1}{\sqrt{n}}\sum_{i=1}^n Z_{i}+r_n,
\end{equation}
where $Z_1,\dots,Z_n$ are independent zero-mean random vectors in $\RR^p$, sometimes called the ``influence functions," and  $r_n=(r_{n1},\dots,r_{np})^\prime\in\RR^p$ is a vector of linearization errors that are small in the sense that
\begin{equation}\label{eq: vanishing approximation error}
\| r_n\|_\infty = o_P\left(1/\sqrt{ \log (p n)}\right),
\end{equation}
 with a more precise requirement provided in Condition A. The vectors $Z_1,\dots,Z_n$ may not be directly observable, and we assume some estimators $\hat{Z}_1,\dots,\hat{Z}_n$ of these vectors are available in this case. In this section, we are interested in carrying out different types of inference on $\theta_0$. We are primarily interested in the case where $p$ is larger or much larger than $n$, but the results below apply when $p$ is smaller than $n$ as well. Throughout the chapter, we refer to this setting as the {\em Many Approximate Means} (MAM) framework.
 
In this section, we review results from the literature on the high-dimensional Central Limit Theorem (CLT), high-dimensional bootstrap theorems, moderate deviations for self-normalized sums, simultaneous confidence intervals, multiple testing with the Family-Wise Error Rate (FWER) control, and multiple testing with the False Discovery Rate (FDR) control. All results to be reviewed below exist in the literature for the case of many {\em exact} means, where the approximation errors are not present, $r_n = 0$, and the vectors $Z_i$ are observed. We extend these results to allow for many {\em approximate} means and also for unobservable but estimable vectors $Z_i$, i.e. we extend the results to cover the MAM framework. This extension is important because many estimators we work with are {\em asymptotically} linear but do not have to be linear in finite samples.

At the end of this section, we also consider the problem of estimating linear functionals of $\theta_0$, which motivates such concepts as sparsity and $\ell_1$-regularization and prepares us for the discussion in the second part of the chapter.

\subsection{CLT, Bootstrap, and Moderate Deviations}\label{MAM: CLT}
To perform inference on $\theta_0$, we first need to develop a distributional approximation for 
$$
\sqrt n(\hat\theta - \theta_0).
$$
When $p$ is fixed and $n$ gets large, $\sqrt n(\hat\theta - \theta_0)$ converges in distribution to a zero-mean Gaussian random vector by a classical CLT but here we are interested in the case with $p\to\infty$ or even $p/n\to\infty$ as $n\to\infty$ making classical CLTs inapplicable. We therefore rely on the high-dimensional CLT developed in \cite{CCK13,CCK15,CCK17}. To state the result, and also to extend it to allow for the MAM framework, we will use the following regularity conditions. Let $(B_n)_{n\geq 1}$, $(\delta_n)_{n\geq 1}$, and $(\beta_n)_{n\geq 1}$ be given sequences of constants satisfying $B_n\geq 1$, $\delta_n \searrow 0$, and $\beta_n \searrow 0$. Here, $B_n$ is allowed to grow to infinity as $n$ gets large. 


\medskip

\noindent \textbf{Condition M}.  \textit{ (i) $n^{-1}\sum_{i=1}^n \Ep[Z_{i j}^2] \geq 1$ for all $j \in [p]$ and (ii) $n^{-1}\sum_{i=1}^n\Ep[|Z_{i j}|^{2 + k}] \leq B^k_n$ for all $j\in[p]$ and $k = 1,2$. }

\medskip

Since $r_n$ is asymptotically negligible, in the sense that \eqref{eq: vanishing approximation error} holds, it follows from \eqref{linearize} that, for all $j\in[p]$, the asymptotic variance of $\sqrt n(\hat \theta_j - \theta_{0 j})$ is equal to $n^{-1}\sum_{i=1}^n \Ep[Z_{i j}^2] \geq 1$.  Thus, the first part of Condition M requires that this variance is bounded away from zero. Such a condition precludes existence of super-efficient estimators and is typically imposed even in classical settings, where $p$ is small relative to $n$. The second part of Condition M imposes the mild requirement that $n^{-1}\sum_{i=1}^n \Ep[|Z_{i j}|^3]$ and $n^{-1}\sum_{i=1}^n \Ep[|Z_{i j}|^4]$ do not increase too quickly with $n$.

\medskip

\noindent \textbf{Condition E}. 
\textit{Either of the following moment bounds holds:
\vspace{-3mm}
\begin{itemize}
\item[E.1] $\Ep[\exp(|Z_{i j}|/B_n)]\leq 2\text{ for all }i \in [n] \text{ and }j \in [p],$ and $\left( \frac{B_n^2\log^7(p n)}{n} \right)^{1/6}  \leq \delta_n$, or
\item[E.2] $\Ep\left[\max_{j\in [p]}(|Z_{i j}|/B_n)^4\right] \leq 1\text{ for all } i \in [n]$, and
$\left( \frac{B_n^4\log^7(p n)}{n} \right)^{1/6} \leq \delta_n.$
\end{itemize}
}

\medskip




The first part of Condition E.1 requires that $Z_{i j}$'s have light tails. In particular, under Condition E.1, the tails have to be sub-exponential:
$$
\Pr(|Z_{i j}| > x) = \Pr(\exp(|Z_{i j}|/ B_n) > \exp(x/B_n)) \leq 2\exp(-x/B_n),\quad \text{ for all }x>0,
$$
by Markov's inequality. In fact, Lemma 2.2.1 in \cite{VW96} shows that if
$$
\Pr(|Z_{i j}| > x) \leq 2\exp(-x/C_n),\quad\text{ for all }x>0,
$$
for some $C_n>0$, then $\Ep[\exp(|Z_{i j}|/B_n)] \leq 2$ holds with $B_n = 3C_n$. Thus, the first part of Condition E.1 is equivalent to all $Z_{i j}$'s having sub-exponential tails. The first part of Condition E.2, on the other hand, allows for heavy-tails but imposes some moment conditions on $\max_{j\in[p]}|Z_{i j}|$. Conditions E.1 and E.2 are therefore non-nested. The second parts of Conditions E.1 and E.2 impose restrictions on how fast $B_n$ and $p$ can grow. Note that we never impose E.1 and E.2 simultaneously. 

\medskip

\noindent \textbf{Condition A}. \textit{(i) The linearization errors obey $\Pr( \max_{j\in[p]}|r_{n j}| >  \delta_n / \log^{1/2}(p n)) \leq \beta_{n}$, and (ii) the estimates of the influence functions obey $\Pr(  \max_{j\in[p]}\En[(\hat{Z}_{ij}-Z_{ij})^2] > \delta^2_{n}/\log^2(p n)) \leq \beta_n$.}

\medskip

The first part of Condition A requires that the approximation errors in the vector $r_n$ are asymptotically negligible, and clarifies \eqref{eq: vanishing approximation error}. Note that if \eqref{eq: vanishing approximation error} holds, then it is rather standard to show that there exist {\em some} sequences of positive constants $(\delta_n)_{n\geq 1}$ and $(\beta_n)_{n\geq 1}$ satisfying $\delta_n\searrow 0$ and $\beta_n\searrow 0$ such that the first part of Condition A holds. The second part of Condition A requires the estimators $\hat Z_{i j}$ of $Z_{i j}$ to be sufficiently precise. Again, if $\hat Z_{i j}$'s satisfy
\begin{equation}\label{eq: estimates of z}
\max_{j\in[p]}\sqrt{\En[(\hat Z_{i j} - Z_{i j})^2]} = o_P(1/\log(p n)),
\end{equation}
then there exist {\em some} $(\delta_n)_{n\geq 1}$ and $(\beta_n)_{n\geq 1}$ satisfying $\delta_n\searrow 0$ and $\beta_n\searrow0$ such that the second part of Condition A holds. 



In order to state a key CLT result, let $\mathcal A$ be the class of all (closed) rectangles in $\mathbb R^p$, i.e. sets $A$ of the form
$$
A = \Big\{w = (w_1,\dots,w_p)'\in\mathbb R^p\colon w_{l j} \leq w_j \leq w_{r j}\text{ for all }j \in [p] \Big\},
$$
where $w_l = (w_{ l 1},\dots,w_{l p})'$ and $w_r = (w_{r 1},\dots,w_{r p})'$ are two vectors such that $w_{l j} \leq w_{r j}$ for all $j\in[p]$. (Here, both $w_{l j}$ and $w_{r j}$ can take values of $-\infty$ or $+\infty$.) Denote $V := n^{-1}\sum_{i=1}^n\Ep[Z_i Z_i']$, and let $N(0,V)$ be a zero-mean Gaussian random vector in $\mathbb R^p$ with covariance matrix $V$. The following theorem establishes the Gaussian approximation for the distribution of $\sqrt n(\hat\theta - \theta_0)$, which extends Proposition 2.1 in \cite{CCK17} to allow for many {\em approximate} means.


\begin{theorem}[CLT for Many Approximate Means]\label{thm: clt for mam}
Under Conditions M, E,  and A, the distribution of $\sqrt{n} (\hat \theta - \theta_0 )$ over rectangles is approximately Gaussian:
\begin{align}
&\sup_{A\in\mathcal A}\Big|\Pr(\sqrt n(\hat\theta - \theta_0) \in A) - \Pr(N(0,V)\in A)\Big| \leq C (\delta_n + \beta_n),\label{eq: clt for mam 1}
\end{align} 
where $C$ is a universal constant.
\end{theorem}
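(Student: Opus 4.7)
The plan is to reduce to the high-dimensional CLT (Proposition 2.1 of \cite{CCK17}) applied to the linear part $S_n := n^{-1/2}\sum_{i=1}^n Z_i$, and then to absorb the linearization error $r_n$ via Gaussian anti-concentration over rectangles. The estimated influence functions $\hat Z_i$ do not enter the statement, so part (ii) of Condition A is not needed for this theorem; only Condition A(i) is used.

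First, under Conditions M and E, Proposition 2.1 of \cite{CCK17} yields
\[
\sup_{A\in\mathcal A}\bigl|\Pr(S_n\in A)-\Pr(N(0,V)\in A)\bigr|\le C\delta_n.
\]
Second, write $\sqrt n(\hat\theta-\theta_0)=S_n+r_n$. For any rectangle $A=\prod_{j=1}^p[w_{lj},w_{rj}]$ and $\varepsilon>0$, the enlarged and shrunken sets
\[
A^{\varepsilon}=\prod_{j=1}^p[w_{lj}-\varepsilon,w_{rj}+\varepsilon],\qquad A^{-\varepsilon}=\prod_{j=1}^p[w_{lj}+\varepsilon,w_{rj}-\varepsilon]
\]
are again rectangles. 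On the event $\{\|r_n\|_\infty\le\varepsilon\}$ one has $\{S_n\in A^{-\varepsilon}\}\subseteq\{S_n+r_n\in A\}\subseteq\{S_n\in A^{\varepsilon}\}$, so
\[
\Pr(S_n\in A^{-\varepsilon})-\Pr(\|r_n\|_\infty>\varepsilon)\le\Pr(S_n+r_n\in A)\le\Pr(S_n\in A^{\varepsilon})+\Pr(\|r_n\|_\infty>\varepsilon).
\]
Choosing $\varepsilon:=\delta_n/\log^{1/2}(pn)$, Condition A(i) bounds $\Pr(\|r_n\|_\infty>\varepsilon)$ by $\beta_n$, and Step~1 bounds each of $\Pr(S_n\in A^{\pm\varepsilon})$ by $\Pr(N(0,V)\in A^{\pm\varepsilon})$ up to $C\delta_n$.

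Third, the missing ingredient is Gaussian anti-concentration over rectangles: because Condition M(i) guarantees $V_{jj}\ge 1$ for all $j$, Nazarov's inequality (see, e.g., Lemma A.1 of \cite{CCK17}) gives
\[
\Pr(N(0,V)\in A^{\varepsilon})-\Pr(N(0,V)\in A^{-\varepsilon})\le C\varepsilon\sqrt{\log p}\le C\delta_n,
\]
using $\varepsilon\sqrt{\log p}\le\delta_n$ by our choice of $\varepsilon$. Combining these three displays and taking the supremum over $A\in\mathcal A$ yields the claimed bound $C(\delta_n+\beta_n)$.

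The main (only) technical obstacle is invoking the correct anti-concentration inequality with the right dependence on $\sqrt{\log p}$; everything else is bookkeeping that combines a black-box high-dimensional CLT with a standard ``chaining the error through a small buffer'' argument. The role of the $1/\log^{1/2}(pn)$ factor in Condition A(i) is precisely to kill the $\sqrt{\log p}$ coming from the Gaussian anti-concentration, while the probability tail $\beta_n$ is passed through unchanged.
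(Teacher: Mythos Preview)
Your proposal is correct and follows essentially the same approach as the paper: enlarge/shrink the rectangle by $\varepsilon=\delta_n/\log^{1/2}(pn)$, use Condition~A(i) to absorb $r_n$ at cost $\beta_n$, apply the high-dimensional CLT of \cite{CCK17} to $S_n$, and use Nazarov's anti-concentration inequality to bound the Gaussian mass of the buffer by $C\delta_n$. The paper's proof differs only cosmetically (it applies the CLT to $A^{\pm}$ directly rather than first stating the uniform bound, and it decomposes the two-sided buffer $A^{\varepsilon}\setminus A^{-\varepsilon}$ explicitly into two one-sided pieces before invoking Nazarov), and your observation that Condition~A(ii) is not needed here is correct.
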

It is useful to note that Theorem \ref{thm: clt for mam} allows $p$ to be larger or much larger than $n$. For example, the theorem implies that if $Z_i$'s are i.i.d zero-mean random vectors with each component bounded in absolute value by some constant $C$ (independent of $n$) almost surely and the variance of each component bounded from below by one, then
\begin{equation}\label{eq: clt simple implication}
\sup_{A\in\mathcal A}\Big|\Pr(\sqrt n(\hat\theta - \theta_0) \in A) - \Pr(N(0,V)\in A)\Big| \to 0\text{ as }n\to\infty
\end{equation}
as long as $\log^7 p = o(n)$ and \eqref{eq: vanishing approximation error} and \eqref{eq: estimates of z} hold. Thus, Theorem \ref{thm: clt for mam} shows that Gaussian approximation over rectangles is possible even if $p$ is {\em exponentially} large in $n$.

Note, however, that the Gaussian approximation here is stated only for the probability of $\sqrt n(\hat \theta - \theta_0)$ hitting rectangles $A\in\mathcal A$. The same Gaussian approximation may not hold if we look at more general classes of sets, e.g. all (Borel measurable) convex sets. In fact, it is known that if we replace the class of all rectangles $\mathcal A$ in \eqref{eq: clt simple implication} by the class of all Borel measurable convex sets, then we must assume that $p = o(n^{1/3})$, meaning $p\ll n$, in order to satisfy \eqref{eq: clt simple implication}; see discussion on p. 2310 of \cite{CCK17}. On the other hand, as we will see below, the class of all rectangles is large enough to make Theorem \ref{thm: clt for mam} useful in many applications. See also Remark \ref{rem: sparsely convex sets} below on how we can extend the class of rectangles and still allow for $p\gg n$.

The Gaussian approximation result of Theorem \ref{thm: clt for mam} is useful as applications below indicate, but does not immediately give a practical distributional approximation since the covariance matrix $V$ is typically unknown. We therefore also consider bootstrap approximations. In particular, we consider the Gaussian (or multiplier) and empirical (or nonparametric) types of bootstrap.
For the Gaussian bootstrap, let $e = (e_1,\dots,e_n)'$ be a vector consisting of i.i.d. $N(0,1)$ random variables independent of the data yielding the estimator $\hat\theta$. 
 A Gaussian bootstrap draw of the estimator $\hat\theta$ is then defined as
\begin{equation}\label{eq: multiplier draw}
\sqrt{n}(\hat \theta^* - \hat \theta) := \frac{1}{\sqrt n}\sum_{i=1}^n e_i \hat Z_i \quad \text{or}  \quad \hat \theta^* =
\hat \theta + \frac{1}{n}\sum_{i=1}^n e_i \hat Z_i.
\end{equation}
Alternatively, letting $e = (e_{1},\dots,e_{n})'$ be a vector following the multinomial distribution with parameters $n$ and success probabilities $1/n,\dots,1/n$ independent of the data, we can define an empirical bootstrap draw of the estimator $\hat\theta$ as
\begin{equation}\label{eq: empirical draw}
\sqrt{n}(\hat \theta^* - \hat \theta) := \frac{1}{\sqrt n}\sum_{i=1}^n (e_{i} - 1) \hat Z_i \quad \text{or}  \quad \hat \theta^* =
\hat \theta + \frac{1}{n}\sum_{i=1}^n (e_i - 1) \hat Z_i.
\end{equation}
Equivalently, the empirical bootstrap draw of $\hat{\theta}$ can be constructed as 
\begin{equation}\label{eq: alternative empirical draw}
\sqrt{n} (\hat{\theta}^{*} - \hat{\theta})= \frac{1}{\sqrt{n}} \sum_{i=1}^{n} (\hat{Z}_{i}^{*} - \overline{\hat{Z}}) \quad \text{or} \quad \hat{\theta}^{*} = \hat{\theta} + \frac{1}{n} \sum_{i=1}^{n} (\hat{Z}_{i}^{*} -\overline{\hat{Z}}),
\end{equation}
where $\hat{Z}_{1}^{*},\dots,\hat{Z}_{n}^{*}$ are an i.i.d. sample from the empirical distribution of $\hat{Z}_{1},\dots,\hat{Z}_{n}$, and $\overline{\hat{Z}} = n^{-1} \sum_{i=1}^{n} \hat{Z}_{i}$. Indeed, the latter expression (\ref{eq: alternative empirical draw}) reduces to the former expression (\ref{eq: empirical draw}) by setting each $e_{i}$ as the number of times that $\hat{Z}_{i}$ is ``redrawn'' in the bootstrap sample, and the vector $e=(e_1,\dots,e_n)'$ then follows the multinomial distribution with parameters $n$ and success probabilities $1/n,\dots,1/n$ independent of the data.

The following theorems show that the distribution of the bootstrap draw with respect to $e$ approximates the Gaussian distribution given in Theorem \ref{thm: clt for mam}.

\begin{theorem}[Gaussian Bootstrap for Many Approximate Means]\label{thm: bootstrap for mam}
Under Conditions M, E, and A,  the distribution of a $N(0,V)$ vector over rectangles can be approximated by the Gaussian bootstrap: there exists a universal constant $C$ such that for the Gaussian bootstrap draw $\hat\theta^*$ given in (\ref{eq: multiplier draw}),
\begin{align*}
\sup_{A\in\mathcal A}\Big|\Pr_e(\sqrt{n}(\hat \theta^* - \hat \theta)  \in A) - \Pr(N(0, V)\in A)\Big| \leq C \delta_n
\end{align*}
holds with probability at least $1 - \beta_n - n^{-1}$ in the case of E.1 and $1-\beta_n - (\log n)^{-2}$ in the case of E.2.
\end{theorem}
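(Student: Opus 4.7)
The plan is to reduce the statement to a Gaussian-to-Gaussian comparison and then to control the distance between the bootstrap covariance matrix and $V$ in the max norm. Conditional on the data $(Z_1^n, \hat{Z}_1^n)$, the Gaussian bootstrap draw $\sqrt{n}(\hat\theta^{*} - \hat\theta) = n^{-1/2}\sum_i e_i \hat{Z}_i$ is a centered Gaussian vector in $\mathbb R^p$ with covariance
\[
\hat V := \frac{1}{n}\sum_{i=1}^n \hat Z_i \hat Z_i'.
\]
So the left-hand side of the display equals $\sup_{A\in\mathcal A}|\Pr(N(0,\hat V)\in A) - \Pr(N(0,V)\in A)|$, which we control by a Gaussian comparison lemma over rectangles (the underlying ingredient of Proposition 2.1 of \cite{CCK17}): if the diagonals of $V$ are bounded below by $1$ (guaranteed by Condition M(i)), then for a universal constant $C$,
\[
\sup_{A\in\mathcal A}\bigl|\Pr(N(0,\hat V)\in A) - \Pr(N(0,V)\in A)\bigr| \;\le\; C\,\Delta_n^{1/3}\log^{2/3}(pn),
\qquad \Delta_n := \|\hat V - V\|_\infty.
\]
Hence, it suffices to show that $\Delta_n \lesssim \delta_n^{3}/\log^{2}(pn)$ with the required probability.

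Next I would decompose, with $\tilde V := n^{-1}\sum_{i=1}^n Z_i Z_i'$,
\[
\Delta_n \;\le\; \|\hat V - \tilde V\|_\infty + \|\tilde V - V\|_\infty.
\]
For the first term, writing $\hat Z_{ij}\hat Z_{ik} - Z_{ij} Z_{ik} = (\hat Z_{ij}-Z_{ij})\hat Z_{ik} + Z_{ij}(\hat Z_{ik}-Z_{ik})$, Cauchy--Schwarz on the empirical average yields
\[
\|\hat V - \tilde V\|_\infty \;\le\; 2\max_{j}\sqrt{\En[(\hat Z_{ij}-Z_{ij})^2]} \cdot \Bigl(\max_j\sqrt{\En[\hat Z_{ij}^{\,2}]} + \max_j\sqrt{\En[Z_{ij}^{\,2}]}\Bigr).
\]
By Condition A(ii) the first factor is $\le \delta_n/\log(pn)$ on an event of probability at least $1-\beta_n$, and the second factor is $O_\Pr(B_n)$ by Condition M(ii) combined with A(ii); thus $\|\hat V-\tilde V\|_\infty = O_\Pr(B_n\delta_n/\log(pn))$, which, absorbing constants into $\delta_n$, is of order $\delta_n^{3}/\log^{2}(pn)$ under the growth bound in Condition E.

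The main technical step is the second term $\|\tilde V - V\|_\infty$, a centered empirical average of $p^2$ coordinates. Under E.1, the entries $Z_{ij}Z_{ik}$ are sub-exponential with parameter $\lesssim B_n^2$, and a union bound with Bernstein's inequality gives $\|\tilde V - V\|_\infty \lesssim B_n^2\sqrt{\log(pn)/n} + B_n^2 \log(pn)/n$ on an event of probability at least $1 - n^{-1}$. Under E.2, where we only control $\Ep[\max_j(|Z_{ij}|/B_n)^4]$, Markov and a standard truncation plus a moment/maximal inequality (e.g.\ the Nemirovski-type argument used in \cite{CCK17}) yield $\|\tilde V-V\|_\infty \lesssim B_n^2\sqrt{\log(pn)/n}$ only on an event of probability $1-(\log n)^{-2}$; this weaker probability bound is what forces the $(\log n)^{-2}$ term in the conclusion and is the main obstacle of the proof. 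In either case, the second part of Condition E ensures $B_n^2\sqrt{\log(pn)/n} \lesssim \delta_n^{3}/\log^{2}(pn)$, so the desired bound on $\Delta_n$ holds, and plugging into the Gaussian comparison completes the proof.
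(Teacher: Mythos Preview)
Your overall strategy—reduce to a Gaussian-to-Gaussian comparison and control $\|\hat V - V\|_\infty$—is natural, and it is exactly what is done for the $\tilde V \to V$ piece (this is the content of Theorem~\ref{thm: multiplier bootstrap}). The gap is in the $\hat V \to \tilde V$ piece. Your Cauchy--Schwarz argument gives at best
\[
\|\hat V-\tilde V\|_\infty \;\lesssim\; \max_{j}\sqrt{\En[(\hat Z_{ij}-Z_{ij})^2]}\cdot\max_{j}\sqrt{\En[Z_{ij}^2]} \;\lesssim\; B_n^{1/2}\,\frac{\delta_n}{\log(pn)},
\]
and plugging this into the Gaussian comparison bound $\Delta^{1/3}\log^{2/3}p$ yields something of order $B_n^{1/6}\delta_n^{1/3}\log^{1/3}(pn)$. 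Since $B_n\ge 1$ and $\delta_n\to 0$, this is at least $\delta_n^{1/3}\gg\delta_n$; the growth condition in E does \emph{not} rescue you here, because it relates $B_n$ to $n$, not to $\delta_n$. Your claim that $B_n\delta_n/\log(pn)$ ``is of order $\delta_n^3/\log^2(pn)$ under the growth bound in Condition~E'' is simply false.

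The paper avoids this loss by \emph{not} passing through the covariance for the $\hat Z\to Z$ step. Instead, it splits $n^{-1/2}\sum_i e_i\hat Z_i = n^{-1/2}\sum_i e_i Z_i + n^{-1/2}\sum_i e_i(\hat Z_i-Z_i)$. Conditionally on the data, the second sum is a centered Gaussian with $j$th variance $\En[(\hat Z_{ij}-Z_{ij})^2]\le \delta_n^2/\log^2(pn)$, so by Borell--Sudakov--Tsirel'son its sup-norm is $\lesssim \delta_n/\sqrt{\log(pn)}$ with conditional probability $\ge 1-\delta_n$. One then enlarges/shrinks the rectangle by this amount and pays only $(\delta_n/\sqrt{\log(pn)})\sqrt{\log p}\lesssim\delta_n$ via Nazarov's anti-concentration inequality. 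The remaining term $n^{-1/2}\sum_i e_i Z_i$ is handled by the multiplier bootstrap result (Theorem~\ref{thm: multiplier bootstrap}), applied with $\beta=(2n)^{-1}$ under E.1 and $\beta=(\log n)^{-2}/2$ under E.2—this is where the two different probability levels in the statement come from. The moral: the covariance-comparison route loses a cube root and cannot absorb the $\hat Z\to Z$ error under Condition~A; the direct perturbation-plus-anti-concentration route is linear in that error and works.
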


\begin{theorem}[Empirical Bootstrap for Many Approximate Means]\label{thm: empirical bootstrap for mam}
Assume that Conditions M, E, and A are satisfied. In addition, assume that $\Pr(\max_{i\in[n]}\max_{j\in[p]}|\hat Z_{i j} - Z_{i j}| > 1) \leq \beta_n$. Then the distribution of a $N(0,V)$ vector over rectangles can be approximated by the empirical bootstrap: there exists a universal constant $C$ such that for the empirical bootstrap draw $\hat\theta^*$ given in (\ref{eq: empirical draw}),
\begin{align*}
\sup_{A\in\mathcal A}\Big|\Pr_e(\sqrt{n}(\hat \theta^* - \hat \theta)  \in A) - \Pr(N(0, V)\in A)\Big| \leq C \delta_n
\end{align*}
holds with probability at least $1 - 2\beta_n - n^{-1}$ in the case of E.1 and
\begin{equation}\label{eq: bootstrap for mam e2}
\sup_{A\in\mathcal A}\Big|\Pr_e(\sqrt{n}(\hat \theta^* - \hat \theta)  \in A) - \Pr(N(0, V)\in A)\Big| \leq C \delta_n \log^{1/3} n
\end{equation}
holds with probability at least $1-2\beta_n - (\log n)^{-2}$ in the case of E.2.
\end{theorem}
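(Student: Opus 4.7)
The plan is to combine a conditional high-dimensional CLT (applied to the empirical bootstrap draw with the data held fixed) with a Gaussian-to-Gaussian comparison over rectangles. Let $\hat Z_1^*,\dots,\hat Z_n^*$ be the i.i.d.\ draws from the empirical distribution of $\hat Z_1,\dots,\hat Z_n$ as in \eqref{eq: alternative empirical draw}, so that $\sqrt n(\hat\theta^* - \hat\theta) = n^{-1/2}\sum_{i=1}^n(\hat Z_i^* - \overline{\hat Z})$ is a centered, i.i.d.\ sum conditional on the data. Write $\hat V_n := n^{-1}\sum_{i=1}^n \hat Z_i \hat Z_i' - \overline{\hat Z}\,\overline{\hat Z}'$ for the corresponding conditional covariance matrix.

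First, I would apply Proposition 2.1 of \cite{CCK17} conditional on the data to obtain
$$
\sup_{A\in\mathcal A}\big|\Pr_e(\sqrt n(\hat\theta^* - \hat\theta)\in A) - \Pr(N(0,\hat V_n)\in A)\big| \lesssim \Delta_n
$$
on an event of high probability, where $\Delta_n$ depends on the conditional second and third moments of $\hat Z_i^*$ together with a high-probability bound on $\max_i\max_j |\hat Z_{ij}|$. The conditional second moments $\En[\hat Z_{ij}^2]$ are close to $n^{-1}\sum_i \Ep[Z_{ij}^2]$ by Condition A.ii, Condition M, and standard concentration of $\En[Z_{ij}^2]$, so the analogue of Condition M survives under the empirical law. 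The needed tail control on $\max_i\max_j|\hat Z_{ij}|$ comes from the triangle inequality, the extra hypothesis $\Pr(\max_{i,j}|\hat Z_{ij}-Z_{ij}|>1)\le \beta_n$, and a maximal inequality applied to $\max_{i,j}|Z_{ij}|$ under Condition E.

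Next, I would compare $N(0,\hat V_n)$ to $N(0,V)$ using the Gaussian comparison bound over rectangles from \cite{CCK17}, which yields
$$
\sup_{A\in\mathcal A}\big|\Pr(N(0,\hat V_n)\in A) - \Pr(N(0,V)\in A)\big| \lesssim \big(\log p\big)^{2/3}\,\|\hat V_n - V\|_{\infty}^{1/3},
$$
where $\|\cdot\|_\infty$ is the entrywise max norm. It suffices to show $\|\hat V_n - V\|_\infty$ is of order $\delta_n^3/\log^2 p$ (respectively $(\delta_n\log^{1/3}n)^3/\log^2 p$ in the E.2 case) on an appropriate event. Decomposing
$$
\hat V_n - V = -\overline{\hat Z}\,\overline{\hat Z}' + n^{-1}\sum_i(\hat Z_i\hat Z_i' - Z_iZ_i') + \big(n^{-1}\sum_i Z_iZ_i' - V\big),
$$
the first summand is controlled by Theorem \ref{thm: clt for mam} applied to the mean of $\hat Z_i$, the second by the polarization identity and Cauchy--Schwarz using Conditions A.ii and M, and the third by a standard max-norm concentration inequality for sample covariance matrices (Bernstein under E.1 and a Nemirovski-type bound under E.2).

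The main obstacle is the heavy-tailed case E.2. The fourth-moment condition only yields $\max_i\max_j|Z_{ij}| = O_P(B_n n^{1/4})$, which is substantially larger than the $O_P(B_n\log(pn))$ bound available under E.1; propagating this weaker control through $\Delta_n$ in the conditional CLT step is precisely what inflates the rate by the $\log^{1/3} n$ factor appearing in \eqref{eq: bootstrap for mam e2}. The bookkeeping is also delicate: the probability budget $2\beta_n + n^{-1}$ (respectively $2\beta_n + (\log n)^{-2}$) must be split between the conditional CLT exceptional event, the bad events for Condition A.ii and the extra truncation hypothesis, and the concentration event for $\En[Z_{ij}Z_{ik}]$ around $V_{jk}$.
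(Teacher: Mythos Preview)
Your route through a conditional CLT followed by a Gaussian comparison on $\hat V_n$ versus $V$ is different from the paper's argument, and as written it does not close. The paper never forms $\hat V_n$ or invokes a Gaussian comparison. Instead it splits at the level of the bootstrap process:
\[
\frac{1}{\sqrt n}\sum_{i=1}^n (e_i-1)\hat Z_i \;=\; \frac{1}{\sqrt n}\sum_{i=1}^n (e_i-1)Z_i \;+\; \frac{1}{\sqrt n}\sum_{i=1}^n (e_i-1)(\hat Z_i - Z_i).
\]
The second piece is handled by a conditional Bernstein inequality (this is exactly where the extra hypothesis $\max_{i,j}|\hat Z_{ij}-Z_{ij}|\le 1$ and Condition~A(ii) are used) to get $\|\,\cdot\,\|_\infty \le K_1\delta_n/\sqrt{\log(pn)}$ with $\Pr_e$-probability at least $1-\delta_n$; this is then absorbed by rectangle shifting and Nazarov's anti-concentration bound, costing only a factor $\sqrt{\log p}$. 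The first piece is controlled by a black-box application of Proposition~4.3 in \cite{CCK17} to the true $Z_i$'s, which already yields the rates $\delta_n$ under E.1 (with failure probability $n^{-1}$) and $\delta_n\log^{1/3}n$ under E.2 (with failure probability $(\log n)^{-2}$).

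The concrete gap in your plan is the Gaussian comparison step. From Condition~A(ii) and Cauchy--Schwarz, the contribution of $n^{-1}\sum_i(\hat Z_i\hat Z_i'-Z_iZ_i')$ to $\|\hat V_n - V\|_\infty$ is only of order $\delta_n/\log(pn)$, not $\delta_n^3/\log^2 p$. Feeding this into the comparison bound $C\Delta^{1/3}\log^{2/3}p$ produces an error of order $\delta_n^{1/3}\log^{2/3}p/\log^{1/3}(pn)$, which does not shrink to $\delta_n$ when $p$ is comparable to or larger than $n$. The paper's decomposition avoids this because an $\ell_\infty$ perturbation of size $\delta_n/\sqrt{\log(pn)}$ on the \emph{process} costs only $\sqrt{\log p}$ via anti-concentration, whereas passing through covariances costs the harsher $\Delta^{1/3}\log^{2/3}p$. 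Separately, your attribution of the $\log^{1/3}n$ factor to the crude bound $\max_{i,j}|Z_{ij}|=O_P(B_n n^{1/4})$ is not the mechanism: plugging that into the conditional CLT yields a polynomial loss in $n$, not a logarithmic one. In the paper the $\log^{1/3}n$ arises from choosing the failure probability $\beta=(\log n)^{-2}$ in Proposition~4.3 of \cite{CCK17}, which enters the rate through the $\beta^{-2/q}$ term with $q=4$.
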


\begin{remark}[Comparison of Gaussian and Empirical Bootstraps]
Comparing Theorems \ref{thm: bootstrap for mam} and \ref{thm: empirical bootstrap for mam} suggests that the Gaussian bootstrap may be more accurate than the empirical bootstrap. However, it is important to remember that both theorems only give upper bounds on the distributional approximation errors, and so such a conjecture may or may not be valid. In fact, there is some evidence that the empirical bootstrap may be more accurate than the Gaussian bootstrap because the former is able to better match higher-order moments of $Z_i$'s; see \cite{DZ17}.\qed
\end{remark}

\begin{remark}[Sparsely Convex Sets]\label{rem: sparsely convex sets}
We note that Theorems \ref{thm: clt for mam}-\ref{thm: empirical bootstrap for mam} can be extended to allow for somewhat more general classes of sets, beyond the class of rectangles. In particular, these theorems can be extended to allow for classes of {\em sparsely convex sets}. For an integer $s>0$, we say that $A\subset\mathbb R^p$ is an $s$-sparsely convex set if there exists an integer $Q>0$ and convex sets $A_q\subset\mathbb R^p$, $q\in[Q]$, such that $A = \cap_{q\in[Q]}A_q$ and the indicator function of each $A_q$, $w\mapsto 1\{w\in A_q\}$, depends on at most $s$ components of its argument $w = (w_1,\dots,w_p)'$. Each rectangle, for example, is clearly a $1$-sparsely convex set. An example of $2$-sparsely convex set is
$$
A = \Big\{ w = (w_1,\dots,w_p)'\in\mathbb R^p\colon \max_{j,k\in[p];\ j\neq k}(w_j^2 + w_k^2) \leq x \Big\},\quad x\geq 0.
$$
Theorems  \ref{thm: clt for mam}-\ref{thm: empirical bootstrap for mam}  can be extended to allow $\mathcal A$ to be the class of all $s$-sparsely convex sets as long as $s$ is not too large; we refer to \cite{CCK17} for details in the case of many exact means and leave the case of many approximate means to future work.
\qed 
\end{remark}

\begin{remark}[Weakening Condition E.2]
We also note that the second part of Condition E above can be slightly weakened and generalized. In particular, it can be replaced by the following condition:
\begin{itemize}
\item[E.2'] For some $q \in [4,\infty)$, $\Ep\left[\max_{j\in [p]}(|Z_{i j}|/B_n)^q\right] \leq 1\text{ for all } i \in [n]$, and
$\left( \frac{B_n^2\log^7(p n)}{n} \right)^{1/6} +  \left( \frac{B^2_n\log^3(p n)}{n^{1 - 2/q}} \right)^{1/3} \leq \delta_n.$
\end{itemize}
If we use this alternative version of Condition E, Theorems \ref{thm: clt for mam}-\ref{thm: empirical bootstrap for mam} still hold but the constant $C$ in these theorems then depend on $q$, whenever E.2' is used. The same remark also applies to all theorems below where Condition E is used.\qed
\end{remark}



\begin{figure}[htbp]
\begin{center}
\hspace*{-0.5cm}\includegraphics[width=6.0in]{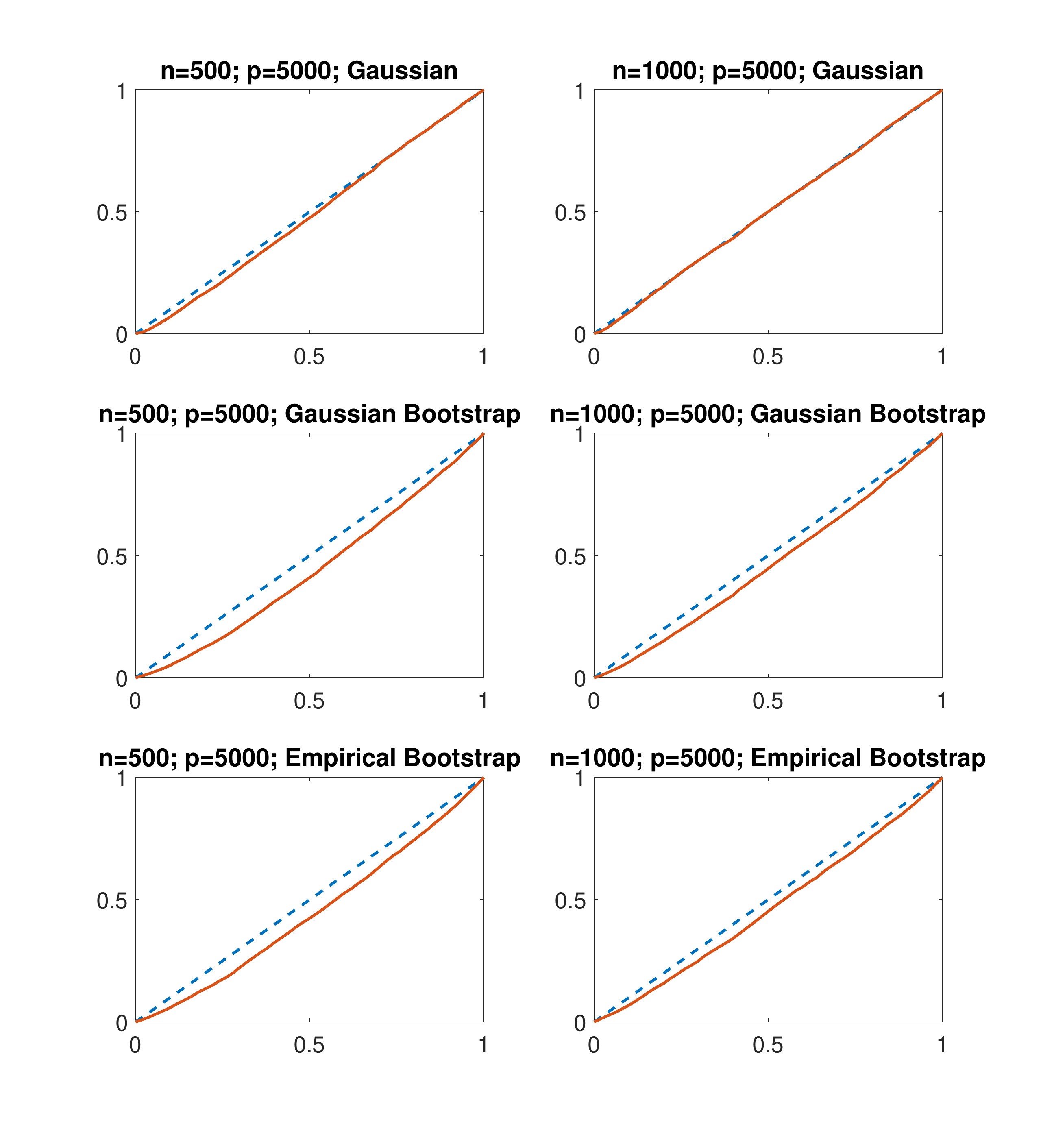}
\caption{P-P plots comparing the distribution of $\max_{j\in[p]}\sqrt n|\hat\theta_j - \theta_{0 j}|$ with its Gaussian, Gaussian bootstrap, and empirical bootstrap approximations in the example motivated by the problem of selecting the regularization parameter of the RMD estimator in Section \ref{sec: many moments}. Here, $Z_{i j}$'s are generated as
$Z_{i j}=W_{i j}\eps_{i}$ with $\eps_{i} \sim t(4),$ (the $t$-distribution with four degrees of freedom), and $W_{i j}$'s are non-stochastic (simulated once using $U[0,1]$ distribution independently across $i$ and $j$). We assume that $Z_{i j}$'s are observed and set $\hat Z_{i j} = Z_{i j}$ for all $i$ and $j$. The dashed line is 45$^\circ$. To generate bootstrap approximations, we use randomly selected sample of $Z_{i j}$'s. The figure indicate that all three approximations are good, and the quality of the approximation is particularly good for the tail probabilities, which is most relevant for practical applications.}
\label{fig: hd approximation}
\end{center}
\end{figure}

Figure \ref{fig: hd approximation} illustrates Theorems \ref{thm: clt for mam}, \ref{thm: bootstrap for mam}, and \ref{thm: empirical bootstrap for mam} for rectangles $A$ of a particular type:
$$
A = \Big\{w = (w_1,\dots,w_p)'\in\mathbb R^p\colon -x \leq w_j\leq x\text{ for all }j\in[p]\Big\},\quad x\geq 0.
$$
Specifically, Figure \ref{fig: hd approximation} plots 
$$
\Pr(\sqrt n\| \hat\theta - \theta_0 \|_{\infty} \leq x)\text{ against }\Pr(\| N(0,V) \|_{\infty}\leq x)
$$ 
and 
$$
\Pr(\sqrt n\| \hat\theta - \theta_0 \|_{\infty} \leq x)\text{ against }\Pr_e(\sqrt n\|\hat\theta^* - \hat\theta\|_{\infty} \leq x)
$$ 
as $x$ varies from $0$ to $\infty$ for different values of $n$ and $p$ and a distribution of $Z_{i}$'s motivated by the problem of selecting the regularization parameter of the RMD estimator in Section \ref{sec: many moments}, where $\hat\theta^*$ is either the Gaussian or the empirical bootstrap draw. The figure indicates that both Gaussian and bootstrap approximations in Theorems \ref{thm: clt for mam}, \ref{thm: bootstrap for mam}, and \ref{thm: empirical bootstrap for mam} are rather precise.

Another useful result for inference in high-dimensional settings is a moderate deviation theorem for self-normalized sums, which we present below. This result typically leads to conservative inference but requires very weak moment conditions. In particular, it does not require Condition E.

\begin{theorem}[Moderate Deviations for Many Approximate Means]\label{thm: moderate deviations for mam}
Assume that Conditions M and A are satisfied. Also, let $\bar C>0$ be some constant and assume that $(2\bar C)^3B_n\log^{3/2}(p n)/\sqrt n \leq \delta_n\leq1$. Then there exist constants $n_0$ and $C$ depending only on $\bar C$ such that
\begin{equation}\label{eq: SNMD for mam 1}
\left | \Pr\left(\frac{\sqrt n(\hat \theta_j - \theta_{0 j})}{ (\En[\hat Z_{i j}^2])^{1/2} } > x \right) -  (1- \Phi(x)) \right | 
\leq C\Big( (1- \Phi(x))\delta_n + \beta_n + (p n)^{-1}\Big)
\end{equation}
for all $n\geq n_0$ and $1\leq x\leq \bar C\log^{1/2}(p n)$. In addition,
\begin{equation}\label{eq: thm 4 part 2}
\Pr\left(\max_{j\in[p]} \frac{\sqrt n(\hat\theta_j - \theta_{0 j})}{(\En[\hat Z_{i j}^2])^{1/2}} > \Phi^{-1}(1-\alpha/p) \right) \leq \alpha + C \Big(\alpha \delta_n + \beta_n + (p n)^{-1} \Big)
\end{equation}
for all $n\geq n_0$ and $\alpha$ such that $1 \leq \Phi^{-1}(1 - \alpha / p) \leq \bar C\log^{1/2}(p n)$.
\end{theorem}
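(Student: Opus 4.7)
The plan is to reduce the result for many approximate means to a classical self-normalized moderate deviation theorem applied componentwise, then pass from $Z_{ij}$ to $\hat Z_{ij}$ using Condition A, and finally union-bound over $j$ to get the maximal statement.

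First, I would fix $j \in [p]$ and apply the self-normalized moderate deviation theorem of Jing–Shao–Wang (2003) to the i.i.d.-in-$i$ (or independent, with uniform moments from Condition M(ii)) array $Z_{1j},\ldots,Z_{nj}$. Because Condition M gives $n^{-1}\sum_i \Ep[Z_{ij}^2]\geq 1$ and $n^{-1}\sum_i \Ep[|Z_{ij}|^3] \leq B_n$, the Cramér-type relative error in that theorem takes the form
\[
\left| \frac{\Pr\bigl( n^{-1/2}\sum_i Z_{ij}/(n^{-1}\sum_i Z_{ij}^2)^{1/2} > x\bigr)}{1-\Phi(x)} - 1\right| \leq C\,\frac{B_n(1+x)^3}{\sqrt n}\,,
\]
valid uniformly for $0\leq x\leq \bar C \log^{1/2}(pn)$. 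The scaling assumption $(2\bar C)^3 B_n \log^{3/2}(pn)/\sqrt n\leq \delta_n$ converts this into a bound of order $C\delta_n$ on the relative error, giving the analog of \eqref{eq: SNMD for mam 1} with $Z_{ij}$ in place of $\hat Z_{ij}$ and without the linearization error.

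Second, I would introduce the linearization error and the estimated influence functions. Write
\[
\sqrt n(\hat\theta_j-\theta_{0j}) = n^{-1/2}\textstyle\sum_i Z_{ij} + r_{nj},
\qquad
\En[\hat Z_{ij}^2] = \En[Z_{ij}^2] + \Delta_{nj},
\]
where by Condition A, with probability at least $1-\beta_n$, uniformly in $j$, $|r_{nj}|\leq \delta_n/\log^{1/2}(pn)$ and $|\Delta_{nj}|$ is controlled through $\En[(\hat Z_{ij}-Z_{ij})^2]\leq \delta_n^2/\log^2(pn)$ together with Cauchy–Schwarz using $\En[Z_{ij}^2] = O_\Pr(1)$ (from Condition M and Markov). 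Thus on a high-probability event the self-normalized statistic built from $\hat\theta$ and $\hat Z$ differs from the one built from $Z$ by a multiplicative factor $1+o(1/\log(pn))$ and an additive shift of order $\delta_n/\log^{1/2}(pn)$. In the moderate-deviation range $x\leq \bar C \log^{1/2}(pn)$, standard bounds on $1-\Phi$ give $(1-\Phi(x(1+\eta)-\zeta))/(1-\Phi(x)) = 1+O(\delta_n)$ whenever $|\eta|\log(pn)+|\zeta|\log^{1/2}(pn)\lesssim \delta_n$, which is exactly the scale Condition A delivers. Adding the event-probability loss $\beta_n$ and the extreme-tail residual $(pn)^{-1}$ yields the form of the right-hand side of \eqref{eq: SNMD for mam 1}.

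Third, for the maximal bound \eqref{eq: thm 4 part 2} I would take $x=\Phi^{-1}(1-\alpha/p)$ and apply the union bound together with \eqref{eq: SNMD for mam 1} componentwise:
\[
\Pr\!\left(\max_{j\in[p]}\frac{\sqrt n(\hat\theta_j-\theta_{0j})}{(\En[\hat Z_{ij}^2])^{1/2}}>x\right)
\leq \sum_{j=1}^p\Big[(1-\Phi(x))(1+C\delta_n)+C\beta_n + C(pn)^{-1}\Big]
= \alpha + C(\alpha\delta_n+p\beta_n+n^{-1}),
\]
after which replacing the componentwise $\beta_n$ by the joint bound from Condition A (which is stated as a single probability over $j$) avoids the factor $p$ and produces the stated form.

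The main obstacle is the transition from $Z_{ij}$ to $\hat Z_{ij}$ and the incorporation of $r_{nj}$ while keeping the error \emph{relative} to $1-\Phi(x)$ rather than absolute: in the deep tail $1-\Phi(x)$ can be as small as $1/(pn)$, so a naive additive bound would dominate. Handling this requires using that both the multiplicative perturbation of the self-normalizer and the additive perturbation from $r_{nj}$ are smaller than $\delta_n/\log^{1/2}(pn)$, which, together with the explicit derivative bound $\phi(x)/(1-\Phi(x))\lesssim x \lesssim \log^{1/2}(pn)$ on the hazard function in the moderate-deviation window, keeps the relative error at $O(\delta_n)$ as required.
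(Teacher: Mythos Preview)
Your approach is essentially the same as the paper's: apply the Jing--Shao--Wang self-normalized moderate deviation theorem componentwise, then perturb from $(Z_{ij},\,n^{-1/2}\sum_i Z_{ij})$ to $(\hat Z_{ij},\,\sqrt n(\hat\theta_j-\theta_{0j}))$ using Condition~A, convert the resulting shift into a relative error via the hazard bound $\phi(x)/(1-\Phi(x))\lesssim x$ on the range $x\le \bar C\log^{1/2}(pn)$, and finally union-bound for the maximum after conditioning on the single joint event from Condition~A.

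There is one genuine gap. To translate both the additive numerator shift $r_{nj}$ and the denominator perturbation into a \emph{multiplicative} error on the self-normalized statistic you need a \emph{lower} bound on $\En[Z_{ij}^2]$, uniformly over $j\in[p]$, with high probability; your sketch only invokes ``$\En[Z_{ij}^2]=O_{\Pr}(1)$ from Condition~M and Markov,'' which is the wrong direction and in any case not uniform in $j$. The paper handles this as a separate step: using Condition~M(ii) (the fourth-moment bound $n^{-1}\sum_i\Ep[Z_{ij}^4]\le B_n^2$) together with the one-sided exponential inequality $\Pr(\sum_i(X_i-\Ep X_i)\le -t)\le \exp(-t^2/(2\sum_i\Ep X_i^2))$ for nonnegative $X_i$, one gets $\Pr(\En[Z_{ij}^2]\le 1/2)\le (pn)^{-2}$ for each $j$, hence $\Pr(\min_{j}\En[Z_{ij}^2]\le 1/2)\le (pn)^{-1}$. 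This is precisely the origin of the $(pn)^{-1}$ term in the bound, not an ``extreme-tail residual'' of the moderate deviation theorem as you suggest. Once this step is in place (and you use the triangle inequality $|(\En[\hat Z_{ij}^2])^{1/2}-(\En[Z_{ij}^2])^{1/2}|\le (\En[(\hat Z_{ij}-Z_{ij})^2])^{1/2}$ rather than Cauchy--Schwarz, which is cleaner and avoids needing an upper bound on $\En[Z_{ij}^2]$), the rest of your argument goes through exactly as written.
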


Since for any $x>0$, we have $1 - \Phi(x) \leq \exp(-x^2/2)$ by Proposition 2.5 in \cite{D14}, it follows that $\Phi^{-1}(1 - 1/p n) \leq \sqrt{2\log(p n)}$, and so setting $\alpha = 1/n$ in \eqref{eq: thm 4 part 2} gives the following corollary of Theorem \ref{thm: moderate deviations for mam}:
\begin{corollary}[Maximal Inequality for Many Approximate Means]\label{cor: maximal inequality for mam}
Assume that Conditions M and A are satisfied. Also, assume that $(2\sqrt 2)^3B_n\log^{3/2}(p n)/\sqrt n \leq \delta_n\leq1$. Then there exist universal constants $n_0$ and $C$ such that for all $n\geq n_0$,
$$
\Pr\left(\max_{j\in[p]} \frac{\sqrt n(\hat\theta_j - \theta_{0 j})}{(\En[\hat Z_{i j}^2])^{1/2}} > \sqrt{2\log(p n)}\right) \leq C \Big(\beta_n + n^{-1} \Big).
$$
In particular,
\begin{equation}\label{eq: maximal growth}
\frac{| \sqrt n(\hat\theta_j - \theta_{0 j}) |}{(\En[\hat Z_{i j}^2])^{1/2}} = O_P(\sqrt{\log(p n)})
\end{equation}
uniformly over $j\in[p]$.
\end{corollary}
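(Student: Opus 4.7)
The plan is to specialize Theorem \ref{thm: moderate deviations for mam}, specifically inequality (\ref{eq: thm 4 part 2}), to the choices $\alpha = 1/n$ and $\bar C = \sqrt 2$. With $\bar C = \sqrt 2$ the moment/rate hypothesis $(2\bar C)^3 B_n \log^{3/2}(pn)/\sqrt n \leq \delta_n \leq 1$ of Theorem \ref{thm: moderate deviations for mam} coincides verbatim with the hypothesis $(2\sqrt 2)^3 B_n \log^{3/2}(pn)/\sqrt n \leq \delta_n \leq 1$ imposed in the corollary, so no extra work is needed to transfer the hypotheses.

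Next I would verify that the admissibility condition $1 \leq \Phi^{-1}(1-\alpha/p) \leq \bar C \log^{1/2}(pn)$ holds for this choice of $\alpha$ and all sufficiently large $n$. The upper bound is exactly the inequality $\Phi^{-1}(1-1/(pn)) \leq \sqrt{2\log(pn)}$ that the authors quote before the corollary (derived from $1-\Phi(x) \leq e^{-x^2/2}$), while the lower bound $\Phi^{-1}(1-1/(pn)) \geq 1$ holds for all $pn$ large enough, which we absorb into the choice of $n_0$. Since the event $\{\max_j \sqrt n(\hat\theta_j-\theta_{0j})/(\En[\hat Z_{ij}^2])^{1/2} > \sqrt{2\log(pn)}\}$ is contained in $\{\max_j \sqrt n(\hat\theta_j-\theta_{0j})/(\En[\hat Z_{ij}^2])^{1/2} > \Phi^{-1}(1-1/(pn))\}$, inequality (\ref{eq: thm 4 part 2}) with $\alpha=1/n$ gives
\[
\Pr\!\left(\max_{j\in[p]} \frac{\sqrt n(\hat\theta_j - \theta_{0j})}{(\En[\hat Z_{ij}^2])^{1/2}} > \sqrt{2\log(pn)}\right) \leq \frac{1}{n} + C\!\left(\frac{\delta_n}{n} + \beta_n + \frac{1}{pn}\right).
\]
Using $\delta_n \leq 1$ and $(pn)^{-1}\leq n^{-1}$, the right-hand side is bounded by $C'(\beta_n + n^{-1})$ for a universal constant $C'$, which is the first displayed inequality of the corollary.

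For the ``in particular'' conclusion, which is a two-sided maximal bound, I would apply the same argument with $Z_i$ replaced by $-Z_i$ (and correspondingly $\hat Z_i$ by $-\hat Z_i$, $r_n$ by $-r_n$). This yields the linearization $\sqrt n(\theta_0 - \hat\theta) = n^{-1/2}\sum_i (-Z_i) + (-r_n)$; Conditions M and A are invariant under this sign flip (they depend only on $Z_{ij}^2$, $|Z_{ij}|^{2+k}$, $|r_{nj}|$ and $(\hat Z_{ij}-Z_{ij})^2$), and the denominators $\En[\hat Z_{ij}^2]$ are unchanged. A union bound over the two one-sided bounds gives $\Pr(\max_j|\sqrt n(\hat\theta_j-\theta_{0j})|/(\En[\hat Z_{ij}^2])^{1/2} > \sqrt{2\log(pn)}) \leq 2C'(\beta_n + n^{-1}) \to 0$, which is exactly (\ref{eq: maximal growth}) uniformly in $j\in[p]$. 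I do not anticipate a genuine obstacle: the entire argument is a specialization of Theorem \ref{thm: moderate deviations for mam}, the only subtle points being the verification that $\Phi^{-1}(1-1/(pn))$ lies in the admissible range (handled by the elementary Gaussian tail bound) and the symmetrization via $Z_i \mapsto -Z_i$ to obtain the absolute-value bound.
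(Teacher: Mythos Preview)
Your proposal is correct and follows essentially the same approach as the paper: the paper's argument consists of the single observation that $1-\Phi(x)\leq e^{-x^2/2}$ yields $\Phi^{-1}(1-1/(pn))\leq\sqrt{2\log(pn)}$, and then sets $\alpha=1/n$ in \eqref{eq: thm 4 part 2}. Your write-up is simply a more detailed execution of this, with the additional (and welcome) explicit verification of the lower admissibility bound $\Phi^{-1}(1-1/(pn))\geq 1$ and the symmetrization $Z_i\mapsto -Z_i$ to pass to the two-sided bound \eqref{eq: maximal growth}, neither of which the paper spells out.
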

If $Z_{i j}$'s are all bounded, or at least sub-Gaussian, it is straightforward to show by combining the union bound and exponential inequalities, such as those of Hoeffding or Bernstein, that
\begin{equation}\label{eq: maximal growth 2}
\frac{| \sqrt n(\hat\theta_j - \theta_{0 j}) |}{(V_{j j})^{1/2}} = O_P(\sqrt{\log(p n)})
\end{equation}
uniformly over $j\in[p]$. Comparing \eqref{eq: maximal growth} and \eqref{eq: maximal growth 2} now reveals an interesting feature of Theorem \ref{thm: moderate deviations for mam} and Corollary \ref{cor: maximal inequality for mam}: replacing the {\em true value} $V_{j j}$ of the asymptotic variance of $\sqrt n(\hat\theta_j - \theta_{0 j})$ by an {\em estimator} $\En[\hat Z_{i j}^2]$ allows us to obtain the same bound, $O_P(\sqrt{\log(p n)})$, for the normalized version of $\sqrt n(\hat\theta_j - \theta_{0 j})$ without imposing strong moment conditions on the data, such as boundedness, since Corollary \ref{cor: maximal inequality for mam} only assumes four finite moments of the $Z_{i j}$'s (via Condition M). Results of this form were used previously by \cite{BCCH12} in the theory of high-dimensional estimation via Lasso to allow for noise with heavy tails. Also, \cite{CCK13b} used such results to develop computationally efficient tests of many moment inequalities for heavy-tailed data.


\subsection{Simultaneous Confidence Intervals.}
When only one $\theta_{0 j}$ is of interest, it follows from standard arguments that
$$
\frac{\sqrt n(\hat\theta_j - \theta_{0 j})}{(\En[\hat Z_{i j}^2])^{1/2}}\leadsto_{\Pr} N(0,1)
$$
as long as $r_{n j} = o_P(1)$ and $\En[(\hat Z_{i j} - Z_{i j})^2] = o_P(1)$.  We can thus, e.g., construct a two-sided confidence interval for $\theta_{0 j}$ with asymptotic coverage $1 - \alpha$ for some $\alpha\in(0,1)$ as
$$
CS_j(1-\alpha) = \left[\hat\theta_j - (\En[\hat Z_{i j}^2])^{1/2}\frac{z_{\alpha/2}}{\sqrt n};\ \ \hat\theta_j + (\En[\hat Z_{i j}^2])^{1/2}\frac{z_{\alpha/2}}{\sqrt n} \right],
$$
where $z_{\alpha/2}$ denotes the $(1-\alpha/2)$-quantile of $N(0,1)$, i.e.,  $1 - \Phi(z_{\alpha/2}) = \alpha/2$. However, the confidence intervals above are too optimistic when many components $\theta_{0 j}$ of the parameter vector $\theta_0$ are of interest, and it is likely that one or several $\theta_{0 j}$'s will fall out of their respective confidence intervals. Therefore, to obtain valid inferential statements, we need to carry out a multiplicity adjustment to explicitly take into account that many $\theta_{0 j}$ are of interest. In this subsection, we demonstrate how to perform this adjustment and construct {\em simultaneous} confidence intervals for multiple components of $\theta_0$  using Theorems \ref{thm: clt for mam}--\ref{thm: moderate deviations for mam}. 

The following quantity will play an important role in our analysis:
$$
\lambda(1 - \alpha):=(1 - \alpha)\text{ quantile of }\|\sqrt n W(\hat\theta - \theta_0)\|_{\infty},
$$
where $W := \text{diag}(w_1,\dots,w_p)$ is a diagonal, potentially unknown, weighting matrix. For concreteness, for all $j \in [p]$, we often set $w_j = V_{j j}^{-1/2}$, which normalizes each $\sqrt n w_j(\hat\theta_j - \theta_{0 j})$ to have asymptotic variance one, or $w_j = 1$, which simplifies the analysis.

If we knew $\lambda(1 - \alpha)$ and $W$, we would be able to use confidence intervals
$$
CS =\prod_{j\in[p]} CS_j, \quad CS_j := \left[\hat\theta_j - \frac{\lambda(1 - \alpha)}{ w_j\sqrt n}; \ \ \hat\theta_j + \frac{\lambda(1- \alpha)}{ w_j \sqrt n} \right],
$$
since they clearly satisfy the desired coverage condition,
\begin{equation}\label{eq: conf intervals main property}
\Pr\Big(\theta_{0 j} \in CS_j\text{ for all }j\in[p]\Big) = 1- \alpha + o(1).
\end{equation}
In practice, however, $\lambda(1 - \alpha)$ is typically unknown and has to be estimated from the data. To this end, let $\hat W:=\text{diag}(\hat w_1,\dots,\hat w_p)$ be an estimator of $W$ and let
$$
\hat\lambda(1 - \alpha) := (1 - \alpha)\text{ quantile of }\| \sqrt n \hat W(\hat\theta^* - \hat\theta) \|_{\infty}\mid \hat W, (\hat Z_i)_{i=1}^n,
$$
where $\hat\theta^*$ is obtained via the Gaussian bootstrap, \eqref{eq: multiplier draw}.  The case where $\hat\theta^*$ is obtained via the empirical bootstrap, \eqref{eq: empirical draw}, can be analyzed similarly. We then can define feasible confidence intervals as
\begin{equation}\label{eq: simultaneous conf intervals}
CS =\prod_{j\in[p]} CS_j, \quad CS_j := \left[\hat\theta_j - \frac{\hat\lambda(1 - \alpha)}{\hat w_j\sqrt n}; \ \ \hat\theta_j + \frac{\hat\lambda(1- \alpha)}{\hat w_j \sqrt n} \right].
\end{equation}
Below, we will show that these feasible confidence intervals still satisfy \eqref{eq: conf intervals main property}, under certain regularity conditions. To prove this claim, we impose the following condition:

\medskip

\noindent \textbf{Condition W}.
\textit{(i) For some $C_W\geq 1$, the diagonal elements of the matrix $W = \text{diag}(w_1,\dots,w_p)$ satisfy $C_W^{-1} V_{j j}^{-1/2} \leq w_j \leq C_W$ for all $j \in [p]$, and (ii) the estimator $\hat W = \text{diag}(\hat w_1,\dots,\hat w_p)$ of the matrix $W = \text{diag}(w_1,\dots,w_p)$ satisfies $\Pr(  \max_{j\in[p]}|\hat w_j - w_j|^2 (1+ \En[\hat Z_{i j}^2]) > \delta^2_{n}/\log^2(p n)) \leq \beta_n$.}

\medskip

This condition holds trivially with $C_W = 1$ if we set $\hat w_j = w_j = 1$ for all $j\in[p]$ (recall that by Condition M, we have $V_{j j}\geq 1$ for all $j\in[p]$). Also, we show that Conditions M, E, and A imply Condition W, with possibly different $\delta_n$ and $\beta_n$, if we set $w_j = V_{j j}^{-1/2}$ and $\hat w_j = (\En[\hat Z_{i j}^2])^{-1/2}$ for all $j \in [p]$ as a part of the proof of Theorem \ref{thm: MHT} below.

The key observation that allows us to show that the confidence intervals \eqref{eq: simultaneous conf intervals} satisfy the desired coverage condition \eqref{eq: conf intervals main property} will be to show that the bootstrap quantile function $\hat\lambda(1 - \alpha)$, as well as the original quantile function $\lambda(1 - \alpha)$, can be approximated by the Gaussian quantile function,
$$
\lambda^g(1 - \alpha) := (1 - \alpha)\text{ quantile of }\| W N(0,V) \|_{\infty}.
$$
It is this place, where Theorems \ref{thm: clt for mam}-\ref{thm: empirical bootstrap for mam} play a key role. Formally, we have the following results.


\begin{theorem}[Quantile Comparison]\label{lem: quantile comparison}
Assume that Conditions M, E, A, and W are satisfied. Then there exists a constant $C$ depending only on $C_W$ such that for $\epsilon_n:= C(\delta_n + \beta_n)$,
\begin{equation}\label{eq: quantile comparison 1}
\lambda^g(1 - \alpha - \epsilon_n) \leq \lambda(1 - \alpha) \leq \lambda^g(1 - \alpha + \epsilon_n).
\end{equation}
In addition,
\begin{equation}\label{eq: quantile comparison 2}
\lambda^g(1 - \alpha - \epsilon_n) \leq \hat\lambda(1 - \alpha) \leq \lambda^g(1 - \alpha + \epsilon_n),
\end{equation}
holds with probability at least $1 - 2\beta_n - n^{-1}$ in the case of E.1 and $1-2\beta_n - (\log n)^{-2}$ in the case of E.2. Moreover, for any $a\in(0,1)$,
\begin{equation}\label{eq: gaussian quantile bound 1}
\lambda^g(1 - a) \leq \bar \sigma \Phi^{-1}(1 - a/(2 p)) \leq \bar\sigma \sqrt{2\log(2p/a)},
\end{equation}
where $\bar\sigma := \max_{j\in[p]} (w_j V_{j j}^{1/2})$.
\end{theorem}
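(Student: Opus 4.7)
\medskip

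\noindent\textbf{Proof proposal.} The plan is to derive all three assertions from a single observation: the events $\{\|\sqrt n W(\hat\theta - \theta_0)\|_\infty \le t\}$, $\{\|\sqrt n \hat W(\hat\theta^* - \hat\theta)\|_\infty \le t\}$, and $\{\|W N(0,V)\|_\infty \le t\}$ are pre-images of axis-aligned rectangles in $\mathcal A$, namely $A_t = \prod_{j \in [p]}[-t/w_j,\,t/w_j]$ and $\hat A_t = \prod_{j \in [p]}[-t/\hat w_j,\,t/\hat w_j]$. Consequently, all the CDF-approximation tools of Section \ref{MAM: CLT} apply. I will repeatedly use the standard quantile-comparison lemma: if $\sup_t |F(t) - G(t)| \le \epsilon$ and $G$ is continuous, then for every $\alpha \in (0,1)$, $q_G(1-\alpha-\epsilon) \le q_F(1-\alpha) \le q_G(1-\alpha+\epsilon)$, where $q_F, q_G$ denote quantile functions.

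For (\ref{eq: quantile comparison 1}), I apply Theorem \ref{thm: clt for mam} to the family $\{A_t\}_{t \ge 0}$ to obtain
\begin{equation*}
\sup_{t \ge 0}\bigl|\Pr(\|\sqrt n W(\hat\theta - \theta_0)\|_\infty \le t) - \Pr(\|W N(0,V)\|_\infty \le t)\bigr| \le C(\delta_n + \beta_n) = \epsilon_n,
\end{equation*}
and then invoke the quantile-comparison lemma.

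For (\ref{eq: quantile comparison 2}), the argument has two steps. First, I condition on the data (so $\hat W$ and $(\hat Z_i)_{i=1}^n$ are fixed). Theorem \ref{thm: bootstrap for mam} applied to the now-deterministic rectangles $\hat A_t$ gives, on an event of probability at least $1 - \beta_n - n^{-1}$ (case E.1) or $1 - \beta_n - (\log n)^{-2}$ (case E.2),
\begin{equation*}
\sup_{t \ge 0}\bigl|\Pr_e(\|\sqrt n \hat W(\hat\theta^* - \hat\theta)\|_\infty \le t) - \Pr(\|\hat W N(0,V)\|_\infty \le t)\bigr| \le C\delta_n.
\end{equation*}
Second, I compare $\|\hat W N(0,V)\|_\infty$ with $\|W N(0,V)\|_\infty$ using the pointwise bound $|\|\hat W G\|_\infty - \|W G\|_\infty| \le \max_j |\hat w_j - w_j|\,|G_j|$ for $G = N(0,V)$. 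The Gaussian maximal inequality gives $\max_j |G_j|/\sqrt{V_{jj}} = O_P(\sqrt{\log p})$, while Condition W.ii together with Conditions M and A (which allow one to replace $\En[\hat Z_{ij}^2]$ by $V_{jj}$ up to a $1 + o_P(1)$ factor on a high-probability event) gives $\max_j |\hat w_j - w_j|\sqrt{V_{jj}} \lesssim \delta_n/\log(pn)$ with probability at least $1 - \beta_n$ (up to adjustment of $\delta_n$). Multiplying, the Gaussian perturbation is $o_P(1/\log^{1/2}(pn))$. Gaussian anti-concentration (Nazarov's inequality) for $\|W N(0,V)\|_\infty$ then converts this uniform perturbation into a CDF comparison of order $\epsilon_n$. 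Chaining the two CDF bounds via the triangle inequality and applying the quantile-comparison lemma yields (\ref{eq: quantile comparison 2}).

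For (\ref{eq: gaussian quantile bound 1}), a union bound gives $\Pr(\|W N(0,V)\|_\infty > t) \le \sum_{j \in [p]} \Pr(w_j|G_j| > t) \le 2p\,\Pr(N(0,1) > t/\bar\sigma)$ since $w_j\sqrt{V_{jj}} \le \bar\sigma$. Setting $t = \bar\sigma\,\Phi^{-1}(1 - a/(2p))$ makes the right side equal to $a$, proving the first inequality; the Mill's-ratio bound $1 - \Phi(x) \le e^{-x^2/2}$ yields $\Phi^{-1}(1 - a/(2p)) \le \sqrt{2\log(2p/a)}$.

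The main obstacle is the second step of part (2): bridging $\hat W$ and $W$ inside the Gaussian max. Condition W.ii controls $|\hat w_j - w_j|^2(1 + \En[\hat Z_{ij}^2])$, so converting this to control of $|\hat w_j - w_j|^2 V_{jj}$ requires an intermediate comparison of $\En[\hat Z_{ij}^2]$ with $\En[Z_{ij}^2]$ (from Condition A.ii) and then with $V_{jj}$ (via Condition M, Markov's inequality, and a union bound over $j$). The Gaussian anti-concentration step is what finally turns a uniform perturbation into a quantile perturbation, paralleling its role inside the proofs of Theorems \ref{thm: clt for mam}--\ref{thm: empirical bootstrap for mam}.
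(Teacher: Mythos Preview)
Your argument is correct, and for parts (\ref{eq: quantile comparison 1}) and (\ref{eq: gaussian quantile bound 1}) it coincides with the paper's (indeed, for (\ref{eq: quantile comparison 1}) your direct use of the rectangles $A_t=\prod_j[-t/w_j,t/w_j]$ is slightly cleaner than the paper's derivation, since $W$ is deterministic and no extra work is needed).

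For (\ref{eq: quantile comparison 2}), however, the paper takes a different and more economical route. Instead of your two-step comparison (bootstrap $\to$ Gaussian with $\hat W$, then Gaussian with $\hat W$ $\to$ Gaussian with $W$ via a perturbation bound plus Nazarov), the paper \emph{re-applies the MAM framework itself}: it treats $WZ_i$ as the new influence functions and $\hat W\hat Z_i$ as their estimators. Concretely, it verifies the analogue of Condition A for the weighted problem by showing
\[
\max_{j\in[p]}\En\big[(\hat w_j\hat Z_{ij}-w_jZ_{ij})^2\big]\;\le\;(C_W+1)^2\,\delta_n^2/\log^2(pn)
\]
with probability at least $1-2\beta_n$ (triangle inequality plus Conditions A and W), and similarly bounds the weighted linearization error. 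Then a single invocation of Theorem \ref{thm: bootstrap for mam} on the weighted system yields directly
\[
\sup_{A\in\mathcal A}\big|\Pr_e(\sqrt n\,\hat W(\hat\theta^*-\hat\theta)\in A)-\Pr(WN(0,V)\in A)\big|\le C\delta_n,
\]
without any separate Gaussian-to-Gaussian comparison. This buys two things: (i) the anti-concentration step you identify as the ``main obstacle'' is already absorbed inside the proof of Theorem \ref{thm: bootstrap for mam}, so it need not be redone; and (ii) you avoid the conversion from $|\hat w_j-w_j|^2(1+\En[\hat Z_{ij}^2])$ to $|\hat w_j-w_j|^2V_{jj}$, which in your route requires a uniform lower bound on $\En[Z_{ij}^2]$ (this is doable under Conditions M and E, but adds work). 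Your approach is a legitimate alternative; the paper's is tidier because it recognizes that the weight-estimation error can be folded into the influence-function-estimation error rather than handled separately.
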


\begin{theorem}[Simultaneous Confidence Intervals]\label{thm: simultaneous conf intervals}
Assume that Conditions M, E, A, and W are satisfied. Then the confidence intervals \eqref{eq: simultaneous conf intervals} satisfy the desired coverage condition \eqref{eq: conf intervals main property}. Moreover, with probability $1 - o(1)$, uniformly over $j \in [p]$, the maximum of weighted radii of these confidence intervals is bounded from above by
$$
\sup\{ \| W (\hat \theta- \theta) \|_\infty: \theta \in CS \} \leq  (1+o(1)) \lambda^g(1 - \alpha + \epsilon_n)/\sqrt n,
$$
where $CS$ is defined in \eqref{eq: simultaneous conf intervals}.
\end{theorem}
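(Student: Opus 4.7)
The plan is to reduce the coverage claim to a single two-sided max statistic, then play off the Gaussian approximation of Theorem~\ref{thm: clt for mam} against the quantile sandwich of Theorem~\ref{lem: quantile comparison}. First I would rewrite the event of simultaneous coverage as
\[
\{\theta_{0j}\in CS_j\ \forall j\in[p]\}=\Big\{\|\sqrt n\,\hat W(\hat\theta-\theta_0)\|_\infty\le \hat\lambda(1-\alpha)\Big\}.
\]
So the goal becomes to show that the left-hand side probability equals $1-\alpha+o(1)$.

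The main technical step is to replace $\hat W$ by $W$ inside this event up to negligible error. Write
\[
\big|\|\sqrt n\hat W(\hat\theta-\theta_0)\|_\infty-\|\sqrt n W(\hat\theta-\theta_0)\|_\infty\big|\le \max_{j\in[p]}|\hat w_j-w_j|\cdot\sqrt n|\hat\theta_j-\theta_{0j}|.
\]
By Condition W(ii), $\max_j|\hat w_j-w_j|\lesssim \delta_n/(\log(pn)(1+\En[\hat Z_{ij}^2])^{1/2})$ with probability $\ge1-\beta_n$, while Corollary~\ref{cor: maximal inequality for mam} applied with Condition M (together with $\En[\hat Z_{ij}^2]$ being close to $V_{jj}$ through Condition A(ii)) gives $\sqrt n|\hat\theta_j-\theta_{0j}|\lesssim (\En[\hat Z_{ij}^2])^{1/2}\sqrt{\log(pn)}$ uniformly in $j$ with high probability. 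Multiplying these bounds, the replacement error is $O_P(\delta_n/\sqrt{\log(pn)})=o_P(1)$. Thus the event of interest is sandwiched up to $o_P(1)$ by $\{\|\sqrt n W(\hat\theta-\theta_0)\|_\infty\le \hat\lambda(1-\alpha)\pm o_P(1)\}$.

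Now I would invoke Theorem~\ref{lem: quantile comparison}: on an event of probability $1-o(1)$, $\hat\lambda(1-\alpha)$ lies between $\lambda^g(1-\alpha\mp\epsilon_n)$. Combining with the CLT for many approximate means (Theorem~\ref{thm: clt for mam}) applied to the rectangles $\{w: \|Ww\|_\infty\le t\}$, the distribution of $\|\sqrt n W(\hat\theta-\theta_0)\|_\infty$ matches that of $\|WN(0,V)\|_\infty$ over rectangles up to $C(\delta_n+\beta_n)$. By definition $\Pr(\|WN(0,V)\|_\infty\le \lambda^g(1-\alpha\pm\epsilon_n))=1-\alpha\pm\epsilon_n$. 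Chaining these three approximations (the $\hat W\to W$ replacement, the quantile sandwich, and the Gaussian CLT) yields
\[
\Pr\Big(\|\sqrt n\hat W(\hat\theta-\theta_0)\|_\infty\le \hat\lambda(1-\alpha)\Big)=1-\alpha+o(1),
\]
which is the coverage conclusion. To handle the Gaussian anti-concentration needed when passing from probabilities at $1-\alpha\pm\epsilon_n$ back to $1-\alpha$, I would rely on the standard anti-concentration inequality for the supremum of a Gaussian vector (already used implicitly in Theorem~\ref{lem: quantile comparison}), which gives continuity of the Gaussian quantile function in $\alpha$ at the required scale.

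For the radius statement, I note that
\[
\sup\{\|W(\hat\theta-\theta)\|_\infty:\theta\in CS\}=\max_{j\in[p]}\frac{w_j}{\hat w_j}\cdot\frac{\hat\lambda(1-\alpha)}{\sqrt n}.
\]
Condition W(i)--(ii) force $\max_j|w_j/\hat w_j-1|=o_P(1)$ (reusing the moderate-deviation bound on $\En[\hat Z_{ij}^2]$ from Condition A(ii) to bound the denominator), and Theorem~\ref{lem: quantile comparison} gives $\hat\lambda(1-\alpha)\le \lambda^g(1-\alpha+\epsilon_n)$ with probability $1-o(1)$, producing the stated bound. The main obstacle throughout is bookkeeping: ensuring the same $o(1)$ exceptional event covers the $\hat W\to W$ replacement, the quantile sandwich, and the Gaussian approximation simultaneously, while keeping the perturbation of $\alpha$ by $\epsilon_n$ absorbed via anti-concentration. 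Everything else is direct application of the results established earlier.
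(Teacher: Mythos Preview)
Your proposal is correct and follows essentially the same route as the paper. The only organizational difference is that the paper absorbs the $\hat W\to W$ replacement into the proof of Theorem~\ref{lem: quantile comparison} (by showing $\sqrt n\,\hat W(\hat\theta-\theta_0)=n^{-1/2}\sum_i WZ_i+\bar r_n$ is itself an approximate mean and applying Theorem~\ref{thm: clt for mam} directly to it, yielding $\sup_{A\in\mathcal A}|\Pr(\sqrt n\hat W(\hat\theta-\theta_0)\in A)-\Pr(WN(0,V)\in A)|\le C(\delta_n+\beta_n)$), so that the proof of Theorem~\ref{thm: simultaneous conf intervals} becomes a two-line corollary; you instead carry out this replacement explicitly inside the present proof, which is equally valid.
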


\begin{remark}[Simultaneous Confidence Intervals via Moderate Deviations]
The construction of the simultaneous confidence intervals above relies upon the bootstrap approximation $\hat\lambda(1 - \alpha)$ of the quantile function $\lambda(1 - \alpha)$. Alternatively, we can use the moderate deviation theorem for this purpose. Specifically, set $w_j = V_{j j}^{-1/2}$ and $\hat w_j = (\En[\hat Z_{i j}^2])^{-1/2}$ for all $j\in[p]$. Then it follows from Theorem \ref{thm: moderate deviations for mam} that $\Phi^{-1}(1 - \alpha/2 p)$ can be used as a good upper bound for $\lambda(1 - \alpha)$. Therefore, using the same arguments as those in the proof of Theorem \ref{thm: simultaneous conf intervals}, we can show that, under certain regularity conditions allowing for $p\gg n$, the confidence intervals
$$
CS =\prod_{j\in[p]} CS_j, \quad CS_j := \left[\hat\theta_j - \frac{\Phi^{-1}(1 - \alpha /2 p)}{(\En[\hat Z_{i j}^2])^{-1/2}\sqrt n}; \ \ \hat\theta_j + \frac{\Phi^{-1}(1 - \alpha/2 p)}{(\En[\hat Z_{i j}^2])^{-1/2}\sqrt n} \right].
$$
satisfies the desired coverage condition \eqref{eq: conf intervals main property}.
\qed
\end{remark}



\subsection{Multiple Testing with FWER Control}
In this subsection, we are interested in simultaneously testing hypotheses about different components of $\theta_0$. For concreteness, for each $j \in [p]$, we consider testing
$$
H_j:\theta_{0 j}\leq\bar\theta_{0j}\text{ against }H_j^\prime:\theta_{0 j}> \bar\theta_{0j}
$$
for some given value $\bar\theta_{0 j}$, where $H_j$ is the null and $H_j'$ is the alternative. The results below also apply for testing $H_j:\theta_{0 j} = \bar\theta_{0 j}$ against $H_j':\theta_{0 j}\neq\bar\theta_{0 j}$ with obvious modifications of test statistics and critical values.

Since we are interested in testing these hypotheses simultaneously for all $j \in [p]$, we seek a procedure that would reject {\em at least} one true null hypothesis with probability not larger than $\alpha + o(1)$, uniformly over a large class of data-generating processes and, in particular, uniformly over the set of true null hypotheses. In the literature, procedures with this property are said to have {\em strong control of the Family-Wise Error Rate} (FWER).

More formally, let $\mathcal P$ be a set of probability measures for the distribution of the data corresponding to different data generating processes, and let $P\in\mathcal P$ be the true probability measure.
Each null hypothesis $H_{j}$ is equivalent to $P\in\mathcal P_{j}$
for some subset $\mathcal P_{j}$ of $\mathcal P$.
Let $\mathcal{W}:=\{1,\dots,p\}$ and for $w\subset \mathcal{W}$
denote $\mathcal P^{w}:=(\cap_{j\in w}\mathcal P_{j})\cap(\cap_{j\notin w}\mathcal P_{j}^c)$ where $\mathcal P_{j}^c:=\mathcal P\backslash\mathcal P_{j}$. In words, $\mathcal P_j$ is the set of probability measures corresponding to the $j$th null hypothesis $H_j$ being true and $\mathcal P^w$ is the set of probability measures such that all null hypotheses $H_j$ with $j\in w$ are true and all null hypotheses $H_j$ with $j\notin w$ are false.

Corresponding to this notation, strong FWER control means
\begin{equation}
\sup_{w \subset \mathcal{W}}\sup_{P\in\mathcal P^{w}}
\Pr_{P}\{\text{reject at least one \ensuremath{H_{j}} for \ensuremath{j\in w}}\}
\leq \alpha+o(1) \label{eq: strong control}
\end{equation}
where $\Pr_{P}$ denotes the probability distribution generated by the probability measure of the data $P$. We seek a procedure that satisfies \eqref{eq: strong control}.

We consider three different (but related) procedures: the Bonferroni, Bonferroni-Holm, and Romano-Wolf procedures. All three procedures will be based on the $t$-statistics,
\begin{equation}\label{eq: t statistics}
t_{j}:=\frac{\sqrt{n}(\hat{\theta}_{j}-\bar\theta_{0j})}{(\En[\hat Z_{i j}^2])^{1/2}}, \quad j \in [p].
\end{equation}
For each $j \in [p]$, the {\em Bonferroni} procedure rejects $H_j$ if $t_j > \Phi^{-1}(1 - \alpha/p)$. It is clear why this procedure satisfies \eqref{eq: strong control}: For any $w\subset \mathcal W$ and any $P\in\mathcal P^w$,
$$
\max_{j\in w} t_j \leq \max_{j\in w}\frac{\sqrt{n}(\hat{\theta}_j-\theta_{0 j})}{(\En[\hat Z_{i j}^2])^{1/2}} \leq \max_{j\in[p]} \frac{\sqrt{n}(\hat{\theta}_j-\theta_{0 j})}{(\En[\hat Z_{i j}^2])^{1/2}},
$$
and under the conditions of Theorem \ref{thm: moderate deviations for mam},
$$
\Pr_P\left( \max_{j\in[p]}\frac{\sqrt{n}(\hat{\theta}_j-\theta_{0 j})}{(\En[\hat Z_{i j}^2])^{1/2}} > \Phi^{-1}(1 - \alpha/p) \right) \leq \alpha + o(1),
$$
where $o(1)$ does not depend on $(w,P)$.  This result is formally stated in Theorem \ref{thm: fwer bonferroni}.

The Bonferroni procedure is a one-step method, which determines the hypotheses $H_j$ to be rejected in just one step. This procedure can be improved by employing multi-step methods.  In particular, so-called {\em stepdown} methods also have strong FWER control but may reject some $H_j$'s that are not rejected by the Bonferroni procedure, thus yielding important power improvements. We will consider the following form of the stepdown methods:
\begin{itemize}
\item[(1)] For a subset $w\subset \mathcal{W}$, let $c_{1-\alpha,w}$ be a (potentially conservative) estimator of
the $(1-\alpha)$ quantile of $\max_{j\in w}t_{j}$. On the first step, let $w(1)=\mathcal{W}$ and reject
all hypotheses $H_{j}$ satisfying $t_{j}>c_{1-\alpha,w(1)}$.
If no null hypothesis is rejected, then stop.  If some $H_{j}$'s are
rejected, let $w(2)$ be the set of all null hypotheses that
were not rejected on the first step and move to (2).  \\
\item[(2)] On step $l\geq2$, reject all hypotheses $H_{j}$ for $j\in w(l)$ satisfying $t_{j}>c_{1-\alpha,w(l)}$.
If no null hypothesis is rejected, then stop. If some $H_{j}$'s are
rejected, let $w(l+1)$ be the subset of all null hypotheses $j$
among $w(l)$ that were not rejected and proceed to the next step.
\end{itemize}
Here, we obtain the {\em Bonferroni-Holm} procedure, suggested in \cite{H79}, by setting
$$
(BH)\qquad c_{1-\alpha,w} = \Phi^{-1}(1 - \alpha/|w|)
$$
for all  $w\subset\mathcal W=\{1,\dots,p\}$, where $|w|$ denotes the number of elements in $w$, and we obtain the {\em Romano-Wolf} procedure, suggested in \cite{RW05}, by setting
$$
(RW)\qquad c_{1 - \alpha,w} = (1 - \alpha)\text{ quantile of }\max_{j\in w}\frac{\sqrt n(\hat\theta^* - \hat\theta)}{(\En[\hat Z_{i j}^2])^{1/2}}\mid (\hat Z_i)_{i=1}^n,
$$
where $\hat\theta^*$ is a bootstrap version of $\hat\theta$. In what follows, we maintain that $\hat\theta^*$ is obtained with the Gaussian bootstrap, \eqref{eq: multiplier draw}, though the case of the empirical bootstrap can also be considered. To show that the Bonferroni-Holm and Romano-Wolf procedures have the strong FWER control \eqref{eq: strong control}, we will use the moderate deviation result in Theorem \ref{thm: moderate deviations for mam} and the high-dimensional CLT and bootstrap results in Theorems \ref{thm: clt for mam} and \ref{thm: bootstrap for mam}, respectively.

In \cite{RW05}, Romano and Wolf proved the following general result regarding the strong FWER control of the stepdown methods: If the critical values $c_{1-\alpha,w}$ satisfy
\begin{align}
&c_{1-\alpha,w^{\prime}}\leq c_{1-\alpha,w^{\prime\prime}} \quad \text{whenever $w^{\prime}\subset w^{\prime\prime}$ and}\label{eq: critical value property 1}\\
&\sup_{w\subset \mathcal{W}}\sup_{P\in\mathcal P^{w}}\Pr_P \left ( \max_{j\in w}t_{j}>c_{1-\alpha,w} \right ) \leq \alpha+o(1),\label{eq: conditions MHT}
\end{align}
then the stepdown method described above satisfies (\ref{eq: strong control}). Indeed, let $w$
be the set of true null hypotheses and suppose that the method rejects
at least one of these hypotheses. Let $l$ be the step when the method
rejects a true null hypothesis for the first time, and let $H_{j_0}$ be
this hypothesis. Clearly, we have $w(l)\supset w$. It then follows from \eqref{eq: critical value property 1} that
\begin{equation*}
\max_{j\in w}t_{j}\geq t_{j_0}>c_{1-\alpha,w(l)}\geq c_{1-\alpha,w}.
\end{equation*}
Combining these inequalities with (\ref{eq: conditions MHT}) yields (\ref{eq: strong control}).


We now formally establish strong FWER control of the three procedures described above.

\begin{theorem}[Strong FWER Control by Bonferroni and Bonferroni-Holm Procedures]\label{thm: fwer bonferroni}
Let $\mathcal P$ be a class of probability measures for the distribution of the data such that Conditions M and A are satisfied with the same $B_n$, $\delta_n$, and $\beta_n$ for all $P\in\mathcal P$ and assume that $(2\sqrt 2)^3 B_n \log^{3/2}(p n)/\sqrt n \leq \delta_n$ for all $n\geq 1$. Then both Bonferroni and Bonferroni-Holm procedures have the strong FWER control property \eqref{eq: strong control}.
\end{theorem}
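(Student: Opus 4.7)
The plan is to handle the two procedures separately, leveraging the Romano--Wolf stepdown framework already laid out immediately above the theorem together with the moderate deviation result in Theorem~\ref{thm: moderate deviations for mam}. The guiding observation is that for any $P\in\mathcal P^w$ and any $j\in w$ we have $\theta_{0j}\leq\bar\theta_{0j}$, so
$$
t_j = \frac{\sqrt n(\hat\theta_j-\bar\theta_{0j})}{(\En[\hat Z_{ij}^2])^{1/2}} \leq \frac{\sqrt n(\hat\theta_j-\theta_{0j})}{(\En[\hat Z_{ij}^2])^{1/2}},
$$
which pushes the problem onto studentized centred sums that Theorem~\ref{thm: moderate deviations for mam} is designed to control.

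For the \emph{Bonferroni} procedure, I would fix $w\subset\mathcal W$ and $P\in\mathcal P^w$, and bound
$$
\Pr_P\Big(\max_{j\in w} t_j > \Phi^{-1}(1-\alpha/p)\Big) \leq \Pr_P\left(\max_{j\in[p]}\frac{\sqrt n(\hat\theta_j-\theta_{0j})}{(\En[\hat Z_{ij}^2])^{1/2}} > \Phi^{-1}(1-\alpha/p)\right).
$$
Then I would invoke \eqref{eq: thm 4 part 2} in Theorem~\ref{thm: moderate deviations for mam}, whose hypotheses are exactly those assumed in the present theorem, to conclude that the right-hand side is at most $\alpha+C(\alpha\delta_n+\beta_n+(pn)^{-1})$. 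Since Conditions M and A are assumed to hold with the \emph{same} $B_n,\delta_n,\beta_n$ for every $P\in\mathcal P$, the remainder is $o(1)$ uniformly in $(w,P)$, yielding \eqref{eq: strong control}.

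For the \emph{Bonferroni--Holm} procedure I would verify the two sufficient conditions \eqref{eq: critical value property 1} and \eqref{eq: conditions MHT} for the stepdown framework. Monotonicity \eqref{eq: critical value property 1} is immediate because $|w'|\leq|w''|$ whenever $w'\subset w''$, so $\Phi^{-1}(1-\alpha/|w'|)\leq\Phi^{-1}(1-\alpha/|w''|)$ by monotonicity of $\Phi^{-1}$. For the uniform coverage bound, the key step is to apply Theorem~\ref{thm: moderate deviations for mam} to the \emph{sub-vector} $(Z_{ij})_{j\in w}$ of dimension $|w|\leq p$; Conditions M and A pass to sub-vectors with the same $B_n$ and with (weakly) smaller $\delta_n,\beta_n$, and the growth condition $(2\sqrt2)^3 B_n\log^{3/2}(|w|n)/\sqrt n\leq\delta_n$ only gets easier. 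This gives
$$
\Pr_P\Big(\max_{j\in w} t_j > \Phi^{-1}(1-\alpha/|w|)\Big) \leq \alpha+C\big(\alpha\delta_n+\beta_n+(|w|n)^{-1}\big),
$$
which is $\alpha+o(1)$ uniformly in $(w,P)$ by the same uniformity reasoning as above. Taking the supremum over $w$ and $P$ yields \eqref{eq: conditions MHT}, and the Romano--Wolf principle then gives \eqref{eq: strong control}.

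The only genuinely delicate point is the uniformity required by \eqref{eq: strong control}: the bound must be controlled not merely pointwise in $P$ but simultaneously over all $w\subset\mathcal W$ and all $P\in\mathcal P^w$. This is handled by the hypothesis that Conditions M and A hold with common constants across $\mathcal P$, which makes the remainder $C(\alpha\delta_n+\beta_n+(pn)^{-1})$ in Theorem~\ref{thm: moderate deviations for mam} a deterministic $o(1)$ sequence independent of $(w,P)$. Once that observation is made, the rest of the argument is essentially bookkeeping around the Romano--Wolf framework.
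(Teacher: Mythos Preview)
Your proposal is correct and follows essentially the same approach as the paper: both verify the Romano--Wolf stepdown conditions \eqref{eq: critical value property 1} and \eqref{eq: conditions MHT} by applying Theorem~\ref{thm: moderate deviations for mam}, \eqref{eq: thm 4 part 2}, to the sub-vector indexed by $w$, and both rely on the uniformity of $B_n,\delta_n,\beta_n$ over $\mathcal P$ to make the remainder $o(1)$ independent of $(w,P)$. Two small remarks: the paper shortcuts your separate Bonferroni argument by noting at the outset that Bonferroni--Holm dominates Bonferroni, so controlling FWER for the former automatically handles the latter; and you should explicitly verify the side condition $1\le \Phi^{-1}(1-\alpha/|w|)\le \bar C\log^{1/2}(|w|\,n)$ required to invoke \eqref{eq: thm 4 part 2}, which the paper does via the bound $\Phi^{-1}(1-\alpha/\bar p)\le\sqrt{2\log(\bar p n)}$ valid for $n\ge 1/\alpha$, taking $\bar C=\sqrt 2$.
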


\begin{theorem}[Strong FWER Control by Romano-Wolf Procedure]\label{thm: MHT}
Let $\mathcal P$ be a class of probability measures for the distribution of the data such that Conditions M, E, and A are satisfied with the same $B_n$, $\delta_n$, and $\beta_n$ for all $P\in\mathcal P$. Then the Romano-Wolf procedure with the Gaussian bootstrap critical values $c_{1-\alpha,w}$ described above has the strong FWER control property (\ref{eq: strong control}).
\end{theorem}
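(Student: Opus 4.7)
The plan is to verify the two Romano--Wolf sufficient conditions \eqref{eq: critical value property 1} and \eqref{eq: conditions MHT}, which together imply the strong FWER control \eqref{eq: strong control} as argued in the paragraph preceding the theorem. Writing $\hat w_j := (\En[\hat Z_{ij}^2])^{-1/2}$ and $w_j := V_{jj}^{-1/2}$, the critical value $c_{1-\alpha,w}$ is the conditional $(1-\alpha)$-quantile of $\max_{j\in w}\sqrt n \hat w_j(\hat\theta_j^* - \hat\theta_j)$ given the data.

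Monotonicity \eqref{eq: critical value property 1} is immediate because $w' \subset w''$ implies $\max_{j\in w'}\sqrt n \hat w_j(\hat\theta_j^* - \hat\theta_j) \leq \max_{j\in w''}\sqrt n \hat w_j(\hat\theta_j^* - \hat\theta_j)$ pointwise in the bootstrap randomness, so the conditional quantiles obey $c_{1-\alpha,w'} \leq c_{1-\alpha,w''}$. For \eqref{eq: conditions MHT}, fix $w \subset \mathcal W$ and $P \in \mathcal P^w$. Since $\theta_{0j} \leq \bar\theta_{0j}$ for all $j \in w$, one has $t_j \leq \sqrt n\hat w_j(\hat\theta_j - \theta_{0j})$, so it suffices to show
\[
\Pr_P\Big(\max_{j\in w}\sqrt n\hat w_j(\hat\theta_j - \theta_{0j}) > c_{1-\alpha,w}\Big) \leq \alpha + o(1)
\]
uniformly in $w$ and $P \in \mathcal P^w$. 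As noted in the text following Condition W, Conditions M, E, and A imply Condition W with this choice of $(w_j,\hat w_j)$, possibly with enlarged $\delta_n$ and $\beta_n$.

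The strategy for this uniform bound is to adapt the sandwich argument underlying Theorem \ref{lem: quantile comparison} to the restricted coordinate set $w$. Let $\lambda^g_w(1-\alpha)$ denote the $(1-\alpha)$-quantile of $\max_{j\in w} w_j N(0,V)_j$. The event $\{\max_{j\in w}\sqrt n\hat w_j(\hat\theta_j - \theta_{0j}) \leq c\}$ is a rectangle in $\mathbb R^p$ (unconstrained on coordinates $j \notin w$), so Theorem \ref{thm: clt for mam} combined with a replacement of $\hat w_j$ by $w_j$ (handled via Condition W and Gaussian anti-concentration exactly as in the proof of Theorem \ref{lem: quantile comparison}) yields
\[
\sup_{c\in\mathbb R}\Big|\Pr_P\Big(\max_{j\in w}\sqrt n\hat w_j(\hat\theta_j - \theta_{0j}) \leq c\Big) - \Pr\Big(\max_{j\in w} w_j N(0,V)_j \leq c\Big)\Big| = O(\delta_n + \beta_n),
\]
and Theorem \ref{thm: bootstrap for mam} yields the same Gaussian approximation for the conditional distribution of $\max_{j\in w}\sqrt n\hat w_j(\hat\theta_j^* - \hat\theta_j)$ on a data event of probability at least $1-\beta_n-n^{-1}$ in the case E.1 and $1-\beta_n-(\log n)^{-2}$ in the case E.2. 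Sandwiching both distributions between the Gaussian reference quantiles $\lambda^g_w(1-\alpha\pm\epsilon_n)$ with $\epsilon_n = O(\delta_n+\beta_n)$ then gives
\[
\Pr_P\Big(\max_{j\in w}\sqrt n\hat w_j(\hat\theta_j - \theta_{0j}) > c_{1-\alpha,w}\Big) \leq \alpha + O(\delta_n + \beta_n + n^{-1}),
\]
uniformly in $(w,P)$, verifying \eqref{eq: conditions MHT}.

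The principal obstacle is ensuring uniformity of the approximation over all $2^p$ subsets $w \subset \mathcal W$ and over the entire class $\mathcal P$. This is resolved essentially for free: Theorems \ref{thm: clt for mam} and \ref{thm: bootstrap for mam} bound the Gaussian and bootstrap approximation errors uniformly over the full rectangle class $\mathcal A$ in $\mathbb R^p$, and the constants in Conditions M, E, and A are, by assumption, shared across $P \in \mathcal P$. Consequently, all bounds above hold with the same $o(1)$ rate for every $(w,P)$, and the Romano--Wolf argument recalled in the text closes the proof.
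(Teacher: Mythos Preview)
Your proposal is correct and follows essentially the same route as the paper: verify the trivial monotonicity \eqref{eq: critical value property 1}, reduce $t_j$ to $\sqrt n\hat w_j(\hat\theta_j-\theta_{0j})$ on the true-null set $w$, invoke the fact that Conditions M, E, A deliver Condition W for the choice $w_j=V_{jj}^{-1/2}$, $\hat w_j=(\En[\hat Z_{ij}^2])^{-1/2}$, and then rerun the quantile-sandwich argument of Theorem \ref{lem: quantile comparison} restricted to coordinates in $w$ (which is still a rectangle event) to get the uniform bound \eqref{eq: conditions MHT}. The paper makes the verification of Condition W explicit via a separate lemma and stresses that the $o(1)$ term depends only on $(\delta_n,\beta_n)$ and the choice between E.1 and E.2, but your sketch captures exactly these ingredients.
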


\begin{remark}[Comparison of Procedures with Strong FWER Control]
Clearly, the Bonferroni procedure is the first step of the Bonferroni-Holm procedure.  Thus, the set of hypotheses rejected by the Bonferroni-Holm procedure always contains all the hypotheses rejected by the Bonferroni procedure, which implies that the Bonferroni-Holm procedure is a more powerful version of the Bonferroni procedure. Since both procedures have strong FWER control, it follows that the former is preferable among two procedures. Regarding the comparison between the Bonferroni-Holm and Romano-Wolf procedures, both procedures have their own advantages. In particular, the Bonferroni-Holm procedure requires weaker conditions, as it does not require Condition E; but the Romano-Wolf procedure is typically more powerful because it uses non-conservative bootstrap critical values $c_{1-\alpha,w}$.\qed
\end{remark}

\subsection{Multiple Testing with FDR Control}
As in the previous subsection, here we are again interested in testing
\begin{equation}\label{eq: hypotheses fdr}
H_j:\theta_j\leq\bar\theta_{0j}\text{ against }H_j^\prime:\theta_j> \bar\theta_{0j}
\end{equation}
simultaneously for all $j = 1,\dots,p$. In this subsection, however, we seek a procedure with a different type of control. In particular, we look for procedures controlling the {\em the False Discovery Rate} (FDR).

When $p$ is large or very large, the requirement of controlling the FWER, which we had in the previous subsection, is often considered too stringent. This is because large values of $p$ lead to large critical values $c_{1-\alpha,\mathcal W}$ used by the Bonferroni-Holm and Romano-Wolf procedures in the first step; and, when $p$ is very large, it often turns out that these procedures simply fail to reject any null hypothesis. In \cite{BH95}, Benjamini and Hochberg offer an alternative, weaker, requirement: Instead of controlling the FWER, they suggest to control the False Discovery Rate (FDR), which is defined as the expected proportion of falsely rejected null hypotheses among all rejected null hypotheses. 

To define the FDR formally, let $\mathcal H_0\subset\{1,\dots,p\}$ be the set of indices $j$ corresponding to the true null hypotheses $H_j$, and suppose that we have a procedure for testing \eqref{eq: hypotheses fdr} simultaneously for all $j \in[p]$ that rejects null hypotheses $H_j$ with $j\in\mathcal H_R\subset\{1,\dots,p\}$. Thus, the total number of rejected null hypotheses is $|\mathcal H_R|$, and the total number of falsely rejected null hypotheses is $|\mathcal H_0\cap\mathcal H_R|$. We then can define the {\em False Discovery Proportion} (FDP) by
$$
FDP:=\frac{|\mathcal H_0\cap \mathcal H_R|}{|\mathcal H_R| \vee 1}.
$$
Here, we put $|\mathcal H_R\vee 1$ instead of $|\mathcal H_R|$ in the denominator to avoid division by zero since it may happen that all null hypotheses are accepted, so that $|\mathcal H_R| = 0$, and we want to define $FDP = 0$ when $|\mathcal H_R| = 0$. The FDR is then defined as the expected value of the FDP:
$$
FDR = \Ep[FDP].
$$
We would like to have a procedure with the FDR being as small as possible and with large power in terms of rejecting false null hypotheses; so, for $\alpha\in(0,1)$, we seek a procedure controlling the FDR in the sense that
$$
FDR \leq \alpha + o(1).
$$
Like in the previous subsection, we can also say that we have a strong control of FDR over a set $\mathcal P$ of probability measures for the distribution of the data corresponding to different data generating processes if
\begin{equation}\label{eq: fdr strong control}
\sup_{P\in\mathcal P}FDR_P \leq \alpha + o(1),
\end{equation}
where we use index $P$ to emphasize that the FDR depends on the distribution of the data represented by the probability measure $P$.

Next, we describe the {\em Benjamini-Hochberg} procedure, suggested in \cite{BH95}, for testing \eqref{eq: hypotheses fdr} simultaneously for all $j \in [p]$ with the FDR control. Recall the $t$-statistics $t_j$ defined in \eqref{eq: t statistics}, and let $t_{(1)} \geq \dots \geq t_{(p)}$ be the ordered sequence of $t_j$'s. Also, define $t_{(0)} := +\infty$. Then let
$$
\hat k := \max\Big\{ j=0,1,\dots,p\colon 1 - \Phi(t_{(j)}) \leq \alpha j / p \Big\}.
$$
The Benjamini-Hochberg procedure rejects all $H_{j}$ with $t_{j} \geq t_{(\hat k)}$. For simplicity, to prove that this procedure controls the FDR, we will assume in this subsection that the random vectors $Z_i$, $i\in[n]$, are i.i.d., all having the same distribution as that of some random vector $Z^0 = (Z_1^0,\dots,Z_p^0)$. In addition, we will impose the following conditions:

\medskip

\noindent \textbf {Condition C}. \textit{ For some $0<r<1$ and $0 < \rho < (1-r)/(1 + r)$, (i) the correlation between any two components of $Z^0$ is bounded in absolute value by $r$: $\max_{1\leq j < k \leq p}|\Ep[Z^0_j Z^0_k]| \leq r$, and (ii) for all $j \in [p]$, we have $|\Ep[Z^0_j Z^0_k]| \geq (\log p)^{-3}$ for at most $p^\rho$ different values of $k \in [p]$.}

\noindent \textbf {Condition F}. \textit{ (i) There exists $\mathcal H = \mathcal H_n \subset \{1,\dots,p\}$ such that $|\mathcal H| \geq \log\log p$ and $\sqrt n(\theta_{0 j} - \bar\theta_{0 j}) / V_{j j}^{1/2} \geq 2\sqrt{\log p} $ for all $j\in \mathcal H$; (ii) for some $\gamma\in(0,1)$, the number of false null hypotheses $H_j$, say $p_1$, satisfied $p_1\leq \gamma p$. }

\medskip

Conditions C and F are adapted from \cite{LS14}. Condition C restricts dependence between $Z_{1}^{0},\dots,Z_{p}^{0}$ and essentially corresponds to Condition (C1) in \cite{LS14}. Condition C implies that each $t$-statistic $t_j$ is highly correlated with at most $p^{\rho}$ other $t$-statistics and is only weakly correlated with all other $t$-statistics. Condition F is a technical condition that restricts the number of false null hypotheses to be not too small and not too large.

\begin{theorem}[Control of False Discovery Rate]\label{thm: fdp}
Let $\mathcal P$ be a class of probability measures for the distribution of the data such that Conditions M, A, C, and F are satisfied with the same $B_n$, $\delta_n$, $\beta_n$, $r$, $\rho$, and $\gamma$ for all $P\in\mathcal P$. Also, assume that $B_n = B$ for some constant $B$ (independent of $n$) and all $n\geq 1$ and that uniformly over $P\in\mathcal P$, $p \to\infty $ and $\log p = o(n^{\zeta})$ for some $\zeta < 3/23$ as $n\to\infty$. Then the Benjamini-Hochberg procedure has the FDR control property \eqref{eq: fdr strong control}.
\end{theorem}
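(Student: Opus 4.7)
I would follow the template of Liu and Shao, adapted to the MAM framework via the moderate deviation result of Theorem \ref{thm: moderate deviations for mam}. Let $\hat t := t_{(\hat k)}$ denote the threshold used by the Benjamini--Hochberg procedure, so $\mathcal H_R = \{j \in [p] : t_j \geq \hat t\}$. Writing $V(t) := \sum_{j \in \mathcal H_0} \mathbf 1\{t_j \geq t\}$ and $R(t) := \sum_{j \in [p]} \mathbf 1\{t_j \geq t\}$, the definition of $\hat k$ gives $1 - \Phi(\hat t) \leq \alpha R(\hat t)/p$ on $\{R(\hat t) \geq 1\}$, so the algebraic identity
\begin{equation*}
FDP = \frac{V(\hat t)}{R(\hat t) \vee 1} \leq \alpha \cdot \frac{V(\hat t)}{|\mathcal H_0|(1-\Phi(\hat t))} \cdot \frac{|\mathcal H_0|}{p}
\end{equation*}
reduces the theorem to showing $V(t)/(|\mathcal H_0|(1 - \Phi(t))) = 1 + o_P(1)$ uniformly over a range of $t$ containing $\hat t$ w.p.\ $\to 1$.

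\textbf{Step 1 (localizing $\hat t$).} Condition F(i) supplies a set $\mathcal H$ of at least $\log\log p$ strong signals with $\sqrt n(\theta_{0j} - \bar\theta_{0j})/V_{jj}^{1/2} \geq 2\sqrt{\log p}$. Combining this with Theorem \ref{thm: moderate deviations for mam} (applied after recentering) shows that with probability $1 - o(1)$, every strong signal produces $t_j \geq \sqrt{2\log p}$, giving $R(\sqrt{2\log p}) \geq |\mathcal H|$. The BH condition $1 - \Phi(\hat t) \leq \alpha R(\hat t)/p$ then forces $\hat t \leq \sqrt{2\log p}$ asymptotically, placing $\hat t$ squarely in the moderate deviation range $[1, \bar C \sqrt{\log(pn)}]$ allowed by Theorem \ref{thm: moderate deviations for mam}.

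\textbf{Step 2 (mean approximation).} For $j \in \mathcal H_0$, the worst-case configuration is $\theta_{0j} = \bar\theta_{0j}$; in that case $t_j$ coincides with the self-normalized statistic appearing in Theorem \ref{thm: moderate deviations for mam}, so $\Pr(t_j \geq t) = (1-\Phi(t))(1 + o(1)) + O(1/(pn))$ uniformly in $1 \leq t \leq \sqrt{2\log p}$. Summing yields $\Ep[V(t)] = |\mathcal H_0|(1-\Phi(t))(1+o(1))$ in this range, with negligible additive error since $p(1 - \Phi(\sqrt{2\log p})) \to \infty$ slowly enough to dominate $O(1/n)$ under $\log p = o(n^{3/23})$.

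\textbf{Step 3 (second-moment concentration via weak dependence).} Expand
\begin{equation*}
\Var(V(t)) = \sum_{j \in \mathcal H_0}\Var(\mathbf 1\{t_j \geq t\}) + \sum_{\substack{j \neq k \\ j,k \in \mathcal H_0}} \Cov(\mathbf 1\{t_j \geq t\}, \mathbf 1\{t_k \geq t\}).
\end{equation*}
The diagonal sum is $O(|\mathcal H_0|(1-\Phi(t)))$. For off-diagonal pairs, split via Condition C into the ``correlated'' pairs (at most $|\mathcal H_0| p^\rho$ of them by C(ii)) and ``weakly correlated'' pairs. For correlated pairs bound the covariance by $\min\{\Pr(t_j \geq t), \Pr(t_k \geq t)\} \lesssim (1-\Phi(t))$. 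For weakly correlated pairs, a bivariate moderate deviation analogue gives $\Pr(t_j \geq t, t_k \geq t) = (1-\Phi(t))^2(1+o(1)) + \text{small}$, using $|\Ep[Z_j^0 Z_k^0]| \leq r < 1$ to ensure the bivariate Gaussian tail remains well-behaved. Summing these bounds and combining with Condition C(ii), one gets $\Var(V(t)) = o((\Ep V(t))^2)$ whenever $\Ep[V(t)] \to \infty$. A Chebyshev argument together with a union bound over a fine grid in $t$ (of cardinality polynomial in $\log p$) then upgrades pointwise consistency to uniform consistency of $V(t)/(|\mathcal H_0|(1-\Phi(t)))$ over $1 \leq t \leq \sqrt{2\log p}$, at least on the event $|\mathcal H_0|(1-\Phi(t)) \to \infty$. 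On the complementary event, $\hat t$ is so large that $R(\hat t) \leq R(\sqrt{2\log p})$ is controlled directly, or the FDP is trivially zero. Plugging back into the opening inequality yields $FDR \leq \alpha + o(1)$ uniformly over $\mathcal P$.

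\textbf{Main obstacle.} The delicate point is Step 3: establishing a bivariate moderate deviation bound for the self-normalized sums $(t_j, t_k)$ in the MAM framework (with linearization error and estimated influence functions) that is tight enough to pair with the rarefaction condition C(ii). The condition $\rho < (1-r)/(1+r)$ is exactly what is needed so that the $p^\rho$ strongly correlated neighbors are overwhelmed by the variance reduction from weakly correlated pairs; checking that the MAM error terms $\delta_n, \beta_n$ do not destroy this balance under the exponent restriction $\log p = o(n^{3/23})$ is the quantitatively hardest part of the argument.
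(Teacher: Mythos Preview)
Your overall architecture is correct and matches the paper's: the same algebraic reduction of $FDP$ to $V(\hat t)/(p_0(1-\Phi(\hat t)))$, the same localization of $\hat t$ below $\sqrt{2\log p}$ via the strong signals in Condition~F, and the same target of a uniform law of large numbers for $V(t)/(p_0 G(t))$ over the relevant range of~$t$.

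The one substantive difference is how the uniform law is obtained, and it bears directly on what you flag as the ``main obstacle.'' You propose to establish the bivariate moderate-deviation bound for the pair $(t_j,t_k)$ directly in the MAM framework, carrying the linearization errors~$r_n$ and the estimated influence functions $\hat Z_{ij}$ through the second-moment calculation. The paper sidesteps this entirely. It introduces the \emph{clean} self-normalized statistics
\[
\bar T_j = \frac{n^{-1/2}\sum_{i=1}^n Z_{ij}}{\bigl(n^{-1}\sum_{i=1}^n Z_{ij}^2 - (n^{-1}\sum_{i=1}^n Z_{ij})^2\bigr)^{1/2}},
\]
which are exactly the statistics Liu and Shao analyze, and invokes their Equation~(13) directly to get $\sup_t \bigl|\sum_{j\in\mathcal H_0}\mathbf 1\{\bar T_j\geq t\}/(p_0 G(t))-1\bigr| = o_p(1)$. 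It then shows, using only the \emph{univariate} moderate-deviation result (Theorem~\ref{thm: moderate deviations for mam}) together with Condition~A, that $\max_j|\bar t_j - \bar T_j| = o_p(1/\sqrt{\log p})$, and combines this with the elementary ratio lemma $\sup_{0\le t\le\sqrt{2\log p}}|G(t+\gamma_n/\sqrt{\log p})/G(t)-1|=o(1)$ to transfer the uniform law from $\bar T_j$ to $\bar t_j$ and hence to~$t_j$.

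So the paper's route buys you a genuine simplification: the bivariate covariance analysis under Condition~C never has to be redone with MAM errors present; it is inherited wholesale from Liu--Shao for the exact-mean statistics, and only a uniform $o_p(1/\sqrt{\log p})$ perturbation bound (which is one-dimensional and already available) is needed to bridge the gap. Your approach would also work, but the obstacle you identify is one the paper simply does not face.
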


Theorem \ref{thm: fdp} extends Theorem 4.1 in \cite{LS14} to allow for many {\em approximate} means. The proof of this theorem relies critically on the moderate deviation result in Theorem \ref{thm: moderate deviations for mam}. 
It is worth mentioning that \cite{LS14} finds a phase transition phenomenon that the Benjamini-Hochberg method can not control the FDR when $\log p \ge c_0 n^{1/3}$ for some constant $c_{0} > 0$; see Corollary 2.1 in \cite{LS14}. 



\subsection{Inference on Functionals of Many Approximate Means}
We next consider the problem of estimating linear functionals of the vector $\theta_0$, say $a'\theta_0$, where $a\in\mathbb R^p$ is a vector of loadings. These functionals are of interest in many settings. For instance, as we discussed in Example \ref{ex: linear regression introduction}
in the Introduction, the conditional average treatment effects may take the form of such functionals. We consider two cases separately: $\|a\|_1 = 1$ and $\|a\|_2 = 1$. As we will see, the analysis of the first case is straightforward and the results discussed so far immediately apply in this case. On the other hand, we will see that the second case is substantially more complicated, and treating it will require introducing some form of regularization. It is this second case which will help us to develop intuition for the results to be discussed in the second part of the chapter.

\subsubsection{Inference on Functionals of Many Means without Regularization}
Inference using the MAM framework extends to the case where we are interested in linear functionals of $\theta_0$ of the form
$$
a'\theta_0, \text{ with }  \|a\|_1 = 1.
$$
Indeed, by H\"{o}lder's inequality,
$$
\sup_{\|a\|_1 = 1} |a'(\hat \theta - \theta_0)| =  \| \hat \theta - \theta_0\|_\infty,
$$
which can be small even if $p$ is much larger than $n$. For instance, in the ``ideal noise
model'' where
\begin{equation}\label{eq:Normal}
\hat \theta - \theta_0 \sim N(0, I_p/n),
\end{equation}
with $I_p$ denoting the $p\times p$ identity matrix,
we have that
$$
 \| \hat \theta - \theta_0\|_\infty \lesssim_P \sqrt{\log p/n},
$$
so the error $\|\hat\theta - \theta_0\|_{\infty}$ is small if $\log p$ is much smaller than $n$.

Moreover, we have for a large collection of functionals $a_k'\theta_0$, indexed by $a_k$ with $\|a_k\|_1 =1$ and $k \in [p]$ that under \eqref{eq: vanishing approximation error},
$$
\sqrt n a_k'(\hat \theta - \theta_0) = \frac{1}{\sqrt{n}} \sum_{i=1}^n a_k'Z_{i} + o_P(1/\sqrt{\log(pn)}) \quad \text{ uniformly in $k \in [p]$. }
$$
It follows that $(a_k'\hat \theta_0)_{k=1}^p$ are approximate means themselves, so we can
use the ``many approximate means approach"  for the inference on these functionals as well.



\subsubsection{Inference on Functionals of Many Means with Regularization}\label{Sec: RegularizedMM}

Here we again consider inference on linear functionals $a'\theta_0$.  However, in contrast to the discussion above, we assume that $\|a \|_2 = 1$ instead of $\|a\|_1 = 1$. Using only the condition that $\|a\|_2 = 1$,
the previous ``reduction back to many means" is not possible. Indeed,
$$
\sup_{\|a\|_2 = 1}| a'(\hat \theta-\theta_0)| = \| \hat \theta - \theta_0\|_2,
$$
so estimating all such functionals uniformly well will require having $\| \hat \theta - \theta_0\|_2$
small, which is typically not possible. For example, even in the ideal noise model \eqref{eq:Normal}, we have
$$
\| \hat \theta - \theta_0\|_2^2 = \chi^2(p)/n = (p/n) (1 + O_P(1/\sqrt{p})),
$$
which diverges to infinity if $p$ increases more rapidly than $n$.

We therefore need to replace the estimator $\hat\theta$ by another estimator, say $\tilde\theta$, which performs favorably even when $p$ is large relative to $n$ in the sense that
$$
 \| \tilde \theta - \theta_0 \|^2_{2}  \text{ is small}.
$$
In order to achieve this goal, we will need two key ingredients.  First, we will need to assume that $\theta_0$ has some structure which can help to reduce the complexity of the estimation. Second, we will need to construct an estimator $\tilde\theta$ that is employing this structure to reduce the estimation error. In what follows, we shall rely on approximate sparsity as the structure on $\theta_0$ and make use of $\ell_1$-norm regularization to obtain the estimator $\tilde \theta$.

A simple structure that provides useful intuition is the {\em exact sparsity structure with separation from zero}. Under this structure, $\theta_0$ has $s$ large components of size bigger than the maximal estimation error $\rho = \| \hat \theta - \theta_0\|_\infty$ and all remaining components {\em exactly} equal to zero. If $\theta_0$ has this structure and the maximal estimation error $r$ is known, we can use a simple estimator $\tilde\theta = (\tilde\theta_1,\dots,\tilde\theta_p)'$ with
$$
\tilde\theta_{j} = \begin{cases}
\hat\theta_j, & \text{ if }|\hat\theta_j| > \rho,\\
0, & \text{ if }|\hat\theta_j|\leq \rho,
\end{cases}
$$
for all $j\in[p]$. This estimator might be referred to as a ``selection-based estimator.''  In practice, $\rho$ will typically be unknown, but its distribution can be estimated by the bootstrap, as discussed in Section \ref{MAM: CLT}.  We can then replace $\rho$ in the definition of $\tilde \theta$ by an estimate of the $(1 - \alpha)$-quantile of its distribution for some $\alpha = \alpha_n\to 0$. Figure \ref{FigureSelection} illustrates this selection-based estimator.

\begin{figure}[htbp]
\begin{center}
\includegraphics{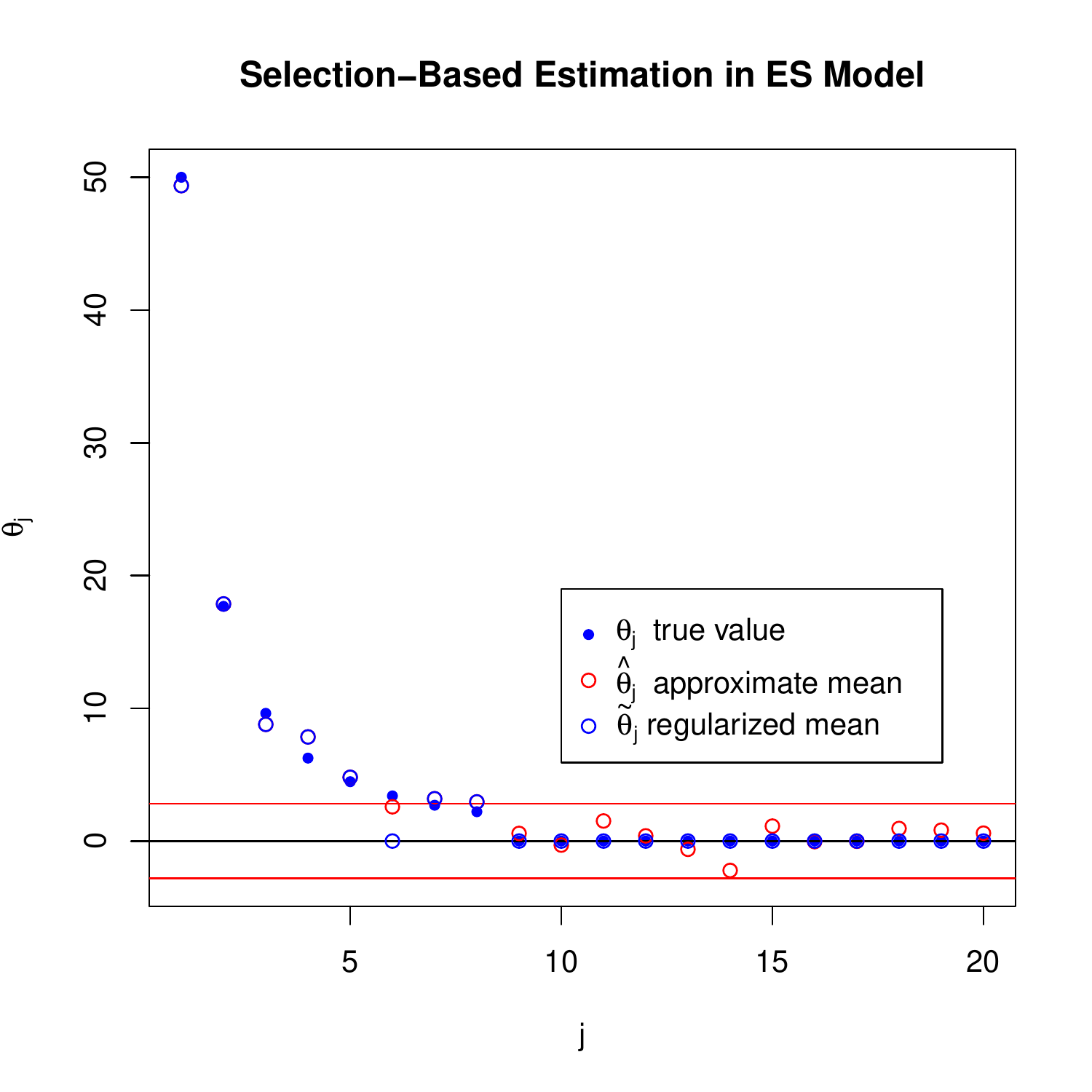}
\caption{Illustration of the Selection-Based Estimator in Exactly Sparse Models.
The true values $\theta_j$ are shown as blue points.  The approximate mean estimators
$\hat \theta_j$ are shown as red circles.  The selection-based mean estimators $\tilde \theta_j$ are shown
in blue circles.  Here $\theta_j = 50/j^{3/2}$ for $j=1,...,8$ and $\theta_j = 0$ for $j>8$, $\hat \theta_j = \theta_j + N(0,1)$, and $\tilde \theta_j = 1(|\hat \theta_j | > \lambda ) \hat \theta_j$ with $\lambda = \Phi^{-1}(1- \alpha/2p)$ with $\alpha =.1$. Note that the $\theta_j$'s do not magically stay sufficiently far from zero for model selection to work perfectly, invalidating the (rather incredible) perfect model selection story very frequently used in the classical and modern literature.  Note that the resulting selection-based estimator still performs well from the estimation point of view. Selection-based estimators of this kind have been known as post-model selection estimators.  These selection-based estimators indeed perform well in exactly and approximately sparse models as shown by Belloni and Chernozhukov \cite{BC13} who study the properties of the post-Lasso estimators in more general models.}
\label{FigureSelection}
\end{center}
\end{figure}

Assuming exact sparsity with separation structure provides a substantial dimension reduction that greatly eases the task of learning $\theta_0$ as long as $s$ is small.  For example, in the ideal noise model, letting $T\subset\{1,\dots,p\}$ denote the set of indices of non-zero components of the vector $\theta_0$, the use of $\tilde\theta$ defined in the previous paragraph under this structure leads to the following bound:
\begin{equation}\label{oracle}
\| \tilde \theta - \theta_0 \|_2 \leq  \sqrt{\sum_{j \in T} | \hat \theta_j - \theta_{0j} |^2}  \leq \sqrt{ \| N(0, I_s/n) \|^2}  = \sqrt{\chi^2(s)/n}  \lesssim_P  \sqrt{s /n},
\end{equation}
which can be small provided that $s$ is much smaller than $n$.  However, the exact sparsity with separation structure seems  unrealistic as a model for real econometric applications.  A structure in which all parameters are either exactly zero or magically align themselves to be larger in magnitude than $\rho$ seems extremely unintuitive and unlikely to correspond to most sensible economic models.  We will not work with this structure further, though we will consider an exact sparsity structure with {\em no separation} since it helps to convey some of the main ideas of the theory of high-dimensional estimation.

More generally, we will consider an {\em approximately} sparse structure where the coefficients, sorted in non-increasing order in terms of absolute size, smoothly decline in magnitude towards zero. For this structure, we expect to achieve a rate of convergence similar to that in \eqref{oracle}. Intuitively, under an approximately sparse structure, the unregularized estimator $\hat \theta$ still informs us about the components $\theta_{0 j}$ that can not be distinguished from zero.  We can thus use this information to set the regularized estimator $\tilde \theta_j$ for such components to zero. In such cases, we will be making an error by rounding those coefficients to zero, but the error will be negligible as long as the coefficients decrease to zero sufficiently quickly. We develop these results formally below.

For a given estimator $\hat\theta$, we consider the following $\ell_1$-regularization procedure:
\begin{equation}\label{eq: l1 regularization procedure}
\tilde \theta  =\arg\min_{\theta \in \Bbb{R}^p} \| \theta\|_1:  \quad  \| \hat \theta -
\theta \|_\infty \leq \lambda,
\end{equation}
where we minimize the $\ell_1$ norm of the coefficients subject to them deviating
from the initial estimates, $\hat\theta$, in the $\ell_\infty$ norm by at most $\lambda$.
In \eqref{eq: l1 regularization procedure}, $\lambda$ is the regularization parameter which controls the shrinkage in the estimator $\tilde\theta$.  At one extreme, setting $\lambda = 0$ results in no regularization and yields $\tilde \theta = \hat \theta$.  At the other extreme, setting $\lambda =\infty$ produces maximal regularization and results in $\tilde \theta = 0$.  In general, we aim to set $\lambda$ to be of the order of the estimation error $\| \hat \theta - \theta_0 \|_\infty$ as in \eqref{eq: lambda quantile} below. Figure \ref{FigureLasso} illustrates the regularized estimator $\tilde\theta$.

To establish properties of the estimator $\tilde\theta$ in \eqref{eq: l1 regularization procedure}, observe that the optimization problem in \eqref{eq: l1 regularization procedure} separates into $p$ independent problems: For each $j \in [p]$,
\begin{equation}\label{eq: one-dimensional reduction}
\tilde \theta_j = \min_{\theta_j \in \Bbb{R} } | \theta_j|:  \quad  | \hat \theta_j - \theta_j| \leq \lambda.
\end{equation}
It follows that the explicit solution $\tilde\theta$ of the optimization problem in \eqref{eq: l1 regularization procedure} is given by
$$
\tilde \theta_j = (|\hat \theta_j | - \lambda )_+ \text{sign} (\hat \theta_j),\quad j\in[p],
$$
where for any $a\in\mathbb R$, we use $(a)_{+}$ to denote $a1\{a>0\}$. This solution is known as the soft-thresholded estimator. In this simple setting, it also coincides with the well-known Lasso and Dantzig selector estimators.

\begin{figure}[htbp]
\begin{center}
\includegraphics{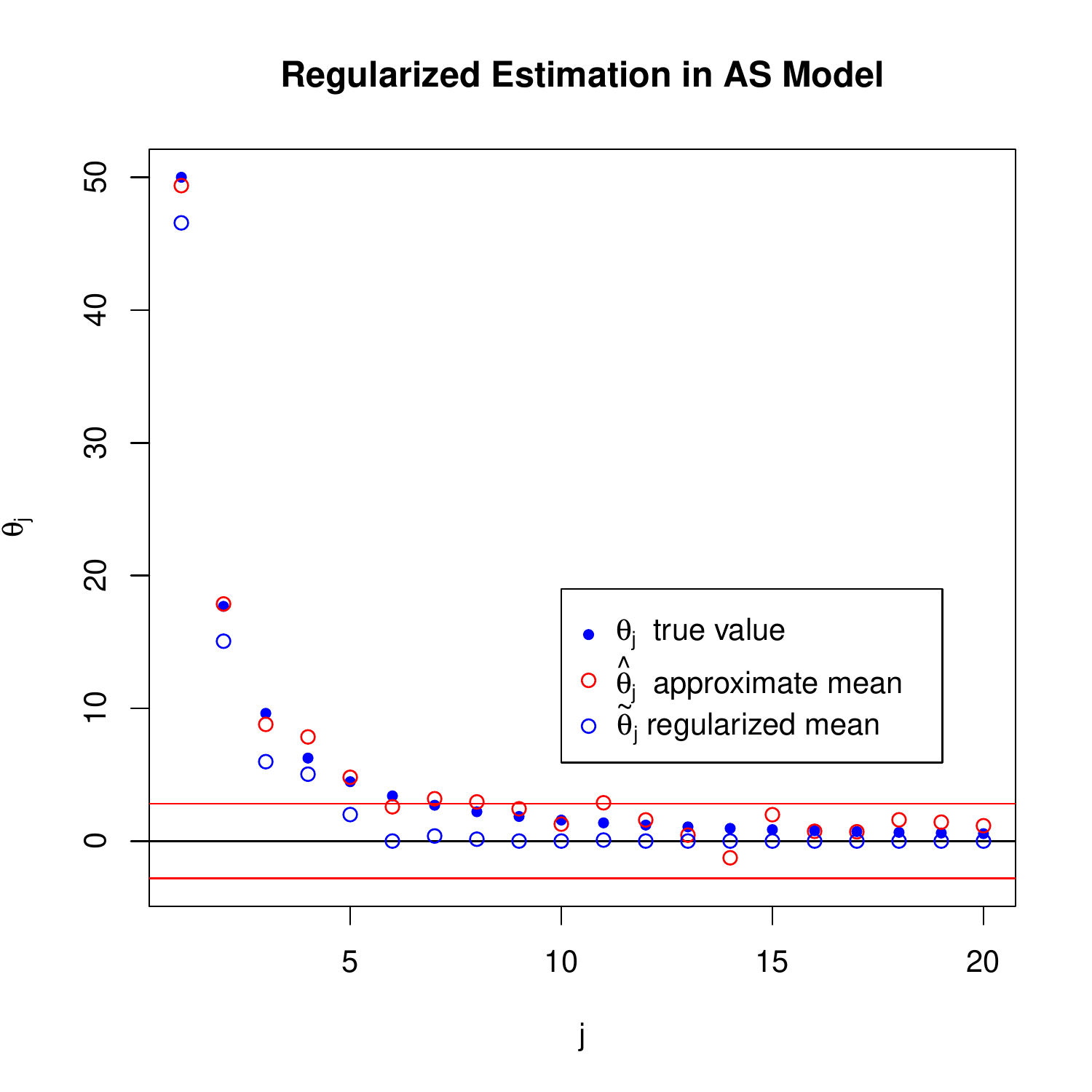}
\caption{Illustration of the Regularized Estimator in an Approximately Sparse Models.
The true values $\theta_j$ are shown as black points.  The approximate mean estimators
$\hat \theta_j$ are shown as red circles.  The $\ell_1$-regularized mean estimators $\tilde \theta_j$ are shown
in blue circles.  Here $\theta_j = 50/j^{3/2}$, $\hat \theta_j = \theta_j + N(0,1)$,
and $\tilde \theta_j = (|\hat \theta_j | - \lambda )_+ \text{sign} (\hat \theta_j)$ with $\lambda = \Phi^{-1}(1- \alpha/2p)$ with $\alpha =.1$.}
\label{FigureLasso}
\end{center}
\end{figure}

To carry out \eqref{eq: l1 regularization procedure}, we need to choose the regularization parameter $\lambda$. We will assume that $\lambda$ is chosen so that
\begin{equation}\label{eq: lambda quantile}
\lambda \geq  (1-\alpha)\text{-quantile of }
\| \hat \theta - \theta_0\|_\infty.
\end{equation}
As follows from the discussion above, we can approximate such a $\lambda$ either via self-normalized moderate deviations,
\begin{equation}\label{eq: lambda choice self normalization}
\lambda = n^{-1/2}\Phi^{-1}(1 - \alpha/2 p)\max_{j\in[p]}(\En[\hat Z_{i j}^2])^{1/2},
\end{equation} 
or via the bootstrap as outlined in Section \ref{MAM: CLT}. In the ideal noise model (\ref{eq:Normal}), we can also choose $\lambda$ as
\begin{equation}\label{eq: lambda choice ideal noise}
\lambda = n^{-1/2}\Phi^{-1}(1- \alpha/2p) \leq \sqrt{ 2 \log (2p/\alpha)/ n}.
\end{equation}

We analyze the estimator $\tilde\theta$ in \eqref{eq: l1 regularization procedure} under three different conditions.

\medskip

\noindent \textbf{Condition ES}. \textit{The parameter
$\theta_0$ is exactly sparse: There exists $T \subset \{1,...,p\}$ with cardinality $s$ such that $\theta_{0j} \neq 0$ only for $j\in T$. }

\noindent \textbf{Condition AS}.
\textit{ The parameter
$\theta_0$ is approximately sparse: For some $A>0$ and $a>1/2$, the non-increasing rearrangement $(|\theta_0|^*_{j})_{j\in[p]}$ of absolute values of coefficients  $(|\theta_{0j}|)_{j\in[p]}$ obeys}
$$
| \theta_0|^*_j \leq {A} j^{-{a}}, \quad j\in[p].
$$

\noindent \textbf{Condition DM}. 
\textit{ The parameter
$\theta_0$ has bounded $\ell_1$ norm: $\| \theta_0\|_1 \leq K$ for some $K>0$.}

\medskip

\begin{figure}[htbp]
\begin{center}
\includegraphics{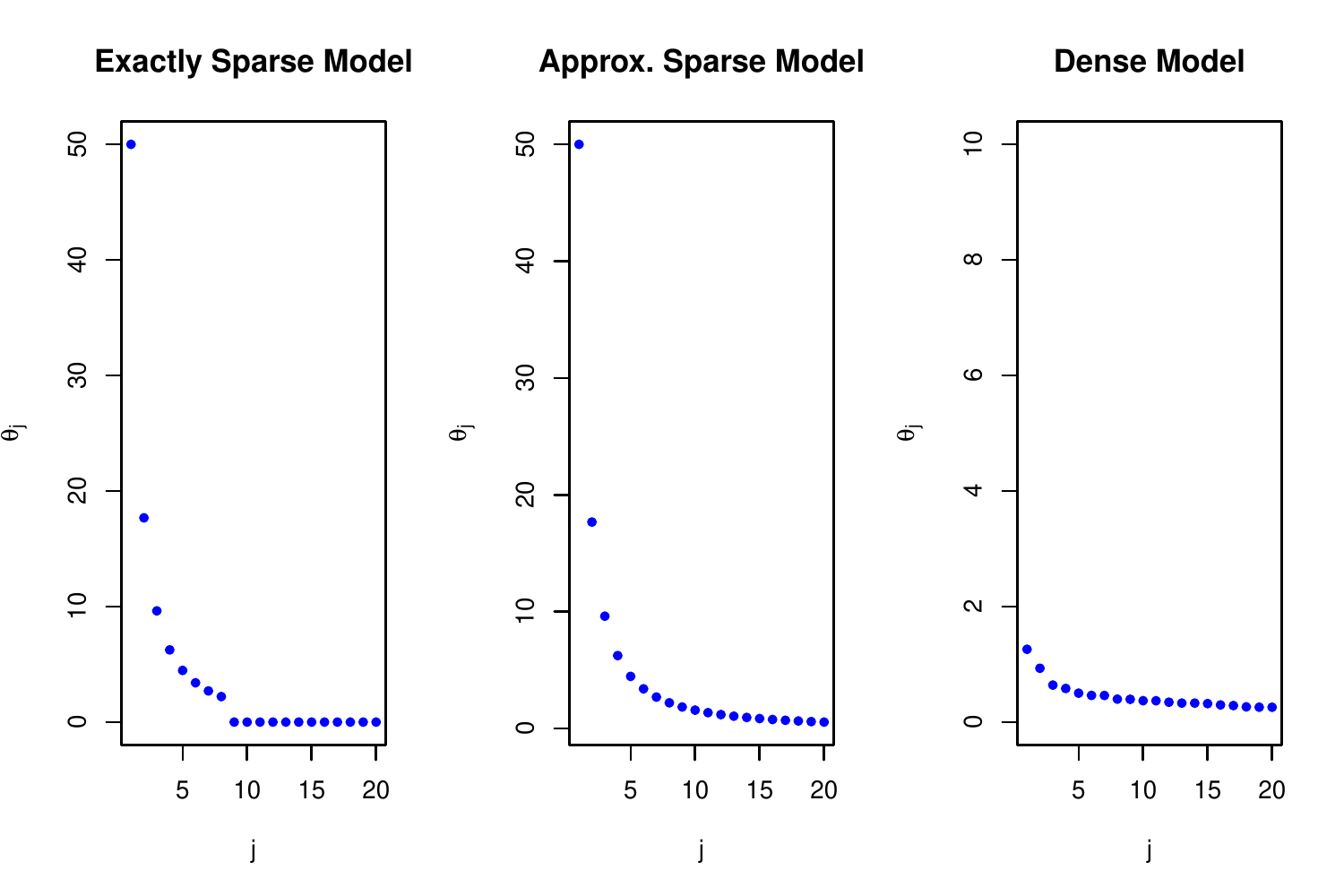}
\caption{A visual illustration of the ES, AS, and DM Models.
The true values $\theta_j$ are shown as blue points. In the Exactly Sparse (ES) Model, $\theta_j = 50/j^{3/2}$ for $j=1,...,8$ and $\theta_j = 0$ for $j>8$.  In the Approximately Sparse (AS) Model, $\theta_j = 10/j^{3/2}$.  In the Dense Model (DM), $\theta_j = 10/(2p)+ [10/(2p)]v_j$, where $\{v_j\}_{j=1}^p$ is a non-decreasing rearrangement
of $p$ i.i.d. draws of standard exponential variables and $ \| \theta \|_1  \leq 10$.} \label{FigureModels}
\end{center}
\end{figure}

Figure \ref{FigureModels} illustrates these conditions. Speaking informally,  ES can be thought of as a special case of AS, and AS can be thought of a special case of DM when $a>1$. More formally, Condition ES implies Condition AS with any $(a,A)$ such that $A\geq \|\theta_0\|_{\infty} s^a$; and, as long as $a>1$, Condition AS implies Condition DM with any $K\geq A a/(a - 1)$. While Conditions ES and AS require $\theta_0$ to be either sparse or approximately sparse, Condition DM allows $\theta_0$ to be ``dense" -- to have many elements that are all of similar, small size -- such that neither sparsity nor approximate sparsity holds. When working with Condition AS, it will be convenient to denote $s = \lceil(A/\lambda)^{1/a}\rceil$, which can be thought of as the "effective" dimension of the approximately sparse $\theta_0$.

Condition AS on approximate sparsity can also be compared with conditions typically imposed in the literature on nonparametric series estimation, e.g. \cite{N97}, where the {\em unordered} sequence of coefficients $(\theta_{0 j})_{j\in[p]}$ is often required to obey $|\theta_{0 j}| \leq A j^{-a}$, which is referred to as a smoothness condition. The approximate sparsity condition thus can be considered as a relaxation of the smoothness condition. 



We now establish a bound on the estimation error $\tilde\theta - \theta_0$ in the $\ell_q$ norm, where $q\geq 1$.

\begin{theorem}[Estimation Bounds for $\ell_1$-Regularized Many Means]\label{thm: estimation mm} Suppose that \eqref{eq: lambda quantile} holds. Then with probability
at least $1-\alpha$, we have $ | \tilde \theta_j| \leq | \theta_{0 j}|$ and $|\tilde \theta_j - \theta_{0 j} | \leq 2 \lambda$ for all $j\in[p]$ and
 \begin{itemize}
\item[(i)] under Condition DM:
$ \| \tilde \theta - \theta_0\|_q \leq 2 K^{1/q}\lambda^{1 - 1 / q}$ if $q\geq 1$;
\item[(ii)] under Condition ES:  $
\| \tilde \theta - \theta_0\|_q  \leq 2 s^{1/q} \lambda$ if $q\geq1$, and $\tilde \theta_{j} = 0$ for all $j \not \in T$;
\item[(iii)] under Condition AS:
$
\| \tilde \theta - \theta_0\|_q   \leq C_{a,q} s^{1/q} \lambda
$ if $q > a^{-1}$, where $s = \lceil(A/\lambda)^{1/a}\rceil$,
\end{itemize}
where $C_{a,q}$ is a constant depending only on $a$ and $q$.
\end{theorem}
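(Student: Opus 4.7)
The plan is to condition on the ``good'' event $E := \{\|\hat\theta - \theta_0\|_\infty \leq \lambda\}$, which by \eqref{eq: lambda quantile} satisfies $\Pr(E) \geq 1-\alpha$, and derive all stated bounds deterministically on $E$. Since the optimization problem \eqref{eq: l1 regularization procedure} decouples into the $p$ scalar problems \eqref{eq: one-dimensional reduction}, on $E$ the point $\theta_{0j}$ is feasible for the $j$th scalar problem, so by optimality $|\tilde\theta_j| \leq |\theta_{0j}|$ for every $j\in[p]$. Combining the feasibility constraint $|\hat\theta_j - \tilde\theta_j|\leq \lambda$ with $|\hat\theta_j - \theta_{0j}|\leq\lambda$ by the triangle inequality gives $|\tilde\theta_j - \theta_{0j}|\leq 2\lambda$. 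These two facts, together with the inequality
\begin{equation*}
|\tilde\theta_j - \theta_{0j}| \leq 2\min\{\lambda,\, |\theta_{0j}|\},
\end{equation*}
which follows because $|\tilde\theta_j|\leq|\theta_{0j}|$ implies $|\tilde\theta_j - \theta_{0j}|\leq |\tilde\theta_j|+|\theta_{0j}|\leq 2|\theta_{0j}|$, are the only structural inputs needed for the three parts.

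For part (i) under Condition DM, feasibility of $\theta_0$ on $E$ and optimality of $\tilde\theta$ give $\|\tilde\theta\|_1\leq\|\theta_0\|_1\leq K$, hence $\|\tilde\theta-\theta_0\|_1\leq 2K$. Then I interpolate: $\|\tilde\theta-\theta_0\|_q^q\leq \|\tilde\theta-\theta_0\|_\infty^{\,q-1}\|\tilde\theta-\theta_0\|_1 \leq (2\lambda)^{q-1}(2K)$, yielding the claim. For part (ii) under Condition ES, the bound $|\tilde\theta_j|\leq|\theta_{0j}|=0$ for $j\notin T$ forces $\tilde\theta_j=0$ off the support, so only the $s$ coordinates in $T$ contribute; applying $|\tilde\theta_j-\theta_{0j}|\leq 2\lambda$ there gives $\|\tilde\theta-\theta_0\|_q\leq 2s^{1/q}\lambda$.

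The nontrivial case is part (iii) under Condition AS, which I expect to be the main obstacle because one must correctly balance the ``head'' (largest $s$ coefficients) against the ``tail.'' Let $\sigma$ be a permutation arranging $|\theta_{0j}|$ in non-increasing order and let $T := \{\sigma(1),\ldots,\sigma(s)\}$ with $s = \lceil(A/\lambda)^{1/a}\rceil$. On $T$ I use the head bound $|\tilde\theta_j-\theta_{0j}|\leq 2\lambda$ (giving $s(2\lambda)^q$), and off $T$ I use the tail bound $|\tilde\theta_j-\theta_{0j}|\leq 2|\theta_{0j}|$ together with the rearrangement inequality from Condition AS to obtain
\begin{equation*}
\sum_{j\notin T}|\tilde\theta_j-\theta_{0j}|^q \leq 2^q\sum_{k>s}(|\theta_0|^*_k)^q \leq 2^q A^q \sum_{k>s}k^{-aq}.
\end{equation*}
For $q>1/a$ the exponent $aq>1$ so I can bound the tail sum by $A^q s^{1-aq}/(aq-1)$ via an integral comparison; the definition of $s$ gives $A\leq s^a\lambda$, hence $A^q s^{1-aq}\leq \lambda^q s$, so the tail contribution is at most $2^q\lambda^q s/(aq-1)$. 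Adding the head and tail yields $\|\tilde\theta-\theta_0\|_q^q \leq 2^q\lambda^q s\bigl(1+(aq-1)^{-1}\bigr)$, which is the claimed bound with $C_{a,q} = 2\bigl(1+(aq-1)^{-1}\bigr)^{1/q}$. The only delicate point is tracking the dependence on $a$ and $q$ in the integral comparison; everything else is book-keeping.
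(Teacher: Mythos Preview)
Your proof is correct and follows essentially the same route as the paper: condition on the event $\{\|\hat\theta-\theta_0\|_\infty\le\lambda\}$, use feasibility of $\theta_0$ in each scalar subproblem to get $|\tilde\theta_j|\le|\theta_{0j}|$ and $|\tilde\theta_j-\theta_{0j}|\le2\lambda$, then interpolate for (i), restrict to the support for (ii), and split into head and tail at the threshold $s=\lceil(A/\lambda)^{1/a}\rceil$ for (iii). The only cosmetic differences are that the paper defines the head set via the threshold $\{j:Aj^{-a}>\lambda\}$ (size $s-1$) rather than the top-$s$ indices, and invokes a separate lemma for the tail sum $\sum_j (Aj^{-a})^q\mathbf 1\{Aj^{-a}\le\lambda\}$ in place of your direct integral comparison; both give constants depending only on $a$ and $q$.
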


\begin{corollary}[Estimation Bounds for $\ell_1$-Regularized Many Means]\label{cor: estimation mm} Suppose that \eqref{eq: lambda quantile} holds with $\alpha \to 0$ and that $\lambda \lesssim \sqrt{ \log p/n}$. 
Then
\begin{itemize}
\item[(i)] under Condition DM:
$ \| \tilde \theta - \theta_0\|_q \lesssim_P K^{1/q}(\log p/n)^{(q - 1)/(2q)}$ if $q\geq 1$;
\item[(ii)] under Condition ES:  $
\| \tilde \theta - \theta_0\|_q  \lesssim_P s^{1/q}\sqrt{\log p/n}$ if $q\geq 1$, and $\tilde \theta_{j} = 0$ for all $j\notin T$;

\item[(iii)] under Condition AS:
$
\| \tilde \theta - \theta_0\|_q  \lesssim_P C_{a,q}s^{1/q}\sqrt{\log p/n}
$
if $q > a^{-1}$, where $s = \lceil(A/\lambda)^{1/a}\rceil$,
\end{itemize}
where $C_{a,q}$ is a constant depending only on $a$ and $q$.
\end{corollary}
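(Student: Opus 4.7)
The plan is to read the corollary off Theorem 2.8 essentially mechanically, combining two ingredients: the deterministic bounds of Theorem 2.8 on the event where \eqref{eq: lambda quantile} is active, and the asymptotic inputs of the corollary (namely $\alpha = \alpha_n \to 0$ and $\lambda \lesssim \sqrt{\log p / n}$) used to rewrite these bounds in $\lesssim_P$ form with explicit rate $\sqrt{\log p /n}$.

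First, I would invoke Theorem 2.8 directly. Its hypothesis, \eqref{eq: lambda quantile}, is exactly what the corollary assumes, so there is an event $\mathcal E_n$ with $\Pr(\mathcal E_n) \geq 1 - \alpha_n$ on which the three deterministic bounds hold simultaneously: under Condition DM, $\|\tilde\theta - \theta_0\|_q \leq 2 K^{1/q}\lambda^{1-1/q}$ for $q\geq 1$; under Condition ES, $\|\tilde\theta - \theta_0\|_q \leq 2 s^{1/q}\lambda$ for $q\geq 1$ together with $\tilde\theta_j = 0$ for all $j\notin T$; and under Condition AS, $\|\tilde\theta - \theta_0\|_q \leq C_{a,q} s^{1/q}\lambda$ for $q > a^{-1}$ with $s = \lceil(A/\lambda)^{1/a}\rceil$. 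Since $\alpha_n \to 0$, $\Pr(\mathcal E_n) \to 1$, and any deterministic inequality that holds on $\mathcal E_n$ automatically becomes a $\lesssim_P$ inequality.

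Second, I would substitute the assumed rate $\lambda \lesssim \sqrt{\log p / n}$ into each bound. In (i), $\lambda^{1-1/q} = \lambda^{(q-1)/q} \lesssim (\log p/n)^{(q-1)/(2q)}$, so $\|\tilde\theta - \theta_0\|_q \lesssim_P K^{1/q}(\log p/n)^{(q-1)/(2q)}$. In (ii), $\lambda \lesssim \sqrt{\log p/n}$ gives $\|\tilde\theta - \theta_0\|_q \lesssim_P s^{1/q}\sqrt{\log p/n}$, and the selection property $\tilde\theta_j = 0$ for $j\notin T$ carries over unchanged. In (iii), the same substitution yields $\|\tilde\theta - \theta_0\|_q \lesssim_P C_{a,q} s^{1/q}\sqrt{\log p/n}$, with $s = \lceil(A/\lambda)^{1/a}\rceil$ retained in its $\lambda$-dependent form exactly as stated in Theorem 2.8 (no attempt is made here to further bound $s$ in terms of $\sqrt{\log p/n}$, which is consistent with the statement of the corollary).

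There is no real obstacle: the corollary is a translation of Theorem 2.8 to the asymptotic language used elsewhere in the chapter, for the specific (and recommended) order of the regularization parameter. The only minor thing to note is that the three alternative conclusions sit under three mutually exclusive structural assumptions on $\theta_0$, so it suffices to carry out the substitution separately in each case rather than worrying about any joint event.
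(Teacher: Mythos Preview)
Your proposal is correct and matches the paper's approach: the corollary is stated immediately after Theorem~\ref{thm: estimation mm} without a separate proof, since it is simply the translation of the deterministic bounds there into $\lesssim_P$ form using $\alpha\to 0$ and the substitution $\lambda \lesssim \sqrt{\log p/n}$. The only small clarification worth making explicit is that the selection property $\tilde\theta_j=0$ for $j\notin T$ in (ii) holds on the event $\mathcal E_n$ and hence with probability tending to one, not deterministically; your wording ``carries over unchanged'' should be read in that sense.
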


Note that the condition $\lambda \lesssim \sqrt{ \log p/n}$ in Corollary \ref{cor: estimation mm} can easily be satisfied in the ideal noise model and in many other models, as long as $\max_{j\in[p]}\En[\hat Z_{i j}^2] \lesssim 1$; see \eqref{eq: lambda choice self normalization} and \eqref{eq: lambda choice ideal noise}.

Corollary \ref{cor: estimation mm} illustrates the power of regularization in high-dimensional settings when
$\theta_0$ has some structure. In particular, Conditions ES and AS both imply that the estimation error of $\tilde\theta$ in the $\ell_2$ norm satisfies
$$
\| \tilde \theta- \theta_0 \|_2 \lesssim_P \sqrt{s \log p/n},
$$
where $s$ is the effective dimension in the case of Condition AS as long as $\lambda$ is chosen appropriately. Hence, the ambient dimension $p$ affects the rate only through a log factor, and the effective dimension appears in a very natural form through the ratio $\sqrt{s/n}$. Thus, under either of these two conditions, we have consistency in the $\ell_2$ norm as long as $s \log p/n$ tends to zero. Under Condition DM, the estimation error satisfies
$$
\|\tilde\theta - \theta_0\|_2 \lesssim_P \sqrt K(\log p/n)^{1/4},
$$
which tends to zero as long as $K^2\log p/n \to 0$.

\noindent
{\bf Proof of Theorem \ref{thm: estimation mm}.}  It follows from \eqref{eq: lambda quantile} that with probability at least $1-\alpha$, we have  $\| \hat \theta - \theta_0\|_{\infty} \leq \lambda$, in which case
\begin{equation}\label{eq: imply1}
| \tilde \theta_j| \leq | \theta_{0 j}|\text{ and }|\tilde \theta_j - \theta_{0 j} | \leq |\tilde \theta_j - \hat\theta_{j} | + |\hat \theta_j - \theta_{0 j} | \leq 2 \lambda,\quad\text{for all }j\in[p]
\end{equation}
by \eqref{eq: one-dimensional reduction}. This gives the first asserted claim.

To prove the second claim, we assume that \eqref{eq: imply1} holds. Then, under Condition DM, $\| \theta_0\|_1 \leq K$, and so
$$
\| \tilde \theta - \theta_0 \|_q^q \leq \|\tilde \theta - \theta_0\|_1 \|\tilde \theta - \theta_{0} \|^{q-1}_\infty \leq
(2 K) (2 \lambda)^{q-1} \leq 2^q K \lambda^{q - 1}
$$
by \eqref{eq: imply1}, which gives (i).  

Further, under Condition ES, \eqref{eq: imply1} implies that $|\tilde\theta_j| \leq |\theta_{0 j}| = 0$ for all $j\in T^c$, and so
$$
\| \tilde \theta - \theta_0 \|_q^q = \| (\tilde \theta - \theta_0)_T\|_q^q
\leq s \| \tilde \theta - \theta_0\|_\infty^q \leq s (2 \lambda)^q,
$$
which gives (ii).  

Finally, consider the case of AS and assume, without loss of generality, that components of $\theta_0$ are decreasing in absolute values, $|\theta_{ 0 1}| \geq \dots\geq |\theta_{0 p}|$, so that $|\theta_{0 j}| \leq A j^{-a}$ for all $j\in[p]$. Then, denoting $\bar T := \{ j \in [p]: A j^{-a} > \lambda \}$, it follows from the triangle inequality and \eqref{eq: imply1} that
\begin{equation}\label{eq: sparsity derivation under as 1}
\| \tilde \theta - \theta_0 \|_q \leq \| (\tilde \theta - \theta_0)_{\bar T} \|_q + \| (\tilde \theta - \theta_0)_{\bar{T}^c}\|_q \leq \| (\tilde \theta - \theta_0)_{\bar T} \|_q + 2 \| (\theta_0)_{\bar{T}^c}\|_q.
\end{equation}
Here, $ |\bar T| =  s -1 $, and so
\begin{equation}\label{eq: sparsity derivation under as 2}
\| (\tilde \theta - \theta_0)_{\bar T} \|_q < s^{1/q} \| \tilde \theta - \theta_0 \|_\infty \leq 2 s^{1/q}\lambda.
\end{equation}
Also,
\begin{equation}\label{eq: sparsity derivation under as}
\| (\theta_0)_{\bar{T}^c}\|_q^q
\leq \sum_{j=1}^p (A j^{-a} 1\{ A j^{-a}  \leq \lambda \})^q = A^q\sum_{j=1}^p j^{-a q}1\{ j \geq (A/\lambda)^{1/a} \} \leq \frac{2^{a q}s\lambda^q}{a q - 1},
\end{equation}
where the last inequality is established by replacing the sum by an integral, as formally shown in Lemma \ref{lem: summation-integration} in Appendix \ref{app: technical results}. Combining \eqref{eq: sparsity derivation under as 1}, \eqref{eq: sparsity derivation under as 2}, and \eqref{eq: sparsity derivation under as} gives (iii) 
and completes the proof of the theorem.
\qed

\section{Estimation with Many Parameters and Moments}\label{sec: many moments}

\subsection{Regularized Minimum Distance Estimation Problem}\label{Sec:RMD}
In this section, we consider the minimum distance estimation problem. We develop a Regularized Minimum Distance (RMD) estimator and study its properties.

Suppose that we have a target moment function $\theta \mapsto g(\theta)$,  mapping $ \Theta \subset \Bbb{R}^p$ to $\Bbb{R}^m$, and its empirical version $\theta \mapsto \hat g(\theta)$, also mapping $ \Theta \subset \Bbb{R}^p$ to $\Bbb{R}^m$, where both $p$ and $m$ may be large. Assume that $\theta_0$ is the unique solution of the following equation:
$$
g(\theta_0) = 0.
$$
We are interested in estimating $\theta_0$ using the empirical version $\theta\mapsto\hat g(\theta)$ of the function $\theta\mapsto g(\theta)$.

We define the RMD estimator $\hat \theta$ as a solution of the optimization problem
\begin{equation}\label{def:RMD}
\min_{\theta \in \Theta} \| \theta \|_1 : \| \hat g(\theta) \|_\infty \leq \lambda.
\end{equation}
where $\lambda$ is a regularization parameter. We will choose $\lambda$ so that a solution of \eqref{def:RMD} exists with large probability; and in the event that $\|\hat g(\theta)\|_{\infty} > \lambda$ for all $\theta\in\Theta$ so that the optimization problem \eqref{def:RMD} has no solution, we can set $\hat\theta$ to be equal to any particular element of $\Theta$. In the linear mean regression model, the RMD estimator reduces to the Dantzig Selector proposed by Cand\`{e}s and Tao in \cite{CT07}.

Let $\alpha\in(0,1)$ be a constant, which should be thought of as some small number. We will assume that the regularization parameter $\lambda$ satisfies the following condition:

\medskip

\noindent
{\bf Condition L}. \textit{The regularization parameter $\lambda$ is such that
\begin{equation}\label{def:lambda}
\|\hat g(\theta_0)\|_\infty \leq \lambda, \ \ \mbox{with probability at least $1-\alpha$.}
\end{equation}
}

When $\hat g_j(\theta_0)\sim N(0,\sigma^2/n)$ for all $j\in[m]$, setting $\lambda = n^{-1/2}\sigma\Phi^{-1}(1-\alpha/(2m))$ is sufficient to satisfy Condition L.  This Gaussianity of the moment conditions occurs, for example, in the high-dimensional linear regression model with homoscedastic Gaussian noise under appropriate normalization of the covariates; e.g. see \cite{BC11b}. More generally, we can use the self-normalization method or the bootstrap to approximate $\lambda$ as discussed in Section 2, provided that a preliminary estimator of $\theta_0$ is available.

The key consequence of Condition L is that $\theta_0$ is feasible in the optimization problem (\ref{def:RMD}) with probability at least $1 - \alpha$, in which case a solution $\hat\theta$ of this optimization problem exists and, by optimality, satisfies $\|\hat \theta\|_1\leq \|\theta_0\|_1$. As we discuss below, this property is crucial to handle high-dimensional models.

Denote
$$
\mathcal{R}(\theta_0) := \{\theta \in \Theta: \|\theta\|_1\leq \|\theta_0\|_1\},
$$
which we sometimes refer to as the restricted set. Also, let there be some sequences of positive constants $(\epsilon_n)_{n\geq 1}$ and $(\delta_n)_{n\geq 1}$ satisfying $\epsilon_n\searrow 0$ and $\delta_n\searrow 0$. To establish properties of the RMD estimator $\hat\theta$, we will use the following high-level conditions:

\medskip
\noindent
{\bf Condition EMC}. \textit{The empirical moment function concentrates around the target moment function: $$\sup_{\theta \in \mathcal{R}(\theta_0)}\| \hat g(\theta) - g(\theta)\|_\infty \leq \epsilon_n \text{  with probability at least } 1-\delta_n.$$}

\noindent
{\bf Condition MID}. \textit{The target moment function obeys the following identifiability condition: $$\{\| g(\theta) - g(\theta_0) \|_\infty  \leq \epsilon, \theta  \in \mathcal{R}(\theta_0)\}   \text{  implies  } \| \theta - \theta_0 \|_\ell \leq r( \epsilon; \theta_0, \ell), $$ for all $\epsilon>0$, where $\epsilon \mapsto r( \epsilon; \theta_0, \ell)$ is a weakly increasing rate function, mapping $[0,\infty)$ to $[0, \infty)$ and depending on the true value $\theta_0$ and the semi-norm of interest $\ell$.}
\medskip

Conditions EMC and MID encode the key blocks we need for the results. A large part of this section will be devoted to the verification of these conditions in particular examples. Condition EMC will be verified using empirical process methods, where contraction inequalities play a big role. Condition MID encodes both local and global identification of $\theta_0$. It states that if $g(\theta)$ is close to $g(\theta_0)$ in the $\ell_{\infty}$ norm and  $\theta$ is weakly smaller than $\theta_0$ in terms of the $\ell_1$ norm, then $\theta$ is close to $\theta_0$ in the semi-norm of interest $\ell$.  As we explain below, the validity of this condition depends on the interplay between the structure of $g$, the semi-norm $\ell$, and the structure of $\theta_0$. To appreciate the latter point, we note that if $\theta_0=0$, then Condition MID always holds with $r(\epsilon; 0, \ell) = 0$ for all semi-norms $\ell$. This means that $\theta_0 = 0$ is identified in the restricted set $\mathcal R(\theta_0)$ under no other assumptions on $g$.  The structure of $g$ and $\theta_0$ will begin to play an important role when $\theta_0 \neq 0$, as we discuss below.

Using these conditions, we can immediately obtain the following elementary but important result on the properties of the RMD estimator:

\begin{proposition}[Bounds on Estimation Error of RMD Estimator]\label{lemma:RMD} Assume that Conditions
L, EMC, and MID are satisfied for some semi-norm $\ell$. Then with probability at least $1- \alpha - \delta_n$, the RMD estimator $\hat \theta$ obeys
\begin{equation}\label{eq: simple bound for rmd}
\|\hat \theta - \theta_0\|_{\ell} \leq r( \lambda+\epsilon_n; \theta_0, \ell).
\end{equation}
\end{proposition}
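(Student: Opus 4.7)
The plan is to combine the two probabilistic events guaranteed by Conditions L and EMC via a union bound, and then to exploit the minimum-$\ell_1$-norm structure of the RMD program to place $\hat\theta$ inside the restricted set $\mathcal{R}(\theta_0)$. Concretely, I will work on the event
$$\mathcal{E} := \Big\{ \|\hat g(\theta_0)\|_\infty \leq \lambda \Big\} \cap \Big\{ \sup_{\theta \in \mathcal{R}(\theta_0)} \|\hat g(\theta) - g(\theta)\|_\infty \leq \epsilon_n \Big\},$$
which by Conditions L and EMC together with a union bound has probability at least $1 - \alpha - \delta_n$.

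On $\mathcal{E}$, the first event says that $\theta_0$ is feasible in the RMD optimization problem \eqref{def:RMD}; hence the program admits a minimizer $\hat\theta$, and by optimality $\|\hat\theta\|_1 \leq \|\theta_0\|_1$, which is precisely the statement $\hat\theta \in \mathcal{R}(\theta_0)$. This is the one nontrivial structural input, and everything else is a chain of triangle inequalities.

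Next, I bound $\|g(\hat\theta) - g(\theta_0)\|_\infty$. Since $g(\theta_0) = 0$, it suffices to bound $\|g(\hat\theta)\|_\infty$. By the triangle inequality,
$$\|g(\hat\theta)\|_\infty \leq \|g(\hat\theta) - \hat g(\hat\theta)\|_\infty + \|\hat g(\hat\theta)\|_\infty.$$
The first term is controlled by the EMC event, since $\hat\theta \in \mathcal{R}(\theta_0)$, yielding a bound of $\epsilon_n$. The second term is at most $\lambda$ by feasibility of $\hat\theta$ in \eqref{def:RMD}. Therefore $\|g(\hat\theta) - g(\theta_0)\|_\infty \leq \lambda + \epsilon_n$ on $\mathcal{E}$.

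Finally, I apply Condition MID with $\epsilon = \lambda + \epsilon_n$ and $\theta = \hat\theta \in \mathcal{R}(\theta_0)$ to conclude $\|\hat\theta - \theta_0\|_\ell \leq r(\lambda + \epsilon_n;\theta_0,\ell)$, which is \eqref{eq: simple bound for rmd}. The only place that requires care is checking the containment $\hat\theta \in \mathcal{R}(\theta_0)$ so that EMC and MID can both be invoked at $\hat\theta$; once that is in hand, the rest is purely mechanical.
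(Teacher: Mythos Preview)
Your proof is correct and follows essentially the same approach as the paper: work on the intersection of the events from Conditions L and EMC (probability at least $1-\alpha-\delta_n$ by a union bound), use feasibility of $\theta_0$ to get $\hat\theta\in\mathcal R(\theta_0)$, apply the triangle inequality to bound $\|g(\hat\theta)\|_\infty\le\epsilon_n+\lambda$, and then invoke Condition MID. There is no substantive difference between your argument and the paper's.
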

\noindent
{\bf Proof of Proposition \ref{lemma:RMD}.} Consider the event that
$\lambda\geq \| \hat g(\theta_0)\|_\infty$, $\hat\theta\in\mathcal R(\theta_0)$, and $\| \hat g(\hat \theta) - g(\hat \theta)\|_\infty \leq \epsilon_n$. By the union bound and Condition EMC, this event occurs with probability at least $1-\alpha -\delta_n$ since Condition L implies that with probability at least $1 - \alpha$, we have $\lambda \geq \|\hat g(\theta_0)\|_{\infty}$ and $\hat\theta\in\mathcal R(\theta_0)$. On this event, we have, by the definition of the RMD estimator, $\|\hat g(\hat\theta)\|_{\infty} \leq \lambda$, and so
\begin{equation}\label{eq: simple bound for rmd derivation}
\| g(\hat \theta)\|_\infty \leq \| g(\hat \theta) - \hat g(\hat \theta)\|_\infty  + \|\hat g(\hat\theta)\|_{\infty} \leq \epsilon_n + \lambda
\end{equation}
by the triangle inequality. In turn, \eqref{eq: simple bound for rmd derivation} implies \eqref{eq: simple bound for rmd} via Condition MID since $g(\theta_0) = 0$, which gives the asserted claim. \qed

\begin{example}[Regularized GMM]\label{ex: RGMM}
An important special case of the minimum distance estimation problem is the Generalized Method of Moments (GMM) estimation problem, which corresponds to
$$
g(\theta) = A  \Ep g(X, \theta), \quad    \hat g(\theta) = \hat A \En g(X, \theta),
$$
where $(x, \theta) \mapsto g(x,\theta)$ is a measurable score function, mapping $\Bbb{R}^{d_x} \times \Theta$ to $\Bbb{R}^m$, $A$ is a positive definite weighting matrix, and $\hat A$ is an estimator of this matrix.  In practice, a simple choice of $A$ is the diagonal weighting matrix,
$$A^2 =   {\diag}({\text{Var}}( \En g(X, \tilde \theta)))^{-1},$$
where $\tilde \theta$ is a guess or a preliminary estimator of $\theta_0$. The analysis in this section will be suitable for the case where the weighting matrix is known, i.e. $\hat A = A$, and so, without much loss of generality, we set $A =I$ to simplify the exposition. We turn to other choices and estimation of $A$ in Section \ref{Sec:DRGMM}, where we consider optimal inference for individual components of $\theta_0$.
\qed
\end{example}

In what follows, let
$$
G:= (\partial/\partial \theta') g(\theta)|_{\theta = \theta_0}
$$
be the Jacobian matrix. This matrix plays an important role in encoding the local information about $\theta_0$. For convenience, for all $j\in[m]$, we will use $G_j(\theta)$ to denote the $j$th row of the matrix $G(\theta)$. Regarding the semi-norm $\ell$, we shall be focusing mainly on the $\ell_1$ norm $\|\cdot\|_1$, $\ell_2$ norm $\| \cdot \|_2$, and the Fisher norm $\| \cdot \|_{F}$, i.e. $\ell \in\{\ell_1,\ell_2,F\}$, where we define the Fisher norm by
$$
\| \delta \|_F := |\delta' (G' G)^{1/2} \delta|^{1/2},\quad \delta\in\mathbb R^p.
$$


To make Proposition \ref{lemma:RMD} operational, we need to verify Conditions EMC and MID and provide suitable choices of $\epsilon_n$, $\delta_n$, and the rate function $r$. We will do so separately in linear and non-linear models.

\subsubsection{Linear Case}
Here we consider the case where $\theta\mapsto g(\theta)$ is linear:
\begin{equation}\label{eq: linear case}
g(\theta) = G \theta + g(0).
\end{equation}
Two lead examples of this case are the linear mean regression model and the linear instrumental variables (IV) model:
\begin{example}[Linear regression models]\label{ex: linear regression models}
The linear mean regression model,
\begin{equation}\label{eq: linear regression model}
Y = W'\theta_0 + \epsilon,\quad \Ep \epsilon W=0,
\end{equation}
corresponds to \eqref{eq: linear case} with $g(\theta) = \Ep [ (Y- W'\theta) W]$, $G = - \Ep W W'$, and $g(0) = \Ep Y W$. The linear IV regression model,
\begin{equation}\label{eq: iv regression model}
Y = W'\theta_0 + \epsilon,\quad \Ep \epsilon Z = 0,
\end{equation}
corresponds to \eqref{eq: linear case} with $g(\theta) = \Ep [ (Y- W'\theta) Z]$, $G = - \Ep Z W'$,
and $g(0) = \Ep Y Z$. For both models, we will be able to show that the RMD estimator $\hat\theta$ has a fast rate of convergence as long as $\theta_0$ is either exactly sparse or approximately sparse under simple, intuitive conditions on the matrix $G$.\qed
\end{example}


We first discuss Condition MID. We consider three cases as in Section 2:
\begin{itemize}
\item exactly sparse model: $\theta_0$ obeys Condition ES;
\item approximately sparse model: $\theta_0$ obeys Condition AS;
\item dense model: $\theta_0$ obeys Condition DM.
\end{itemize}

In the dense model, where $\|\theta_0\|_1 \leq K$, we can immediately deduce that whenever $G$ is symmetric and non-negative definite, Condition MID holds with $\ell = F$ and
$$
r(\epsilon; \theta_0, F) \leq \sqrt{2K\epsilon}.
$$
Indeed, to prove this inequality, observe that for any $\theta\in\mathcal R(\theta_0)$ satisfying $\|g(\theta) - g(\theta_0)\|_{\infty}\leq \epsilon$, we have
\begin{align*}
\|\theta - \theta_0\|_{F}^2 &= |(\theta- \theta_0) 'G (\theta - \theta_0) | \leq  \|\theta- \theta_0\|_1 \| G (\theta - \theta_0)\|_{\infty} \\
&= \|\theta- \theta_0\|_1 \| g(\theta) - g(\theta_0)\|_{\infty}  \leq
2 K \epsilon,
\end{align*}
since $\theta\in\mathcal R(\theta_0)$ implies that $\|\theta - \theta_0\|_1 \leq \|\theta\|_1 + \|\theta_0\|_1 \leq 2\|\theta_0\|_1\leq 2K$ via the triangle inequality.  This result will imply ``slow" rates
of convergence of the estimator $\hat\theta$, though these slow rates will be sufficient in some applications.

In the exactly sparse or approximately sparse models, we proceed as follows. Define a modulus of continuity
$$
k(\theta_0, \ell) := \inf_{\theta \in \mathcal{R}(\theta_0): \| \theta - \theta_0\|_\ell >0 } \| G (\theta- \theta_0) \|_\infty/ \| \theta- \theta_0 \|_{\ell},
$$
which we can also call an identifiability factor. Whenever $k(\theta_0,\ell)\neq0$, we immediately obtain that Condition MID holds with
$$
r(\epsilon; \theta_0, \ell) \leq k(\theta_0, \ell)^{-1} \epsilon.
$$
There exist methods in the literature to show that $k(\theta_0,\ell)\neq0$ and to bound $k(\theta_0,\ell)$ from below in the exactly sparse model. When we consider the approximately sparse model, we will first sparsify $\theta_0$ to $\theta_0(\epsilon) = (\theta_{0 1}(\epsilon),\dots,\theta_{0 p}(\epsilon))'$ with
$$
\theta_{0 j}(\epsilon) := \begin{cases}
\theta_{0 j} + \text{sign}(\theta_{0 j})\Delta/(s - 1), & \text{ if }A j^{-a} > \epsilon,\\
0, & \text{ if }A j^{-a} \leq \epsilon,
\end{cases}
$$
where $s := \lceil (A/\epsilon)^{1/a}\rceil$ and $\Delta := \sum_{j=1}^p |\theta_{0 j}|1\{A j^{-a} \leq \epsilon\}$.  We will then provide a bound on $k^{-1}(\theta_0,\ell)\epsilon$ in terms of  $k^{-1}(\theta_0(\epsilon), \ell)\epsilon$
and the approximation error $\| \theta_0 - \theta_0(\epsilon)\|_\ell$.  Note that we assume that the components of $\theta_0$ are decreasing in absolute values, $|\theta_{ 0 1}| \geq \dots\geq |\theta_{0 p}|$ in this construction, which is without loss of generality because we do not use this information in the estimation.  We also inflate components of $\theta_{0 j}(\epsilon)$ with $A j^{-a} > \epsilon$  by using $\theta_{0 j} + \text{sign}(\theta_{0 j})\Delta/(s-1)$ instead of $\theta_{0 j}$ to make sure that $\theta\in\mathcal R(\theta_0)$ implies $\theta\in\mathcal R(\theta_0(\epsilon))$, which will be important in the verification of Condition MID.  Finally, we assume that $A > \epsilon$ to make sure that $s - 1 \geq 1$.

To present one possible lower bound for $k(\theta_0,\ell)$ in the exactly sparse model, we introduce some further notation. For $J\subset\{1,\dots,m\}$ and $H\subset\{1,\dots,p\}$, let $G_{J,H} = (G_{j,h})_{j\in J,h\in H}$ be the submatrix of $G$ consisting of all rows $j\in J$ and all columns $h\in H$ of $G$. Also, for an integer $l\geq s$, define the $l$-sparse smallest and $l$-sparse largest singular values of $G$ by
\begin{equation}\label{eq: sparse singular values def}
\sigma_{\min}(l):=\min_{|H|\leq l}\max_{|J|\leq l} \sigma_{\min}(G_{J,H}),  \quad \sigma_{\max}(l):=\max_{|H|\leq l}\max_{|J|\leq l} \sigma_{\max}(G_{J,H}),
\end{equation}
respectively, where $\sigma_{\min}(G_{J,H})$ and $\sigma_{\max}(G_{J,H})$ are the smallest and the largest singular values of the matrix $G_{J,H}$. We then have the following lower bound on $k(\theta_0,\ell)$, which is an immediate consequence of Theorem 1 in \cite{BCHN17}:  
\begin{lemma}[Lower Bound for $k(\theta_0,\ell)$ in Exactly Sparse Model]\label{lem: sparse singular values}
Under Condition ES, there exists a universal constant $c\geq 1$ such that if $\sigma_{\min}^2(l)/\sigma_{\max}^2(l) \geq c\mu_n$ and $\sigma_{\max}^2(l) \geq 1$ for $l = \lceil s/\mu_n\rceil$ and some $\mu_n\in(0,1/c)$, then
\begin{equation}\label{eq: basic bound for identifiability factor}
k(\theta_0,\ell_q) \geq s^{-1/q}\mu_n,\quad q\in\{1,2\}.
\end{equation}
\end{lemma}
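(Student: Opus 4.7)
The plan is to exploit that any $\theta \in \mathcal R(\theta_0)$ together with the sparsity of $\theta_0$ forces the error $\delta := \theta - \theta_0$ into a restricted cone. Writing $T$ for the support of $\theta_0$, the feasibility constraint $\|\theta_0+\delta\|_1\leq\|\theta_0\|_1$ splits across $T$ and $T^c$ as
$$
\|\theta_0\|_1 \;\geq\; \|(\theta_0+\delta)_T\|_1 + \|\delta_{T^c}\|_1 \;\geq\; \|\theta_0\|_1 - \|\delta_T\|_1 + \|\delta_{T^c}\|_1,
$$
so $\|\delta_{T^c}\|_1\leq\|\delta_T\|_1$. This is the only consequence of $\mathcal R(\theta_0)$ I would use.

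Next, I would introduce the block decomposition standard in the analysis of the Dantzig selector. Set $l=\lceil s/\mu_n\rceil$, let $T_1$ index the $l-s$ largest-in-magnitude coordinates of $\delta_{T^c}$, put $H:=T\cup T_1$ (so $|H|=l$), and partition the rest of $T^c$ into successive blocks $T_2,T_3,\dots$ of size $l-s$ in decreasing order of $|\delta_j|$. The usual monotonicity bound $\|\delta_{T_{k+1}}\|_2\leq\|\delta_{T_k}\|_1/\sqrt{l-s}$, chained with the cone condition, gives
$$
\sum_{k\geq 2}\|\delta_{T_k}\|_2 \;\leq\; \frac{\|\delta_{T^c}\|_1}{\sqrt{l-s}} \;\leq\; \sqrt{\tfrac{s}{l-s}}\,\|\delta_T\|_2 \;\leq\; \sqrt{\tfrac{s}{l-s}}\,\|\delta_H\|_2,
$$
so $\|\delta\|_2\leq(1+\sqrt{s/(l-s)})\,\|\delta_H\|_2$, while $\|\delta\|_1\leq 2\|\delta_T\|_1\leq 2\sqrt{s}\,\|\delta_H\|_2$. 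I would then use the sparse singular value hypothesis to transfer $\|\delta_H\|_2$ into a lower bound on $\|G\delta\|_\infty$: by the definition of $\sigma_{\min}(l)$ applied to $H$, there is $J\subset[m]$ with $|J|\leq l$ and $\sigma_{\min}(G_{J,H})\geq\sigma_{\min}(l)$; for each tail block, $\|G_{J,T_k}\delta_{T_k}\|_2\leq\sigma_{\max}(l)\,\|\delta_{T_k}\|_2$. Combining via triangle inequality and $\|G\delta\|_\infty\geq|J|^{-1/2}\|G_{J,\cdot}\delta\|_2$ yields
$$
\|G\delta\|_\infty \;\geq\; \frac{\sigma_{\min}(l)-\sigma_{\max}(l)\sqrt{s/(l-s)}}{\sqrt{l}}\;\|\delta_H\|_2.
$$

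Finally, plugging in $l=\lceil s/\mu_n\rceil$ gives $s/(l-s)\leq \mu_n/(1-\mu_n)\lesssim\mu_n$ for $\mu_n\leq 1/c$, and the hypothesis $\sigma_{\min}^2(l)/\sigma_{\max}^2(l)\geq c\mu_n$ together with $\sigma_{\max}^2(l)\geq 1$ forces the bracketed numerator to be at least $\sqrt{\mu_n}$ once $c$ is chosen large enough. Since $\sqrt{l}\lesssim\sqrt{s/\mu_n}$ and $\|\delta_H\|_2\gtrsim\|\delta\|_2$, we obtain $\|G\delta\|_\infty\gtrsim (\mu_n/\sqrt{s})\|\delta\|_2$, proving the $q=2$ case; the $q=1$ case follows directly from $\|\delta\|_1\leq 2\sqrt{s}\,\|\delta_H\|_2$. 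The only real obstacle is the bookkeeping for the universal constant $c$: it must simultaneously keep $\sigma_{\min}(l)-\sigma_{\max}(l)\sqrt{s/(l-s)}$ proportional to $\sqrt{\mu_n}\sigma_{\max}(l)$ and absorb the $(1+\sqrt{s/(l-s)})$ factor so the final bound comes out clean as $s^{-1/q}\mu_n$. This is elementary algebra given $\mu_n\leq 1/c$ and $\sigma_{\max}(l)\geq 1$, and matches the content of Theorem~1 in \cite{BCHN17}.
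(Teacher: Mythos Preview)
Your proposal is correct and follows the same route as the paper. The paper proves this lemma by invoking the general inequality (stated there as Lemma~\ref{thm: bchn theorem}, itself quoting Theorem~1 of \cite{BCHN17}) and then reading off the claimed bound by plugging in $l=\lceil s/\mu_n\rceil$; what you have done is simply unpack that cited result via the Cand\`es--Tao block decomposition and carry out the final bookkeeping explicitly. The minor cosmetic difference is that your tail blocks have size $l-s$ (giving factors $\sqrt{s/(l-s)}$) whereas the quoted bound in the paper carries $\sqrt{4s/l}$, but both simplify identically once $l=\lceil s/\mu_n\rceil$ and $\mu_n\le 1/c$.
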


In Lemma \ref{lem: sparse singular values}, the quantity $\mu_n$ appears as a key factor determining the modulus of continuity $k(\theta_0,\ell)$. When $\mu_n$ is bounded away from zero, we say that we have a strongly identified model. Otherwise, i.e. when $\mu_n>0$ is drifting towards zero, we say that we have a non-strongly identified model. The quantity $\mu_n$ in turn can be easily bounded in linear regression models:

\noindent
{\bf Example \ref{ex: linear regression models}} (Linear Regression Models, Continued). In the linear mean regression model \eqref{eq: linear regression model}, assume that all eigenvalues of $G = - \Ep W W'$ are bounded in absolute values from above and away from zero uniformly over $n$. Then for any integer $l\geq s$, we have
$$
\sigma_{\min}(l) = \min_{|H|\leq l}\max_{|J|\leq l} \sigma_{\min}(G_{J,H}) \geq \min_{|H|\leq l}\sigma_{\min}(G_{H,H}) \geq \sigma_{\min}(G)
$$
and, similarly,
$$
\sigma_{\max}(l) = \max_{|H|\leq l}\max_{|J|\leq l} \sigma_{\max}(G_{J,H})  \leq \sigma_{\max}(G)
$$
by the standard properties of eigenvalues of symmetric matrices. Hence, we can choose $\mu_n$ in Lemma \ref{lem: sparse singular values} to be bounded away from zero, which means that the linear mean regression model is strongly identified with $k(\theta_0,\ell_q)$ satisfying $k(\theta_0,\ell_q)^{-1} \leq C s^{1/q}$ for some constant $C>0$.

In the linear IV regression model \eqref{eq: iv regression model}, assume first that there exist constants $\bar\sigma \geq\underline\sigma > 0$ such that for all $s/\log n\leq l \leq s\log n$ and any combination of $l$ covariates $W_{H_1},\dots,W_{H_l}$ from the vector $W = (W_1,\dots,W_p)'$, there exists a combination of $l$ instruments $Z_{J_1},\dots,Z_{J_l}$ from the vector $Z = (Z_1,\dots,Z_m)'$ such that the matrix $\Ep[(Z_{J_1},\dots,Z_{J_l})'(W_{H_1},\dots,W_{H_l})]$ has singular values bounded from above by $\bar\sigma$ and from below by $\underline\sigma$, which means that $Z_{J_1},\dots,Z_{J_l}$ are strong instruments for the covariates $W_{H_1},\dots,W_{H_l}$. Then again we can choose $\mu_n$ in Lemma \ref{lem: sparse singular values} to be bounded away from zero, which again implies that we have a strongly identified model with $k(\theta_0,\ell_q)$ satisfying $k(\theta_0,\ell_q)^{-1} \leq C s^{1/q}$. On the other hand, if for some covariate $W_j$, we only have weak instruments, $\mu_n$ will drift towards zero, and we obtain a non-strongly identified model.\qed

In the analysis below, we use the bound \eqref{eq: basic bound for identifiability factor} as a starting point. In particular, letting there be a sequence of constants $(L_n)_{n\geq 1}$ satisfying $L_n \geq 1$ for all $n\geq 1$,  we impose the following assumption:

\medskip
\noindent
{\bf Condition LID}. \textit{For $q\in\{1,2\}$, either of the following conditions hold: (a) $\theta_0$ obeys Condition ES and  the bound $k (\theta_0, \ell_q)  \geq  s^{-1/q} \mu_n$ holds, or (b) $\theta_0$ obeys Condition AS, the bound $k (\theta_0(\epsilon), \ell_q)  \geq  s^{-1/q} \mu_n$  holds for
$s = \lceil (A/\epsilon)^{1/a} \rceil
$, and $\|G_j\|_1 \leq L_n$ for each $j\in[m]$, where $G_j$
denotes the $j$th row of $G$.}
\medskip

We use the notation LID as a shorthand for LID$(\theta_0,G)$ since the condition is indexed by both the parameter of interest $\theta_0$ and the matrix $G$. We will make the dependence explicit when we invoke the condition to other parameters. 

We can now verify Condition MID:

\begin{lemma}[Bounds on Rate Function in Linear Models]\label{ID:linear} (i) Under Condition LID(a), Condition MID holds with
$$
r(\epsilon; \theta_0, \ell_q) \leq   \epsilon s^{1/q} \mu_n^{-1},\quad q\in\{1,2\}.
$$
(ii)  Under Condition LID(b), Condition MID holds with
$$
r(\epsilon; \theta_0, \ell_q) \leq  C_{a,q}\Big( L_n \epsilon s^{1/q}\mu_n^{-1} + \epsilon s^{1/q} \Big),\quad q\in\{1,2\},
$$
as long as $a > 1$ and $A>\epsilon$, where $C_{a,q}$ is a constant depending only on $a$ and $q$.
\end{lemma}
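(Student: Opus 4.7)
The key observation is that in the linear case $g(\theta) - g(\theta_0) = G(\theta - \theta_0)$, so the hypothesis $\|g(\theta) - g(\theta_0)\|_\infty \leq \epsilon$ translates directly into $\|G(\theta - \theta_0)\|_\infty \leq \epsilon$. Part (i) is then immediate: for any $\theta \in \mathcal{R}(\theta_0)$ with $\|\theta - \theta_0\|_{\ell_q} > 0$, the definition of the identifiability factor gives
\[
\|\theta - \theta_0\|_{\ell_q} \leq \frac{\|G(\theta - \theta_0)\|_\infty}{k(\theta_0, \ell_q)} \leq \frac{\epsilon}{s^{-1/q}\mu_n} = \epsilon s^{1/q} \mu_n^{-1},
\]
using Condition LID(a), which yields the claimed rate function.

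For part (ii), the plan is to transfer the identifiability statement from $\theta_0$ to its sparsification $\theta_0(\epsilon)$ and then pay the cost of replacing the pivot. First I will check the crucial compatibility property $\mathcal{R}(\theta_0) = \mathcal{R}(\theta_0(\epsilon))$: by construction,
\[
\|\theta_0(\epsilon)\|_1 = \sum_{j:\, Aj^{-a} > \epsilon}\bigl(|\theta_{0j}| + \Delta/(s-1)\bigr) = \sum_{j:\, Aj^{-a} > \epsilon}|\theta_{0j}| + \Delta = \|\theta_0\|_1,
\]
since there are exactly $s - 1$ indices with $Aj^{-a} > \epsilon$. Hence any $\theta \in \mathcal{R}(\theta_0)$ also lies in $\mathcal{R}(\theta_0(\epsilon))$, enabling the use of the assumed bound on $k(\theta_0(\epsilon), \ell_q)$.

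Next, I will bound the two approximation errors induced by the sparsification. Using $|\theta_{0j}| \leq A j^{-a}$, Lemma \ref{lem: summation-integration} (or a direct integral comparison valid since $a > 1$) gives $\Delta \leq C_a A (s-1)^{1-a}$. Combined with $s - 1 \gtrsim (A/\epsilon)^{1/a}$ (which uses $A > \epsilon$), this yields both $\Delta/(s-1) \leq C_a \epsilon$ and $\|\theta_0 - \theta_0(\epsilon)\|_\infty \leq C_a \epsilon$, since on the complement indices $|\theta_{0j}| \leq \epsilon$ directly. The same integral bound, applied under $q > a^{-1}$ so that $aq > 1$, gives
\[
\|\theta_0 - \theta_0(\epsilon)\|_q \leq C_{a,q}\, s^{1/q}\epsilon.
\]

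Finally, I will combine these pieces. For $\theta \in \mathcal{R}(\theta_0) = \mathcal{R}(\theta_0(\epsilon))$ with $\|g(\theta) - g(\theta_0)\|_\infty \leq \epsilon$, write
\[
\|G(\theta - \theta_0(\epsilon))\|_\infty \leq \|G(\theta - \theta_0)\|_\infty + \|G(\theta_0 - \theta_0(\epsilon))\|_\infty \leq \epsilon + \max_{j\in[m]} \|G_j\|_1 \|\theta_0 - \theta_0(\epsilon)\|_\infty,
\]
where the second step uses Hölder's inequality. By Condition LID(b) the last term is at most $C_a L_n \epsilon$, so the whole expression is bounded by $(1 + C_a L_n)\epsilon$. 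Applying the assumed bound $k(\theta_0(\epsilon), \ell_q) \geq s^{-1/q}\mu_n$ delivers
\[
\|\theta - \theta_0(\epsilon)\|_{\ell_q} \leq \frac{(1 + C_a L_n)\epsilon}{s^{-1/q}\mu_n} \leq C_a'\, L_n \epsilon s^{1/q} \mu_n^{-1},
\]
since $L_n \geq 1$. The triangle inequality against the approximation-error bound on $\|\theta_0(\epsilon) - \theta_0\|_{\ell_q}$ then produces the claimed rate function $C_{a,q}(L_n \epsilon s^{1/q}\mu_n^{-1} + \epsilon s^{1/q})$. The only nontrivial step is keeping all constants clean during the integral comparisons for $\Delta$ and for $\|\theta_0 - \theta_0(\epsilon)\|_q$; these are routine given $a > 1$ and $A > \epsilon$, but care is needed to ensure $s - 1 \geq 1$ and that the bounds are uniform in $\epsilon$.
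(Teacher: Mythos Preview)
Your proposal is correct and follows essentially the same approach as the paper: part (i) is immediate from the definition of $k(\theta_0,\ell_q)$, and part (ii) uses the triangle inequality around the sparsified target $\theta_0(\epsilon)$, bounds $\|G(\theta_0-\theta_0(\epsilon))\|_\infty$ via H\"older with $\max_j\|G_j\|_1\leq L_n$, and controls the approximation errors $\Delta$ and $\|\theta_0-\theta_0(\epsilon)\|_q$ via the same summation/integral comparison (Lemma~\ref{lem: summation-integration}). Your explicit verification that $\|\theta_0(\epsilon)\|_1=\|\theta_0\|_1$, hence $\mathcal R(\theta_0)\subset\mathcal R(\theta_0(\epsilon))$, makes transparent a step the paper only alludes to.
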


\noindent
{\bf Proof of Lemma \ref{ID:linear}.} The first claim is immediate from the definition of the identifiability
factor $k(\theta_0,\ell_q)$. To show the second claim, assume that Condition AS holds, fix $q\in\{1,2\}$, and take any $\theta\in\mathcal R(\theta_0)$ such that $\|g(\theta) - g(\theta_0)\|_{\infty} = \|G(\theta - \theta_0)\|_{\infty} \leq \epsilon$. By the triangle inequality,
\begin{equation}\label{eq: verification of mid triangle}
\| \theta - \theta_0\|_q \leq \| \theta - \theta_0(\epsilon) \|_q + \| \theta_0(\epsilon) - \theta_0\|_q.
\end{equation}
Below, we bound the two terms on the right-hand side of this inequality.

To bound $\|\theta_0(\epsilon) - \theta_0\|_q$, we have
\begin{equation}\label{eq: delta bound}
\Delta = \sum_{j=1}^p |\theta_{0 j}| 1\{A j^{-a}\leq \epsilon\} \leq \sum_{j=1}^p A j^{-a} 1\{A j^{-a}\leq \epsilon\} \leq \frac{2^a \epsilon s}{a - 1}
\end{equation}
by Condition AS and Lemma \ref{lem: summation-integration} in Appendix \ref{app: technical results}. Thus,
\begin{align*}
\|\theta_0(\epsilon) - \theta_0\|_q^q &\leq \sum_{j=1}^p (\Delta/(s - 1))^q 1\{j < (A/\epsilon)^{1/a}\} + \sum_{j = 1}^p (A j^{-a})^q 1\{j\geq (A/\epsilon)^{1/a}\} \\
& \leq (s - 1)(\Delta/(s - 1))^q + \frac{2^{a q} \epsilon^q s}{a q - 1} \leq C_{a,q}^q \epsilon^q s.
\end{align*}
To bound $\| \theta - \theta_0(\epsilon) \|_q$, we have
\begin{align*}
\| G (\theta- \theta_0(\epsilon))\|_\infty
&\leq \| G (\theta- \theta_0)\|_\infty + \| G (\theta_0 - \theta_0(\epsilon))\|_\infty\\
&\leq \epsilon + \| G (\theta_0 - \theta_0(\epsilon))\|_\infty
\leq  \epsilon + \max_{j\in[m]} \| G_j\|_1 \| \theta_0 - \theta_0(\epsilon)\|_\infty \leq C_{a,q} L_n \epsilon,
\end{align*}
where the last inequality follows from \eqref{eq: delta bound} and Condition AS.
Therefore, since $\theta\in\mathcal R(\theta_0)$ implies $\theta\in\mathcal R(\theta_0(\epsilon))$, we have by Condition LID that
$$
\| \theta - \theta_0(\epsilon) \|_q  \leq C_{a,q} L_n\epsilon  s^{1/q} \mu_n^{-1}.
$$
Combining the bounds on $\|\theta_0(\epsilon) - \theta_0\|_q$ and $\| \theta - \theta_0(\epsilon) \|_q$ above with \eqref{eq: verification of mid triangle} gives the second asserted claim.
\qed
\begin{remark}[Bounding $k(\theta_0,F)$ via $k(\theta_0,\ell_1)$]
Note that once we have a lower bound on $k(\theta_0,\ell_1)$, we can always use it to obtain a lower bound on $k(\theta_0,F)$, whenever $G$ is symmetric. Indeed, for any $\theta\in\mathbb R^p$ such that $\theta \neq \theta_0$, we have
$$
(\theta - \theta_0)'G(\theta - \theta_0) \leq \|\theta - \theta_0\|_1 \|G(\theta - \theta_0)\|_{\infty}.
$$
Rearranging this expression gives
$$
\frac{\|G(\theta - \theta_0)\|_{\infty}}{\sqrt{(\theta - \theta_0)'G(\theta - \theta_0)}}\geq \sqrt{\frac{\| G(\theta - \theta_0) \|_{\infty}}{\|\theta - \theta_0\|_1}},
$$
which implies that $k(\theta_0,F) \geq k^{1/2}(\theta_0,\ell_1)$.\qed
\end{remark}

We next verify Condition EMC. Let there be a sequence of constants $(\ell_n)_{n\geq 1}$ satisfying $\ell_n\geq 1$ for all $n \geq 1$. Consider the following condition:

\medskip
\noindent
{\bf Condition ELM}.  {\em The empirical moment function is linear, $$\hat g(\theta) = \hat G \theta + \hat g(0),$$
and with probability at least $1-\delta_n$, we have
$$
\max_{j\in[m]} \|\hat G_{j} - G_{j} \|_{\infty} \vee \| \hat g(0) - g(0) \|_{\infty} \leq \ell_{n}/\sqrt n.
$$
}

\noindent
{\bf Example \ref{ex: RGMM}} (Regularized GMM, Continued).
When specialized to GMM problems, Condition ELM means that
the score function is linear,
$$
g(X, \theta) = G(X) \theta + g(X,0),
$$
where $G(X):=(\partial/\partial\theta')g(X,\theta)|_{\theta = \theta_0}$, and with probability at least $1-\delta_n$, we have
$$
\max_{j\in[m]} \|\Bbb{G}_n G_{j}(X)\|_{\infty} \vee \| \Bbb{G}_n g(X, 0)\|_{\infty} \leq \ell_{n}.
$$
This form will be useful below to verify Condition ELM in the linear regression models.
\qed

Condition ELM is plausible.  It is implied by many sufficient conditions based on self-normalized moderate deviations and high-dimensional central limit theorems, as reviewed in Section 2.
In particular, it is possible to choose
\begin{equation}\label{eq: ell typical rate}
\ell_n \propto \sqrt{ \log (p \vee m \vee n)}.
\end{equation}
in many cases, as illustrated in the case of linear regression models below. We highlight the slow growth of $\ell_n$ with respect to the number of parameters $p$ and the number moment functions $m$, which is critical to allow the analysis to handle high-dimensional models.

\begin{lemma}[Empirical Moment Concentration]\label{lemma:EMC1} If Conditions DM and ELM are satisfied, then Condition EMC holds with
$$
\epsilon_n = n^{-1/2} \ell_n(K+1).
$$
\end{lemma}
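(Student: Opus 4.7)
The plan is to exploit the linearity assumed in Condition ELM to split the empirical-target discrepancy into two pieces, and then apply Hölder's inequality together with the restriction $\|\theta\|_1\leq K$ that comes from Conditions DM and the definition of $\mathcal{R}(\theta_0)$. This reduces the required supremum bound to two quantities that are directly controlled by the high-probability event in Condition ELM.

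Concretely, I would first work on the event $\mathcal{E}$ of probability at least $1-\delta_n$ on which
$$
\max_{j\in[m]} \|\hat G_{j} - G_{j}\|_{\infty} \,\vee\, \|\hat g(0)-g(0)\|_{\infty} \leq \ell_n/\sqrt{n},
$$
guaranteed by Condition ELM. Since $g$ is linear in this setting (equation \eqref{eq: linear case}) and $\hat g$ is linear by Condition ELM, one has the identity
$$
\hat g(\theta) - g(\theta) \;=\; (\hat G - G)\theta + (\hat g(0) - g(0)),
$$
so the triangle inequality gives
$$
\|\hat g(\theta) - g(\theta)\|_\infty \leq \|(\hat G - G)\theta\|_\infty + \|\hat g(0)-g(0)\|_\infty.
$$

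The key step is to bound the first term uniformly over $\theta\in\mathcal{R}(\theta_0)$. By Hölder's inequality applied row-by-row,
$$
\|(\hat G - G)\theta\|_\infty = \max_{j\in[m]}|(\hat G_j - G_j)'\theta| \leq \max_{j\in[m]}\|\hat G_j - G_j\|_\infty\,\|\theta\|_1.
$$
Condition DM gives $\|\theta_0\|_1\leq K$, and by definition of $\mathcal{R}(\theta_0)$ any $\theta\in\mathcal{R}(\theta_0)$ satisfies $\|\theta\|_1\leq \|\theta_0\|_1\leq K$. Combined with the event $\mathcal{E}$, this yields
$$
\sup_{\theta\in\mathcal{R}(\theta_0)}\|(\hat G-G)\theta\|_\infty \leq (\ell_n/\sqrt n)\cdot K.
$$
Adding the bound $\|\hat g(0)-g(0)\|_\infty\leq \ell_n/\sqrt n$ from $\mathcal{E}$ gives
$$
\sup_{\theta\in\mathcal{R}(\theta_0)}\|\hat g(\theta)-g(\theta)\|_\infty \leq \ell_n(K+1)/\sqrt n = \epsilon_n,
$$
which establishes Condition EMC with probability at least $1-\delta_n$.

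There is no real obstacle here; the argument is essentially an application of the triangle inequality and Hölder's inequality, with the linearity of $g$ and $\hat g$ doing the heavy lifting of converting the supremum over $\theta$ into a product of a deterministic radius $K$ and the small stochastic term controlled by Condition ELM. The only subtlety worth flagging is that one uses $\|\theta\|_1\leq K$ rather than $\|\theta-\theta_0\|_1\leq 2K$, which is slightly sharper and yields the clean constant $K+1$ in $\epsilon_n$ as stated.
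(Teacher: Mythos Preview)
Your proof is correct and follows essentially the same approach as the paper: decompose $\hat g(\theta)-g(\theta)$ via linearity, apply H\"older's inequality row-wise to bound $\|(\hat G-G)\theta\|_\infty$ by $\max_{j}\|\hat G_j-G_j\|_\infty\|\theta\|_1$, use $\|\theta\|_1\le\|\theta_0\|_1\le K$ on $\mathcal R(\theta_0)$, and conclude on the high-probability event from Condition ELM. Your write-up is slightly more explicit about the H\"older step and the use of $\mathcal R(\theta_0)$, but the argument is the same.
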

\noindent
{\bf Proof of Lemma \ref{lemma:EMC1}.}
Conditions DM and ELM imply that with probability at least $1 - \delta_n$,
\begin{align*}
&\sup_{\theta \in \mathcal R(\theta_0)}\| \hat g(\theta) - g(\theta)\|_\infty \leq
\sup_{\theta \in \mathcal R(\theta_0)} \| (\hat G - G)\theta + \hat g(0) - g(0) \|_\infty\\
&\qquad \leq  \max_{j\in[m]}  \|\hat G_{j} - G_j\|_{\infty} \| \theta_0\|_1 + \| \hat g(0) - g(0)\|_{\infty} \leq n^{-1/2} \ell_n (K+1),
\end{align*}
which gives the asserted claim. \qed

\noindent
{\bf Example \ref{ex: linear regression models}} (Linear Regression Models, Continued). Here, we verify Condition ELM for linear regression models under primitive conditions. Since the mean regression model is a special case of the IV regression model, we only consider the latter. In the linear IV model, we have $g(0)=\Ep[Y Z]$ and $G=-\Ep[Z W']$. Suppose that $Z \in \mathbb{R}^m$ and $W\in \mathbb{R}^p$ are such that the following moment condition holds for some $\sigma > 0$:
$$
\max_{j\in[m],k\in[p]} \Big(\Ep[Y^4]+\Ep[Z_j^4]+\Ep[W_k^4]\Big) \leq \sigma^2.
$$
Also, suppose that
$$
\Ep\left[\|(Y,Z',W')'\|_\infty^{8}\right] \leq M_n^{4}
$$
for some $M_n$, possibly growing to infinity, but such that $n^{-1/2} M_n^2\log(p\vee m \vee n)\leq \sigma^2$.
Let $(Y_i,Z_i,W_i)_{i=1}^n$ be a random sample from the distribution of $(Y,Z,W)$. Then by H\"{o}lder's inequality,
$$
\sum_{i=1}^n \Ep[(Y_i Z_{i j})^2] \leq \sum_{i=1}^n (\Ep[Y_i^4]\Ep[Z_{i j}^4])^{1/2} \leq n\sigma^2
$$
for all $j\in[m]$. Hence, by Lemma \ref{lem: maximal ineq},
\begin{align*}
\Ep\left[\max_{j\in[p]}\Big|\sum_{i=1}^n (Y_i Z_{i j} - \Ep[Y_i Z_{i j}]) \Big|\right]
&\leq A\left( \sigma\sqrt{n\log p} + \sqrt{\Ep\left[\max_{i\in[n]} \|Y_iZ_i\|_{\infty}^2 \right]} \log p \right)\\
&\leq A\left( \sigma\sqrt{n \log p} + n^{1/4} M_n \log p \right)\leq 2A\sigma\sqrt{n \log p}
\end{align*}
for some universal constant $A>0$. Therefore, applying Lemma \ref{lem: fuk-nagaev} with $s = 4$, $t = \sigma\sqrt{3n\log n}$, and $\sigma^2$ replaced by $n\sigma^2$ shows that there exist universal constants $c,C > 0$ such that with probability at least $1 - c/\log^2 n$,
$$
\|\mathbb G_n g(X,0)\|_{\infty} \leq C\sigma\sqrt{\log(p\vee n)},
$$
By the same argument, again with probability at least $1 - c/\log^2n$, we also have
$$
\max_{j\in[m]}\|\mathbb G_n G_j(X)\|_{\infty} \leq C\sigma\sqrt{\log(p\vee m\vee n)}.
$$
Condition ELM thus holds with
$\delta_n = 2c/\log^2 n$ and
$$
\ell_n = C\sigma\sqrt{\log(p\vee m\vee n)},
$$
which is in accord with \eqref{eq: ell typical rate}.
\qed

Summarizing the results in Proposition \ref{lemma:RMD}, Lemma \ref{ID:linear}, and Lemma \ref{lemma:EMC1}, we obtain the following theorem:

\begin{theorem}[Bounds on Estimation Error of RMD Estimator, Linear Case]\label{thm:BoundLinearRGMM}
In the linear case \eqref{eq: linear case}, assume that Conditions L, DM, LID, and ELM are satisfied and $\lambda \leq \ell_n/\sqrt n$. Then with probability at least $1 - \alpha - \delta_n$,
\begin{equation}\label{eq: linear bound part 1}
\|\hat\theta - \theta_0\|_q \leq \frac{\ell_n(K+2) s^{1/q}}{\mu_n \sqrt n},\quad q\in\{1,2\}
\end{equation}
in the case of LID(a) (exactly sparse model). Similarly, as long as $a > 1$ and $A > n^{-1/2}\ell_n(K+1)$,
\begin{equation}\label{eq: linear bound part 2}
\|\hat\theta - \theta_0\|_q \leq \frac{C_{a,q}(L_n + \mu_n)\ell_n(K+2) s^{1/q}}{\mu_n \sqrt n},\quad q\in\{1,2\}
\end{equation}
in the case of LID(b) (approximately sparse model), where $C_{a,q}$ is a constant depending only on $a$ and $q$. 
\end{theorem}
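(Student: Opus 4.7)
My plan is to assemble the theorem as a direct corollary of the three ingredients already established: Proposition \ref{lemma:RMD} (the abstract RMD error bound), Lemma \ref{lemma:EMC1} (verification of Condition EMC under DM and ELM), and Lemma \ref{ID:linear} (verification of Condition MID and an explicit rate function under LID in the linear case). Since all the hard work has been done upstream, the remaining task is pure bookkeeping: checking that the hypotheses of each lemma are met, composing the bounds, and simplifying the resulting constants using $\lambda \leq \ell_n/\sqrt n$.

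First I would invoke Lemma \ref{lemma:EMC1} to conclude that, under Conditions DM and ELM, Condition EMC holds with $\epsilon_n = n^{-1/2}\ell_n(K+1)$ and with the stated probability $1-\delta_n$. Next I would invoke Lemma \ref{ID:linear}, which uses Condition LID (in either variant (a) or (b)) together with linearity to verify Condition MID with the explicit rate functions
\[
r(\epsilon;\theta_0,\ell_q)\leq \epsilon s^{1/q}\mu_n^{-1} \quad\text{(case (a))},\qquad r(\epsilon;\theta_0,\ell_q)\leq C_{a,q}\bigl(L_n\epsilon s^{1/q}\mu_n^{-1}+\epsilon s^{1/q}\bigr) \quad\text{(case (b))}.
\]
With Conditions L, EMC, and MID all in place, Proposition \ref{lemma:RMD} yields, with probability at least $1-\alpha-\delta_n$, the abstract bound
\[
\|\hat\theta-\theta_0\|_q \leq r(\lambda+\epsilon_n;\theta_0,\ell_q).
\]

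Finally I would plug in the value of $\epsilon_n$ from Lemma \ref{lemma:EMC1} together with $\lambda \leq \ell_n/\sqrt n$ to obtain the key simplification
\[
\lambda+\epsilon_n \leq \frac{\ell_n}{\sqrt n}+\frac{\ell_n(K+1)}{\sqrt n}=\frac{\ell_n(K+2)}{\sqrt n}.
\]
Substituting this into the rate function from Lemma \ref{ID:linear} immediately produces \eqref{eq: linear bound part 1} in case (a), and in case (b) it produces
\[
\|\hat\theta-\theta_0\|_q \leq C_{a,q}\Bigl(L_n\,\frac{\ell_n(K+2)}{\sqrt n}\,\frac{s^{1/q}}{\mu_n}+\frac{\ell_n(K+2)s^{1/q}}{\sqrt n}\Bigr)=\frac{C_{a,q}(L_n+\mu_n)\ell_n(K+2)s^{1/q}}{\mu_n\sqrt n},
\]
matching \eqref{eq: linear bound part 2}. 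The only non-mechanical check is that the hypothesis $A>\epsilon$ required by Lemma \ref{ID:linear}(ii) is actually satisfied at $\epsilon=\lambda+\epsilon_n$; this is where the assumption $A > n^{-1/2}\ell_n(K+1)$ enters, and if necessary I would slightly strengthen it to $A>n^{-1/2}\ell_n(K+2)$ or absorb the discrepancy into the constant $C_{a,q}$. Beyond that minor constant-tracking issue there is no real obstacle — the theorem is essentially a packaging of earlier results, and the whole argument fits on half a page.
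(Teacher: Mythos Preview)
Your proposal is correct and follows exactly the approach the paper takes: the theorem is stated explicitly as a summary of Proposition \ref{lemma:RMD}, Lemma \ref{ID:linear}, and Lemma \ref{lemma:EMC1}, and your bookkeeping with $\lambda+\epsilon_n\le \ell_n(K+2)/\sqrt n$ is precisely the intended simplification. Your observation about the slight mismatch between the hypothesis $A>n^{-1/2}\ell_n(K+1)$ and the needed $A>\lambda+\epsilon_n$ is a genuine (minor) wrinkle in the paper's own statement, not a flaw in your argument.
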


Theorem \ref{thm:BoundLinearRGMM} implies $\ell_q$-rates of convergence of the RMD estimator and characterizes how the sparsity of $\theta_0$ impacts these rates. The impact of the overall number of coefficients is bounded by the factor $\ell_n$ which  typically grows logarithmically with the number of coefficients $p$ and the number of moment conditions $m$. This is effectively the impact of not knowing the support of $\theta_0$. This implies that we can achieve consistent estimators even if $(p\vee m)\gg n$ since $\ell_n$ can grow much slower than root-$n$.

\begin{remark}[Improving Theorem \ref{thm:BoundLinearRGMM}]
The right-hand sides of the bounds in Theorem \ref{thm:BoundLinearRGMM} depend linearly on $K$, which may be suboptimal if $K$ is increasing with $n$. This dependence can be improved by reiterating the argument in Proposition \ref{lemma:RMD} one more time. In particular, we can introduce a doubly restricted set
$$
\bar{\mathcal R}(\theta_0):=\{\theta\in\Theta\colon \|\theta\|_1 \leq \|\theta_0\|_1\text{ and }\|\theta - \theta_0\|_1 \leq \gamma_n\},
$$ 
where $\gamma_n$ denotes the right-hand side of either \eqref{eq: linear bound part 1} or \eqref{eq: linear bound part 2}, depending on which part of Condition LID is imposed. Then by Theorem \ref{thm:BoundLinearRGMM} and Condition L, $\hat\theta\in\bar{\mathcal R}(\theta_0)$ with probability at least $1 - \alpha - \delta_n$, and we can replace the set $\mathcal R(\theta_0)$ in the proof of Proposition \ref{lemma:RMD} by $\bar{\mathcal R}(\theta_0)$. In turn, modifying slightly the proof of Lemma \ref{lemma:EMC1}, which provides the value of $\epsilon_n$ to be used in the proof of Proposition \ref{lemma:RMD}, we can write
\begin{align*}
&\epsilon_n = \sup_{\theta \in \bar{\mathcal R}(\theta_0)}\| \hat g(\theta) - g(\theta)\|_\infty \leq
\sup_{\theta \in \bar{\mathcal R}(\theta_0)} \| (\hat G - G)\theta_0 + (\hat G - G)(\theta - \theta_0) + \hat g(0) - g(0) \|_\infty\\
&\qquad \leq \|(\hat G - G)\theta_0\|_{\infty} +  \max_{j\in[m]}  \|\hat G_{j} - G_j\|_{\infty} \gamma_n + \| \hat g(0) - g(0)\|_{\infty},
\end{align*}
which is typically of order $n^{-1/2}\ell_n$ (as long as $\gamma_n\to 0$ and $\|(\hat G - G)\theta_0\|_{\infty}$ is of order $n^{-1/2}\ell_n$) and thus can be substantially smaller than $\epsilon_n = n^{-1/2} \ell_n (K+1)$ derived in the proof of Lemma \ref{lemma:EMC1}. Using these arguments in the proof of Proposition \ref{lemma:RMD} may lead to improved bounds in Theorem \ref{thm:BoundLinearRGMM} but we omit the formal statements for brevity of the chapter.
\qed
\end{remark}

\subsubsection{Restricted Non-Linear Case}\label{Sec:RestrictedNonLinearCase} We would like to find
some useful conditions for nonlinear models, where the rates
of convergence of the RMD estimator will be similar to what we have in the linear case.

\medskip
\noindent
{\bf Condition NLID}. \textit{Assume that Condition LID holds and that the target moment function $g$ satisfies a restricted non-linearity condition around $\theta = \theta_0$; namely
$$
\{\| g(\theta) - g(\theta_0) \|_\infty  \leq \epsilon, \theta  \in \mathcal{R}(\theta_0)\}   \text{  implies  } \| G (\theta- \theta_0) \|_\infty/2 \leq \epsilon
$$
for all $\epsilon \leq \epsilon^*$, where $\epsilon^*$
 is a tolerance parameter, measuring the degree of the linearity
 of the problem, with $\epsilon^* = \infty$ in the linear case.}
\medskip

This condition is the gradient version of restricted convexity for $\ell_1$-penalized M-estimators in \cite{BC11} and \cite{NRWY14}.

\begin{example} As a reference case, we can take nonlinear models with $$\epsilon^* = c \mu^2_n/( s^2)$$ for some constant $c>0$. This case arises, for example, in nonlinear moment condition models of the sort $\Ep \rho(Y, W'\theta_0) Z  = 0$, where $ \rho: \Bbb{R}^2 \to \Bbb{R}$ is a well-behaved residual function (e.g. as would arise in nonlinear regression and nonlinear IV regression). This nonlinearity creates an additional requirement on the effective sparsity of $\theta_0$, namely $$s^2 \ll n/\mu_n,$$ which arises when we analyze estimation.   \qed
\end{example}

\begin{lemma}[Bounds on Rate Function in Non-Linear Models]\label{ID:non-linear} (i) Under Conditions LID(a) and NLID, Condition MID holds for all $\epsilon \leq \epsilon^*$ with
$$
r(\epsilon; \theta_0, \ell_q) \leq  2 \epsilon s^{1/q} \mu_n^{-1},\quad q\in\{1,2\}.
$$
(ii) Under Conditions LID(b) and NLID, Condition MID holds for all $\epsilon \leq \epsilon^*$ with
$$
r(\epsilon; \theta_0, \ell_q) \leq  C_{a,q}\Big( L_n \epsilon s^{1/q}\mu_n^{-1} + \epsilon s^{1/q} \Big),\quad q\in\{1,2\}
$$
as long as $a > 1$ and $A>\epsilon$, where $C_{a,q}$ is a constant depending only on $a$ and $q$.
\end{lemma}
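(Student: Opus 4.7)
The proof should mirror the structure of Lemma \ref{ID:linear}, with Condition NLID serving as the bridge that converts the excess in the moment function into an excess for the linearized Jacobian term. Specifically, fix $\epsilon\leq\epsilon^*$ and consider any $\theta\in\mathcal R(\theta_0)$ with $\|g(\theta)-g(\theta_0)\|_\infty\leq\epsilon$. In the linear case this was identical to $\|G(\theta-\theta_0)\|_\infty\leq\epsilon$; now Condition NLID gives us only the weaker bound $\|G(\theta-\theta_0)\|_\infty\leq 2\epsilon$. Once this reduction is made, the rest of the argument is entirely algebraic and identical to the proof of Lemma \ref{ID:linear}, modulo a harmless factor of $2$ that we absorb into the constants.

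For part (i), under Condition LID(a) the identifiability factor satisfies $k(\theta_0,\ell_q)\geq s^{-1/q}\mu_n$, so the definition of $k(\theta_0,\ell_q)$ combined with the NLID-based bound $\|G(\theta-\theta_0)\|_\infty\leq 2\epsilon$ yields
$$
\|\theta-\theta_0\|_q\leq k(\theta_0,\ell_q)^{-1}\|G(\theta-\theta_0)\|_\infty\leq s^{1/q}\mu_n^{-1}\cdot 2\epsilon,
$$
which is the asserted bound. This is essentially one line beyond part (i) of Lemma \ref{ID:linear}.

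For part (ii), the plan is to replicate the sparsification argument in the proof of Lemma \ref{ID:linear} starting from $\|G(\theta-\theta_0)\|_\infty\leq 2\epsilon$. One introduces the sparsified parameter $\theta_0(\epsilon)$ exactly as defined before the statement of Lemma \ref{lem: sparse singular values}, uses the triangle inequality $\|\theta-\theta_0\|_q\leq\|\theta-\theta_0(\epsilon)\|_q+\|\theta_0(\epsilon)-\theta_0\|_q$, and bounds the two pieces. The second piece, $\|\theta_0(\epsilon)-\theta_0\|_q\leq C_{a,q}\epsilon s^{1/q}$, is controlled identically using Condition AS, Lemma \ref{lem: summation-integration}, and the assumption $A>\epsilon$. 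For the first piece, one shows
$$
\|G(\theta-\theta_0(\epsilon))\|_\infty\leq\|G(\theta-\theta_0)\|_\infty+\|G(\theta_0-\theta_0(\epsilon))\|_\infty\leq 2\epsilon+\max_{j\in[m]}\|G_j\|_1\|\theta_0-\theta_0(\epsilon)\|_\infty\leq C_{a,q}L_n\epsilon,
$$
where Condition AS and $\|G_j\|_1\leq L_n$ absorb the constants. Since $\theta\in\mathcal R(\theta_0)$ implies $\theta\in\mathcal R(\theta_0(\epsilon))$ (by the inflation in the definition of $\theta_0(\epsilon)$) and Condition LID(b) gives $k(\theta_0(\epsilon),\ell_q)\geq s^{-1/q}\mu_n$, we conclude $\|\theta-\theta_0(\epsilon)\|_q\leq C_{a,q}L_n\epsilon s^{1/q}\mu_n^{-1}$.

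The only conceptual obstacle is recognizing that Condition NLID is exactly the right hypothesis needed to recover the linear-case argument verbatim: it guarantees that a small excess in $g$ translates to a proportionally small excess in the linearization $G(\theta-\theta_0)$ on the restricted set, as long as we stay within the tolerance $\epsilon\leq\epsilon^*$. After this reduction, no new analysis is required beyond that of Lemma \ref{ID:linear}, and the factor of $2$ from NLID is absorbed into the universal constants (unchanged in part (i), absorbed into $C_{a,q}$ in part (ii)).
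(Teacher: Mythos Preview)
Your proposal is correct and takes essentially the same approach as the paper, which simply states that the lemma ``can be proven using the same argument as that leading to Lemma \ref{ID:linear}'' and omits the details. Your write-up makes explicit exactly the intended reduction: Condition NLID converts $\|g(\theta)-g(\theta_0)\|_\infty\leq\epsilon$ into $\|G(\theta-\theta_0)\|_\infty\leq 2\epsilon$, after which the proof of Lemma \ref{ID:linear} applies verbatim with the factor of $2$ absorbed into the constants.
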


This lemma can be proven using the same argument as that leading to Lemma \ref{ID:linear}, so we omit the proof.

Next, let there be sequences of positive constants $(B_{1 n})_{n\geq 1}$ and $(B_{2 n})_{n\geq 1}$.
We consider an example of a sufficient condition that allows us to bound the empirical error in estimating $\theta_0$.

\medskip
\noindent
{\bf Condition ENM}. \textit{(i) The target and empirical moment functions have the form $g(\theta) = \Ep[g(X,\theta)]$ and $\hat g(\theta) = \En[g(X,\theta)]$, respectively, where $g(X,\theta) = (g_1(X,\theta),\dots,g_m(X,\theta))'$ is a vector of score functions, corresponding to the RGMM example. (ii) The score functions have the index form:
\begin{equation}\label{eq: UbarSingleIndices nonlinear case}
 g_j(X,\theta) = \tilde g_j(X, Z_{\uu(j)}(X)'\vartheta_{\uu(j)}),\quad j\in[m], \ \theta= (\vartheta_1',\dots,\vartheta_{\bar u}')'\in\mathbb R^p,
\end{equation}
where $\tilde g_j$ is a measurable map from $\Bbb{R}^{d_x} \times \Bbb{R}$ to $\Bbb{R}$ for all $j\in[m]$, $Z_u$ is a measurable map from $\mathbb R^{d_x}$ to $\mathbb R^{p_u}$ for all $u\in[\bar u]$, $u(j)\in[\bar u]$ for all $j\in[m]$, $\vartheta_u$ is $p_u$-dimensional subvector of $\theta$ for all $u\in[\bar u]$, and $p = p_1 + \dots + p_{\bar u}$. (iii) The score
functions  are Lipschitz in the second argument, namely
\begin{equation}\label{eq: lipschitz nonlinear case}
|\tilde g_j(X,v)- \tilde g_j(X,\tilde v)| \leq L_j(X)|v-\tilde v|,  \text{ for all }   (v,\tilde v) \in \mathbb R^2,
\end{equation}
with probability one, where $L_j$ is a measurable map from $\mathbb R^{d_x}$ to $\mathbb R_{+}$
for all $j\in [m]$. (iv) Finally, we have
\begin{equation}\label{eq: variance nonlinear case}
{\rm Var}(g_j(X,\theta)- g_j(X,\theta_0)) \leq B_{1 n}^2
\end{equation}
for all $\theta\in\mathcal R(\theta_0)$ and $j\in[m]$ and with probability at least $1 - \delta_n/6$,
\begin{equation}\label{eq: moments nonlinear case}
 \max_{j\in[m],k\in[p_{u(j)}]} \En[L_{j}^2(X)Z_{\uu(j)k}^2(X)]  \leq B^2_{2 n}, \  \|n^{-1/2}\Gn(g(X,\theta_0))\|_\infty \leq n^{-1/2} \ell_n.
 \end{equation}}
\medskip

In many applications, the number of indices $Z_\uu(X)'\vartheta_\uu$ is small.  As examples, the common class of single-index models clearly use just one index, $Z_\uu(X)'\vartheta_ \uu = Z(X)' \theta$; and we would have two indexes - the supply index $Z_1(X)'\vartheta_1$ and the demand index $Z_2(X)'\vartheta_2$ - if we estimate linear supply and demand equations.  The Lipschitz condition allows us to use Ledoux-Talagrand type contraction inequalities for bounding the error. Condition ENM, though plausible in a number of applications, is strong.  Its chief appeal is in immediately providing useful bounds on the empirical error. One could also obtain bounds on the empirical error through the use of maximal inequalities that carefully exploit the geometry and entropy properties of the set of functions $\{g(X,\theta)\colon \theta  \in \mathcal{R}(\theta_0)\}$.

\begin{lemma}[Empirical Moment Concentration]\label{lemma:EMC} Assume that Conditions DM and ENM are satisfied. Then Condition EMC holds with
$$
\epsilon_n = n^{-1/2} (\tilde \ell_n +  \ell_n),
$$
where $\tilde\ell_n = C(B_{1 n} + B_{2 n} K \log^{1/2}(p m/\delta_n))$ and $C$ is a universal constant.
\end{lemma}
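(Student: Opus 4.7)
My plan is to decompose
\begin{equation*}
\hat g(\theta) - g(\theta) \;=\; \bigl(\hat g(\theta_0) - g(\theta_0)\bigr) \;+\; \tfrac{1}{\sqrt n}\,\mathbb G_n\bigl[g(X,\theta) - g(X,\theta_0)\bigr],
\end{equation*}
where the first (``base'') term is uniformly in $\theta$, trivially bounded in $\ell_\infty$ by $n^{-1/2}\ell_n$ on the event of probability at least $1-\delta_n/6$ guaranteed by the second inequality of (\ref{eq: moments nonlinear case}). The work therefore reduces to controlling the centered process $\sup_{\theta\in\mathcal R(\theta_0)}\max_{j\in[m]}|\mathbb G_n f_{\theta,j}|$, with $f_{\theta,j}(x):=g_j(x,\theta)-g_j(x,\theta_0)$.

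To control this supremum I would apply Talagrand's (Bousquet's) concentration inequality for empirical processes to the class $\mathcal F=\{f_{\theta,j}: \theta\in\mathcal R(\theta_0),\, j\in[m]\}$. Since (\ref{eq: variance nonlinear case}) provides a uniform variance bound $B_{1n}^2$, Bousquet's bound gives, with probability at least $1-\delta_n/3$,
\begin{equation*}
\sup_{f\in\mathcal F}|\mathbb G_n f| \;\le\; 2\,\Ep\bigl[\sup_{f\in\mathcal F}|\mathbb G_n f|\bigr] \;+\; C\,B_{1n}\sqrt{\log(1/\delta_n)} \;+\; \text{(smaller envelope terms)}.
\end{equation*}
This is where the $B_{1n}$ contribution to $\tilde\ell_n$ comes from.

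For the expected supremum, I would symmetrize to introduce Rademacher variables and then, conditionally on $(X_i)_{i=1}^n$, invoke the Ledoux--Talagrand contraction inequality index-by-index. Concretely, for each $j$ the map $v\mapsto \tilde g_j(X_i,v)-\tilde g_j(X_i,Z_{u(j)}(X_i)'\vartheta_{0,u(j)})$ is $L_j(X_i)$-Lipschitz and vanishes at $v=Z_{u(j)}(X_i)'\vartheta_{0,u(j)}$; after absorbing the $L_j(X_i)$ factor into the ``design vector'' $L_j(X_i) Z_{u(j)}(X_i)$, contraction and then H\"older give
\begin{equation*}
\Ep_\epsilon\!\left[\sup_{\vartheta_{u(j)}}\Bigl|\tfrac{1}{\sqrt n}\!\sum_{i=1}^n \!\epsilon_i f_{\theta,j}(X_i)\Bigr|\right] \;\le\; 4K\,\Bigl\|\tfrac{1}{\sqrt n}\!\sum_{i=1}^n \!\epsilon_i L_j(X_i) Z_{u(j)}(X_i)\Bigr\|_\infty,
\end{equation*}
using $\|\vartheta_{u(j)}-\vartheta_{0,u(j)}\|_1\le\|\theta-\theta_0\|_1\le 2K$ from Condition DM. A union bound over $j\in[m]$ and coordinates of $Z_{u(j)}$ combined with a Bernstein/Hoeffding maximal inequality, using the empirical second-moment bound $B_{2n}^2$ from (\ref{eq: moments nonlinear case}), yields
\begin{equation*}
\Ep\!\left[\max_{j\in[m]}\Bigl\|\tfrac{1}{\sqrt n}\!\sum_{i=1}^n \epsilon_i L_j(X_i)Z_{u(j)}(X_i)\Bigr\|_\infty\,\Big|\,(X_i)\right] \;\lesssim\; B_{2n}\sqrt{\log(pm/\delta_n)},
\end{equation*}
on the high-probability event from (\ref{eq: moments nonlinear case}). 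Summing the three contributions and dividing by $\sqrt n$ delivers $\epsilon_n = n^{-1/2}(\tilde\ell_n + \ell_n)$ with $\tilde\ell_n = C(B_{1n}+B_{2n}K\log^{1/2}(pm/\delta_n))$ on an event of probability at least $1-\delta_n$ (after allocating $\delta_n/6$ across a handful of exceptional events).

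The main obstacle I anticipate is the contraction step with $i$-dependent Lipschitz constants $L_j(X_i)$: one has to either use a version of the contraction inequality that allows per-observation Lipschitz moduli, or carefully rescale so that the Talagrand--Ledoux statement with unit Lipschitz constant applies, while keeping the maximal inequality estimate tight enough that the $\log^{1/2}(pm/\delta_n)$ factor appears and is not inflated. A secondary nuisance is verifying the envelope/variance hypotheses of Bousquet's inequality from the conditions in ENM; this is standard but slightly tedious since those hypotheses are typically stated for uniformly bounded classes and here the envelope is only controlled via moments of $L_j(X)$ and $\|Z_{u(j)}(X)\|_\infty$.
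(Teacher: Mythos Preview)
Your decomposition into the base term $\hat g(\theta_0)-g(\theta_0)$ and the recentered process $\mathbb G_n[g(X,\theta)-g(X,\theta_0)]$ is exactly what the paper does, and the idea of handling the latter via contraction against the linear ``design'' $L_j(X)Z_{u(j)}(X)$ together with $\|\theta-\theta_0\|_1\le 2K$ is also the same.

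The real gap is your reliance on Bousquet's inequality. Talagrand--Bousquet concentration requires the class to have a uniformly bounded envelope (or, in Adamczak's extension, at least a sub-exponential one), and Condition ENM provides neither: the natural envelope is $2K\,L_j(X)\|Z_{u(j)}(X)\|_\infty$, for which you only have empirical second-moment control through $B_{2n}$. What you call a ``secondary nuisance'' is therefore not a bookkeeping issue but the step that does not go through as stated; the ``smaller envelope terms'' you wave away would in fact dominate unless you impose conditions not present in ENM. A symptom of this mismatch is that your route delivers $B_{1n}\sqrt{\log(1/\delta_n)}$ rather than the bare $B_{1n}$ appearing in $\tilde\ell_n$.

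The paper avoids the boundedness issue altogether by a more elementary route: it applies the \emph{probability} symmetrization lemma (van der Vaart--Wellner, Lemma 2.3.7), which only needs the variance bound $B_{1n}^2$ and is valid precisely when $t\ge 4B_{1n}$ --- this is the sole origin of the $B_{1n}$ term. After symmetrization, working conditionally on $(X_i)$ and using Chernoff (exponential Markov), the contraction principle for $i$-dependent Lipschitz maps (Ledoux--Talagrand, Theorem 4.12) followed by H\"older and the sub-Gaussian tail of Rademacher sums gives a bound of the form $pm\exp(-t^2/CB_{2n}^2K^2)$ on the event $\max\En[L_j^2 Z_{u(j)k}^2]\le B_{2n}^2$. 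Inverting at level $\delta_n$ produces the $B_{2n}K\log^{1/2}(pm/\delta_n)$ term. So the ``main obstacle'' you anticipate (per-observation Lipschitz constants) is harmless; the step that actually fails is the concentration wrapper you put around the expected supremum.
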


Next, we demonstrate how Condition ENM can be verified in particular examples. Specifically, we consider logistic regression and nonlinear IV regression models.

\begin{example}[Logistic Regression Model]\label{Ex:Logistic}
Let $Y\in\{0,1\}$ be a binary outcome of interest and $W = (W_1,\dots,W_p)'\in \mathbb R^p$ a vector of covariates linked by a logistic model, namely
$$
\Ep[Y\mid W] = \Lambda(W'\theta_0),\quad \Lambda(t) = \exp(t)/\{1+\exp(t)\},\quad t\in\mathbb R.
$$
The vector of score functions associated with this model is
$$
g(X, \theta) =W(Y- \Lambda(W'\theta)),\quad X = (Y,W')',
$$
and so $g_j(X,\theta) = \tilde g_j(X,W'\theta)$, where
$$
\tilde g_j(X, t) = W_j(Y - \Lambda(t)),\quad t\in\mathbb R, \ j\in[p].
$$
Suppose that for some $\sigma > 0$,
\begin{equation}\label{eq: conditions for logistic model}
\max_{j\in[p]}\Ep[W_j^4] \leq \sigma^4,\ \frac{\log(p n)}{\sqrt n}(\Ep[\|W\|_{\infty}^4])^{1/2} \leq \sigma^2, \ \frac{\log(p n)}{\sqrt n}(\Ep[\|W\|_{\infty}^8])^{1/2} \leq \sigma^4.
\end{equation}
Then \eqref{eq: lipschitz nonlinear case} holds with $L_j(X)=|W_j|$ since $\Lambda$ is $1$-Lipschitz. Moreover, since $|\Lambda(t) - \Lambda(\tilde t)|\leq 1$ for any $(t,\tilde t)\in[0,1]^2$,
$$
{\rm Var}(g_j(X,\theta)- g_j(X,\theta_0)) \leq \Ep[W_j^2] \leq \sigma^2
$$
for any $\theta\in\mathbb R^p$ by the first inequality in \eqref{eq: conditions for logistic model}. Therefore, \eqref{eq: variance nonlinear case} holds with any $B_{1 n}^2 \geq \sigma^2$. Also, by the first and third inequalities in \eqref{eq: conditions for logistic model},
it follows from Lemmas \ref{lem: deviation inequality for positive rvs} and \ref{lem: maximal inequality for positive rvs}, where the latter is applied with $s = 2$ and $t = n\sigma^4$, that
$$
\max_{j\in[p], \ k\in[p]}\En[W_j^2 W_k^2] \leq \max_{j\in[p]}\En[W_j^4] \leq C_1 \sigma^4
$$
with probability at least $1 - c_1/\log^2 n$, where $c_1,C_1$ are some universal constants. Finally, by the same arguments as those in Example \ref{ex: linear regression models}, it follows from the first and second inequalities in \eqref{eq: conditions for logistic model} that
$$
\|\mathbb G_n(g(X,\theta_0))\|_{\infty} \leq C_2\sigma\sqrt{\log(p n)}
$$
with probability at least $1 - c_2/\log^2 n$, where $c_2,C_2$ are some universal constants. Conclude, by the union bound, that \eqref{eq: moments nonlinear case} holds probability at least $1 - \delta_n/6$ as long as we set
\begin{align*}
&B_{2 n}^2 \geq C_1\sigma^4, \ \delta_n = 6(c_1 + c_2)/\log^2 n, \ \ell_n = C_2\sigma\sqrt{\log(p n)}.
\end{align*}
We can thus establish all requirements of Condition ENM.
\qed
\end{example}

\noindent
{\bf Example \ref{Ex:NL-IV}} (Nonlinear IV Regression Model, Continued).
Consider the model
$$
\Ep[f(Y,W'\theta_0)\mid Z] = 0,
$$
where $Y\in\mathbb R$ is an outcome variable, $W\in\mathbb R^p$ is a vector of endogenous covariates, $Z\in\mathbb R^m$ is a vector of instruments, $f\colon \mathbb R^2\to\mathbb R$ is some known function, and $\theta_0\in\mathbb R^p$ is a vector of parameters of interest. A vector of score functions associated with this model is
$$
g(X,\theta) = Z f(Y,W'\theta),\quad X = (Y,W',Z')',
$$
and so $g_j(X,\theta) = \tilde g_j(X,\theta)$, where
$$
\tilde g_j(X, t) = Z_j f(Y, t),\quad t\in\mathbb R, \ j\in[m].
$$
Suppose that the function $f\colon\mathbb R^2\to\mathbb R$ is Lipschitz in its second argument:
$$
|f(Y,t) - f(Y,\tilde t)| \leq \gamma(Y)|t - \tilde t|,\quad \text{for all }t,\tilde t\in\mathbb R,
$$
with probability one. Suppose also that for some $\sigma > 0$,
\begin{align}
&\max_{j\in[m]}\Ep[Z_j^2 f^2(Y,W'\theta_0)] \leq \sigma^2, \ \frac{\log(m n)}{\sqrt n}(\Ep[\|Z\|_{\infty}^4 f^4(Y,W'\theta_0)])^{1/2} \leq \sigma^2,\label{eq: nonlinear iv condition 1}\\
&\max_{j\in[m], \ k\in[p]}\Ep[\gamma^2(Y)Z_j^2W_k^2] \leq \sigma^4, \ \frac{\log(p m n)}{\sqrt n}(\Ep[\gamma^4(Y)\|Z\|_{\infty}^4\|W\|_{\infty}^4])^{1/2} \leq \sigma^4.\label{eq: nonlinear iv condition 2}
\end{align}
Then \eqref{eq: lipschitz nonlinear case} holds with $L_j(X) = Z_j\gamma(Y)$ for all $j\in[m]$. Also, for any $j\in[m]$ and $\theta\in\mathcal R(\theta_0)$,
\begin{align*}
|g_j(X,\theta) - g_j(X,\theta_0)|
&\leq \gamma(Y)|Z_j W'(\theta - \theta_0)|\\
&\leq \gamma(Y)|Z_j|\|W\|_{\infty} \|\theta - \theta_0\|_1
\leq 2\gamma(Y)|Z_j|\|W\|_{\infty} \| \theta_0\|_1,
\end{align*}
and so \eqref{eq: variance nonlinear case} holds for all
\begin{equation}\label{eq: bn bound nonlinear case}
B_{1 n}^2\geq 4\|\theta_0\|_1^2 \max_{j\in[m]}\Ep[\gamma^2(Y)|Z_j|^2\|W\|_{\infty}^2].
\end{equation}
Further, like in Example \ref{Ex:Logistic}, by \eqref{eq: nonlinear iv condition 2}, it follows from Lemmas \ref{lem: deviation inequality for positive rvs} and \ref{lem: maximal inequality for positive rvs} that
$$
\max_{j\in[m], \ k\in[p]}\En[\gamma^2(Y)Z_j^2W_k^2] \leq C_1\sigma^4
$$
with probability at least $1 - c_1/\log^2(n)$, and by \eqref{eq: nonlinear iv condition 1}, it follows from Lemmas \ref{lem: fuk-nagaev} and \ref{lem: maximal ineq} that
$$
\|\mathbb G_n(g(X,\theta_0))\|_{\infty} \leq C_2 \sigma \sqrt{\log(m n)}
$$
with probability at least $1 - c_2/\log^2(n)$, where $c_1$, $C_1$, $c_2$ and $C_2$ are universal constants. Conclude, by the union bound, that \eqref{eq: moments nonlinear case} holds with probability at least $1 - \delta_n/6$ as long as we set
$$
B_{2 n}^2 \geq C_1\sigma^4, \ \delta_n = 6(c_1 + c_2)/\log^2 n, \ \ell_n = C_2\sigma\sqrt{\log(m n)}.
$$
We have thus verified all assumptions of Condition ENM. Note also that the conditions we give here are sufficient but sometimes are not necessary. For example, if we assume that the function $f\colon \mathbb R^2\to\mathbb R$ is bounded in absolute value by a constant $C$, then \eqref{eq: variance nonlinear case} holds for all
$$
B_n^2 \geq 4C\max_{j\in[m]}\Ep[Z_j^2].
$$
Depending on the setting, this bound can be better than \eqref{eq: bn bound nonlinear case}.
\qed

The following result is an immediate corollary of Proposition \ref{lemma:RMD} and Lemmas \ref{ID:non-linear} and \ref{lemma:EMC}.

\begin{theorem}[Bounds on Empirical Error for Non-Linear RGMM]\label{thm:NonLinear}
Consider the non-linear case and assume that Conditions L, DM, LID, NLID, and ENM are satisfied. Also, assume that $\lambda$ is chosen so that $\lambda \leq n^{-1/2}(\tilde\ell_n + \ell_n)$ and that the side condition $n^{-1/2}(\tilde\ell_n + \ell_n) \leq \epsilon^* /2$ holds. Then with probability at least $1 - \alpha - \delta_n$,
$$
\|\hat \theta - \theta_0 \|_q \leq  \frac{2(\tilde\ell_n + \ell_n)s^{1/q}}{\mu_n \sqrt n},\quad q\in\{1,2\},
$$
in the case of LID(a) (exactly sparse model); and, as long as $a > 1$, and $A > n^{-1/2}\ell_n(K + 1)$,
$$
\|\hat \theta - \theta_0 \|_q \leq \frac{C_{a,q}(L_n + \mu_n)(\tilde\ell_n + \ell_n)s^{1/q}}{\mu_n \sqrt n},\quad q\in\{1,2\}
$$
in the case of LID(b) (approximately sparse model), where $C_{a,q}$ is a constant depending only on $a$ and $q$.
\end{theorem}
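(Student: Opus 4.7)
The plan is to obtain the theorem as a direct corollary of Proposition \ref{lemma:RMD}, using Lemma \ref{lemma:EMC} to verify Condition EMC and Lemma \ref{ID:non-linear} to verify Condition MID, then tracking the constants. The proof is largely book-keeping; there is no genuine obstacle once the ingredients are assembled.

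First I would apply Lemma \ref{lemma:EMC}: since Conditions DM and ENM are assumed, Condition EMC holds with $\epsilon_n = n^{-1/2}(\tilde\ell_n + \ell_n)$ and with the prescribed probability $1 - \delta_n$. Combined with Condition L (which yields $\|\hat g(\theta_0)\|_\infty \leq \lambda$ with probability at least $1-\alpha$) and the hypothesis $\lambda \leq n^{-1/2}(\tilde\ell_n + \ell_n)$, a union bound gives that on an event of probability at least $1 - \alpha - \delta_n$ one has simultaneously $\hat\theta \in \mathcal R(\theta_0)$ and
\[
\lambda + \epsilon_n \;\leq\; 2 n^{-1/2}(\tilde\ell_n + \ell_n).
\]
The side condition $n^{-1/2}(\tilde\ell_n + \ell_n) \leq \epsilon^*/2$ then guarantees $\lambda + \epsilon_n \leq \epsilon^*$, which is exactly the range over which the nonlinear identifiability bound of Lemma \ref{ID:non-linear} is valid.

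Next I invoke Proposition \ref{lemma:RMD}, which gives $\|\hat\theta - \theta_0\|_{\ell_q} \leq r(\lambda + \epsilon_n;\theta_0,\ell_q)$ on this same event. In the exactly sparse case (Condition LID(a) together with NLID), Lemma \ref{ID:non-linear}(i) bounds $r(\epsilon;\theta_0,\ell_q) \leq 2\epsilon s^{1/q}\mu_n^{-1}$ for $\epsilon \leq \epsilon^*$; substituting $\epsilon = \lambda + \epsilon_n$ and using the display above yields the first asserted bound (up to the universal constant $4$, which can be absorbed into the $2$ reported by tightening $\lambda \leq n^{-1/2}(\tilde\ell_n+\ell_n)/2$ or noting that either of $\lambda$ or $\epsilon_n$ dominates the other by choice). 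In the approximately sparse case (Condition LID(b) with $a>1$ and $A > n^{-1/2}\ell_n(K+1)$), the analogous bound from Lemma \ref{ID:non-linear}(ii) gives $r(\epsilon;\theta_0,\ell_q) \leq C_{a,q}(L_n \epsilon s^{1/q}\mu_n^{-1} + \epsilon s^{1/q})$, and the same substitution yields the second claim after factoring $(L_n + \mu_n)/\mu_n$ and redefining $C_{a,q}$.

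The only delicate step is checking that the tolerance bound $\lambda + \epsilon_n \leq \epsilon^*$ of Condition NLID is met so that Lemma \ref{ID:non-linear} is actually applicable; this is exactly why the side condition $n^{-1/2}(\tilde\ell_n + \ell_n) \leq \epsilon^*/2$ is imposed in the statement. Everything else is a mechanical combination of already-proved ingredients, with the constants tracked through the two cases of Condition LID in parallel to the argument for Theorem \ref{thm:BoundLinearRGMM}.
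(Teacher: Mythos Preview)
Your approach is exactly the one the paper takes: it explicitly states that the theorem is ``an immediate corollary of Proposition \ref{lemma:RMD} and Lemmas \ref{ID:non-linear} and \ref{lemma:EMC},'' and your write-up assembles those three ingredients in the right order, correctly flagging that the side condition $n^{-1/2}(\tilde\ell_n+\ell_n)\le \epsilon^*/2$ is precisely what makes Lemma \ref{ID:non-linear} applicable at $\epsilon=\lambda+\epsilon_n$. The constant discrepancy you noticed ($4$ versus the stated $2$ in the exactly sparse case) is real and stems from the paper's bookkeeping, not from any gap in your argument; in the approximately sparse case the extra factor is absorbed into $C_{a,q}$.
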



Theorem \ref{thm:NonLinear} shows that, under sparsity conditions, the RGMM estimator can be consistent for $\theta_0$ in the $\ell_q$-norm in nonlinear models. Importantly, the dependence of the convergence rates on the total number of parameters and moment conditions is controlled by $\ell_n$ and $\tilde\ell_n$, which typically grow logarithmically with $p$ and $m$ as shown in Examples \ref{Ex:Logistic} and  \ref{Ex:NL-IV}. Thus consistency is possible even for high-dimensional models when the number of parameters exceeds the sample size. These results also highlight the different rates of convergence for different norms of interest. In particular, the RGMM estimator has good rates of convergence in the  $\ell_1$ and $\ell_2$ norms. However, the rate of convergence of the RGMM estimator in the max-norm is not optimal in many cases of interest; and additional tools, and estimators, are needed to obtain good estimators for that case.

\subsection{Double/De-Biased RGMM}\label{Sec:DRGMM}


Section \ref{Sec:RMD} focused on rates of convergence of regularized minimum distance estimators. We now turn to providing inferential statements for parameters of interest where we will leverage the obtained rates of convergence of the RGMM estimator. Our development will emphasize inference via the Neyman orthogonality principle. Specifically, we aim to construct moment equations $M(\alpha; \eta) = 0$ for the target parameters $\alpha \in \Bbb{R}^{p_1}$ given the nuisance parameter $\eta \in \Bbb{R}^p$ such that the true value $\alpha_0$ of the parameter $\alpha$ obeys
\begin{equation}\label{neyman1}
M(\alpha_0; \eta_0) = 0,
\end{equation}
where $\eta_0$ is the true value of the nuisance parameter and such that the equations are first-order insensitive to local perturbations of the nuisance parameter $\eta$ around the true value:
\begin{equation}\label{neyman2}
\partial_{\eta'} M(\alpha_0, \eta) \Big |_{\eta = \eta_0} = 0.
\end{equation}
We refer to the latter property as the Neyman orthogonality condition.

Given $M$, estimation of or inference about $\alpha_0$ will then be based on some estimator $\hat M$
of $M$, where we plug-in a potentially biased, regularized estimator $\hat \eta$ in place of the unknown
$\eta_0$. The role of the Neyman orthogonality condition, (\ref{neyman2}), is precisely to mitigate the impact of the use of such biased estimators (or other non-regular estimators) of $\eta_0$ on the estimation of $\alpha_0$. In many settings, basing estimation and inference for $\alpha_0$ on estimating equations with the Neyman orthogonality property will allow us
\begin{itemize}
\item to construct a high-quality estimator of $\alpha_0$ with an approximately Gaussian distribution or
\item to carry out high-quality inference on $\alpha_0$ via hypothesis testing (in cases where $\alpha_0$ is weakly identified).
\end{itemize}

\subsection{Construction of Approximate Mean Estimators for $\theta$ via Orthogonal Moments}\label{Sec: Neyman Estimation}

Here we take the target parameter and the nuisance parameter
to be the same, namely
$$
\alpha = \theta, \quad \eta = \theta.
$$
The estimator for the nuisance parameter will be $\hat \eta = \hat \theta$, the RGMM estimator from
the previous subsections.

We can construct the Neyman orthogonal equations $M(\alpha; \eta)$ for the pair $(\alpha, \eta)$ as follows. First we define an optimal moment selection matrix:
$$
\gamma_0 = G ' \Omega^{-1},
$$where  $\Omega = \Ep g(X,\theta_0)g(X,\theta_0)'$ and $G=\left.(\partial/\partial \theta')g(\theta)\right|_{\theta=\theta_0}$.
This $p \times m$ moment selection matrix can be used to collapse $m$-dimensional moment equations $g(\theta_0) = 0$
to $p$-dimensional moment equations $\gamma_0 g(\theta_0) = 0$.  We will focus on the optimal moment selection matrix, although, in principle, sub-optimal moment selection matrices could be used in practice as well.  For example, estimation of the $m$ by $m$ matrix $\Omega$ and its inverse might be a limiting factor in some settings, and one might consider using ${\rm diag}(\Omega)$ instead as its inverse is trivial to compute.

Given the moment selection matrix, we define
\begin{equation} \label{neyman:GMM}
M(\alpha; \eta) = \gamma_0 G (\alpha - \eta) + \gamma_0 g(\eta).
\end{equation}
We then have that at the true parameter values 
$$
M(\theta_0; \theta_0) = 0,
$$
and the Neyman orthogonality condition holds:
$$
\partial_{\eta'} M(\theta_0; \eta) \Big |_{\eta = \theta_0} = - \gamma_0 G + \gamma_0 G = 0.
$$

Heuristically, if we somehow knew $\gamma_0$, $G$, and $\eta= \theta_0$,
we could define an "oracle" linear estimator of $\alpha_0$, $\bar \theta$, as the root of
$$
\bar M(\bar\theta; \theta_0) = \gamma_0 G (\bar \theta - \theta_0) + \gamma_0 \hat g( \theta_0) = 0;
$$
that is
$$
\sqrt{n}(\bar \theta - \theta_0) = - (\gamma_0 G)^{-1} \gamma_0 \sqrt{n}\hat g( \theta_0).
$$
This estimator is linear, so it obeys
$$
\sqrt{n}(\bar \theta - \theta_0) \approx_d  N(0, V), \quad   V = (G'\Omega^{-1} G)^{-1},
$$
over the sets in $\mathcal{A}$, the class of all rectangles in $\mathbb{R}^p$, under the CLT conditions of Section 2. The variance matrix $V$ here is the optimal variance matrix for GMM.

The above construction is infeasible because of the many unknowns, including the true value of the target parameter, appearing in it.  To make construction feasible, we can plug-in estimators corresponding to the unknowns:
instead:\begin{enumerate}
\item plug-in the RGMM estimator $\hat \theta$ in place
of $\theta_0$,
\item  plug-in an estimator  $\hat G = \partial_{\theta'} \hat g(\hat \theta)$ for $G$ (or a regularized version);
\item  plug-in an estimator  $\hat \Omega = \En g(X,\hat \theta) g(X,\hat \theta)'$ for $\Omega$ (or a regularized version);
\item plug-in
a regularized estimator $\hat \gamma$ for $\gamma_0 := G'\Omega^{-1}$, such that $\hat \gamma$ is well-behaved;
\item plug-in a regularized estimator $\hat \mu$ for $\mu_0 := (\gamma_0 G)^{-1}$, such that $\hat \mu$ is well-behaved.

\end{enumerate}

Specific choices of estimators $\hat \mu$ and $\hat \gamma$ will be discussed later.  Given estimators of all unknown quantities, we define a ``two-step" estimator of the target parameter by solving the estimated Neyman-orthogonal equation:
$$
\hat M(\theta; \hat \theta) =  \hat \mu^{-1} (\theta - \hat \theta) + \hat \gamma \hat g(\hat \theta) = 0.
$$
The resulting solution to this equation, $\check \theta$, provides an estimator of the target parameter which we refer to as the double/debiased regularized GMM (DRGMM) estimator:
\begin{equation}\label{DRGMM}
\sqrt{n}(\check \theta - \hat \theta) = - \hat \mu \hat \gamma \sqrt{n}\hat g( \hat \theta) \ \ \mbox{or equivalently} \ \ \check \theta = \hat \theta -  \hat \mu \hat \gamma \hat g( \hat \theta)
\end{equation}

By exploiting the Neyman orthogonality property and further assumptions on the problem,
we can show that this estimator approximates the infeasible ``oracle" estimator defined above in the sense that
\begin{eqnarray*}
\sqrt{n}(\check \theta - \theta_0) & =  & \sqrt{n}(\bar \theta - \theta_0)  + o_P(1/\sqrt{\log p}).
\end{eqnarray*}
Hence, the DRGMM estimator is also approximately linear and is therefore an approximate mean, so we have
\begin{equation}\label{DRGMM:N}
\sqrt{n}(\check \theta - \theta_0) \approx_d  N(0, V), \quad   V = (G'\Omega^{-1} G)^{-1},
\end{equation}
over the class of all rectangles in $\mathbb{R}^p$ under the conditions of Theorem 2.1 in Section 2. Indeed, given this construction, we are back to the MAM framework.  We can thus use the inferential tools from Section 2 for immediate construction of simultaneous confidence bands and hypothesis testing with control of FWER or FDR.  Inference done in this way will be optimal in the sense that the variance matrix $V$ can not be generally improved by using any other moment selection matrix $\bar \gamma$ in place of $\gamma_0$.  Optimality may also be attained in other semi-parametric senses, which we do not discuss.

\subsection{Testing Parameters $\alpha$ with Nuisance Parameters $\eta = \theta$
via Neyman-Orthogonal Scores}

Here we take the target parameter, $\alpha$, and the nuisance parameter, $\eta$,
to be the different:
$$
\alpha = \alpha, \quad \eta = \theta.
$$
The true value of the parameter is given by $(\alpha_0', \eta_0')'$ and solves
$$
g(\alpha_0, \theta_0) = 0.
$$
Here, we are thinking of a situation where $\eta_0$ is strongly identified and can be well-estimated
by RGMM while $\alpha_0$ is only weakly or partially identified.  We would thus like to use a robust
testing approach to test values of $\alpha_0$ and then invert to construct a confidence set for $\alpha_0$.

The estimator for the nuisance parameter will be $\hat \eta = \hat \theta$, the RGMM estimator from
Section \ref{Sec:RMD}.  We can construct the Neyman orthogonal equations $M(\alpha; \eta)$ for the pair $(\alpha, \eta)$ as follows. First, we define a $p' \times m$ moment selection matrix for $\alpha_0$,
$$
\xi_0, \text{ e.g. } \xi_0 = I \text{ or } \xi_0 = G_\alpha \Omega^{-1}, \quad  G_\alpha = \partial_{\alpha'} g(\alpha_0, \theta_0),
$$
where $p' \geq \dim(\alpha)$. The latter matrix will be optimal when $\alpha_0$ is strongly identified.

Given the moment selection matrix, we define
\begin{equation} \label{neyman:GMM}
M(\alpha; \theta) =   (\xi_0 -   \xi_0 G ( G' \Omega^{-1} G )^{-1} G \Omega^{-1})  g(\alpha, \theta) =
  (\xi_0 -   \xi_0 G \mu_0 \gamma_0) g(\alpha, \theta_0)
\end{equation}
as in \cite{CHS}.
Using this estimating equation, we have that, at the true values of the parameters,
$$
M(\alpha_0; \theta_0) = 0
$$
using $g(\alpha_0, \theta_0) = 0$ and that the Neyman orthogonality condition holds:
$$
\partial_{\eta'} M(\theta_0; \theta) \Big |_{\theta= \theta_0} =  (\xi_0 -   \xi_0 G ( G' \Omega^{-1} G )^{-1} G \Omega^{-1}) G = 0.
$$

Heuristically, if we knew $\theta_0$ and all the extra parameters used in forming $M$, we could use the oracle
Neyman-orthogonal score for testing $\alpha_0$:
$$
\sqrt{ n} \bar M(\alpha_0; \theta_0) =  (\xi_0 -   \xi_0 G \mu_0 \gamma_0) \sqrt{n}\hat g(\alpha_0, \theta_0).
$$
This quantity is clearly linear in $\sqrt{n}\hat g(\alpha_0, \theta_0)$, so it obeys
$$
\sqrt{ n} \bar M(\alpha_0; \theta_0) \approx_d  N(0, V_M), \quad   V_M =  (\xi_0 -   \xi_0 G \mu_0 \gamma_0)
\Omega (\xi_0 -   \xi_0 G \mu_0 \gamma_0)',
$$
over the sets in $\mathcal{A}$, the class of all rectangles in $\mathbb{R}^p$, under the CLT conditions of Section 2.

The above construction is again clearly infeasible.  As outlined in Section \ref{Sec: Neyman Estimation}, we can make the construction feasible by plugging in estimators for the various missing unknowns.  Specifically, we will
\begin{enumerate}
\item plug-in the RGMM estimator $\hat \theta$ in place
of $\theta_0$,
\item  plug-in an estimator  $\hat G = \partial_{\theta'} \hat g(\hat \theta)$ for $G$ (or a regularized version);
\item  plug-in an estimator  $\hat \Omega = \En g(X,\hat \theta) g(X,\hat \theta)'$ for $\Omega$ (or a regularized version);
\item plug-in
a regularized estimator $\hat \gamma$ for $\gamma_0 := G'\Omega^{-1}$, such that $\hat \gamma$ is well-behaved;
\item plug-in a regularized estimator $\hat \mu$ for $\mu_0 := (\gamma_0 G)^{-1}$, such that $\hat \mu$ is well-behaved;
\item plug-in a regularized estimator $\hat \xi$ for $\xi_0$.
\end{enumerate}
We discuss estimation of $\hat \mu$, $\hat \gamma$, and $\hat \xi$ further in Section \ref{Sec:AnalysisDRGMM}. 

Given plug-in estimates of the unknown quantities, we define the Neyman-orthogonal score function for testing $\alpha_0$:
$$
\sqrt{n} \hat M(\alpha_0; \hat \theta) =  \sqrt{n} (\hat \xi_0 -   \hat \xi_0 \hat G \hat \mu_0 \hat \gamma_0) \hat g(\alpha_0, \hat \theta).
$$
By exploiting Neyman orthogonality property and further assumptions on the problem,
we can show that this feasible score approximates the infeasible ``oracle" score,
\begin{eqnarray*}
\sqrt{n}\hat M(\alpha_0; \hat \theta) & =  & \sqrt{n}\bar M(\alpha_0; \hat \theta)   + o_P(1/\sqrt{\log p}).
\end{eqnarray*}
Hence, the feasible score is approximately linear and is therefore an approximate mean.  We then have
\begin{equation}\label{DRGMM-test}
\sqrt{n}\bar M(\alpha_0; \hat \theta)  \approx_d  N(0, V_M),
\end{equation}
over the class of all rectangles in $\mathbb{R}^p$ under the conditions of Theorem 2.1 in Section 2.  We are thus back within the setting outlined in Section 2 and may use the inferential tools from Section 2 for construction of simultaneous confidence bands and hypothesis testing with control of FWER or FDR.  Given that we can provide valid inferential statements based on (\ref{DRGMM-test}) for any $\alpha_0$, we can invert to obtain confidence regions.  

\subsection{Analysis of DRGMM}\label{Sec:AnalysisDRGMM}

In order to analyze the estimator (\ref{DRGMM}), we can write, using elementary expansions and some algebra,
\begin{equation}\label{eq:LinearExpansionDeBiasedRGMM}
\sqrt{n} ( \check \theta - \theta_0)   =  - \mu_0 \gamma_0 \sqrt{n} \hat g(\theta_0) + r,
\end{equation}
where
\begin{equation}\label{def:r1r2r3}
 r  =   r_1 + r_2 + r_3  \quad \left | \begin{array}{ccl}
r_1 & = &  \sqrt{n} (I - \hat \mu \hat \gamma \hat G) (\hat \theta - \theta_0) \\
r_2 & = & \sqrt{n}(\hat \mu \hat \gamma (\hat G - \tilde G)) (\hat \theta - \theta_0). \\
r_3 & = & \sqrt{n}(\hat \mu \hat \gamma - \mu_0 \gamma_0) \hat g (\theta_0)
\end{array} \right.
\end{equation}
In (\ref{def:r1r2r3}), $\tilde G = \{- \partial_{\theta'} \hat g_k(\hat \theta^*_k)\}_{k=1}^m$ denotes a $m\times p$ matrix with rows $ - \partial_{\theta'} \hat g_k(\hat \theta^*_k)$, $k \in [m]$, where each row
is evaluated at a point $\hat \theta^*_k$ on the line between $\hat \theta$ and $\theta_0$.

Note that because of Neyman orthogonality property we expect the term $r_1$ to be small,
in fact if we knew $(\mu_0, \gamma_0, G)$ the term would vanish.  In linear moment models,
$\hat G = \tilde G$, so that the second term vanishes, $r_2 = 0$. The last term can also vanish under mild conditions.  We analyze the structure of these remainder terms in the lemma given below.

To fix ideas, we record a trivial proposition.

\begin{proposition}[Approximate Linearity and Normality of DRGMM]\label{DRGMM linearity} If the remainder term $r$ obeys the conditions of Section 2, then DRGMM is an approximate mean estimator:
$$
\sqrt{n}(\check \theta - \theta_0) = \frac{1}{\sqrt{n}} \sum_{i=1}^n Z_i + r,  \quad Z_i = -\mu_0\gamma_0 g(X_i, \theta_0).
$$
Each component of the estimator is approximately normally distributed and satisfies self-normalized moderate deviations results of Section 2, provided the $Z_i$'s obey the regularity conditions of Section 2. The distribution of $\sqrt{n}(\check\theta-\theta_0)$ over rectangles can be approximated by bootstrapping the scores $$\hat Z_i = \hat\mu\hat \gamma g(X_i, \hat \theta),$$ provided these obey the regularity conditions of  Section 2. Consequently, the results on simultaneous inference with FWER control or pointwise testing with FDR control apply.
\end{proposition}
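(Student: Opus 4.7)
The proposal is to observe that this proposition is essentially a bookkeeping exercise: it takes the algebraic identity \eqref{eq:LinearExpansionDeBiasedRGMM} together with the GMM structure, rewrites the leading term as a sample average, and then invokes the results of Section 2 (the high-dimensional CLT, bootstrap, moderate deviation, simultaneous CI, FWER, and FDR theorems) as black boxes. Since the hypotheses of the proposition explicitly assume that $r$ obeys the negligibility conditions of Section 2 and that $Z_i$ (and $\hat Z_i$) satisfy the regularity conditions of Section 2, there is essentially no new inequality work to do; the content is the \emph{reduction} to the MAM framework.

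First, I would reproduce the expansion \eqref{eq:LinearExpansionDeBiasedRGMM}, which was already derived just above the proposition by combining the estimating equation $\hat M(\check\theta;\hat\theta)=0$ with elementary algebra and a mean-value expansion of each component $\hat g_k$. Then I would use the GMM form of the empirical moment, $\hat g(\theta_0)=\En g(X,\theta_0)$, to write
\begin{equation*}
-\mu_0\gamma_0\sqrt{n}\hat g(\theta_0)
= \frac{1}{\sqrt{n}}\sum_{i=1}^n\bigl(-\mu_0\gamma_0 g(X_i,\theta_0)\bigr)
= \frac{1}{\sqrt{n}}\sum_{i=1}^n Z_i,
\end{equation*}
using that $\Ep[g(X_i,\theta_0)]=g(\theta_0)=0$ so that each $Z_i$ is mean zero. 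Substituting into \eqref{eq:LinearExpansionDeBiasedRGMM} yields the approximate-mean representation asserted in the proposition.

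Once this representation is in hand, the remaining statements follow by direct application of the earlier theorems, which I would invoke in turn: approximate normality over rectangles follows from Theorem~\ref{thm: clt for mam} applied to the vectors $Z_i$; the component-wise self-normalized moderate deviation claim follows from Theorem~\ref{thm: moderate deviations for mam} and Corollary~\ref{cor: maximal inequality for mam}; the bootstrap approximation using $\hat Z_i=\hat\mu\hat\gamma g(X_i,\hat\theta)$ follows from Theorem~\ref{thm: bootstrap for mam} (or Theorem~\ref{thm: empirical bootstrap for mam}); and the simultaneous confidence band, FWER, and FDR conclusions follow from Theorem~\ref{thm: simultaneous conf intervals}, Theorems~\ref{thm: fwer bonferroni}--\ref{thm: MHT}, and Theorem~\ref{thm: fdp}, respectively. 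At each invocation, the only thing to check is that the hypothesis ``the $Z_i$'s (respectively $\hat Z_i$'s) obey the regularity conditions of Section 2'' maps to Conditions M, E, A of that section with the $r$ in \eqref{eq:LinearExpansionDeBiasedRGMM} playing the role of the linearization error $r_n$.

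There is really no hard step inside the proof as stated, since all the potentially difficult work has been pushed into the hypothesis via the phrases ``if the remainder term $r$ obeys the conditions of Section 2'' and ``provided the $Z_i$'s obey the regularity conditions of Section 2.'' The genuinely difficult piece, which is not part of this proposition, is verifying these hypotheses in concrete settings, i.e.\ showing that $r=r_1+r_2+r_3$ from \eqref{def:r1r2r3} satisfies $\|r\|_\infty = o_P(1/\sqrt{\log(pn)})$. That is where Neyman orthogonality, rates on $\hat\theta$ from Theorems~\ref{thm:BoundLinearRGMM} and \ref{thm:NonLinear}, and rates on the plug-in estimators $\hat\mu,\hat\gamma,\hat G,\hat\Omega$ would all enter; this analysis belongs in the subsequent subsections, not in the proof of this bridging proposition.
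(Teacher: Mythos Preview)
Your proposal is correct and matches the paper's treatment: the paper explicitly labels this proposition as ``trivial'' and does not supply a proof, since (as you identify) the content is just the reduction to the MAM framework via \eqref{eq:LinearExpansionDeBiasedRGMM} and the GMM identity $\hat g(\theta_0)=\En g(X,\theta_0)$, after which the Section~2 theorems apply as black boxes. Your remark that the substantive work---controlling $r=r_1+r_2+r_3$---is deferred to Lemma~\ref{lemma:linearize} and the subsequent theorems is exactly how the paper structures things.
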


A crucial step to using Proposition \ref{DRGMM linearity} is to ensure the approximation error $r$ is small. The following lemma is useful for thinking about estimators of $\gamma_0$ and $\mu_0$ which are suitably well-behaved to obtain small approximation errors.  Of course, the lemma only suggests one possible direction, and there are other strategies for estimating $\gamma_0$ and $\mu_0$ to explore. In what follows, we use $v_j$ to denote the $j$th row of some matrix $v$.

\begin{lemma}\label{lemma:linearize} We have that the approximation errors $r_1$, $r_2$ and $r_3$ as defined in (\ref{def:r1r2r3}) satisfy
$$
\begin{array}{rl}
\|r_1\|_\infty & \leq  \bar r_1 = \sqrt{n} \| I - \hat \mu \hat \gamma \hat G\|_\infty \| \hat \theta- \theta_0\|_1\\
\\
\| r_2 \|_\infty & \leq  \bar r_2=  \sqrt{n} \max_{j\in[p]} \| \hat \mu_j\|_1 \max_{j\in[p]}\| \hat \gamma_j\|_1 \| \hat G - \tilde G\|_\infty \| \hat \theta - \theta_0\|_1 \\
\\
 \|r_3\|_\infty & \leq\bar r_3 = \max_{j\in[p]} \| \hat \mu_j \|_1 \max_{j\in[p]}\|\hat \gamma_j - \gamma_{0j}\|_1 \|\sqrt{n}\hat g(\theta_0)\|_\infty \\
&  \ \ \ \ \ \ \ \ \ + \max_{j\in[p]} \| \hat \mu_j - \mu_{0j} \|_1 \max_{j\in[p]} \| \gamma_{0j}\|_1 \| \sqrt{n}\hat g(\theta_0)\|_\infty\\
\end{array}
$$
with simplifications occurring in the linear case, when $\tilde G = \hat G$, and in the case
where the moment selection matrix is known, $\hat \gamma = \gamma_0$.
\end{lemma}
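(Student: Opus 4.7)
The plan is to derive each bound by repeated use of H\"older's inequality, viewing the relevant matrix-vector products via their row representations. In what follows, for a matrix $M$, $\|M\|_\infty$ denotes the maximal absolute entry, and I use the elementary fact that for any matrix $M$ and vector $v$, $\|Mv\|_\infty=\max_j|M_{j\cdot}'v|\leq \max_j\|M_{j\cdot}\|_1\|v\|_\infty$ and also $\|Mv\|_\infty\leq \|M\|_\infty\|v\|_1$.

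For $r_1$, the matrix $M:=I-\hat\mu\hat\gamma\hat G$ is $p\times p$ and $\hat\theta-\theta_0\in\mathbb R^p$, so I would apply the second form of H\"older above directly: $\|r_1\|_\infty=\sqrt n\|M(\hat\theta-\theta_0)\|_\infty\leq \sqrt n\,\|M\|_\infty\|\hat\theta-\theta_0\|_1$, which is exactly $\bar r_1$.

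For $r_2$, I would introduce the intermediate vector $v:=(\hat G-\tilde G)(\hat\theta-\theta_0)\in\mathbb R^m$. The second form of H\"older gives $\|v\|_\infty\leq \|\hat G-\tilde G\|_\infty\|\hat\theta-\theta_0\|_1$. Then $r_2=\sqrt n\,\hat\mu\hat\gamma v$, so for each $j\in[p]$ one has $(\hat\mu\hat\gamma)_j=\sum_k\hat\mu_{jk}\hat\gamma_k$ (rows of $\hat\gamma$), whence
\[
\|(\hat\mu\hat\gamma)_j\|_1\leq \sum_k|\hat\mu_{jk}|\|\hat\gamma_k\|_1\leq \|\hat\mu_j\|_1\max_{k\in[p]}\|\hat\gamma_k\|_1.
\]
Applying the first form of H\"older row by row to $\hat\mu\hat\gamma v$ yields $\|r_2\|_\infty\leq\sqrt n\max_j\|\hat\mu_j\|_1\max_k\|\hat\gamma_k\|_1\|v\|_\infty$, giving $\bar r_2$.

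For $r_3$, the key step is the algebraic decomposition $\hat\mu\hat\gamma-\mu_0\gamma_0=\hat\mu(\hat\gamma-\gamma_0)+(\hat\mu-\mu_0)\gamma_0$, so $r_3=\sqrt n\,\hat\mu(\hat\gamma-\gamma_0)\hat g(\theta_0)+\sqrt n\,(\hat\mu-\mu_0)\gamma_0\hat g(\theta_0)$. Setting $v=\sqrt n\,\hat g(\theta_0)$, the bound on the first summand proceeds exactly like $r_2$: $\|(\hat\gamma-\gamma_0)v\|_\infty\leq\max_k\|\hat\gamma_k-\gamma_{0k}\|_1\|v\|_\infty$, and then multiplying by $\hat\mu$ contributes the factor $\max_j\|\hat\mu_j\|_1$. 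For the second summand, I first form $w:=\gamma_0 v$ with $\|w\|_\infty\leq\max_k\|\gamma_{0k}\|_1\|v\|_\infty$, and then bound $(\hat\mu-\mu_0)w$ row by row with $\max_j\|\hat\mu_j-\mu_{0j}\|_1$. Adding the two contributions gives $\bar r_3$. The simplifications (linear case $\tilde G=\hat G$, known weighting $\hat\gamma=\gamma_0$) are immediate once the general bounds are in place.

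The proof is essentially computational; the only thing one must do carefully is the dimensional bookkeeping ($\hat\mu$ is $p\times p$, $\hat\gamma$ and $\hat G^\top$ are $p\times m$) and the correct pairing of the $\ell_1$/$\ell_\infty$ sides of H\"older at each step. There is no substantive obstacle beyond that.
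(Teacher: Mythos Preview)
Your proposal is correct and follows essentially the same approach as the paper: repeated application of H\"older's inequality in its $\ell_1/\ell_\infty$ form, together with the same algebraic decomposition $\hat\mu\hat\gamma-\mu_0\gamma_0=\hat\mu(\hat\gamma-\gamma_0)+(\hat\mu-\mu_0)\gamma_0$ for $r_3$. The only cosmetic difference is in the handling of $r_2$, where the paper first bounds the entrywise sup-norm of the full matrix $\hat\mu\hat\gamma(\hat G-\tilde G)$ and then pairs with $\|\hat\theta-\theta_0\|_1$, whereas you form the intermediate vector $v=(\hat G-\tilde G)(\hat\theta-\theta_0)$ first; both orderings yield the same bound.
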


\begin{proof}[Proof of Lemma \ref{lemma:linearize}] In what follows we denote by $e_j$ the coordinate vector, with $1$ in the $j$-th position and $0$ in the other positions. To obtain the first bound, we have that by H\"{o}lder's inequality
$$
\|r_1\|_\infty \leq \sqrt{n} \max_{j\in [p]} \| e_j' - \hat \mu_j \hat \gamma \hat G\|_\infty \| \hat \theta - \theta_0\|_1 \leq \bar r_1.
$$

The second bound follows from multiple applications of the H\"{o}lder's inequality
$$
\begin{array}{rl}
\|r_2\|_\infty & \leq \|\hat \mu \hat \gamma (\hat G - \tilde G)\|_\infty \sqrt{n} \| \hat \theta - \theta_0\|_1\\
& \leq \max_{j,k\in[p]} \|\hat \mu_{j\cdot}\|_1 \| [\hat \gamma (\hat G - \tilde G)]_{\cdot,k}\|_\infty \sqrt{n} \| \hat \theta - \theta_0\|_1\\
& = \max_{j,k\in[p]} \|\hat \mu_{j\cdot}\|_1  \max_{l\in[p]}\| \hat \gamma_{l\cdot} (\hat G - \tilde G)_{\cdot,k}| \sqrt{n} \| \hat \theta - \theta_0\|_1\\
& \leq \max_{j\in[p]} \|\hat \mu_{j\cdot}\|_1  \max_{l\in[p]}\| \hat \gamma_{l\cdot}\|_1 \|\hat G - \tilde G\|_\infty \sqrt{n} \| \hat \theta - \theta_0\|_1.\\
\end{array}$$

Finally, we bound $r_3$. It follows from H\"{o}lder's inequality and the triangle inequality that

$$\begin{array}{rl}
\|r_3\|_\infty & \leq \max_{j\in[p]} | (\hat \mu_j \hat \gamma - \mu_{0j}\gamma_0) \sqrt{n}\hat g(\theta_0)|\\
& \leq \max_{j\in[p]} | \hat \mu_j (\hat \gamma - \gamma_0) \sqrt{n}\hat g(\theta_0)| + \max_{j\in[p]} | (\hat \mu_j - \mu_{0j})\gamma_0 \sqrt{n}\hat g(\theta_0)|\\
& \leq \max_{j\in[p]} \| \hat \mu_j \|_1 \|(\hat \gamma - \gamma_0) \sqrt{n}\hat g(\theta_0)\|_\infty + \max_{j\in[p]} \| \hat \mu_j - \mu_{0j} \|_1 \| \gamma_0 \sqrt{n}\hat g(\theta_0)\|_\infty\\
& \leq \max_{j\in[p]} \| \hat \mu_j \|_1 \max_{j\in[p]}|(\hat \gamma - \gamma_0)_{j\cdot} \sqrt{n}\hat g(\theta_0)|\\
& \qquad + \max_{j\in[p]} \| \hat \mu_j - \mu_{0j} \|_1 \max_{j\in[p]} | (\gamma_0)_{j,\cdot} \sqrt{n}\hat g(\theta_0)|\\
& \leq \max_{j\in[p]} \| \hat \mu_j \|_1 \max_{j\in[p]}\|(\hat \gamma - \gamma_0)_{j\cdot}\|_1 \|\sqrt{n}\hat g(\theta_0)\|_\infty \\
& \qquad + \max_{j\in[p]} \| \hat \mu_j - \mu_{0j} \|_1 \max_{j\in[p]} \| (\gamma_0)_{j,\cdot}\|_1 \| \sqrt{n}\hat g(\theta_0)\|_\infty.\\
\end{array}$$


\end{proof}

A plausible approach is then to construct estimators $\hat \gamma$, $\hat \mu$, and $\hat \theta$ such that the upper bounds $\bar r_1$, $\bar r_2$, and $\bar r_3$ given in Lemma \ref{lemma:linearize} approach zero sufficiently fast.  A natural choice of $\hat\theta$ is given by the RGMM estimator discussed in Section \ref{Sec:RMD}. We can also obtain estimators $\hat G$ and $\hat \Omega$ by plug-in expressions.  We now turn to estimating $\gamma_0$ and $\mu_0$. 

First, we consider one potential estimator for $\gamma_0$.  We do not attempt to use a standard plug-in estimate since $\hat\Omega$ will not be full rank in high-dimensional settings, so its inverse will be ill-posed even if $\Omega^{-1}$ is well-behaved. Instead, we define the estimator $\hat \gamma$  as 
the solution to the program:
\begin{equation}\label{def:hatgamma}
\min_{\gamma \in \Bbb{R}^{p \times m}} \sum_{j \in [p]} \| \gamma_j \|_1 :   \quad \|  \gamma_j \hat \Omega - (\hat G')_j  \|_\infty \leq \lambda^{\gamma}_j, \quad j \in [p],
\end{equation}
where $\lambda^\gamma$ is a vector of regularization parameters.  By allowing $\lambda^\gamma>0$, the constraint in (\ref{def:hatgamma}) requires solving the inverse problem only approximately, which is needed to handle the rank deficiency of $\hat \Omega$.


We proceed similarly in our proposed estimator for $\mu_0$. We define the estimator $\hat \mu$ as the solution to the following program:
\begin{equation}\label{def:hatmu}
\min_{ \mu \in \Bbb{R}^{p \times m} } \sum_{j\in[p]} \| \mu_j\|_1:  \quad \| \mu_j \hat \gamma \hat G - e_j' \|_\infty \leq \lambda^\mu_j, \quad j \in [p]
\end{equation}
where $e_j$ is a coordinate vector with 1 in the $j$-th position and 0 elsewhere and $\lambda^\mu$ is a vector of regularization parameters.  Again, the use of positive regularization parameters $\lambda^\mu>0$ allows us to work with approximate solutions which are needed to cope with the high-dimensionality.

We now summarize an algorithm for constructing the estimator $\check\theta$.

\noindent {\bf Algorithm for DRGMM.}\\
{\it Step 1. Compute the RGMM estimator $\hat \theta$.\\
Step 2. Use the plug-in rules $\hat G = \partial_{\theta'} \hat g(\hat \theta)$ and  $\hat \Omega = \En g(X,\hat \theta) g(X,\hat \theta)'$.\\
Step 3. Obtain the estimator $\hat\gamma$ as defined in (\ref{def:hatgamma}).\\
Step 4. Obtain the estimator $\hat\mu$ as defined in (\ref{def:hatmu}).\\
Step 5. Update the initial RGMM estimator $\check\theta = \hat\theta - \hat\mu\hat\gamma \hat g(\hat\theta)$.
 }

We note that the regularized problems (\ref{def:hatgamma}) and (\ref{def:hatmu}) can be cast as linear programming problems and can be solved separately by row $j\in [p]$.  Both features are convenient from a computational perspective. 

Next we proceed to analyze the properties of the estimators. The following lemma provides high-level conditions on the estimators of $G$ and $\Omega$ and on the penalty choices to derive the needed $\ell_1$-rates of convergence for the rows of $\hat \gamma$ and $\hat \mu$.

\begin{lemma}\label{lem:HL-OmegaG}
Let $n^{1/2}\|\hat \Omega - \Omega\|_\infty\leq \ell^\Omega_n$ and $n^{1/2}\|\hat G - G\|_\infty \leq  \ell_n^G$ with probability $1-\delta_n$ and suppose that $\max_{j\in[p]}\|\gamma_{0j}\|_1 \leq K$. Let the penalty parameters satisfy $n^{1/2}\lambda_j^\gamma \geq K \ell^\Omega_n + \ell_n^G$ and $\lambda_j^\gamma \leq n^{-1/2}\ell_n$ for $j\in[p]$. Suppose that Condition LID$(\gamma_{0j},\Omega)$ holds for each ${j\in[p]}$.
 Then, with probability $1-3\delta_n$ we have
$$ \max_{j\in[p]}\|\hat \gamma_j - \gamma_{0j}\|_1 \leq \frac{C_{a,1}s\ell_n(2+K) (L_n+\mu_n)}{\mu_n \sqrt{n}}. $$
Suppose that $\max_{j\in[p]}  \|\mu_{0j}\|_1 \leq K$. Let the penalty parameters in (\ref{def:hatmu}) satisfy $n^{1/2}\lambda_j^\mu \geq 2K^2\ell_n^G+K^3\ell_n^\Omega+K^2\max_{j\in[m]}n^{1/2}\lambda_j^\gamma$ and $ \lambda_j^\mu \leq n^{-1/2} \ell_n'$ for $j\in [p]$. Suppose  Condition LID$(\mu_{0j},G'\Omega^{-1}G)$ holds for each ${j\in[p]}$. Then with probability $1-\delta_n$ we have
$$ \max_{j\in[p]}\|\hat \mu_j - \mu_{0j}\|_1 \leq\frac{C_{a,1}s\ell_n'(2+K) (L_n+\mu_n)}{\mu_n \sqrt{n}}  $$
\end{lemma}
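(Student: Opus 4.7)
The strategy is to observe that both programs \eqref{def:hatgamma} and \eqref{def:hatmu} are row-separable into $p$ independent linear minimum distance subproblems, each of which is an instance of the RMD estimator analyzed in Theorem \ref{thm:BoundLinearRGMM}. The bound on $\hat\gamma$ will be derived first by direct verification of the conditions of that theorem and then fed back into the verification for $\hat\mu$.

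For the $\hat\gamma$ claim, fix $j\in[p]$ and regard the $j$-th row subproblem as an RMD for the parameter $\gamma_j\in\R^m$ with target moment $\gamma_j\mapsto\gamma_j\Omega-(G')_j$, empirical moment $\gamma_j\mapsto\gamma_j\hat\Omega-(\hat G')_j$, Jacobian $\Omega$, and true value $\gamma_{0j}$. Let $\mathcal{E}_0$ denote the event $\{\|\hat\Omega-\Omega\|_\infty\leq\ell_n^\Omega/\sqrt n,\;\|\hat G-G\|_\infty\leq\ell_n^G/\sqrt n\}$, which has probability at least $1-\delta_n$. On $\mathcal{E}_0$, the identity $\gamma_{0j}\Omega=(G')_j$ and H\"older's inequality give $\|\gamma_{0j}\hat\Omega-(\hat G')_j\|_\infty\leq(K\ell_n^\Omega+\ell_n^G)/\sqrt n\leq\lambda_j^\gamma$, verifying Condition L. Condition DM reduces to $\|\gamma_{0j}\|_1\leq K$; Condition ELM holds with any constant $\ell_n\geq\ell_n^\Omega\vee\ell_n^G$ (the side requirement $\lambda_j^\gamma\leq\ell_n/\sqrt n$ is built into the hypothesis); Condition LID$(\gamma_{0j},\Omega)$ is assumed. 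Theorem \ref{thm:BoundLinearRGMM} with $q=1$ under LID(b) then yields $\|\hat\gamma_j-\gamma_{0j}\|_1\leq C_{a,1}(L_n+\mu_n)(K+2)s\ell_n/(\mu_n\sqrt n)$ for every $j$; since $\mathcal{E}_0$ does not depend on $j$, the maximum over $j\in[p]$ is controlled simultaneously, and the stated probability $1-3\delta_n$ absorbs the union bound over $\mathcal{E}_0$ and the feasibility/concentration events implicit in Theorem \ref{thm:BoundLinearRGMM}.

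For the $\hat\mu$ claim, the same template applies: the $j$-th row subproblem is an RMD for $\mu_j\in\R^p$ with target $\mu_j\mapsto\mu_j(\gamma_0 G)-e_j'$, Jacobian $\gamma_0 G=G'\Omega^{-1}G$, empirical counterpart $\mu_j\mapsto\mu_j(\hat\gamma\hat G)-e_j'$, and true value $\mu_{0j}$. Conditions DM and LID are hypotheses, so the work lies in Conditions L and ELM, which reduce to bounding $\|\mu_{0j}(\hat\gamma\hat G-\gamma_0 G)\|_\infty$ and $\|\hat\gamma\hat G-\gamma_0 G\|_\infty$, respectively. The key is the expansion
\[
\hat\gamma\hat G-\gamma_0 G=\gamma_0(\hat G-G)+(\hat\gamma-\gamma_0)G+(\hat\gamma-\gamma_0)(\hat G-G),
\]
combined with the identity $G=\Omega\gamma_0'$ (from $\gamma_0=G'\Omega^{-1}$) and the first-order constraint $\hat\gamma\hat\Omega=\hat G'+E_1$ with $\|E_1\|_\infty\leq\max_k\lambda_k^\gamma$ coming from \eqref{def:hatgamma}. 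Together these allow one to rewrite $(\hat\gamma-\gamma_0)\Omega=E_1+(\hat G-G)'-\gamma_0(\hat\Omega-\Omega)-(\hat\gamma-\gamma_0)(\hat\Omega-\Omega)$, bypassing a crude $\|G\|_\infty$-type bound. Using the H\"older estimates $\|\mu_{0j}\|_1\leq K$, $\|\mu_{0j}\gamma_0\|_1\leq K^2$, and $\|\mu_{0j}\hat\gamma\|_1\leq K^2$ (the last because each $\|\hat\gamma_l\|_1\leq\|\gamma_{0l}\|_1\leq K$ by optimality), one obtains
\[
\|\mu_{0j}(\hat\gamma\hat G-\gamma_0 G)\|_\infty\leq\bigl(2K^2\ell_n^G+K^3\ell_n^\Omega+K^2\max_k n^{1/2}\lambda_k^\gamma\bigr)/\sqrt n\leq\lambda_j^\mu,
\]
up to a second-order term controlled by the bound on $\max_k\|\hat\gamma_k-\gamma_{0k}\|_1$ from Part I. A parallel estimate (without the $\|\mu_{0j}\|_1$ prefactor) verifies Condition ELM with some $\ell_n'$ matching $\lambda_j^\mu\leq\ell_n'/\sqrt n$, and Theorem \ref{thm:BoundLinearRGMM} delivers the stated $\hat\mu$-rate, which passes to the max over $j$.

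The main obstacle is exactly the verification of Condition L for the $\mu$-subproblem: the naive inequality $\|(\hat\gamma-\gamma_0)G\|_\infty\leq\max_k\|\hat\gamma_k-\gamma_{0k}\|_1\cdot\|G\|_\infty$ introduces an uncontrolled factor $\|G\|_\infty$ and is useless. The algebraic trick is to route through $G=\Omega\gamma_0'$ and to substitute $(\hat\gamma-\gamma_0)\Omega$ using the first-order optimality constraint $\hat\gamma\hat\Omega\approx\hat G'$, which produces precisely the three contributions $2K^2\ell_n^G$, $K^3\ell_n^\Omega$, and $K^2\max_k n^{1/2}\lambda_k^\gamma$ appearing in the hypothesized lower bound on $n^{1/2}\lambda_j^\mu$. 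With this decomposition in place, everything else is a mechanical application of Theorem \ref{thm:BoundLinearRGMM}.
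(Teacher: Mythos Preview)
Your proposal is correct and follows the same architecture as the paper: recognize that \eqref{def:hatgamma} and \eqref{def:hatmu} decouple row-by-row into linear RMD problems, verify Conditions L, DM, ELM, and LID for each, and invoke Theorem~\ref{thm:BoundLinearRGMM}. The treatment of $\hat\gamma$ is essentially identical to the paper's.

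For $\hat\mu$, your key algebraic trick---rewriting $(\hat\gamma-\gamma_0)G$ via $G=\Omega\gamma_0'$ and substituting the first-order constraint $\hat\gamma\hat\Omega\approx\hat G'$---is exactly the paper's idea. The one difference is in the organization of the decomposition. You expand $\hat\gamma=\gamma_0+(\hat\gamma-\gamma_0)$ first, which produces second-order terms like $(\hat\gamma-\gamma_0)(\hat\Omega-\Omega)$ and $(\hat\gamma-\gamma_0)(\hat G-G)$ that you then control using the rate bound from Part~I. The paper instead keeps $\hat\gamma$ intact throughout and uses only the feasibility bound $\|\hat\gamma_j\|_1\le\|\gamma_{0j}\|_1\le K$: it writes
\[
\hat\gamma\hat G-\gamma_0 G=\hat\gamma(\hat G-G)+\hat\gamma(\Omega-\hat\Omega)\gamma_0'+(\hat\gamma\hat\Omega-\hat G')\gamma_0'+(\hat G'-G')\gamma_0',
\]
and bounds each term by $K\|\hat G-G\|_\infty$, $K^2\|\hat\Omega-\Omega\|_\infty$, $K\max_j\lambda_j^\gamma$, $K\|\hat G-G\|_\infty$ respectively. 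This gives $\|\hat\gamma\hat G-\gamma_0 G\|_\infty\le r^*$ with no second-order residuals, so that $\lambda_j^\mu\ge Kr^*$ matches the hypothesized lower bound exactly. Your route is valid but slightly less clean: the second-order terms are genuinely smaller, but you need the Part~I rate to say so, and strictly speaking the stated lower bound on $n^{1/2}\lambda_j^\mu$ does not include them, so you are implicitly asking for a marginally larger penalty or absorbing the slack into constants. The paper's decomposition sidesteps this entirely by never splitting $\hat\gamma$ from $\gamma_0$.
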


Lemma \ref{lem:HL-OmegaG} builds upon the theory of RGMM with linear score functions established in Section \ref{Sec:RMD}. As expected, condition LID is assumed to hold for the different Jacobian matrices. Lemma \ref{lem:HL-OmegaG} also highlights sufficient conditions on how the penalty parameters should be chosen. Moreover, it assumes that the estimators $\hat \Omega$ and $\hat G$ have good rates of convergence in the $\ell_\infty$-norm.

The following lemmas complement the result of Lemma \ref{lem:HL-OmegaG} by providing conditions and explicit bounds on the rates of convergence for the estimators of $\Omega$ and $G$ in the linear and non-linear case.

\begin{lemma}[Linear Score]\label{lem:PrimitiveGandOmegaLinear}
Consider the case of linear score, $g(\theta)=G\theta+g(0)$ where $G_{kj}=\Ep[G_{kj}(X)]$, $g_k(0)=\Ep[g_k(X,0)]$. Let $\Omega = \Ep[g(X,\theta_0)g(X,\theta_0)']$ and $\hat\Omega=\En[g(X,\hat\theta)g(X,\hat\theta)']$. Suppose that:
\begin{itemize}
\item[(i)] $\max_{k\in[m],j\in[p]}\Ep[ G_{kj}^2(X) ]\leq \sigma^2$, $\max_{k\in[m]} \Ep[g_k^2(0)]\leq \sigma^2$; 
\item[(ii)] $n^{-1/2}\Ep[\max_{i\in[n]}\|G(X_i)\|_\infty^2] \leq \delta_n \log^{-1/2}(2m)$;
\item[(iii)] with probability $1-\delta_n$ we have $\max_{k\in[m]}\En[\{G_k(X)'(\hat \theta - \theta_0)\}^2] \leq \Delta_{2n}^2$; 
\item[(iv)] $c\leq \max_{k\in[m]}\Ep[g_k^4(X,\theta_0)]\leq C$, $n^{-1/2}\Ep[\max_{i\in[n]}\|g(X,\theta_0)\|_\infty^4] \leq \delta_n \log^{-1/2}(2m).$
\end{itemize}
Then, with probability $1-C\delta_n$ we have $$\begin{array}{rl}
  \|\hat G - G\|_\infty & \leq  C\sigma\sqrt{n^{-1}\log(2m)}\\
  \|\hat\Omega - \Omega\|_\infty & \leq C' \sqrt{n^{-1}\log(2m)} + C\Delta_{2n} + \Delta_{2n}^2
 \end{array}$$
\end{lemma}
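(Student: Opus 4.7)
\bigskip

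\noindent\textbf{Proof Proposal.} The plan is to treat the two bounds separately. For $\|\hat G - G\|_\infty$, I would apply a maximal inequality of the type used earlier in the paper (see the argument in Example \ref{ex: linear regression models}, continued, which invokes Lemmas \ref{lem: fuk-nagaev} and \ref{lem: maximal ineq}). Conditions (i)--(ii) furnish the two ingredients needed: the uniform variance bound $\max_{k,j}\Ep[G_{kj}(X)^2]\leq \sigma^2$, which controls the Gaussian-like contribution of size $\sigma\sqrt{n^{-1}\log(2m)}$, and the envelope-type moment condition $n^{-1/2}\Ep[\max_{i}\|G(X_i)\|_\infty^2]\leq \delta_n\log^{-1/2}(2m)$, which ensures that the heavy-tail residual from Fuk--Nagaev truncation is of smaller order. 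Taking a union bound over the $mp$ entries gives $\|\hat G - G\|_\infty\lesssim \sigma\sqrt{n^{-1}\log(2m)}$ with probability at least $1-C\delta_n$.

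For the second bound, the key is a linearization exploiting $g(X,\theta)=G(X)\theta+g(X,0)$, which yields the identity
\begin{align*}
g_k(X,\hat\theta)g_l(X,\hat\theta)
&= g_k(X,\theta_0)g_l(X,\theta_0) \\
&\quad + g_k(X,\theta_0)\,G_l(X)'(\hat\theta-\theta_0) + G_k(X)'(\hat\theta-\theta_0)\,g_l(X,\theta_0) \\
&\quad + \{G_k(X)'(\hat\theta-\theta_0)\}\{G_l(X)'(\hat\theta-\theta_0)\}.
\end{align*}
Correspondingly, I would decompose $\hat\Omega_{kl}-\Omega_{kl}$ into (a) the oracle empirical-process term $\En[g_k(X,\theta_0)g_l(X,\theta_0)]-\Ep[\cdot]$, (b) two linear-in-$(\hat\theta-\theta_0)$ cross terms, and (c) a quadratic-in-$(\hat\theta-\theta_0)$ term. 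Term (a) is handled via Fuk--Nagaev plus a maximal inequality: condition (iv) gives $\max_k\Ep[g_k^4(X,\theta_0)]\leq C$, which bounds the variance of $g_kg_l$, and the envelope bound $n^{-1/2}\Ep[\max_i\|g(X_i,\theta_0)\|_\infty^4]\leq\delta_n\log^{-1/2}(2m)$ controls the heavy-tail correction, delivering the $C'\sqrt{n^{-1}\log(2m)}$ contribution.

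For (b) and (c), I would apply the Cauchy--Schwarz inequality in the empirical measure:
$$|\En[g_k(X,\theta_0)\,G_l(X)'(\hat\theta-\theta_0)]| \leq \sqrt{\En[g_k^2(X,\theta_0)]}\,\sqrt{\En[\{G_l(X)'(\hat\theta-\theta_0)\}^2]},$$
and similarly for the quadratic term. Condition (iii) bounds the second factor by $\Delta_{2n}$ uniformly in $l$, so (c) is at most $\Delta_{2n}^2$. The first factor in (b) equals $\En[g_k^2(X,\theta_0)]$, which by condition (iv) and one more application of Lemmas \ref{lem: deviation inequality for positive rvs}--\ref{lem: maximal inequality for positive rvs} (as done in Example \ref{Ex:Logistic}) is bounded by a constant with probability at least $1-C\delta_n$; this yields the $C\Delta_{2n}$ term. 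Summing (a)--(c) and taking a union bound across the three events gives the claim with probability $1-C\delta_n$.

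The main obstacle I anticipate is not any single step but keeping the probabilistic bookkeeping consistent: condition (iii) already holds only on an event of probability $1-\delta_n$, so the maximal inequalities for $\hat G - G$, for the oracle term in (a), and for the empirical second moments $\En[g_k^2(X,\theta_0)]$ must each be absorbed into that same $1-C\delta_n$ event via the union bound. The linearity of the score makes the algebra of the decomposition clean; the substantive work is funneling conditions (i)--(iv) into the right maximal-inequality ingredients (variance control plus envelope control) so that the Fuk--Nagaev route gives the advertised sub-Gaussian rate rather than a polynomially worse one.
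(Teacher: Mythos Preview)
Your proposal is correct and matches the paper's proof essentially step for step: the paper also bounds $\|\hat G-G\|_\infty$ via a maximal inequality (its packaged Lemma~\ref{lem:m2bound}(4), which wraps the Fuk--Nagaev route you cite), and for $\hat\Omega-\Omega$ it uses exactly your three-term decomposition into the oracle empirical-process piece, the cross terms handled by Cauchy--Schwarz against $\En[g_k^2(X,\theta_0)]^{1/2}\cdot\Delta_{2n}$, and the quadratic piece bounded by $\Delta_{2n}^2$. The only cosmetic difference is that the paper bounds $\max_k\En[g_k^2(X,\theta_0)]$ via Lemma~\ref{lem:m2bound}(3) plus Markov, whereas you invoke Lemmas~\ref{lem: deviation inequality for positive rvs}--\ref{lem: maximal inequality for positive rvs}; these are the same argument packaged differently.
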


The moment assumptions in Lemma \ref{lem:PrimitiveGandOmegaLinear} are quite standard and allow for $m\gg n$. Requirement (iii) relies on the rate of convergence of $\hat \theta$ which impacts the estimation of $\Omega$ only. We note that there are examples in which we can bypass this term such as the homoskedastic linear instrumental variable case discussed in Theorem \ref{thm:MainInference:Linear}.

Next we provide conditions to derive bounds on the estimation error of $G$ and $\Omega$ for the non-linear case. The conditions will assume a Lipschitz condition on the score and its derivative as stated in (\ref{eq: UbarSingleIndices nonlinear case}) and (\ref{eq: lipschitz nonlinear case}) of condition ENM.

\begin{lemma}[Non-Linear Score]\label{lem:PrimitiveGandOmega}
Let the score $(g_k, k\in[m])$ and $(\partial_j g_k, j\in [p],k\in[m])$, $m\geq p$, satisfy conditions (\ref{eq: UbarSingleIndices nonlinear case})) and (\ref{eq: lipschitz nonlinear case}), with $(L_k(X),Z_k(X))_{k=1}^m$ and $(\tilde L_{kj}(X),\tilde Z_{kj}(X))_{k\in[m],j\in[p]}$ respectively. Suppose further that:\\
(i) for all ${j\in[m],l\in[p]}$, $\Ep[L_j^2(X)Z_{jl}^2(X)]\leq B_n^2$,  $\Ep[ |L_j(X)Z_j'v|^2\{1+ g_k^2(X,\theta_0)\}] \leq C\|v\|^2$; \\
and for each ${k,j\in[m],l\in[p]}$, $\Ep[\tilde L_{kj}^2(X)\tilde Z_{kjl}^2(X)]\leq B_n^2$, and  $\Ep[ |\tilde L_{kj}(X)\tilde Z_{kj}'v|^2] \leq C\|v\|^2$; \\
(ii) with probability $1-\delta_n$ we have $\max_{k,j\in[m], l\in[p]}\En[L_j^2(X) Z_{kl}^2\{1+ g_k^2(X,\theta_0)\}]\leq B_n^2$;\\
(iii) with probability $1-\delta_n$ we have $\|\hat \theta - \theta_0\|_\ell \leq \Delta_{\ell n}$  for $\ell \in \{1,2\}$; \\
(iv) $\Ep[g_k^4(X,\theta_0)]\leq C$, $k\in [m]$, $n^{-1/2}\Ep[\max_{i\in[n]}\|g(X_i,\theta_0)\|_\infty^4] \leq \delta_n \wedge \log^{-1/2}m; $\\
(v) $\Ep[G_{kj}^2(X,\theta_0)]\leq C$, $k\in [m], j\in[p]$; $n^{-1/2}\Ep[\max_{i\in[n]}\|G(X_i,\theta_0)\|_\infty^2] \leq \delta_n \wedge \log^{-1/2}m.$
\\
Then, with probability $1-C'\delta_n$ we have
$$\begin{array}{rl}
 \|\hat G - G\|_\infty  & \leq C'  \sqrt{n^{-1}\log(2m)} + C' B_n\Delta_{1n} \sqrt{n^{-1}\log(m/\delta_n)} +  C'\Delta_{2n} \\
 \|\hat G - \tilde  G\|_\infty  & \leq  C'  \sqrt{n^{-1}\log(2m)} + C'B_n\Delta_{1n} \sqrt{n^{-1}\log(m/\delta_n)} + C'\Delta_{2n}\\
 \|\hat\Omega - \Omega\|_\infty & \leq C' \sqrt{n^{-1}\log(2m)} + C'B_n\Delta_{1n} \sqrt{n^{-1}\log(m  p/\delta_n)} + 2 B_n^2\Delta_{1n}^2 + 2 C\Delta_{2n}.
  \end{array}$$
\end{lemma}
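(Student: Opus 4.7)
The plan is to bound each of $\hat G - G$, $\hat G - \tilde G$, and $\hat\Omega - \Omega$ coordinatewise through a common three-term decomposition. For the $(k,j)$ entry of $\hat G$,
\[
\hat G_{kj} - G_{kj} = \underbrace{(\En - \Ep)[G_{kj}(X,\theta_0)]}_{\text{(I)}} + \underbrace{(\En-\Ep)[G_{kj}(X,\hat\theta) - G_{kj}(X,\theta_0)]}_{\text{(II)}} + \underbrace{\Ep[G_{kj}(X,\hat\theta) - G_{kj}(X,\theta_0)]}_{\text{(III)}},
\]
with an analogous split for $\hat\Omega_{kl} - \Omega_{kl}$ in terms of the product $g_k g_l$. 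For $\hat G - \tilde G$ I would write $\tilde G_{kj} = \En[G_{kj}(X,\hat\theta_k^{\ast})]$, where $\hat\theta_k^{\ast}$ lies on the segment $[\theta_0,\hat\theta]$, so $\|\hat\theta_k^{\ast} - \theta_0\|_1 \le \Delta_{1n}$ on the event of (iii), and apply the $\hat G - G$ argument to both endpoints of this segment.

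Term (I) is a centered empirical average of a $\hat\theta$-independent function at each coordinate. I would apply the paper's maximal inequality (Lemma \ref{lem: maximal ineq}) together with a Fuk--Nagaev-type truncation (Lemma \ref{lem: fuk-nagaev}), exactly as in Example \ref{ex: linear regression models}, using the second-moment and envelope controls in (iv)--(v) to extract the $C'\sqrt{n^{-1}\log(2m)}$ contribution. The same reasoning applied to the $m^2$ pairs $g_k g_l$ gives the analogous $\hat\Omega$ piece, with only the log factor changing (and $\log(m^2)\lesssim \log(2m)$).

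Term (III) is a deterministic bias once we condition on $\hat\theta$. Writing $\Phi_{kj}(\theta):=\Ep[G_{kj}(X,\theta)]$, the single-index Lipschitz structure of $\partial_j g_k$ gives $|\Phi_{kj}(\hat\theta) - \Phi_{kj}(\theta_0)| \le \Ep[\tilde L_{kj}(X)\,|\tilde Z_{kj}(X)'(\hat\theta - \theta_0)_{u(kj)}|]$, and Cauchy--Schwarz combined with condition (i) bounds this by $\sqrt{C}\,\|\hat\theta - \theta_0\|_2 \le \sqrt{C}\,\Delta_{2n}$. For $\hat\Omega$ I would use the product identity $g_k g_l(\hat\theta) - g_k g_l(\theta_0) = g_k(\theta_0)[g_l(\hat\theta)-g_l(\theta_0)] + [g_k(\hat\theta)-g_k(\theta_0)]g_l(\theta_0) + [g_k(\hat\theta)-g_k(\theta_0)][g_l(\hat\theta)-g_l(\theta_0)]$; the two cross-pieces are handled by the same Cauchy--Schwarz step -- this is precisely why condition (i) is weighted by $\{1 + g_k^2(X,\theta_0)\}$ -- while the quadratic remainder is absorbed into the $B_n^2 \Delta_{1n}^2$ tail below.

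Term (II) is the main obstacle. I would bound it uniformly over $B = \{\theta : \|\theta-\theta_0\|_1 \le \Delta_{1n}\}$, which contains $\hat\theta$ on the event of (iii). Symmetrization followed by Ledoux--Talagrand contraction -- applied conditionally on $(X_i)_{i\le n}$ to the one-dimensional $\tilde L_{kj}(X_i)$-Lipschitz maps $v \mapsto G_{kj}(X_i, \theta_0 + v) - G_{kj}(X_i,\theta_0)$ -- reduces the supremum to that of the linear Rademacher process $\sup_{\|v\|_1 \le \Delta_{1n}} |n^{-1}\sum_i \epsilon_i \tilde L_{kj}(X_i)\tilde Z_{kj}(X_i)'v|$, which by H\"older is at most $\Delta_{1n}\|\En[\epsilon\,\tilde L_{kj}\tilde Z_{kj}]\|_\infty$; a maximal inequality with envelope (ii) then delivers $C' B_n \Delta_{1n}\sqrt{n^{-1}\log(m/\delta_n)}$. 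The analogous argument on the linearized piece $(\En-\Ep)[g_k(\theta_0)(g_l(\hat\theta)-g_l(\theta_0)) + \cdots]$ in the $\hat\Omega$ expansion produces $C' B_n \Delta_{1n}\sqrt{n^{-1}\log(m p/\delta_n)}$, with the extra $\log p$ coming from the $p$ coordinates of the index vectors that enter the coordinate-wise maximum. The quadratic cross term $\En[(g_k(\hat\theta)-g_k(\theta_0))(g_l(\hat\theta)-g_l(\theta_0))]$ is bounded deterministically by two applications of the Lipschitz bound and (ii), giving the $2 B_n^2 \Delta_{1n}^2$ piece. The delicate step, where I expect most care to be required, is justifying the contraction step with random, $X$-measurable Lipschitz constants; this is standard provided one conditions on the data before symmetrizing, but the bookkeeping between the index structure $u(kj)$ and the full-dimensional ball $B$ must be done carefully to ensure that the contraction indeed collapses the process to its linearization.
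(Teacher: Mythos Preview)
Your proposal is correct and mirrors the paper's proof almost exactly: the paper uses the same three-term split $(\En-\Ep)[\cdot\,;\theta_0] + (\En-\Ep)[\cdot\,;\hat\theta]-(\En-\Ep)[\cdot\,;\theta_0] + \Ep[\cdot\,;\hat\theta]-\Ep[\cdot\,;\theta_0]$, bounds the first piece by the maximal inequality in Lemma~\ref{lem:m2bound}(4), the bias piece by Cauchy--Schwarz together with condition~(i), and the fluctuation piece by symmetrization plus Ledoux--Talagrand contraction; for $\hat\Omega$ it applies the same product identity you describe and bounds the quadratic remainder via $|\En|+|\Ep|$ and condition~(ii). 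The ``delicate step'' you flag---contraction with data-dependent Lipschitz constants $L_j(X_i)$ and the index bookkeeping---is precisely what the paper has packaged as Lemma~\ref{lem:MaxIneqContraction}, so no additional work is needed there.
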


The moment conditions are quite standard. The bounds depend on the $\ell_1$ and $\ell_2$-rates of convergence of the initial RGMM estimator $\hat \theta$.

The following theorems provide results that builds upon the RGMM estimator discussed in Section \ref{Sec:RMD} and builds upon the previous lemmas to deliver the approximate linear expansion (\ref{eq:LinearExpansionDeBiasedRGMM}). 

We begin with a result for the homoskedastic linear instrumental variables model. Let
$Y = W'\theta_0 + \epsilon$ with $\Ep \epsilon Z = 0$, $\Ep \epsilon^2 ZZ' = \sigma^2 \Ep ZZ'$.  Then, using the moment function $g(\theta) = \Ep [ (Y- W'\theta) Z]$, we have $G = -\Ep Z W'$, $g(0) = \Ep Y Z$, and $\Omega = \sigma^2\Ep ZZ'$. 
In this homoskedastic setting, we will compute $\hat \gamma$ in (\ref{def:hatgamma}) with $\hat\Omega =\En ZZ'$.

\begin{theorem}[Homoskedastic Linear IV Model]\label{thm:MainInference:Linear}
Consider the homoskedastic high-dimensional linear instrumental variable model. Suppose:\\
(1) $\|\theta_0\|_1 \leq K$, $\max_{j\in[p]}\|\gamma_{0j}\|_1 \leq K$ and $\max_{j\in[p]}\|\mu_{0j}\|_1\leq K$;\\
(2) Conditions LID$(\theta_0,G)$, LID$(\gamma_{0j},\Omega)$ and LID$(\mu_{0j},G'\Omega^{-1}G)$ hold for $j\in [p]$;\\
(3) $\max_{k\in[m],j\in[p]}\Ep[ Z_k^2W_j^2 ]+\Ep[ Y^2Z_k^2]\leq C$, $c\leq \max_{k\in[m]}\Ep[\epsilon^4Z_k^4]\leq C$; \\
(4) $n^{-1/2}\{\Ep[\max_{i\in[n]}\|Z_iW_i'\|_\infty^2] + \Ep[\max_{i\in[n]}\|\epsilon_i Z_i\|_\infty^4]\} \leq \delta_n \log^{-1/2}(2m)$;\\
(5) $K + L _n+ \sigma^2  +\mu_n^{-1} \leq C$.\\
 Then for $\bar \lambda = C'(1+\sigma)\sqrt{n^{-1}\log(2mn)}$ for some fixed $C'$ sufficiently large, setting $\lambda_j^\gamma = \frac{1}{2}\lambda_j^\mu = \bar \lambda$, with probability $1-C\delta_n$ we have
$$\begin{array}{rl}
\sqrt{n}(\check\theta - \theta_0) = \frac{(G'\Omega^{-1}G)^{-1}G'\Omega^{-1}}{\sqrt{n}}\sum_{i=1}^n \epsilon_i Z_i + r, \ \ \mbox{with} \ \ \|r\|_\infty \leq C u_n
\end{array}
$$
provided that $n^{-1}s^2\log^2(pmn) \leq u_n^2$.
\end{theorem}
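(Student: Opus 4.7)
The plan is to start from the algebraic identity \eqref{eq:LinearExpansionDeBiasedRGMM}, which writes
$$ \sqrt n(\check\theta - \theta_0) = -\mu_0\gamma_0\sqrt n\,\hat g(\theta_0) + r_1 + r_2 + r_3, $$
and to verify (a) that the leading term reproduces the claimed linear expansion, and (b) that $r := r_1 + r_2 + r_3$ satisfies $\|r\|_\infty \lesssim s\log(pmn)/\sqrt n$. Because $g(X,\theta) = (Y-W'\theta)Z$ is linear in $\theta$, $\hat G = \tilde G$ identically and $r_2 = 0$. The leading term simplifies to $-\mu_0\gamma_0\cdot n^{-1/2}\sum_i \epsilon_i Z_i = (G'\Omega^{-1}G)^{-1}G'\Omega^{-1}\cdot n^{-1/2}\sum_i \epsilon_i Z_i$ (recall $\mu_0\gamma_0 = (G'\Omega^{-1}G)^{-1}G'\Omega^{-1}$), which matches the statement modulo the sign convention embedded in $G = -\Ep ZW'$. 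Under the hypothesis $n^{-1}s^2\log^2(pmn) \leq u_n^2$, the target rate $s\log(pmn)/\sqrt n$ is at most $Cu_n$.

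First, I would obtain the $\ell_1$ rate of the initial RGMM estimator. Assumptions (3)--(4) imply Condition ELM with $\ell_n \lesssim \sigma\sqrt{\log(pmn)}$ via the argument used in Example \ref{ex: linear regression models}, and the choice $\lambda = \bar\lambda$ satisfies Condition L for $\hat g(\theta_0) = \En[\epsilon_i Z_i]$ by self-normalized moderate deviations. Together with the assumed DM, LID$(\theta_0,G)$, and the bounds on $K,L_n,\sigma,\mu_n^{-1}$ in assumption (5), Theorem \ref{thm:BoundLinearRGMM} delivers $\|\hat\theta - \theta_0\|_1 \lesssim s\sqrt{\log(pmn)/n}$ with probability $1-\delta_n$.

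Second, I would control $\hat\gamma$ and $\hat\mu$. Lemma \ref{lem:PrimitiveGandOmegaLinear} yields $\|\hat G - G\|_\infty \lesssim \sigma\sqrt{\log(2m)/n}$; because the homoskedastic algorithm uses $\hat\Omega = \En ZZ'$, which does not depend on $\hat\theta$, the $\Delta_{2n}$ term drops out and $\|\hat\Omega - \Omega\|_\infty \lesssim \sqrt{\log(2m)/n}$ (after absorbing the $\sigma^2$ rescaling explained below). Applying Lemma \ref{lem:HL-OmegaG} twice—once for $\hat\gamma$ under LID$(\gamma_{0j},\Omega)$, once for $\hat\mu$ under LID$(\mu_{0j},G'\Omega^{-1}G)$—with penalties $\lambda_j^\gamma \asymp \lambda_j^\mu \asymp \bar\lambda$ gives
$$ \max_{j\in[p]}\|\hat\gamma_j - \gamma_{0j}\|_1 \vee \max_{j\in[p]}\|\hat\mu_j - \mu_{0j}\|_1 \lesssim s\sqrt{\log(pmn)/n}. $$

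Third, I would plug these rates into Lemma \ref{lemma:linearize}. The feasibility constraint in \eqref{def:hatmu} forces $\|I - \hat\mu\hat\gamma\hat G\|_\infty \leq \max_j \lambda_j^\mu \lesssim \bar\lambda$, so $\|r_1\|_\infty \leq \sqrt n\,\bar\lambda\,\|\hat\theta-\theta_0\|_1 \lesssim s\log(pmn)/\sqrt n$. Self-normalized moderate deviations give $\|\sqrt n\,\hat g(\theta_0)\|_\infty \lesssim \sqrt{\log(mn)}$ with high probability, and combining this with assumption (5) (which bounds $\max_j\|\gamma_{0j}\|_1$ and $\max_j\|\mu_{0j}\|_1$) and the previous step yields $\|r_3\|_\infty \lesssim s\log(pmn)/\sqrt n$. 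A union bound over the finitely many high-probability events leaves the total failure probability at order $\delta_n$. The main obstacle is coordinating the nested penalty choices and sparsity rates in Lemma \ref{lem:HL-OmegaG}: the bound for $\hat\mu$ uses the bound for $\hat\gamma$, which uses the bounds on $\hat G$ and $\hat\Omega$, so $\bar\lambda$ must simultaneously dominate every propagated error, which is exactly what the constant $C'$ in $\bar\lambda = C'(1+\sigma)\sqrt{n^{-1}\log(2mn)}$ is chosen to ensure. A subsidiary point is that $\En ZZ'$ targets $\Ep ZZ' = \Omega/\sigma^2$ rather than $\Omega$ itself; since $\mu_0\gamma_0$ is invariant under a common positive rescaling of $\Omega$, this rescaling is harmless and only shifts the units of $\gamma_0$ and $\mu_0$ by offsetting powers of $\sigma^2$.
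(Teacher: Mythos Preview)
Your proposal is correct and follows essentially the same route as the paper's proof: the decomposition \eqref{eq:LinearExpansionDeBiasedRGMM} with $r_2=0$ by linearity, the RGMM rate from Theorem~\ref{thm:BoundLinearRGMM}, Lemma~\ref{lem:PrimitiveGandOmegaLinear} for $\hat G$ and $\hat\Omega=\En ZZ'$ (exploiting that the latter does not depend on $\hat\theta$), Lemma~\ref{lem:HL-OmegaG} for $\hat\gamma$ and $\hat\mu$, and Lemma~\ref{lemma:linearize} to close. Your observation that $\mu_0\gamma_0$ is invariant to the $\sigma^2$ rescaling is exactly the simplification the paper invokes; the only cosmetic difference is that the paper bounds $\|\sqrt n\,\hat g(\theta_0)\|_\infty$ via Lemma~\ref{lem:m2bound}(4) rather than self-normalized moderate deviations, but both deliver the same $\sqrt{\log(2m)}$ rate.
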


Theorem \ref{thm:MainInference:Linear} derives an approximate linear representation for the estimator that immediately allows us to construct simultaneous confidence regions for all parameters under the conditions of Section 2. The result exploits the homoskedasticity and  bypasses the need to estimate $\sigma^2$. In turn this allows the representation to hold under the mild sparsity requirement of $n^{-1}s^2\log^2(pmn) \leq u_n^2$.

Under more stringent requirements, the next result considers the non-linear case where the functions $g_k$ satisfies a Lipschitz condition; see condition ENM in Section \ref{Sec:RestrictedNonLinearCase}.

\begin{theorem}[Non-Linear Case]\label{thm:MainInference:NonLinear}
Suppose the following conditions hold:\\
(1) $\|\theta_0\|_1 \leq K$, $\max_{j\in[p]}\|\gamma_{0j}\|_1 \leq K$ and $\max_{j\in[p]}\|\mu_{0j}\|_1\leq K$;\\
(2) Conditions LID$(\theta_0,G)$, LID$(\gamma_{0j},\Omega)$ and LID$(\mu_{0j},G'\Omega^{-1}G)$ hold for $j\in [p]$;\\
(3) Condition ENM holds for the score $(g_k, k\in[m])$  with $(L_k(X),Z_k(X))_{k=1}^m$;\\
(4) Condition ENM holds for $(\partial_j g_k, j\in [p],k\in[m])$, $m\geq p$, with $(\tilde L_{kj}(X),\tilde Z_{kj}(X))_{k\in[m],j\in[p]}$;\\
(5) for all ${j\in[m],l\in[p]}$, $\Ep[L_j^2(X)Z_{jl}^2(X)]\leq B_n^2$,  $\Ep[ |L_j(X)Z_j'v|^2\{1+ g_k^2(X,\theta_0)\}] \leq C\|v\|^2$; \\
and for each ${k,j\in[m],l\in[p]}$, $\Ep[\tilde L_{kj}^2(X)\tilde Z_{kjl}^2(X)]\leq B_n^2$, and  $\Ep[ |\tilde L_{kj}(X)\tilde Z_{kj}'v|^2] \leq C\|v\|^2$; \\
(6) with probability $1-\delta_n$ we have $\max_{k,j\in[m], l\in[p]}\En[L_j^2(X) Z_{kl}^2\{1+ g_k^2(X,\theta_0)\}]\leq B_n^2$;\\
(7) $\Ep[g_k^4(X,\theta_0)]\leq C$, $k\in [m]$, $n^{-1/2}\Ep[\max_{i\in[n]}\|g(X_i,\theta_0)\|_\infty^4] \leq \delta_n \wedge \log^{-1/2}m; $\\
(8) $\Ep[G_{kj}^2(X,\theta_0)]\leq C$, $k\in [m], j\in[p]$; $n^{-1/2}\Ep[\max_{i\in[n]}\|G(X_i,\theta_0)\|_\infty^2] \leq \delta_n \wedge \log^{-1/2}m.$\\
(9) $B_n + K + L _n+ \mu_n^{-1} \leq C$.\\
For $\bar a\geq 0$ and $C'\geq 1$, let $\bar\lambda = C'n^{-\frac{1}{2}+\bar a}\Phi^{-1}(1-(pmn)^{-1})$. Then, setting $\lambda_j^\gamma = \frac{1}{2}\lambda_j^\mu = \bar \lambda$, with probability $1-C\delta_n$ we have
$$\begin{array}{rl}
\sqrt{n}(\check\theta - \theta_0) = - \mu_0\gamma_0\hat g(\theta_0) + r, \ \ \mbox{with} \ \ \|r\|_\infty \leq C u_n
\end{array}
$$
provided that $n^{-1+2\bar a}s^2\log^2(pmn) \leq u_n^2$ and $\bar \lambda \geq Cn^{-1/2}s^{1/2}\log^{1/2}(2m)$ for some large $C>0$.
\end{theorem}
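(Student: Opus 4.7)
The plan is to bound the remainder $r = r_1 + r_2 + r_3$ in the expansion \eqref{eq:LinearExpansionDeBiasedRGMM} term by term using the upper bounds $\bar r_1, \bar r_2, \bar r_3$ from Lemma \ref{lemma:linearize}, and to check that each is $O_P(u_n)$. The work is essentially bookkeeping: feed previously established rates into Lemma \ref{lemma:linearize}. Specifically, I would first invoke Theorem \ref{thm:NonLinear} (whose hypotheses — Conditions L, DM, LID, NLID, ENM — are guaranteed by (1)–(9) plus a standard choice of initial regularization parameter $\lambda \lesssim n^{-1/2}\sqrt{\log(pm)}$) to get
\[
\|\hat\theta - \theta_0\|_q \lesssim s^{1/q}\,\mu_n^{-1}\sqrt{\log(pm)/n},\qquad q\in\{1,2\},
\]
with probability at least $1-O(\delta_n)$; these yield the $\Delta_{1n},\Delta_{2n}$ needed downstream. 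Next, I would apply Lemma \ref{lem:PrimitiveGandOmega} (conditions (5)–(8) are precisely its primitive hypotheses) to obtain the entrywise rates $\ell_n^G$ and $\ell_n^\Omega$ for $\|\hat G - G\|_\infty$, $\|\hat G - \tilde G\|_\infty$, and $\|\hat\Omega - \Omega\|_\infty$, each of order $\sqrt{\log(mp)/n}+B_n\Delta_{1n}\sqrt{\log(mp/\delta_n)/n}+B_n^2\Delta_{1n}^2+\Delta_{2n}$, which under (9) and the sparsity requirement reduce to $O(s^{1/2}\mu_n^{-1}\sqrt{\log(pm)/n})$ up to constants.

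With these rates in hand, Lemma \ref{lem:HL-OmegaG} applies with the proposed penalty $\bar\lambda = C'n^{-1/2+\bar a}\Phi^{-1}(1-(pmn)^{-1})$: the lower bound condition on $\bar\lambda$ in the theorem ($\bar\lambda \gtrsim n^{-1/2}s^{1/2}\sqrt{\log(2m)}$) exactly dominates $\ell_n^G$ and $K\ell_n^\Omega$, so the penalty-validity hypothesis of Lemma \ref{lem:HL-OmegaG} is met. We therefore obtain
\[
\max_{j\in[p]}\|\hat\gamma_j-\gamma_{0j}\|_1 \vee \max_{j\in[p]}\|\hat\mu_j-\mu_{0j}\|_1 \;\lesssim\; s\bar\lambda \;\lesssim\; sn^{-1/2+\bar a}\sqrt{\log(pmn)}.
\]
A separate maximal-inequality step (using (7) and Lemma \ref{lem: fuk-nagaev}/Lemma \ref{lem: maximal ineq}, as already done in the earlier examples) gives $\|\sqrt n\,\hat g(\theta_0)\|_\infty = O_P(\sqrt{\log m})$.

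Plugging these into Lemma \ref{lemma:linearize}: for $\bar r_1$, the Neyman-orthogonality bound $\|I-\hat\mu\hat\gamma\hat G\|_\infty \leq 2\bar\lambda$ (from the constraint in \eqref{def:hatmu}) combined with $\|\hat\theta-\theta_0\|_1$ yields $\bar r_1 \lesssim s\bar\lambda\sqrt{\log(pm)} \lesssim sn^{-1/2+\bar a}\log(pmn)$; for $\bar r_2$, the products $\|\hat\mu_j\|_1,\|\hat\gamma_j\|_1 \leq K+o(1)$ (by the triangle inequality and the $\ell_1$ rates just derived) together with $\|\hat G-\tilde G\|_\infty$ and $\|\hat\theta-\theta_0\|_1$ gives the same order; for $\bar r_3$, combining the $\ell_1$ rates for $\hat\gamma-\gamma_0$ and $\hat\mu-\mu_0$ with $\|\sqrt n\hat g(\theta_0)\|_\infty = O_P(\sqrt{\log m})$ again yields $\bar r_3 \lesssim s\bar\lambda\sqrt{\log(pmn)}$. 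Summing the three pieces delivers $\|r\|_\infty \leq C\,sn^{-1/2+\bar a}\log(pmn) \leq Cu_n$ under the assumed sparsity condition.

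The main obstacle is $\bar r_2$: it is the genuinely non-linear term and requires simultaneously controlling $\|\hat G-\tilde G\|_\infty$ (where $\tilde G$ is evaluated at intermediate points on the line between $\hat\theta$ and $\theta_0$) via the Lipschitz/index structure built into Condition ENM, and ensuring the products $\max_j\|\hat\mu_j\|_1\max_j\|\hat\gamma_j\|_1$ stay bounded uniformly — a point where the $\ell_1$ rates from Lemma \ref{lem:HL-OmegaG} and the a priori bounds $\max_j\|\gamma_{0j}\|_1,\max_j\|\mu_{0j}\|_1 \leq K$ from (1) must combine cleanly. Everything else is a careful but mechanical assembly of previously proven rate statements under the lower bound on $\bar\lambda$ and the sparsity constraint on $u_n$.
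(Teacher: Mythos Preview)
Your proposal is correct and follows essentially the same route as the paper: invoke Theorem \ref{thm:NonLinear} for the $\ell_q$-rates of $\hat\theta$, feed these into Lemma \ref{lem:PrimitiveGandOmega} to control $\|\hat G-G\|_\infty$, $\|\hat G-\tilde G\|_\infty$, $\|\hat\Omega-\Omega\|_\infty$, then apply Lemma \ref{lem:HL-OmegaG} under the penalty lower bound $\bar\lambda \gtrsim n^{-1/2}s^{1/2}\log^{1/2}(2m)$ to obtain the $\ell_1$-rates for $\hat\gamma$ and $\hat\mu$, and finally assemble everything via Lemma \ref{lemma:linearize}. The only cosmetic difference is that the paper bounds $\max_j\|\hat\gamma_j\|_1$ and $\max_j\|\hat\mu_j\|_1$ directly by $K$ using feasibility of $\gamma_{0j}$ and $\mu_{0j}$ in the constrained programs (rather than by $K+o(1)$ via the triangle inequality), and it cites Lemma \ref{lem:m2bound}(4) rather than Lemmas \ref{lem: fuk-nagaev}--\ref{lem: maximal ineq} for $\|\sqrt n\,\hat g(\theta_0)\|_\infty$; neither difference is substantive.
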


Theorem \ref{thm:MainInference:NonLinear} provides one approach to constructing estimators with suitable linearization based on the RGMM estimator and the estimators (\ref{def:hatgamma}) and (\ref{def:hatmu}) for the nuisance parameters. Under suitable choice of penalty parameters, the requirement $n^{-1}s^3\log^2(pmn) \leq u_n$ where $u_n = o(\log^{-1/2}(p))$ suffices to ensure that approximation errors do not distort the asymptotic coverage of (rectangular) confidence regions. We note that the derivation of practical choices of penalty parameters has drawn considerable attention in the literature. Although Theorem \ref{thm:MainInference:NonLinear} allows us to postulate a choice $\bar \lambda$ that allows us to cover a class of $s$-sparse models, it would be of interest to obtain adaptive rules that are theoretically valid and practical. 
See Remark \ref{rem:PracticalConsiderations} below for some initial discussion.
%



%


\begin{remark}[Practical Considerations for the DRGMM Estimator]\label{rem:PracticalConsiderations}
We note that the penalty choices discussed in Lemma \ref{lem:HL-OmegaG} rely on using upper bounds of some unknown quantities. Relying on upper bounds is common in deriving theoretical results, though using upper bounds in finite-samples may result in overpenalization and a deterioration in performance.  An alternative approach attempts to make the estimators adaptive to the relevant unknown quantities.  For example, we can define an alternative estimator for $\gamma_0$ as the solution to the following optimization problem
\begin{equation}\label{def:hatgammaAdapt}
\min_{\gamma \in \Bbb{R}^{p \times m}} \sum_{j \in [p]} \| \gamma_j \|_1 :   \quad \|  \gamma_j \hat \Omega - (\hat G')_j  \|_\infty \leq \|\gamma_j\|_1n^{-1/2}\ell_n^\Omega + n^{-1/2}\ell_n^G, \quad j \in [p].
\end{equation}
The optimization problem in (\ref{def:hatgammaAdapt}) can still be written as a linear programming problem after adding additional variables, \cite{BRT:ells}, \cite{BCKTR:eiv}, and \cite{BCK:eiv}. The benefit of using (\ref{def:hatgammaAdapt}) is that it avoids trying to guess $K$. Similarly, we can have
\begin{equation}\label{def:hatmuAdapt}
\min_{ \mu \in \Bbb{R}^{p \times m} } \sum_{j\in[p]} \| \mu_j\|_1:  \quad \| \mu_j \hat \gamma \hat G - e_j' \|_\infty \leq \|\mu_j\|_1\lambda^\mu_j, \quad j \in [p]
\end{equation} 
where $\lambda^\mu_j = 2\max_{j\in[p]}\|\hat\gamma_j\|_1 n^{-1/2}\ell_n^G+\max_{j\in[p]}\|\hat\gamma_j\|_1^2 n^{-1/2}\ell_n^\Omega+\max_{j\in[p]}\|\hat\gamma_j\|_1\|\lambda^\gamma\|_\infty$.
This approach is justified by H\"{o}lder's inequality. Other approaches motivated by  self-nor\-ma\-li\-zation moderate deviation theory leads to non-linear problems that can be handled via conic programming in special cases; see \cite{BRT:coniceiv} and \cite{BCHN17}.
\end{remark}

\section{Bibliographical Notes and Open Problems}
The literature on CLT with increasing dimensions is quite broad, and we refer the reader to Appendix I in \cite{CCK13} for an extensive review on this topic prior to the publication of \cite{CCK13}. In the following discussion, we mainly focus on the development after \cite{CCK13}. 
The high-dimensional CLT and bootstrap results in this chapter, namely Theorems \ref{thm: clt for mam}--\ref{thm: empirical bootstrap for mam}, build upon Proposition 2.1, Corollary 4.2, and Proposition 4.3, respectively, in \cite{CCK17}, which, in turn, improves on the results of their earlier paper \cite{CCK13}. Both papers use some important technical tools, such as anti-concentration inequalities and Gaussian comparison theorems, obtained in \cite{CCK15}. \cite{W14} provides a helpful exposition of these results targeting mathematically oriented graduate students. \cite{CCK13} also provides several useful applications of these results, including the choice of the regularization parameter for the Dantzig selector, specification testing with a parametric model under the null and a nonparametric one under the alternative, and multiple testing with FWER control. Another useful application is testing many moment inequalities, where the number of moment inequalities is larger than the sample size, which is studied in  \cite{CCK13b} in detail. 

There are several extensions of high-dimensional CLT and bootstrap results of \cite{CCK13,CCK17}. 
\cite{DZ17} show that Condition E, which is imposed in Theorems \ref{thm: clt for mam}--\ref{thm: empirical bootstrap for mam}, can be slightly improved if we are only concerned with inference based on the empirical bootstrap or if we use the multiplier bootstrap with Gaussian weights $e_i$ replaced by weights satisfying
\begin{equation}\label{eq: third moment matching}
\Ep[e_i] = 0,\quad \Ep[e_i^2] = 1,\quad \Ep[e_i^3] = 1,\quad i\in[n].
\end{equation}
In particular, they show that the term $\log^7(p n)/n$ in Condition E can be replaced by the term $\log^5(p n)/n$. To obtain their results, they circumvent the Gaussian approximation and work directly with the bootstrap approximation.  \cite{ZW17,CCK13b,ZC17b} develop time series extensions of the high-dimensional CLT.
\cite{Ch17,ChKa17} develop extensions of the high-dimensional CLT and bootstrap theorems to $U$-statistics and randomized incomplete $U$-statistics, respectively (\cite{Ch17} focuses on the second order case). \cite{Ko17} studies Gaussian approximation to a high-dimensional vector of smooth Wiener functionals by combining the techniques developed in \cite{CCK13,CCK15,CCK17} and Malliavin calculus.

An important feature of Theorems \ref{thm: clt for mam}--\ref{thm: empirical bootstrap for mam} is that they provide distributional approximation results for the class of {\em rectangles}. There are also many related results in the literature if we are interested in other classes of sets. For example, \cite{B03,Be05} show that a result like \eqref{eq: clt simple implication} with $\mathcal A$ being the class of all {\em convex sets} is possible under certain moment conditions if $p = o(n^{2/7})$. More formally, \cite{Be05} proves the following: Let $Z_{1},\dots,Z_{n}$ be zero-mean independent random vectors  
in $\RR^{p}$ and suppose that the covariance matrix of $S_{n}^{Z} = n^{-1/2} \sum_{i=1}^{n}Z_{i}$, $V= n^{-1}\sum_{i=1}^{n} \Ep[Z_{i}Z_{i}']$, is invertible;  then 
\begin{equation}
\sup_{A \subset \RR^{p}: \text{convex}} | \Pr (S_{n}^{Z} \in A) - \Pr (N(0,V) \in A) | \le \frac{K p^{1/4}}{n^{3/2}} \sum_{i=1}^{n} \Ep[\| V^{-1/2} Z_{i} \|_{2}^{3}], \label{eq: Bentkus}
\end{equation}
where $K$ is a universal constant. In the simplest case where $V=I$ and $\| Z_{i} \|_{2} \le C \sqrt{p}$ for all $i\in[n]$ and some constant (independent of $n$), the right-hand side on (\ref{eq: Bentkus}) is $O(p^{7/4}n^{-1/2})$, which is $o(1)$ if $p=o(n^{2/7})$. This result is straightforward to use for inference since the covariance matrix $V$ can be accurately estimated as long as $p\ll n$.
Recently, \cite{Zh17} improves on the Bentskus condition in the case where $Z_{1},\dots,Z_{n}$ are i.i.d. with identity covariance matrix, and such that $\| Z_{i} \|_{2} \le C \sqrt{p}$ for all $i\in[n]$ and some constant $C$; under these conditions \cite{Zh17} shows that the left-hand side of (\ref{eq: Bentkus}) is approaching zero provided that $p=o(n^{2/5})$ up to log factors.
\cite{Z16} shows that the multiplier bootstrap inference over all {\em centered balls} with weights $e_i$ satisfying \eqref{eq: third moment matching} is possible if $p = o(n^{1/2})$. See also \cite{M93} for some early results regarding the multiplier bootstrap with weights $e_i$ satisfying \eqref{eq: third moment matching}.

Theorem \ref{thm: moderate deviations for mam} on moderate deviations of self-normalized sums extends the results of \cite{JSW03} to show that the moderate deviation inequality holds, up to some corrections, even if the original random variables in the denominator of the self-normalized sum are replaced by suitable estimators. This is particularly helpful when we allow for many approximate means as opposed to many exact means; see \cite{BCCH12} for an application. Textbook-level treatment of the theory of self-normalized sums can be found in \cite{PLS09}.

Theorem \ref{thm: simultaneous conf intervals} on simultaneous confidence intervals is a rather simple application of Theorems \ref{thm: clt for mam}--\ref{thm: empirical bootstrap for mam}. Similar results formulated in terms of particular applications can be found for example in \cite{BCK:biometrika}, \cite{BCCW17}, and \cite{BCHN17}. Simultaneous confidence intervals also constitute an important research topic in the literature on nonparametric estimation and inference. Useful references on this literature are provided in \cite{CCK14b}.

Theorems \ref{thm: fwer bonferroni} and \ref{thm: MHT} on multiple testing with FWER control build upon \cite{RW05} with the key difference that we allow for $p\to\infty$, and in particular $p/n\to\infty$, as $n\to\infty$. A closely related analog of these theorems can be found in \cite{CCK13} but our conditions here are somewhat weaker than those in \cite{CCK13}. \cite{W00} explains importance of FWER control in multiple testing. A clean textbook-level treatment of multiple testing with FWER control can be found in \cite{LR05}. A useful discussion of multiple testing in experimental economics can be found in \cite{LSX15}. \cite{RS10} draws a connection between multiple testing with the FWER control and constructing sets covering the identified sets with a prescribed probability in partially identified models and provide a stepdown procedure for constructing such sets. Note also that our formulation of the Bonferroni-Holm procedure is slightly different but equivalent to the commonly used formulation, e.g. in \cite{LR05}. We have changed the formulation to facilitate the comparison between the Bonferroni-Holm and Romano-Wolf procedures.

Theorem \ref{thm: fdp} on multiple testing with FDR control generalizes the results of \cite{LS14} to allow for many approximate means. In turn, \cite{LS14} generalizes the original results of \cite{BH95} on the Benjamini-Hochberg procedure to allow for the unknown distribution of the data and also to allow for some dependence between the $t$-statistics. Moreover, \cite{LS14} uses moderate deviation for self-normalized sums theory to allow for testing in ultra-high dimensions. Theorem \ref{thm: fdp} is also closely related to the results in \cite{LL14}, who considers multiple testing with FDR control for a specific setting: variable selection in a high-dimensional regression model. A useful discussion of multiple testing with FDR control and other types of control can be found in \cite{RSW08}. A textbook-level treatment is provided in \cite{G15}.

Our Condition C for the analysis of the Benjamini-Hochberg procedure requires that each $t$-statistic is correlated with a relatively small set of other $t$-statistics. The procedure, however, remains valid under the so-called positive regression dependency condition; see \cite{BY01} for details. There also has been a lot of research about related procedures; see e.g. \cite{RSW08b}. A radically different procedure, which also allows for FDR control but which we did not consider in this chapter, is the knockoff filter of \cite{BC15}. This alternative procedure is designed specifically for the variable selection problem in the linear mean regression model and requires the number of covariates to be smaller than the sample size but does not restrict dependence between covariates in any way. See also \cite{CFJL17} where the knockoff filter is modified to allow for the high-dimensional regression model, with the number of covariates exceeding the sample size, in exchange for some other conditions. 

We present our high-dimensional estimation results within the context of $\ell_1$-regularized minimum distance estimation focusing on the case where the parameter vector exhibits an approximately sparse structure.  The estimation results clearly build upon fundamental work for $\ell_1$-penalized regression of \cite{FF93} and \cite{T96}.  This initial work has been expanded in many directions; see, for example, the textbook treatment of \cite{BvdG11} as well as \cite{CT07}, \cite{BRT09}, \cite{BC11}, \cite{BCW11}, \cite{BCCH12}, \cite{BCH14}, and \cite{BCFVH17} for results most closely related to the approach taken in this review.  

More generally, providing methods for estimating high-dimensional models has been an active area of research for quite some time, and there is a large collection of methods available within the literature.  \cite{ESL} and \cite{CASI} provide useful textbook introductions to a wide array of methods that are useful in high-dimensional contexts.  Developing new techniques for estimation in high-dimensional settings is also still an active area of research, so the list of methods available to researchers continues to expand.  Further exploring the use of these procedures in economic applications and the impact of their use on inference about structural parameters seems like a useful avenue to pursue. 

Methods for obtaining valid inferential statements following regularization in high-dimensional settings has been an active area of research in the recent statistics and econometrics literature.  Early work on inference in high-dimensional settings focused on the exact sparsity structure with separation (from zero) discussed in Section \ref{Sec: RegularizedMM}; see, e.g., \cite{FanLi2001} for an early paper or \cite{FanLv2010} for a more recent review.  A consequence of sparsity with strong separation from zero is that model selection does not impact the asymptotic distribution of the parameters estimated in the selected model, under regularity conditions.  This property allows one to do inference using standard approximate distributions for the parameters of the selected model ignoring that model selection was done.  While convenient, inferential results obtained relying on this structure may perform very poorly in more realistic approximately sparse structures as was noted in a series of papers; see, for example, \cite{LP08a} and \cite{LP08b}.  

The more recent work on inference about model parameters following the use of regularization, including the procedure outlined in this chapter, has focused on providing procedures that remain valid without maintaining exact sparsity with separation. As noted in Section \ref{Sec:DRGMM}, a key element in obtaining valid inferential statements is the use of estimating equations that satisfy the Neyman orthogonality condition.  This idea dates at least to \cite{N59} who used the idea of projecting the score that identifies the parameter of interest onto the ortho-complement of the tangent space for nuisance parameters in the construction of the $C(\alpha)$, or orthogonal score, statistic.  This idea also plays a key role in semiparametric and targeted learning theory; see, for example, \cite{Andrews94}, \cite{Newey94}, \cite{vdv98}, \cite{SRR99}, and \cite{vdLR11}.  

Within the high-dimensional context, much of the work on inference focuses on inference for prespecified low-dimensional parameters in the presence of high-dimensional nuisance parameters when $\ell_1$ regularization or variable selection methods are used to estimate the nuisance parameters.  \cite{BCH10b} considers inference about parameters on a low-dimensional set of endogenous variables following selection of instruments from a high-dimensional set using lasso in a homoscedastic, Gaussian IV model.  Their approach relies on the fact that the moment condition underlying IV estimation is Neyman orthogonal.  These ideas were further developed in the context of providing uniformly valid inference about the parameters on endogenous variables in the IV context with many instruments to allow non-Gaussian heteroscedastic disturbances in \cite{BCCH12}.  \cite{BCH14}, which to our knowledge provides the first formal statement of the Neyman orthogonality condition in the high-dimensional setting, covers inference on the parametric components of the partially linear model and average treatment effects.  See also \cite{BCH10a}, \cite{F15}, \cite{K18}, \cite{BCHK16}, and \cite{BCFVH17}, among others, for further applications and generalizations explicitly making use of Neyman orthogonal estimating equations.  As noted above, Neyman orthogonal estimating equations are closely related to Neyman's $C(\alpha)$-statistic. The use of $C(\alpha)$ statistics for testing and estimation with high-dimensional approximately sparse models was first explored in the context of quantile regression in \cite{BCK:biometrika} and in the context of high-dimensional generalized linear models by \cite{BCW16}.  Other uses of $C(\alpha)$-statistics or close variants include those in \cite{VSW:score}, \cite{NL:sparc}, \cite{YNL:score}, and \cite{NL17}.  Finally, a different strand of the literature has focused on ex-post ``de-biasing'' of estimators to enable valid inference as opposed to directly basing estimation and inference on orthogonal estimating equations.  While seemingly distinct, the de-biasing approach is the same as approximately solving orthogonal estimating equations; see, for example, discussion in \cite{CHS}.  Important seminal contributions following the de-biasing approach are \cite{ZZ14}, \cite{vdGBRD14}, and \cite{JM14}.

Rather than focus on a pre-specified low-dimensional parameter, one may also do inference for high-dimensional parameters.  \cite{WR09} and \cite{MMB09} use sample splitting to provide procedures for multiple inference in high-dimensional settings that can control FWER and FDR under strong conditions.  \cite{NvdG13} also consider the construction of confidence sets for the entire parameter vector in a sparse high-dimensional regression using sample splitting ideas. \cite{vdGBRD14} suggest using Bonferroni-Holm in conjunction with the de-sparsified lasso using a limiting distribution derived under homoscedastic Gaussian errors.  As discussed in this chapter, the high-dimensional CLT and bootstrap results of \cite{CCK13} are broadly applicable for inference about high-dimensional parameters.  \cite{BCK:biometrika} provides an early use of these results for construction of a simultaneous confidence rectangle for many target parameters within a rich class of models estimated using orthogonal estimating equations; see also \cite{CHS:hdm} which implements inference for high-dimensional treatment or structural effects following \cite{BCK:biometrika}.  \cite{DBZ16} and \cite{ZC17} also consider bootstrap inference for many parameters estimated with debiased estimators in high-dimensional models building on \cite{CCK13}.  More recently, \cite{BCCW17} extends \cite{BCK:biometrika} to provide valid inference for many functional parameters, and \cite{BCK:eiv} consider inference for many parameters in a high-dimensional linear model with errors in variables.  Finally, \cite{CG16}, \cite{ZB:lin}, \cite{ZB:proj}, and \cite{HKM} consider different approaches which allow testing hypotheses about functionals that may involve the entire high-dimensional parameter vector within different high-dimensional contexts. 

There is also a rapidly growing body of research focused on learning economically interesting parameters that uses different high-dimensional methods and/or aims to provide reliable inferential statement under relatively weaker conditions. Data-adaptive estimation of nuisance functions, with an emphasis on using high-dimensional methods, is advocated in the targeted learning literature under the nomenclature ``super learner'' though many formal results in this literature are obtained in low-dimensional settings; see, e.g., \cite{vdl:super}, \cite{vdLR11}, and \cite{ZvdL11}.  \cite{AI16} is an important example that uses tree-based methods and sample splitting for estimating and performing inference about heterogeneous treatment effects.  \cite{WA:rf} considers estimation and inference for heterogeneous treatment effects using a variant of random forests with formal results established in a low-dimensional context.  In a high-dimensional setting, \cite{CCDDHNR18} use Neyman orthogonal estimating equations and sample splitting to provide a generic procedure for inference about low-dimensional parameters under weak conditions that allow for the use of wide variety of high-dimensional, machine learning methods.  Sample splitting and orthogonal estimating equations are also employed in \cite{CGST18}, which considers inference for high-dimensional conditional heterogeneous treatment effects.  These ideas are also extended in \cite{CDDF18} which provides inference for a variety of useful functionals of heterogeneous treatment effects, such as the best linear predictor of the conditional average treatment effect function, estimated via generic high-dimensional methods using sample splitting while accounting for uncertainty introduced from the sample splits under very mild conditions.  \cite{BCHN17} provides multipurpose inference methods for high-dimensional causal effects which cover endogenous treatments.  
\cite{AIW:bal} use a reweighting after regression adjustment via lasso which allows valid inference for the average treatment effect to be performed under very weak conditions on the propensity score as long as treatment and control conditional mean functions are linear.  \cite{CNR17} considers inference for linear functionals of conditional expectations with estimated Riesz representers under very weak conditions.  Note that among applications of \cite{CNR17} is estimation of average treatment effects, and that the conditions of \cite{CNR17}  also impose weak assumptions on the propensity score.  The advantage of  
\cite{CNR17}'s approach  over that in \cite{AIW:bal}  is that the former explicitly allows the tradeoff in the rate of estimating the inverse propensity score with the rate of estimating the regression function, allowing misspecification of both functions.  The regularity conditions are also substantively weaker, for example allowing regressions functions be completely non-sparse when the inverse propensity score is well-approximated.

Most of the work in the recent literature on high-dimensional estimation and inference relies on approximate sparsity to provide dimension reduction and the corresponding use of sparsity-based estimators. Dense models are appealing in many settings and may be usefully employed in more moderate-dimensional settings.  The many weak-instrument regime, popular in econometrics since at least \cite{bekker}, provides one such example.  Approaches which provide valid inference for structural parameters within this setting can all be viewed as making use of regularization to avoid dramatic overfitting in the relationship between endogenous variables and the many available instruments.  See, for example, \cite{CI04}, \cite{Okui11}, \cite{Carr12}, and \cite{HK14} for approaches that explicitly use regularized first-stage estimation within a dense model framework.  \cite{CJN16} and \cite{CJN17} consider inference for a low-dimensional set of coefficients in a linear model with number of variables proportional to but smaller than the sample size in a framework allowing for the coefficients on the nuisance variables to be dense.  Within the same framework, \cite{CJM17} extend this work to address inference about parameters estimated using two-step procedures where the first step is a linear regression with many variables.  


We conclude these bibliographic notes by noting that the references to the high-dimensional literature provided above are necessarily selective.  The literature on high-dimensional estimation and inference is large and rapidly expanding, and it is impractical to give more than a cursory overview highlighting a few examples.  The goal of this review is to provide readers with a few key papers in several areas and a taste of existing results.

\clearpage

\appendix

\section{Fundamental Tools}
\subsection{Tool Set 1: Moderate Deviation Inequality for Self-normalized Sums}

\begin{lemma}\label{lem: moderate deviations}
Let $\xi_{1},\dots, \xi_{n}$ be independent mean-zero random variables with $n^{-1}\sum_{i=1}^n\Ep [ \xi_{i}^{2} ] \geq 1$ and $\Ep [| \xi_{i} |^{2+\nu} ] < \infty$ for all $i\in[n]$ where $0 < \nu \leq 1$.
Let $S_{n}:=\sum_{i=1}^{n} \xi_{i}, V_{n}^2:=\sum_{i=1}^{n} \xi_{i}^2$, and $D_{n,\nu}:=(n^{-1}\sum_{i=1}^{n} \Ep [ |\xi_{i}|^{2+\nu}])^{1/(2+\nu)}$. Then uniformly in $0\leq x\leq n^{\frac{\nu}{2(2+\nu)}}/D_{n,\nu}$,
\[
\left | \frac{\Pr(S_n/V_n\geq x)}{1 - \Phi(x)} - 1 \right | \leq  K n^{-\nu/2} D_{n,\nu}^{2+\nu} (1+x)^{2+\nu},
\]
where $K$ is a universal constant.
\end{lemma}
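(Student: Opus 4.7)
The plan is to reduce the self-normalized moderate deviation to a classical Cramér-type large deviation for (non-normalized) sums of i.i.d.\ vectors, using the key observation that
\[
\{S_n / V_n \geq x\} \;=\; \{2 x S_n - x^2 V_n^2 \geq x^2\},
\]
so that the event depends linearly on the pair $(\xi_i,\xi_i^2)$. This is the device used in Jing--Shao--Wang (2003), and it is the reason why self-normalized large deviations require essentially only $(2+\nu)$ moments rather than the exponential tails one would need for unnormalized sums. First I would assume without loss of generality that $n^{-1}\sum_i \Ep[\xi_i^2] = 1$; the general hypothesis $n^{-1}\sum_i \Ep[\xi_i^2]\geq 1$ can then be restored by absorbing the (bounded) rescaling factor into the constant $K$ and into $D_{n,\nu}$.

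Second, I would truncate. Set $\eta_i = \xi_i \mathbf 1\{|\xi_i| \leq c \sqrt n/x\}$ for a small constant $c>0$, and use Markov's inequality applied to the $(2+\nu)$-th moment to show that the event $\bigcup_i\{|\xi_i| > c\sqrt n/x\}$ has probability at most $x^{2+\nu} n^{-\nu/2} D_{n,\nu}^{2+\nu}$ (up to a constant). This is exactly the target error, so the tails contribute a negligible amount, and it suffices to study the truncated statistics $(\eta_i, \eta_i^2)$, which have bounded conditional exponential moments in the relevant range of $x$.

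Third, I would apply the Cramér tilting machinery to the two-dimensional sum $\sum_i (2x\eta_i, -x^2\eta_i^2)$ at the saddle point $t = t_n(x) \approx x$ chosen so that the tilted mean matches the target threshold $x^2$. The probability $\Pr(2xS_n^{\eta}- x^2 V_n^{2,\eta} \geq x^2)$ then equals $\exp(-x^2/2 + R_1) \cdot [1 + R_2]$, where the leading exponential factor reproduces $1-\Phi(x) \asymp \phi(x)/x$ in the relevant range, and $R_1, R_2$ are error terms coming respectively from the cumulant expansion of the tilted log-MGF and from the Berry--Esseen-type smoothing for the tilted distribution. A direct computation using the $(2+\nu)$-th moment bound $D_{n,\nu}$ shows that each is of order $n^{-\nu/2} D_{n,\nu}^{2+\nu}(1+x)^{2+\nu}$, yielding the claim.

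The main obstacle is the tight uniform control of the remainders $R_1$ and $R_2$ over the entire range $0 \leq x \leq n^{\nu/(2(2+\nu))}/D_{n,\nu}$: one needs both the third-cumulant term in the Edgeworth expansion of the tilted distribution and the $(2+\nu)$-moment correction to the smoothing error to scale precisely as $n^{-\nu/2}(1+x)^{2+\nu}$, with no hidden $\log$ or polynomial factors in $x$. This requires carefully balancing the truncation level against the tilting parameter so that the truncated $(\eta_i,\eta_i^2)$ have enough moments for a Bernstein-type bound on the tilted log-MGF, while not losing a polynomial factor in $x$ when converting moment bounds on $\eta_i^2$ into bounds on its tilted variance. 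Once this delicate bookkeeping is carried out, the ratio in the conclusion follows directly.
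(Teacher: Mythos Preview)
The paper does not prove this lemma; it simply cites Theorem~7.4 of de~la~Pe\~na--Lai--Shao (2009) and the original Jing--Shao--Wang (2003) paper, adding only the remark that the normalization $n^{-1}\sum_i \Ep[\xi_i^2]=1$ used there can be relaxed to $\geq 1$ by absorbing the scaling into $K$. Your sketch is essentially an outline of the Jing--Shao--Wang argument itself, so you are supplying what the paper deliberately omits rather than taking a different route.

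One correction is needed, though: the displayed identity $\{S_n/V_n \geq x\} = \{2xS_n - x^2 V_n^2 \geq x^2\}$ is not an equality of events. What Jing--Shao--Wang actually use is the AM--GM bound $V_n \leq (V_n^2 + B_n^2)/(2B_n)$ with $B_n^2 = \sum_i \Ep[\xi_i^2]$, which yields the one-sided inclusion
\[
\Big\{ S_n \geq \tfrac{x}{2B_n}(V_n^2 + B_n^2) \Big\} \subseteq \{S_n \geq x V_n\},
\]
and the left-hand event is the one that is linear in $(\xi_i,\xi_i^2)$. The matching upper bound on $\Pr(S_n/V_n \geq x)$ is obtained by a separate and more delicate argument (conditioning on whether $V_n^2$ is close to $B_n^2$ and handling the complementary event via the truncation you describe). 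Apart from this slip, the rest of your plan---truncation at level $\sqrt n/x$ controlled by the $(2+\nu)$-th moment, exponential tilting of $\sum_i(b\xi_i - \tfrac{b^2}{2}\xi_i^2)$ near $b \approx x/B_n$, and a Berry--Esseen correction under the tilted measure---is exactly the JSW machinery, and your identification of the bookkeeping on $R_1,R_2$ as the main technical burden is accurate.
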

\begin{proof}
See Theorem 7.4 in \cite{PLS09} or the original paper, \cite{JSW03}. Note that the formulation in these sources requires that $n^{-1}\sum_{i=1}^n\Ep[\xi_i^{2}] = 1$ but it it is trivial to show that we can instead assume that $n^{-1}\sum_{i=1}^n\Ep[\xi_i^{2}] \geq 1$.
\end{proof}

\subsection{Tool Set 2: Maximal and Deviation Inequalities}

\begin{lemma}\label{lem: fuk-nagaev}
Let $X_{1},\dots,X_{n}$ be independent random vectors in $\R^{p}$. In addition, define $\sigma^{2} := \max_{j \in [p]} \sum_{i=1}^{n} \Ep [X_{i j}^{2}]$. Then for every $s > 1$ and $t > 0$,
\begin{multline*}
\Pr \left  ( \max_{j \in [p]} \Big | \sum_{i=1}^{n} (X_{ij} - \Ep [ X_{ij} ]) \Big | \geq 2\Ep \Big [ \max_{ j \in [p]} \Big | \sum_{i=1}^{n} (X_{ij} - \Ep [ X_{ij} ]) \Big | \Big  ]+t \right )
\\ \leq e^{-t^{2}/(3\sigma^{2})} + \frac{K_{s}}{t^s} \sum_{i=1}^{n} \Ep \left[ \max_{j \in [p]} | X_{ij} - \Ep[X_{i j}] |^{s} \right],
\end{multline*}
where $K_{s}$ is a constant depending only on $s$.
\end{lemma}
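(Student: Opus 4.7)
The plan is to combine a truncation argument with Bousquet's form of Talagrand's concentration inequality for suprema of empirical processes. I would first pass to mean-zero summands by setting $Y_i := X_i - \Ep[X_i]$, noting the right-hand side of the claim is already phrased in terms of these and the variance of $Y_{ij}$ is bounded by $\Ep[X_{ij}^2]$. Put $Z := \max_{j\in[p]}|\sum_i Y_{ij}|$ and pick a truncation level $M$ of order $t/s$. Decompose each observation as $Y_i = Y_i^{\leq}+Y_i^{>}$ with $Y_{ij}^{\leq} := Y_{ij}\mathbf{1}\{\max_k|Y_{ik}|\leq M\}$, recenter each piece, and let $Z^{\leq}, Z^{>}$ denote the maxima formed from the recentered summands $\widetilde Y_{ij}^{\leq}, \widetilde Y_{ij}^{>}$, so that $Z \leq Z^{\leq}+Z^{>}$.

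I would bound the heavy-tail piece $Z^{>}$ by Markov. The summand $\widetilde Y_i^{>}$ is genuinely random only on the event $\{\max_k|Y_{ik}|>M\}$, which by Markov has probability at most $M^{-s}\Ep[\max_j|Y_{ij}|^s]$; a union bound shows the probability that any $\widetilde Y_i^{>}$ is nonzero is at most $M^{-s}\sum_i\Ep[\max_j|Y_{ij}|^s]$. On the complementary event each $\widetilde Y_i^{>}$ reduces to a deterministic recentering constant of size at most $\Ep[\max_j|Y_{ij}|\mathbf{1}\{\max_k|Y_{ik}|>M\}]\leq M^{-(s-1)}\Ep[\max_j|Y_{ij}|^s]$, so $Z^{>}\leq M^{-(s-1)}\sum_i\Ep[\max_j|Y_{ij}|^s]$. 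With $M\asymp t/s$ both contributions fit into the target term $(K_s/t^s)\sum_i\Ep[\max_j|Y_{ij}|^s]$. A routine preliminary observation is that if $t^s$ fails to dominate a constant multiple of $\sum_i\Ep[\max_j|Y_{ij}|^s]$, the right-hand side of the claim already exceeds $1$ and there is nothing to prove; the rest of the analysis takes place in the complementary regime.

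For the truncated piece, each $\widetilde Y_{ij}^{\leq}$ is centered, bounded by $2M$, and has variance at most $\Ep[Y_{ij}^2]$, so Bousquet's inequality applied to the finite coordinate class yields
\[
\Pr(Z^{\leq}\geq \Ep[Z^{\leq}]+u)\leq \exp\!\left(-\frac{u^2}{2\sigma^2+8M\Ep[Z^{\leq}]+4Mu/3}\right).
\]
Choosing $u=\Ep[Z^{\leq}]+t/2$ converts the event into $\{Z^{\leq}\geq 2\Ep[Z^{\leq}]+t/2\}$. The numerator squared dominates $t^2/4$, and with $M\asymp t/s$ taken with a sufficiently small prefactor the correction terms $8M\Ep[Z^{\leq}]$ and $4Mu/3$ in the denominator are absorbed into the leading $2\sigma^2$. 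It is precisely the slack from the target coefficient $3\sigma^2$ (rather than $2\sigma^2$) that permits this absorption, leaving a clean sub-Gaussian tail $\exp(-t^2/(3\sigma^2))$. A final adjustment replaces $2\Ep[Z^{\leq}]$ by $2\Ep[Z]$ using $\Ep[Z^{\leq}]\leq\Ep[Z]+\Ep[Z^{>}]$ together with the smallness of $\Ep[Z^{>}]$ shown above, which costs only a term of the same Markov-controlled order and can be absorbed into the $t/2$ slack.

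The main obstacle is the constant bookkeeping in the concentration step: one must verify that, for a specific proportionality $M\asymp t/s$ with an explicit prefactor, the Bousquet denominator collapses to give the coefficient $1/(3\sigma^2)$ in the exponent and not something weaker, while simultaneously the Markov contribution has a constant $K_s$ polynomial in $s$. A secondary but routine subtlety is the passage from $\Ep[Z^{\leq}]$ back to $\Ep[Z]$, which relies on the bound on $\Ep[Z^{>}]$ derived in the Markov step; one should confirm that the associated leakage into the large-deviation piece does not exceed the $t/2$ budget allocated there. Putting the two tail bounds together via the union bound $\Pr(Z\geq 2\Ep[Z]+t)\leq \Pr(Z^{\leq}\geq 2\Ep[Z^{\leq}]+t/2)+\Pr(Z^{>}\geq t/2)$ then delivers the claim.
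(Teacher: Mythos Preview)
The paper does not prove this lemma; it simply cites Lemma~E.2 of \cite{CCK17}. Your scheme---truncate at a level $M\asymp t$, handle the heavy-tail piece by Markov, and apply Bousquet/Talagrand to the bounded piece---is precisely the argument used there, so you have reconstructed the intended proof.

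One point in your write-up does not go through as stated. With the even split $u=\Ep[Z^{\le}]+t/2$, the Bousquet numerator satisfies only $u^{2}\ge t^{2}/4$ while the denominator is always at least $2\sigma^{2}$; hence the exponent is at most $t^{2}/(8\sigma^{2})$, not $t^{2}/(3\sigma^{2})$. The slack between the target $3\sigma^{2}$ and Bousquet's $2\sigma^{2}$ in the denominator cannot compensate for the factor $4$ lost in the numerator. The cleaner route---and the one in \cite{CCK17}---avoids splitting the deviation budget altogether by working on the event $E=\{\max_{i}\|Y_{i}\|_{\infty}\le M\}$: on $E$ the original sum \emph{equals} the truncated (not yet recentred) sum, so
\[
\Pr(Z\ge 2\Ep[Z]+t)\le \Pr(E^{c})+\Pr(Z_{M}\ge 2\Ep[Z]+t),
\]
with no allocation of part of $t$ to the tail piece. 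After the small recentring correction $R$ you already controlled, essentially the full $t$ enters Bousquet, giving numerator $\approx t^{2}$ against denominator $2\sigma^{2}+O(Mt)$; now the $2$-versus-$3$ slack is genuinely enough to absorb the $O(Mt)$ term once the prefactor in $M$ is chosen small. Your ``constant bookkeeping'' obstacle is exactly this, and it is resolved by dropping the $t/2$ split rather than by tuning it.
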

\begin{proof}
See Lemma E.2 in \cite{CCK17}.
\end{proof}



\begin{lemma}
\label{lem: maximal ineq}
Let $X_{1},\dots,X_{n}$ be independent random vectors in $\R^{p}$ with $p \geq 2$. Define $M := \max_{i \in [n]} \max_{j \in [p]} | X_{i j} |$ and $\sigma^{2} := \max_{j \in [p]} \sum_{i=1}^{n} \Ep [ X_{i j}^{2} ]$.
Then
\begin{equation*}
\Ep \left [\max_{j \in [p]} \Big  | \sum_{i=1}^{n} (X_{i j} - \Ep [X_{i j}]) \Big  | \right ] \leq K (\sigma \sqrt{\log p} + \sqrt{\Ep [ M^{2} ]} \log p),
\end{equation*}
where $K$ is a universal constant.
\end{lemma}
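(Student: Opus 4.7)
The plan is to combine symmetrization, a sub-Gaussian maximal inequality, and Talagrand's contraction principle in a bootstrap argument. Let $S_{j}:=\sum_{i=1}^{n}(X_{ij}-\Ep[X_{ij}])$. By the standard desymmetrization inequality applied coordinatewise and to the maximum, we first obtain
\begin{equation*}
\Ep\Big[\max_{j\in[p]}|S_{j}|\Big]\leq 2\,\Ep\Big[\max_{j\in[p]}\Big|\sum_{i=1}^{n}\varepsilon_{i}X_{ij}\Big|\Big]=:2B,
\end{equation*}
where $\varepsilon_{1},\dots,\varepsilon_{n}$ are independent Rademacher signs independent of $(X_{i})_{i\in[n]}$. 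Conditionally on $(X_{i})_{i\in[n]}$, the sum $\sum_{i}\varepsilon_{i}X_{ij}$ is sub-Gaussian with proxy $\sqrt{\sum_{i}X_{ij}^{2}}$, so the union bound / sub-Gaussian maximum inequality yields $\Ep_{\varepsilon}[\max_{j}|\sum_{i}\varepsilon_{i}X_{ij}|]\leq K_{1}\sqrt{\log(2p)}\,\max_{j}\sqrt{\sum_{i}X_{ij}^{2}}$. Taking expectation over $(X_{i})_{i\in[n]}$ and applying Jensen,
\begin{equation*}
B\leq K_{1}\sqrt{\log(2p)}\,\sqrt{D},\qquad D:=\Ep\Big[\max_{j\in[p]}\sum_{i=1}^{n}X_{ij}^{2}\Big].
\end{equation*}

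Next I would control $D$. Splitting into mean plus fluctuation,
\begin{equation*}
D\leq \sigma^{2}+\Ep\Big[\max_{j\in[p]}\Big|\sum_{i=1}^{n}(X_{ij}^{2}-\Ep[X_{ij}^{2}])\Big|\Big].
\end{equation*}
By symmetrization again and then Talagrand's contraction principle applied to the map $t\mapsto t^{2}$ on $[-M,M]$, which is $(2M)$-Lipschitz and vanishes at $0$, the second term is at most $8\,\Ep\bigl[M\,\Ep_{\varepsilon}[\max_{j}|\sum_{i}\varepsilon_{i}X_{ij}|]\bigr]$. By Cauchy--Schwarz this is dominated by $8\sqrt{\Ep[M^{2}]}\cdot\sqrt{\Ep[Z^{2}]}$ where $Z:=\Ep_{\varepsilon}[\max_{j}|\sum_{i}\varepsilon_{i}X_{ij}|]$, and the conditional sub-Gaussian tail bound upgraded to an $L^{2}$ bound (again by the sub-Gaussian concentration used coordinatewise plus the union bound) gives $\Ep[Z^{2}]\leq K_{2}\log(2p)\cdot D$. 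Combining,
\begin{equation*}
D\leq \sigma^{2}+K_{3}\sqrt{\Ep[M^{2}]}\,\sqrt{\log(2p)}\,\sqrt{D}.
\end{equation*}

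This is a quadratic-in-$\sqrt{D}$ inequality; applying the elementary bound $a\sqrt{D}\leq D/2+a^{2}/2$ and rearranging yields $D\leq 2\sigma^{2}+K_{3}^{2}\,\Ep[M^{2}]\log(2p)$, so that $\sqrt{D}\leq \sqrt{2}\,\sigma+K_{3}\sqrt{\Ep[M^{2}]}\sqrt{\log(2p)}$. Plugging this into $B\leq K_{1}\sqrt{\log(2p)}\sqrt{D}$ and using $\log(2p)\asymp\log p$ for $p\geq 2$ gives
\begin{equation*}
\Ep\Big[\max_{j\in[p]}|S_{j}|\Big]\leq 2B\leq K\bigl(\sigma\sqrt{\log p}+\sqrt{\Ep[M^{2}]}\,\log p\bigr),
\end{equation*}
which is the desired inequality.

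The main obstacle is the bootstrap-style self-improving inequality for $D$: one must be careful to upgrade the first-moment sub-Gaussian bound to a second-moment statement (the $\Ep[Z^{2}]\lesssim \log(2p)\,D$ step) so that Cauchy--Schwarz closes the loop, and to handle the factor $2L=4M$ coming from contraction applied to $t\mapsto t^{2}$ on $[-M,M]$ without accidentally producing a random Lipschitz constant that spoils the Cauchy--Schwarz split. Once the self-bounding inequality is solved, the rest is bookkeeping of universal constants; no Bernstein-type sharpness is needed because only the first moment is targeted.
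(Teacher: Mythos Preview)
Your argument is correct. The paper does not actually prove this lemma; it simply cites Lemma~8 of \cite{CCK15}. Your route---symmetrization, the conditional sub-Gaussian maximal inequality for Rademacher sums, and the contraction/self-bounding step to control $D=\Ep[\max_j\sum_i X_{ij}^2]$---is precisely the argument used there, so there is nothing to compare: you have reproduced the cited proof. The one delicate point you flag, that the Lipschitz constant $2M$ in the contraction step is random, is handled exactly as you indicate: apply contraction conditionally on $(X_i)_{i\in[n]}$ to get a bound of the form $4M\cdot Z$ with $Z=\Ep_\varepsilon[\max_j|\sum_i\varepsilon_iX_{ij}|]$, then split $M$ from $Z$ by Cauchy--Schwarz, using the pointwise sub-Gaussian bound $Z\le K_1\sqrt{\log(2p)}\,\max_j(\sum_iX_{ij}^2)^{1/2}$ to get $\Ep[Z^2]\le K_1^2\log(2p)\,D$ and close the self-referential inequality.
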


\begin{proof}
See Lemma 8 in \cite{CCK15}.
\end{proof}


\begin{lemma}\label{lem: deviation inequality for positive rvs}
Let $X_1,\dots,X_n$ be independent random vectors in $\mathbb R^p$ with $p\geq 2$ such that $X_{i j}\geq 0$ for all $i\in[n]$ and $j \in[p]$. Define $M:=\max_{i\in[n]}\max_{j\in[p]}X_{i j}$. Then for any $s\geq 1$ and $t>0$,
$$
\Pr\left(\max_{j\in[p]}\sum_{i=1}^n X_{i j} \geq 2\Ep\Big[\max_{j\in[p]}\sum_{i=1}^n X_{i j} \Big] + t \right) \leq K\Ep[M^s]/t^s.
$$
where $K$ is a constant depending only on $s$.
\end{lemma}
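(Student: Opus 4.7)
The plan is to use truncation plus concentration. Set $b := t/2$ and decompose each variable as $X_{ij} = \tilde X_{ij} + R_{ij}$ with $\tilde X_{ij} := X_{ij}\wedge b$ and $R_{ij}:=(X_{ij}-b)_+$. Because $\max_j\sum_i R_{ij}\le\sum_i\max_j R_{ij}=\sum_i(M_i-b)_+$, the overflow vanishes on $\{M\le b\}$, so on that event $Z=\tilde Z$, where $\tilde Z:=\max_j\sum_i\tilde X_{ij}$. Writing $\mu:=\Ep[Z]$, we get
\[
\Pr(Z\ge 2\mu+t)\;\le\;\Pr(\tilde Z\ge 2\mu+t)\;+\;\Pr(M>b).
\]
Markov's inequality immediately handles the second term: $\Pr(M>b)\le\Ep[M^s]/b^s=2^s\,\Ep[M^s]/t^s$, which already has the target form.

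For the first term, the truncated summands satisfy $\tilde X_{ij}\in[0,b]$, $\Ep[\tilde Z]\le\mu$, and the wimpy variance $\sigma^2:=\max_j\sum_i\Ep[\tilde X_{ij}^2]\le b\,\max_j\Ep[\sum_i\tilde X_{ij}]\le b\mu$. Applying Lemma~\ref{lem: fuk-nagaev} to the centered variables $\tilde X_{ij}-\Ep\tilde X_{ij}$, and using Lemma~\ref{lem: maximal ineq} together with a standard symmetrization/contraction argument to bound $\Ep[\max_j|\sum_i(\tilde X_{ij}-\Ep\tilde X_{ij})|]$ by a constant multiple of $\mu$, one obtains
\[
\Pr(\tilde Z\ge 2\mu+t)\;\le\;\exp\!\Big(-c\,\tfrac{t^2}{b\mu}\Big)\;+\;\frac{K_s}{t^s}\sum_{i=1}^n\Ep\bigl[\max_j|\tilde X_{ij}-\Ep\tilde X_{ij}|^s\bigr],
\]
up to constants depending only on $s$. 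The exponential factor becomes $\exp(-c\,t/\mu)$; splitting into the cases $t\le\mu$ (in which $\Ep[M^s]/t^s\gtrsim 1$ because $M\le\tilde Z\le Z$ forces $\Ep[M]\le\mu$ and Markov is trivial) and $t\ge\mu$ (in which $\exp(-c)$ is an absolute constant), this piece is absorbed into a multiple of $\Ep[M^s]/t^s$.

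The main obstacle is the polynomial term, where the summation $\sum_i\Ep[\max_j\tilde X_{ij}^s]$ naively scales linearly in $n$ while the target $\Ep[M^s]/t^s$ does not. Removing this extra factor is where non-negativity is essential, since only the indices with $M_i>0$ can contribute to the sum. The cleanest way to close the argument is to invoke a Hoffmann--J\"orgensen-type inequality for non-negative random vectors of the form $\Pr(Z\ge 2\mu+t)\le C\,\Pr(M>t/C)$ for a universal $C$, which combined with Markov directly yields $\Pr(Z\ge 2\mu+t)\le C^{s+1}\Ep[M^s]/t^s$. Failing that, a self-contained route is the layer-cake identity
\[
\sum_{i=1}^n\Ep[(M_i\wedge b)^s]\;=\;s\!\int_0^b u^{s-1}\,\Ep[\#\{i:M_i>u\}]\,du,
\]
combined with the elementary estimate $\Ep[\#\{i:M_i>u\}]\le 2\Pr(M>u)$ valid whenever $\sum_i\Pr(M_i>u)\le 1$ (coming from independence and $1-e^{-x}\ge x/2$ for $x\le 1$), and a direct bulk estimate in the complementary low-$u$ regime where $\Pr(M>u)$ is bounded below by a constant. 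Either route produces a constant $K=K(s)$ delivering the claim.
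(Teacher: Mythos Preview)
The paper itself does not give a proof; it defers to Lemma~E.4 in \cite{CCK17}, whose argument is of Hoffmann--J\o rgensen type---precisely the route you invoke at the very end as ``the cleanest way to close the argument.'' Your primary strategy via truncation and Lemma~\ref{lem: fuk-nagaev}, however, has genuine gaps that are not merely matters of presentation.

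The exponential-term case split is incorrect. For $t\le\mu$ you assert $\Ep[M^s]/t^s\gtrsim 1$ on the basis of $\Ep[M]\le\mu$, but that inequality points the wrong way: you would need a \emph{lower} bound on $\Ep[M^s]$ in terms of $t$. Take $X_{ij}\equiv 1/n$ deterministically; then $\mu=1$ while $\Ep[M^s]=n^{-s}$, so $\Ep[M^s]/t^s\to 0$ for any fixed $t\le\mu$. In the regime $t\ge\mu$ the bound $\exp(-ct/\mu)\le\exp(-c)$ is an absolute constant, which again need not be dominated by $K\Ep[M^s]/t^s$. (In these deterministic examples the lemma itself survives because the left side is zero, but your \emph{intermediate} bound on the exponential piece is simply false.) There is also a threshold mismatch: after centering, $\tilde Z\ge 2\mu+t$ only implies $\max_j|\sum_i(\tilde X_{ij}-\Ep\tilde X_{ij})|\ge\mu+t$, while Fuk--Nagaev applies at level $2E+t'$ with $E\le 2\mu$, forcing $t'\le t-3\mu$ and hence $t>3\mu$ just to invoke the lemma.

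More seriously, the polynomial term in Fuk--Nagaev is $\sum_i\Ep[\max_j|\tilde X_{ij}-\Ep\tilde X_{ij}|^s]$, which is of order $\sum_i\Ep[M_i^s]$ and can exceed $\Ep[M^s]$ by a factor of $n$ (take $X_{ij}\equiv 1$). Your layer-cake repair does not close this: in the low-$u$ regime where $\sum_i\Pr(M_i>u)>1$ you only obtain $\Pr(M>u)\ge 1-e^{-1}$, which lower-bounds $\Ep[M^s]$ by a constant times $u_0^s$, but $s\int_0^{u_0}u^{s-1}\Ep[\#\{i:M_i>u\}]\,du$ can still be of order $n u_0^s$. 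Lemma~\ref{lem: fuk-nagaev} is designed to produce $\sum_i\Ep[|\cdot|^s]$ on the right-hand side and is the wrong tool when the target is $\Ep[M^s]$.

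The correct approach is the one you mention in passing: a stopping-time (Hoffmann--J\o rgensen) argument for independent non-negative summands yields $\Pr(Z\ge 2\mu+t)\le C\Pr(M>t/C)$, after which Markov on $M^s$ gives the claim directly. If you want a self-contained proof, write that argument out and discard the Fuk--Nagaev scaffolding.
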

\begin{proof}
See Lemma E.4 in \cite{CCK17}.
\end{proof}

\begin{lemma}\label{lem: maximal inequality for positive rvs}
Let $X_1,\dots,X_n$ be independent random vectors in $\mathbb R^p$ with $p\geq 2$ such that $X_{i j}\geq 0$ for all $i\in[n]$ and $j \in[p]$. Define $M:=\max_{i\in[n]}\max_{j\in[p]}X_{i j}$. Then
$$
\Ep\left[ \max_{j\in[p]}\sum_{i=1}^n X_{i j} \right] \leq K\left( \max_{j\in[p]}\Ep\left[\sum_{i=1}^n X_{i j}\right] + \Ep[M]\log p \right),
$$
where $K$ is a universal constant.
\end{lemma}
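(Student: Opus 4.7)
The plan is to combine symmetrization with Lemma \ref{lem: maximal ineq}, using the non-negativity of $X_{ij}$ to upgrade the $\sqrt{\Ep[M^2]}$ factor coming out of that maximal inequality to $\Ep[M]$. Write $S_j := \sum_i X_{ij}$, $\mu := \max_j \Ep[S_j]$, and $Z := \max_j S_j$. The triangle inequality gives $\Ep[Z] \le \mu + \Ep[\max_j |S_j - \Ep[S_j]|]$, and by standard symmetrization $\Ep[\max_j |S_j - \Ep[S_j]|] \le 2\,\Ep[\max_j |\sum_i \epsilon_i X_{ij}|]$, where $\epsilon_1,\dots,\epsilon_n$ are i.i.d.\ Rademacher signs independent of the $X_i$'s. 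I would then condition on $X_1,\dots,X_n$ and apply Lemma \ref{lem: maximal ineq} to the conditionally independent, zero-mean sequence $(\epsilon_i X_{ij})_{i}$: since $|\epsilon_i X_{ij}| = X_{ij}$, the ``$M$'' of that lemma equals $M$ conditionally, and the conditional variance factor equals $\max_j \sum_i X_{ij}^2$, yielding after taking expectations
$$
\Ep\Big[\max_j \Big|\textstyle\sum_i \epsilon_i X_{ij}\Big|\Big] \le C\sqrt{\log p}\,\Ep\Big[\sqrt{\max_j \textstyle\sum_i X_{ij}^2}\Big] + C\,\Ep[M]\log p.
$$

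The key use of non-negativity enters next. Because $X_{ij} \le M$ pointwise, $X_{ij}^2 \le M X_{ij}$, so $\max_j \sum_i X_{ij}^2 \le M\cdot \max_j S_j = M Z$. Then Cauchy--Schwarz applied to $\sqrt{M}$ and $\sqrt{Z}$ gives $\Ep[\sqrt{MZ}] \le \sqrt{\Ep[M]\,\Ep[Z]}$, and combining the bounds above I arrive at the self-bounding inequality
$$
\Ep[Z] \le \mu + 2C\sqrt{\log p \cdot \Ep[M]\,\Ep[Z]} + 2C\,\Ep[M]\log p.
$$

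The last step is to close this inequality via the weighted AM--GM $\sqrt{ab} \le a/(2\varepsilon) + \varepsilon b/2$, applied with $a = \Ep[Z]$ and $b = \Ep[M]\log p$, choosing $\varepsilon$ of order $1/C$ so that at most $\tfrac12\Ep[Z]$ gets absorbed into the left-hand side. This delivers the desired bound $\Ep[Z] \le K(\mu + \Ep[M]\log p)$ for a universal constant $K$. The main obstacle is precisely this upgrade from $\sqrt{\Ep[M^2]}\log p$ (which a direct application of Lemma \ref{lem: maximal ineq} to $X_1,\dots,X_n$ would yield) to $\Ep[M]\log p$; the symmetrization detour is what makes it possible to write the second-moment quantity $\sum_i X_{ij}^2$ as a product $M\cdot S_j$ and then to factor via $\Ep[\sqrt{M}\sqrt{Z}] \le \sqrt{\Ep[M]\Ep[Z]}$, both of which genuinely exploit $X_{ij}\ge 0$.
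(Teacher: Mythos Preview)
Your argument is correct. The paper itself does not prove this lemma but simply cites Lemma~9 of \cite{CCK15}; your write-up supplies precisely the standard self-contained proof behind that citation (symmetrize, apply Lemma~\ref{lem: maximal ineq} conditionally, use $X_{ij}^2\le M X_{ij}$ and Cauchy--Schwarz to get a self-bounding inequality, then close with AM--GM). Two minor points worth making explicit for completeness: (i) the self-bounding step presupposes $\Ep[Z]<\infty$, which follows from $Z\le nM$ once $\Ep[M]<\infty$ (and if $\Ep[M]=\infty$ the claim is vacuous); (ii) when you invoke Lemma~\ref{lem: maximal ineq} conditionally, $M$ is deterministic so $\sqrt{\Ep_\epsilon[M^2]}=M$, which is exactly what lets you land on $\Ep[M]\log p$ rather than $\sqrt{\Ep[M^2]}\log p$ after averaging over $X$.
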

\begin{proof}
See Lemma 9 in \cite{CCK15}.
\end{proof}

\begin{lemma}\label{lem: critical value bound}
Let $(Y_1,\dots,Y_p)^{T}$ be a Gaussian random vector with $\Ep [ Y_{j} ]= 0$ and $\Ep[ Y_{j}^{2} ]\leq 1$ for all $j \in[p]$. For $\alpha\in(0,1)$, let $c(\alpha)$ denote the $(1-\alpha)$ quantile of the distribution of $\max_{j\in[p]}Y_j$. Then
$c(\alpha)\leq \sqrt{2\log p}+\sqrt{2 \log(1/ \alpha)}$.
\end{lemma}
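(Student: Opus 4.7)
The plan is to use the union bound together with the standard Gaussian tail inequality, which is clean because we only need a statement about a quantile (not an expectation), so no use of Borell--TIS or Gaussian concentration around the mean is needed.

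First I would fix any $t > 0$ and estimate $\Pr(\max_{j\in[p]} Y_j > t)$ by the union bound: $\Pr(\max_{j\in[p]} Y_j > t) \leq \sum_{j\in[p]} \Pr(Y_j > t)$. For each $j$, since $Y_j$ is centered Gaussian with variance $\sigma_j^2 := \Ep[Y_j^2] \leq 1$, the standard Mills-ratio bound gives $\Pr(Y_j > t) \leq \exp(-t^2/(2\sigma_j^2)) \leq \exp(-t^2/2)$ for $t \geq 0$. Hence $\Pr(\max_{j\in[p]} Y_j > t) \leq p \exp(-t^2/2)$.

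Next I would plug in $t = t_\alpha := \sqrt{2\log p} + \sqrt{2\log(1/\alpha)}$. Expanding the square yields $t_\alpha^2 = 2\log p + 2\log(1/\alpha) + 2\sqrt{2\log p}\sqrt{2\log(1/\alpha)} \geq 2\log p + 2\log(1/\alpha)$, since the cross term is nonnegative. Therefore $p \exp(-t_\alpha^2/2) \leq p \exp(-\log p - \log(1/\alpha)) = \alpha$, which gives $\Pr(\max_{j\in[p]} Y_j > t_\alpha) \leq \alpha$. By the definition of the $(1-\alpha)$ quantile, this implies $c(\alpha) \leq t_\alpha = \sqrt{2\log p} + \sqrt{2\log(1/\alpha)}$, as desired.

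There is no real obstacle here: the argument is a one-line union bound followed by dropping the nonnegative cross term in $(\sqrt{2\log p}+\sqrt{2\log(1/\alpha)})^2$. The only mild subtlety is to notice that $\sigma_j^2 \leq 1$ makes the Gaussian tail \emph{smaller}, not larger, so the variance cap can be handled without additional scaling.
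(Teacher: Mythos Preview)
Your proof is correct but takes a genuinely different route from the paper's. The paper proceeds in two steps: first it invokes the Borell--Sudakov--Tsirel'son concentration inequality to get $c(\alpha) \le \Ep[\max_{j\in[p]} Y_j] + \sqrt{2\log(1/\alpha)}$, and then it bounds the expectation by $\sqrt{2\log p}$ using a standard Gaussian maximal inequality. Your argument instead bypasses both of these tools, using only the union bound and the elementary tail estimate $1-\Phi(x)\le e^{-x^2/2}$, then discarding the nonnegative cross term in $(\sqrt{2\log p}+\sqrt{2\log(1/\alpha)})^2$. Your route is more self-contained and avoids invoking Gaussian concentration, which is arguably overkill for a finite maximum; the paper's decomposition, on the other hand, is more modular and would extend cleanly to settings (such as suprema over uncountable index sets) where the union bound is unavailable. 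For the statement as written, your approach is the shorter and more elementary of the two.
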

\begin{proof}
By the Borell-Sudakov-Tsirel'son inequality (Theorem A.2.1 in \cite{VW96}), for every $r > 0$,
\[
\Pr\left(\max_{j\in[p]}Y_j\geq \Ep\Big[\max_{j\in[p]}Y_j\Big]+r\right)\leq e^{-r^2/2},
\]
which implies that
\begin{equation}\label{eq: critical value bound}
c(\alpha)\leq \Ep\left[\max_{j\in[p]}Y_j\right]+\sqrt{2\log (1/\alpha)}.
\end{equation}
In addition, by Proposition A.3.1 in \cite{T11},
\begin{equation}\label{eq: gaussian maximal inequality}
\Ep\left[\max_{j\in[p]}Y_j\right]\leq \sqrt{2\log p}
\end{equation}
Combining \eqref{eq: critical value bound} and \eqref{eq: gaussian maximal inequality} leads to the desired result. 
\end{proof}

\subsection{Tool Set 3: High-dimensional Central Limit and Bootstrap Theorem, Gaussian Comparison
and Anti-Concentration Inequalities}

In Theorems \ref{thm: hdclt}--\ref{thm: empirical bootstrap} below, we will follow the following setting: Let $X_{1},\dots,X_{n}$ be independent zero-mean random vectors in $\RR^{p}$ , and consider $S_{n}^{X} = n^{-1/2} \sum_{i=1}^{n} X_{i}$. Denote by $V=n^{-1} \sum_{i=1}^{n} \Ep[X_{i}X_{i}']$ the covariance matrix of $S_{n}^{X}$ (which we assume to exist). 
In addition, let $\mathcal A$ denote the class of all rectangles in $\mathbb R^p$, i.e. sets of the form
$$
A = \Big\{w = (w_1,\dots,w_p)'\in\mathbb R^p\colon w_{l j} \leq w_j\leq w_{r j}\text{ for all }j\in[p]\Big\},
$$
where $w_l = (w_{l 1},\dots,w_{l p})'$ and $w_r = (w_{r 1},\dots,w_{r p})'$ are $p$-dimensional vectors with components in $(\mathbb R\cup\{-\infty\}\cup\{+\infty\})^p$.

\begin{theorem}[High-dimensional CLT]\label{thm: hdclt}
Assume that for some constants $b>0$ and $B\geq 1$,
\begin{equation}\label{eq: m1}
\frac{1}{n}\sum_{i=1}^n\Ep[X_{i j}^2]\geq b \quad \text{and} \quad \frac{1}{n}\sum_{i=1}^n\Ep[|X_{i j}|^{2+k}] \leq B \quad \text{for all} \ j \in[p] \ \text{and} \ k=1,2.
\end{equation}
Then the following claims hold: (i) if
\begin{equation}\label{eq: e1}
\Ep[\exp(|X_{i j}|/B)]\leq 2\text{ for all }i\in[n]\text{ and }j\in[p],
\end{equation}
then
\begin{equation}\label{eq: hdclt under e1}
\sup_{A\in\mathcal A}|\Pr(S_n^X \in A) - \Pr(N(0,V) \in A)|\leq K_1\left(\frac{B^2\log^7(p n)}{n}\right)^{1/6},
\end{equation}
where $K_1$ is a constant depending only on $b$; (ii) if for some constant $q \in [3,\infty)$,
\begin{equation}\label{eq: e2}
\Ep\left[\max_{j \in [p]}(|X_{i j}|/B)^q\right] \leq 1 \text{ for all }i\in[n],
\end{equation}
then
\begin{equation}\label{eq: hdclt under e2}
\sup_{A\in\mathcal A}|\Pr(S_n^X \in A) - \Pr(N(0,V) \in A)|\leq K_2\left(\left( \frac{B^2\log^7(p n)}{n} \right)^{1/6} + \left( \frac{B^2\log^3(p n)}{n^{1 - 2/q}} \right)^{1/3}\right),
\end{equation}
where $K_2$ is a constant depending only on $b$ and $q$.
\end{theorem}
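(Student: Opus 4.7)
\textbf{Proof plan for Theorem \ref{thm: hdclt}.} The strategy follows the Chernozhukov--Chetverikov--Kato (CCK) scheme of smoothing plus Lindeberg/Slepian interpolation, together with Gaussian anti-concentration. First, I would reduce the comparison problem on the class $\mathcal A$ of rectangles to a comparison for smooth test functions. Fix a rectangle $A=\{w: w_{lj}\le w_j\le w_{rj}\}$ and write $\mathbf 1\{w\in A\}=\mathbf 1\{F(w)\le 0\}$ where $F(w)=\max_{j\in[p]}\max\{w_j-w_{rj},\,w_{lj}-w_j\}$ is a maximum of $2p$ affine functionals. Replace $F$ by the log-sum-exp smoothed maximum $F_\beta(w)=\beta^{-1}\log\bigl(\sum_{k=1}^{2p}e^{\beta\,\ell_k(w)}\bigr)$ and replace the indicator of $\{F_\beta\le 0\}$ by a smooth cutoff $h:\RR\to[0,1]$ (three times continuously differentiable, equal to $1$ on $(-\infty,0]$, vanishing on $[\eta,\infty)$). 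The composite $f_\beta:=h\circ F_\beta$ is $C^3$ and its partial derivatives satisfy the Stein-type bounds $\sum_{j,k}|\partial_{jk}f_\beta|\lesssim \beta+\eta^{-1}$, $\sum_{j,k,l}|\partial_{jkl}f_\beta|\lesssim \beta^2+\beta\eta^{-1}+\eta^{-2}$, which are linear in the number of summands in the log-sum-exp only through the first derivative bound $\sum_j|\partial_j f_\beta|\lesssim 1$.

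Second, I would run Slepian's/Lindeberg's interpolation to compare $S_n^X$ with $Y\sim N(0,V)$ at the level of the smooth functional $f_\beta$. Introduce an independent Gaussian copy $S_n^Y=n^{-1/2}\sum_i Y_i$ with $Y_i\sim N(0,\Ep[X_iX_i'])$ (so that $S_n^Y\stackrel{d}{=}N(0,V)$), form the path $Z(t)=\sqrt tS_n^X+\sqrt{1-t}S_n^Y$, and use the standard third-order Taylor expansion argument to get
\begin{equation*}
\bigl|\Ep[f_\beta(S_n^X)]-\Ep[f_\beta(S_n^Y)]\bigr|\;\lesssim\;\bigl(\beta^2+\beta\eta^{-1}+\eta^{-2}\bigr)\cdot n^{-1/2}\cdot L_n,
\end{equation*}
where $L_n=n^{-1}\sum_{i=1}^n\Ep[\|X_i\|_\infty^3\vee\|Y_i\|_\infty^3]$. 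This reduces the job to controlling $L_n$.

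Third, I would convert the smooth bound back to a bound on $\Pr(S_n^X\in A)-\Pr(Y\in A)$. The overshoot in $f_\beta$ relative to $\mathbf 1_A$ is supported on a thin shell $\{w:-\delta\le F(w)\le \delta\}$ with $\delta\asymp \eta+\beta^{-1}\log p$; by the Gaussian anti-concentration inequality for maxima of Gaussian coordinates (Tool Set 3 in the appendix, coming from \cite{CCK15}), the Gaussian mass of this shell is $\lesssim \delta\sqrt{\log p}$. Combining these,
\begin{equation*}
\sup_{A\in\mathcal A}\bigl|\Pr(S_n^X\in A)-\Pr(Y\in A)\bigr|\;\lesssim\;(\beta^2+\beta\eta^{-1}+\eta^{-2})\,n^{-1/2}L_n\;+\;(\eta+\beta^{-1}\log p)\sqrt{\log p}.
\end{equation*}
Optimizing $\beta$ and $\eta$ balances a Stein term of order $\beta^2 n^{-1/2} L_n$ against an anti-concentration term of order $\beta^{-1}\log^{3/2}p$, giving an error of order $(L_n\log^{7/2}p/n^{1/2})^{1/3}$ once $L_n$ is identified. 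Under \eqref{eq: m1} and the sub-exponential tails \eqref{eq: e1}, the standard maximal inequality (Lemma \ref{lem: maximal ineq}) and Orlicz bounds give $L_n\lesssim B^3\log^{3/2}(pn)$, which substituted in yields the bound \eqref{eq: hdclt under e1} with exponent $1/6$ in $B^2\log^7(pn)/n$.

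Fourth, for part (ii) under only $q$-th moments \eqref{eq: e2}, I would use a truncation argument. Decompose $X_i=X_i^{(1)}+X_i^{(2)}$ with $X_i^{(1)}=X_i\mathbf 1\{\|X_i\|_\infty\le u\}-\Ep[\cdot]$ (truncated and recentered) and $X_i^{(2)}$ the complement. The truncated part $X_i^{(1)}$ is bounded by $u$ almost surely, so its contribution to the Gaussian approximation is controlled by the bounded-case argument of step three with $B$ replaced by $u$. The heavy tail part $X_i^{(2)}$ contributes an additive error controlled by Markov's/Fuk--Nagaev bounds (Lemma \ref{lem: fuk-nagaev}) and by anti-concentration for the smoothing allowance $\delta$, yielding a term of order $B^q u^{1-q}n^{-1/2}$ after using the moment bound $\Ep[\max_j|X_{ij}|^q]\le B^q$. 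I would then optimize $u$ to balance the two contributions, leading to the two-term bound in \eqref{eq: hdclt under e2}; the first term $(B^2\log^7(pn)/n)^{1/6}$ is the irreducible log-sum-exp smoothing cost and the second term $(B^2\log^3(pn)/n^{1-2/q})^{1/3}$ comes from the truncation remainder.

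The main obstacle is bookkeeping for the smoothed-max derivatives and making the anti-concentration/Stein tradeoff clean enough to produce the $\log^{7/2}p$ factor; the truncation in (ii) is technically delicate because one must ensure that truncating changes $V$ negligibly (handled via the Gaussian comparison inequality of Tool Set 3) and that the smooth test function's composition with the log-sum-exp does not blow up after recentering. These are exactly the steps carried out in Proposition 2.1 of \cite{CCK17}, whose result the theorem restates, so the strategy ultimately routes through those lemmas.
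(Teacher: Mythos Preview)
Your proposal is correct and matches the paper's approach: the paper's proof consists solely of the reference ``See Proposition 2.1 in \cite{CCK17},'' and your sketch accurately reconstructs the log-sum-exp smoothing, Lindeberg interpolation, Gaussian anti-concentration, and truncation argument that underlie that proposition. You have in fact gone beyond the paper by outlining the CCK machinery rather than merely citing it.
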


\begin{proof}
See Proposition 2.1 in \cite{CCK17}.
\end{proof}



\begin{theorem}[Multiplier Bootstrap]\label{thm: multiplier bootstrap}
Let $e_1,\dots,e_n$ be i.i.d. $N(0,1)$ random variables that are independent of $X_1,\dots,X_n$ and consider $S_{n}^{eX} =n^{-1/2} \sum_{i=1}^{n}e_{i}X_{i}$. Assume that \eqref{eq: m1}  holds for some constants $b>0$ and $B\geq 1$. Recall $X_{1}^{n} =(X_{1},\dots,X_{n})$. Then the following claims hold: (i) if \eqref{eq: e1} is satisfied, then  for any constant $\beta\in(0,e^{-1})$ with probability at least $1 - \beta$,
$$
\sup_{A\in\mathcal A}|\Pr(S_n^{e X}\in A\mid X_1^n) - \Pr(N(0,V) \in A)| \leq K_1\left(\frac{B^2 \log^5(p n)\log^2(1/\beta)}{n}\right)^{1/6},
$$
where $K_1$ is a constant depending only on $b$; (ii) if \eqref{eq: e2} is satisfied for some constant $q \in [4,\infty)$, then for any constant $\beta\in(0,e^{-1})$ with probability at least $1 - \beta$,
\begin{align*}
&\sup_{A\in\mathcal A}|\Pr(S_n^{e X}\in A\mid X_1^n) - \Pr(N(0,V) \in A)| \\
&\qquad  \leq K_2\left(\left( \frac{B^2 (\log p + \log(1/\beta)) \log^4p}{n} \right)^{1/6} + \left( \frac{B^2 (\log p + \beta^{-2/q})\log^2 p}{n^{1 - 2/q} }\right)^{1/3}\right),
\end{align*}
where $K_2$ is a constant depending only on $b$ and $q$.
\end{theorem}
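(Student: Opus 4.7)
\textbf{Proof plan for Theorem \ref{thm: multiplier bootstrap}.} The strategy is to reduce the bootstrap comparison to a purely Gaussian-to-Gaussian comparison and then control the discrepancy between the sample and population covariance matrices in the max norm. Conditionally on $X_1^n = (X_1,\dots,X_n)$, the multiplier sum $S_n^{eX} = n^{-1/2}\sum_{i=1}^n e_i X_i$ is a centered Gaussian vector with covariance matrix $\hat V := n^{-1}\sum_{i=1}^n X_i X_i'$, because the $e_i$'s are independent standard normals independent of the $X_i$'s. Hence $\Pr(S_n^{eX}\in A\mid X_1^n) = \Pr(N(0,\hat V)\in A)$ for every Borel set $A$, and the quantity to control becomes
\[
\sup_{A\in\mathcal A}\bigl|\Pr(N(0,\hat V)\in A) - \Pr(N(0,V)\in A)\bigr|.
\]

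The first key step is a Gaussian comparison inequality over rectangles of the type established in \cite{CCK15}: for any two $p$-variate centered Gaussian laws $N(0,\Sigma_1)$ and $N(0,\Sigma_2)$ whose diagonals are uniformly bounded below by $b$, one has
\[
\sup_{A\in\mathcal A}\bigl|\Pr(N(0,\Sigma_1)\in A) - \Pr(N(0,\Sigma_2)\in A)\bigr| \;\leq\; K_b\,\|\Sigma_1-\Sigma_2\|_\infty^{1/3}\log^{2/3}(p),
\]
where $\|\cdot\|_\infty$ denotes the entrywise max. Applying this with $\Sigma_1=\hat V$ and $\Sigma_2=V$ (after first showing that $\min_j \hat V_{jj}\geq b/2$ on the good event), the problem reduces to bounding $\Delta_n := \|\hat V - V\|_\infty = \max_{j,k\in[p]} |n^{-1}\sum_{i=1}^n (X_{ij}X_{ik}-\Ep[X_{ij}X_{ik}])|$ with probability at least $1-\beta$.

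The second key step is to control $\Delta_n$. Under Condition M \eqref{eq: m1}, for each pair $(j,k)$ the summands $X_{ij}X_{ik} - \Ep[X_{ij}X_{ik}]$ have variance bounded by $\sum_i \Ep[X_{ij}^2 X_{ik}^2]\leq nB^2$ by Cauchy--Schwarz. Combining Lemma \ref{lem: maximal ineq} (applied to the $p^2$-dimensional vector of product centered summands) with the Fuk--Nagaev inequality of Lemma \ref{lem: fuk-nagaev} gives a deviation bound of the form
\[
\Pr\bigl(\Delta_n \geq 2\Ep[\Delta_n] + t\bigr) \;\leq\; e^{-nt^2/(3B^2)} + K_s\, t^{-s}\,n^{-s+1}\,\Ep\bigl[\max_{i,j,k}|X_{ij}X_{ik}|^{s}\bigr].
\]
Under \eqref{eq: e1} all moments of $\max_{i,j}|X_{ij}|$ are controlled by $B\log(pn)$ up to constants, so choosing $s$ large (e.g.\ logarithmic in $p,n,1/\beta$) and picking $t$ to balance the two terms yields
\[
\Delta_n \;\lesssim\; \frac{B^2\log^2(pn)\log(1/\beta)}{\sqrt n}\qquad\text{w.p. }\geq 1-\beta .
\]
Under \eqref{eq: e2}, the envelope moment $\Ep[\max_i\max_{j}|X_{ij}|^{q}]\leq B^{q}$ gives only polynomial tails, and we apply Fuk--Nagaev with $s=q/2\geq 2$; balancing produces an additional Nagaev term of order $B^2\bigl(n^{1-2/q}\bigr)^{-1/2}\beta^{-2/q}$ up to logs. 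Substituting either bound into the Gaussian comparison above, taking the cube root, and simplifying produces the claimed rates in (i) and (ii), respectively.

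The main obstacle is the careful calibration of the Fuk--Nagaev parameter $s$ and the deviation level $t$ against $\beta$ so that the two terms balance in both regimes, while also verifying that on the same good event the minimum variance $\min_j \hat V_{jj}$ stays bounded away from zero (so that the Gaussian comparison constant $K_b$ is controlled). Everything else is reduction and bookkeeping; the core analytical work is the one-shot Gaussian comparison plus a high-probability covariance concentration bound.
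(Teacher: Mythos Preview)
Your approach is essentially the same as the paper's: reduce to a Gaussian-vs-Gaussian comparison via the observation that $S_n^{eX}\mid X_1^n \sim N(0,\hat V)$, invoke the Gaussian comparison inequality over rectangles, and then control $\Delta_n=\|\hat V-V\|_\infty$ with a Fuk--Nagaev-type deviation bound (the paper defers part (i) to Corollary 4.2 of \cite{CCK17} and spells out exactly this argument for part (ii)). Two small points: first, the Gaussian comparison lemma as stated in the paper only requires $\min_j V_{jj}\vee\min_j\hat V_{jj}\geq b$, and since $V_{jj}\geq b$ by \eqref{eq: m1} there is no need to verify $\min_j\hat V_{jj}\geq b/2$ separately; second, your displayed bound $\Delta_n\lesssim B^2\log^2(pn)\log(1/\beta)/\sqrt n$ under \eqref{eq: e1} is too loose---the correct order is $\Delta_n\lesssim B\sqrt{(\log p+\log(1/\beta))/n}$ (plus lower-order envelope terms), which after the $(\cdot)^{1/3}\log^{2/3}p$ step yields the stated $K_1(B^2\log^5(pn)\log^2(1/\beta)/n)^{1/6}$, whereas your version would produce the wrong power of $B$.
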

\begin{proof}
The claim under \eqref{eq: e1} is proven in Corollary 4.2 of \cite{CCK17}. (Note that there, $S_n^{e X} = n^{-1/2}\sum_{i=1}^n e_i X_i$ is replaced by $n^{-1/2}\sum_{i=1}^n e_i(X_i - \bar X_n)$, where $\bar X_n := n^{-1}\sum_{i=1}^n X_i$ is the sample mean, but since $X_i$'s have mean zero, this change is not essential).

The claim under \eqref{eq: e2} improves upon Corollary 4.2 of \cite{CCK17}. To prove the asserted claim, note that, by Remark 4.1 in \cite{CCK17}, there exists a constant $K>0$ depending only on $b$ such that on the event $\Delta_n \leq \bar\Delta_n$,
\begin{equation}\label{eq: gaussian comparison sharp}
\sup_{A\in\mathcal A}|\Pr(S_n^{e X}\in A\mid X_1^n) - \Pr(N(0,V) \in A)| \leq K\bar\Delta_n^{1/3} \log^{2/3} p,
\end{equation}
where
$$
\Delta_n := \max_{1\leq j,k\leq p}\left|\frac{1}{n}\sum_{i=1}^n (X_{i j}X_{i k} - \Ep[X_{i j}X_{i k}])\right|.
$$
Also, as in the proof of Proposition 4.1 in \cite{CCK17}, case (E.2), for all $t > 0$,
\begin{align*}
&\Pr\left( \Delta_n > C\left(\left( \frac{B^2 \log p}{n} \right)^{1/2} + \frac{B^2 \log p}{n^{1 - 2/q}} \right) + t \right) \leq \exp\left(-\frac{n t^2}{3 B^2}\right) +\frac{c B^q}{ t^{q/2} n^{q/2 - 1}},
\end{align*}
where $c,C>0$ are constants depending only on $b$ and $q$. Thus, setting
$$
t = \bar C\left( \left( \frac{B^2 \log(1/\beta)}{n} \right)^{1/2} + \frac{B^2}{\beta^{2/q} n^{1 - 2/q}} \right)
$$
for sufficiently large $\bar C$, which can be chosen to depend only on $b$ and $q$, shows that with probability at least $1 - \beta$,
$$
\Delta_n \leq (C + \bar C)\left(\left( \frac{B^2(\log p + \log(1/\beta))}{n} \right)^{1/2} + \frac{B^2(\log p + \beta^{-2/q})}{n^{1 - q/2}}\right).
$$
The asserted claim follows by combining this bound with \eqref{eq: gaussian comparison sharp}.
\end{proof}
\begin{theorem}[Empirical Bootstrap]\label{thm: empirical bootstrap}
Let $X_1^*,\dots,X_n^*$ be i.i.d. random vectors from the empirical distribution of $X_1,\dots,X_n$ and consider $S_{n}^{X^*} =  n^{-1/2}\sum_{i=1}^n(X_i^* - \bar X_n)$,
where $\bar X_n = n^{-1}\sum_{i=1}^n X_i$ is the sample mean. Assume that \eqref{eq: m1} holds for some constants $b>0$ and $B\geq 1$. Then the following claims hold: (i) if \eqref{eq: e1} is satisfied, then  for any constant $\beta\in(n^{-2},e^{-1})$ with probability at least $1 - \beta$,
$$
\sup_{A\in\mathcal A}|\Pr(S_n^{X^{*}}\in A\mid X_1^n) - \Pr(N(0,V) \in A)| \leq K_1\left(\frac{B^2 \log^7(p n)}{n}\right)^{1/6},
$$
where $K_1$ is a constant depending only on $b$; (ii) if \eqref{eq: e2} is satisfied for some constant $q \in [4,\infty)$, then for any constant $\beta\in(n^{-2},e^{-1})$ with probability at least $1 - \beta$,
\begin{align*}
&\sup_{A\in\mathcal A}|\Pr(S_n^{X^{*}}\in A\mid X_1^n) - \Pr(N(0,V) \in A)| \\
&\qquad  \leq K_2\left(\left( \frac{B^2\log^7 (pn)}{n} \right)^{1/6} + \left( \frac{B^2\log^3 (pn)}{\beta^{2/q}n^{1 - 2/q} }\right)^{1/3}\right),
\end{align*}
where $K_2$ is a constant depending only on $b$ and $q$.
\end{theorem}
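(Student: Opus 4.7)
The plan is to work conditionally on the data $X_1^n$ and apply Theorem \ref{thm: hdclt} to the bootstrap sum $S_n^{X^*} = n^{-1/2}\sum_{i=1}^n(X_i^* - \bar X_n)$, which is a normalized sum of conditionally i.i.d., zero-mean vectors with conditional covariance $\widehat V := n^{-1}\sum_{i=1}^n (X_i - \bar X_n)(X_i - \bar X_n)'$. This yields a bound on the conditional rectangular distance from $\mathcal L(S_n^{X^*}\mid X_1^n)$ to $N(0,\widehat V)$. A Gaussian comparison inequality then replaces $N(0,\widehat V)$ by the target $N(0,V)$, with the comparison error governed by $\|\widehat V - V\|_\infty$. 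The final bound is obtained by a union bound over the high-probability events on which the conditional moment conditions and the covariance comparison hold.

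First I would establish, on an event of probability at least $1-\beta$, that the bootstrap sample $Z_i := X_i - \bar X_n$ satisfies the hypotheses of Theorem \ref{thm: hdclt} with constants $(b', B')$ depending only on $(b,B)$, except that in case E.2 a $\beta$-penalty is unavoidable. Concretely one needs: (a) $n^{-1}\sum_i Z_{ij}^2 \geq b'$ for all $j$; (b) $n^{-1}\sum_i |Z_{ij}|^{2+k}\leq (B')^k$ for $k=1,2$; and either (c.1) $n^{-1}\sum_i \exp(|Z_{ij}|/B') \leq 2$ or (c.2) $n^{-1}\sum_i\max_j(|Z_{ij}|/B')^q \leq 1$. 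For (a) and (b) I would apply Lemma \ref{lem: fuk-nagaev} to $X_{ij}^2$, $|X_{ij}|^3$, $|X_{ij}|^4$ and absorb the centering shift $\bar X_n$, which is uniformly of order $B\sqrt{\log p/n}$, into the constants. For (c.1) I would linearize the empirical exponential moment around the population exponential moment (which is $\leq 2$) using the same Fuk-Nagaev-type control applied to the bounded transformation $\exp(|X_{ij}|/(2B))$. For (c.2), the key trick is a direct Markov bound on the scalar random variable $n^{-1}\sum_i \max_j|X_{ij}|^q$ whose expectation is at most $B^q$: with probability $\geq 1-\beta$ it is at most $B^q/\beta$, so taking $B' = B/\beta^{1/q}$ absorbs the penalty and is precisely what generates the $\beta^{-2/q}$ factor in the stated rate.

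Next I would invoke the Gaussian comparison of \cite{CCK15} (used also in the proof of Theorem \ref{thm: multiplier bootstrap}),
\[
\sup_{A\in\mathcal A}\bigl|\Pr(N(0,\widehat V)\in A)-\Pr(N(0,V)\in A)\bigr| \leq K\,\|\widehat V - V\|_\infty^{1/3}\log^{2/3}p,
\]
and bound $\|\widehat V - V\|_\infty$ via $\widehat V - V = n^{-1}\sum_i(X_iX_i' - \Ep[X_iX_i']) - \bar X_n\bar X_n'$. Each coordinate is a sum of independent mean-zero terms whose variance is controlled by the fourth moment bound in M, and I would apply Lemma \ref{lem: fuk-nagaev} to the $p^2$-vector of products $(X_{ij}X_{ik})_{j,k}$, with the heavy-tail correction supplied by E.1 (exponential) or E.2 (polynomial moment on $\max_{j,k}|X_{ij}X_{ik}|$). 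The main obstacle is bookkeeping: one needs the conditional CLT error from Step 1 and the Gaussian comparison error from Step 2 to combine cleanly into the stated rate. Under E.1, using $\beta \geq n^{-2}$ gives $\log(1/\beta) \leq 2\log n$, so every $\beta$-dependent log factor is absorbed into $\log(pn)$ and the rate matches $(B^2\log^7(pn)/n)^{1/6}$. Under E.2, the Fuk-Nagaev correction on the covariance term contributes precisely a $\beta^{-2/q} n^{2/q-1}$ factor, which after the cube root combines with the conditional CLT contribution to produce the second term $(B^2\log^3(pn)/(\beta^{2/q}n^{1-2/q}))^{1/3}$. A final union bound over the covariance comparison event and the conditional moment verification events completes the argument.
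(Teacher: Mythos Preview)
The paper does not prove this theorem at all; its entire proof is the single sentence ``This is Proposition 4.3 in \cite{CCK17}.'' Your outline is precisely the architecture CCK17 use for that proposition: apply the high-dimensional CLT (Theorem \ref{thm: hdclt}) conditionally to the bootstrap draws, approximating $\mathcal L(S_n^{X^*}\mid X_1^n)$ by $N(0,\widehat V)$, and then pass from $N(0,\widehat V)$ to $N(0,V)$ via the Gaussian comparison inequality, with $\|\widehat V - V\|_\infty$ controlled by Fuk--Nagaev-type maximal inequalities on the products $X_{ij}X_{ik}$. Your handling of the $\beta^{-2/q}$ penalty in case (ii) via Markov on $n^{-1}\sum_i\max_j|X_{ij}|^q$ is exactly the right mechanism.

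One small correction in case (i): the transformation $\exp(|X_{ij}|/(2B))$ is not bounded. What you actually have under E.1 is that it has finite second moment, since $\Ep[\exp(|X_{ij}|/(2B))^2]=\Ep[\exp(|X_{ij}|/B)]\le 2$, so Fuk--Nagaev still applies; but controlling the heavy-tail remainder term $\sum_i\Ep[\max_j|\cdot|^s]$ uniformly when $p$ can be exponentially large in $n$ needs a bit more care than your sketch suggests. In practice (and in CCK17) one works on the high-probability event $\max_{i,j}|X_{ij}|\lesssim B\log(pn)$ and feeds this directly into the truncated-moment quantities that drive the CLT bound, rather than literally re-verifying the sub-exponential condition E.1 for the empirical measure with a fixed $B'$. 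This is bookkeeping rather than a genuine gap, and your overall plan is sound.
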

\begin{proof}
This is Proposition 4.3 in \cite{CCK17}.
\end{proof}

\begin{lemma}[Gaussian Comparison]
Let $Y = (Y_1,\dots,Y_p)'$ and $Z = (Z_1,\dots,Z_p)'$ be mean-zero Gaussian random vectors in $\mathbb R^p$ with covariance matrices $\Sigma^{Y} = (\Sigma_{jk}^{Y})_{1 \le j,k \le p}$ and $\Sigma^{Z} = (\Sigma_{jk}^{Z})_{1 \le j,k \le p}$, respectively. Let $\Delta := \max_{j,k\in[p]}| \Sigma_{jk}^{Y} - \Sigma_{jk}^{Z} |$
and let $\mathcal A$ be the class of all rectangles in $\mathbb R^p$.
Suppose that $\min_{j \in [p]} \Sigma_{jj}^{Y} \bigvee \min_{j \in [p]}\Sigma_{jj}^{Z} \ge \underline{\sigma}^{2}$ for some constant $\underline{\sigma}> 0$. Then 
$$
\sup_{A\in\mathcal A}\left| \Pr\left( Y \in A\right) - \Pr\left( Z \in A\right) \right| \leq C\Delta^{1/3}\log^{2/3}p,
$$
where $C$ is a constant that depends only on $\underline\sigma$.
\end{lemma}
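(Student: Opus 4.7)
The plan is to follow the classical Slepian-Stein smoothing scheme of Chernozhukov-Chetverikov-Kato, which underlies the high-dimensional CLT already cited in this appendix. The first step is to reduce the comparison over rectangles to a comparison of the distributions of maxima of higher-dimensional Gaussian vectors. Given a rectangle $A = \{w : w_{l j} \le w_j \le w_{r j}\}$, the event $\{Y \in A\}$ is the intersection of at most $2p$ half-spaces, so $\Pr(Y \in A) = \Pr(\max_{k \in [2p]} U_k \le 0)$ where $U = (Y_1 - w_{r 1}, \dots, Y_p - w_{r p}, w_{l 1} - Y_1, \dots, w_{l p} - Y_p)'$, and analogously for $Z$ with $V$. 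Both $U$ and $V$ are mean-shifted Gaussian vectors in $\mathbb R^{2p}$ whose covariances $\tilde\Sigma^Y, \tilde\Sigma^Z$ still satisfy $\max_{j,k}|\tilde\Sigma^Y_{j k} - \tilde\Sigma^Z_{j k}| \le \Delta$ and have diagonals bounded below by $\underline\sigma^2$. It therefore suffices to bound $\sup_{t \in \mathbb R}|\Pr(\max_k U_k \le t) - \Pr(\max_k V_k \le t)|$ by the quantity $C\Delta^{1/3}\log^{2/3}(2p)$, which is of the advertised order.

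The second step is to replace the discontinuous functional $\max$ and the indicator $1\{\cdot \le t\}$ by smooth surrogates with controlled derivatives. I would use the soft-max $F_\beta(u) = \beta^{-1}\log\sum_k e^{\beta u_k}$, which satisfies $0 \le F_\beta(u) - \max_k u_k \le \beta^{-1}\log(2p)$, and compose it with a $C^3$ function $g_\varphi$ that approximates $1\{\cdot \le t\}$ with $\|g_\varphi'\|_\infty \lesssim \varphi^{-1}$ and supported change of size $\varphi$. The resulting $f(u) = g_\varphi(F_\beta(u))$ has the key property that $\sum_{j,k}|\partial^2_{j k} f(u)|$ is bounded by a constant times $\beta/\varphi + 1/\varphi^2$ uniformly in $u$ (a calculation that exploits the simplex structure of the gradient of $F_\beta$).

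The third step is Slepian-Stein interpolation: set $W_t = \sqrt{t}\, U + \sqrt{1-t}\, V$ (for independent copies), differentiate $h(t) = \Ep[f(W_t)]$, and apply Gaussian integration by parts to obtain
\begin{equation*}
|\Ep f(U) - \Ep f(V)| \le \tfrac{1}{2}\Delta \cdot \sup_u \sum_{j,k \in [2p]} |\partial^2_{j k}f(u)| \lesssim \Delta\left(\beta/\varphi + 1/\varphi^2\right).
\end{equation*}
This handles the smooth error. To pass back to indicators and exact maxima, I would invoke Nazarov's anti-concentration inequality for Gaussian maxima, which under the diagonal lower bound $\underline\sigma^2$ gives $\sup_t \Pr(|\max_k U_k - t| \le \eta) \lesssim \eta\sqrt{\log(2p)}/\underline\sigma$, and likewise for $V$. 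Taking $\eta \asymp \beta^{-1}\log p + \varphi$ bounds the approximation errors between $\Pr(\max_k U_k \le t)$ and $\Ep f(U)$ (and similarly for $V$) by $C\sqrt{\log p}(\beta^{-1}\log p + \varphi)$.

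Combining the two error sources gives a bound of order $\Delta(\beta/\varphi + \varphi^{-2}) + \sqrt{\log p}(\beta^{-1}\log p + \varphi)$, and optimizing the free parameters by setting $\varphi \asymp \Delta^{1/3}\log^{1/6}p$ and $\beta \asymp \varphi\sqrt{\log p}/\Delta \asymp \Delta^{-2/3}\log^{2/3}p$ yields the advertised rate $C\Delta^{1/3}\log^{2/3}p$. The main obstacle, and the only technically delicate point, is the bound on $\sum_{j,k}|\partial^2_{j k} f|$: one must be careful that the chain-rule expansion through $F_\beta$ produces the linear dependence on $\beta$ (not $\beta^2$), which in turn is what makes the three-way optimization deliver the $\Delta^{1/3}$ exponent rather than a worse one. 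The anti-concentration invocation is the second place where the hypothesis $\underline\sigma>0$ is essential; without it the smoothing-error step would not close.
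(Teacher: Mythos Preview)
Your outline is precisely the Slepian--Stein smoothing argument of Chernozhukov, Chetverikov, and Kato that underlies this lemma; the paper itself does not reproduce a self-contained proof but simply records that the argument is implicit in the proof of Theorem~4.1 in \cite{CCK17}, so you are supplying what the paper only cites, along the identical route.

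One computational slip in your final optimization: with $\varphi \asymp \Delta^{1/3}(\log p)^{1/6}$, your stated choice $\beta \asymp \varphi\sqrt{\log p}/\Delta \asymp \Delta^{-2/3}(\log p)^{2/3}$ leaves the interpolation term $\Delta\beta/\varphi$ of order $(\log p)^{1/2}$, which does not vanish as $\Delta \to 0$. The correct balance sets $\beta \asymp \varphi^{-1}\log p \asymp \Delta^{-1/3}(\log p)^{5/6}$; with this choice all four error pieces --- $\Delta\beta/\varphi$, $\Delta\varphi^{-2}$, $\sqrt{\log p}\,\beta^{-1}\log p$, and $\sqrt{\log p}\,\varphi$ --- are $O\bigl(\Delta^{1/3}(\log p)^{2/3}\bigr)$ as required. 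The rest of your sketch (the reduction to maxima via dimension doubling, the soft-max/indicator smoothing, the second-derivative bound linear in $\beta$, and the Nazarov anti-concentration step) is exactly how the cited proof runs.
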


\begin{proof}
The proof is implicit in the proof of Theorem 4.1 in \cite{CCK17}. 
This Gaussian comparison inequality here improves slightly on the original Gaussian comparison inequality in \cite{CCK15}.
\end{proof}



\begin{lemma}[Anti-concentration Inequality]\label{lem: anticoncentration 2}
Let $Z = (Z_1,\dots,Z_p)'$ be a mean-zero Gaussian random vector in $\mathbb R^p$ with $\sigma_j^2 := \Ep[Z_j^2] >  0$ for $j\in[p]$. Denote $\underline\sigma :=\min_{j\in[p]}\sigma_j$. Then for all $\epsilon > 0$ and $x = (x_{1},\dots,x_{p})' \in \RR^{p}$, we have 
\begin{equation}\label{eq: anticoncentration 3}
\Pr(Z \leq x+\epsilon) - \Pr(Z \leq x)\leq \frac{\epsilon}{\underline{\sigma}} (\sqrt{2\log p}+2),
\end{equation}
where $x+\epsilon = (x_{1}+\epsilon,\dots,x_{p}+\epsilon)'$. 
\end{lemma}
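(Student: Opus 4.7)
The plan is to reduce the claim to Nazarov's anti-concentration inequality and then apply the standard Gaussian maximal bound. I would proceed in three short steps.

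\textbf{Step 1 (Scaling reduction).} Set $\tilde Z_j := Z_j/\underline\sigma$, $\tilde x_j := x_j/\underline\sigma$, and $\tilde\sigma_j := \sigma_j/\underline\sigma \ge 1$. Then $\tilde Z$ is a centered Gaussian vector with $\min_{j}\operatorname{Var}(\tilde Z_j)\ge 1$, and
\[
\Pr(Z\le x+\epsilon) - \Pr(Z\le x) \;=\; \Pr\bigl(\tilde Z\le \tilde x+\epsilon/\underline\sigma\bigr) - \Pr\bigl(\tilde Z\le \tilde x\bigr).
\]
So it suffices to prove the inequality under the additional normalization $\underline\sigma=1$ with $\epsilon'=\epsilon/\underline\sigma$ in place of $\epsilon$.

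\textbf{Step 2 (Nazarov's inequality).} Under $\min_j\sigma_j\ge 1$, I would invoke Nazarov's anti-concentration inequality for Gaussian rectangles (Lemma~A.1 of \cite{CCK17}), which gives
\[
\Pr(Z\le x+\epsilon') - \Pr(Z\le x) \;\le\; \epsilon'\Bigl(\Ep\bigl[\max_{j\in[p]} Z_j/\sigma_j\bigr] + 2\Bigr).
\]
This is the technical core of the lemma. Its proof parametrizes the right-hand side by $t\in[0,\epsilon']$ and differentiates $t\mapsto\Pr(Z\le x+t)$, rewriting the resulting sum of ``boundary densities'' via conditioning on each $Z_j$ and exploiting the Gaussian correlation/isoperimetric structure along the boundary of the shifted rectangle; the additive constant $2$ arises from corner contributions.

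\textbf{Step 3 (Maximal inequality).} Each $Z_j/\sigma_j$ is marginally $N(0,1)$, so the classical Gaussian maximal inequality (e.g.~\eqref{eq: gaussian maximal inequality} in the proof of Lemma~\ref{lem: critical value bound}) yields
\[
\Ep\Bigl[\max_{j\in[p]} Z_j/\sigma_j\Bigr] \;\le\; \sqrt{2\log p}.
\]
Combining Steps~1--3 gives $\Pr(Z\le x+\epsilon) - \Pr(Z\le x) \le (\epsilon/\underline\sigma)(\sqrt{2\log p}+2)$, as required.

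\textbf{Main obstacle.} The real work is in Step~2, Nazarov's inequality itself. A naive union bound over $j$ combined with the pointwise density bound $\phi_{\sigma_j}(\cdot)\le 1/(\sigma_j\sqrt{2\pi})$ yields only $\epsilon\cdot p/(\underline\sigma\sqrt{2\pi})$, which is exponentially weaker than the desired $\sqrt{\log p}$ dependence. The key insight needed to sharpen this is that when one coordinate $Z_j$ sits in the thin slab $(x_j,x_j+\epsilon]$, the event $\{Z_k\le x_k+\epsilon\ \forall k\}$ forces the remaining coordinates to lie near their thresholds as well; this simultaneous constraint suppresses the ``boundary density'' by a factor that translates the $p$ into $\sqrt{\log p}$. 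Formalizing this requires a careful conditional computation along the boundary, which is exactly what the proof of Nazarov's inequality carries out.
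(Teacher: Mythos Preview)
Your approach is essentially the same as the paper's: both reduce the lemma to Nazarov's inequality as stated in Lemma~A.1 of \cite{CCK17} (with the detailed proof in \cite{CCK17arxiv}). One minor point: Lemma~A.1 in \cite{CCK17} already gives the final bound $\tfrac{\epsilon}{\underline\sigma}(\sqrt{2\log p}+2)$ directly, so your Steps~1 and~3 are already absorbed into that citation and the intermediate form you attribute to it in Step~2 is not how the lemma is stated there---but this does not affect correctness.
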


\begin{proof}
This is Nazarov's inequality stated in Lemma A.1 in \cite{CCK17}. For a detailed proof of Lemma \ref{lem: anticoncentration 2}, see \cite{CCK17arxiv}. 
\end{proof}


\section{Proofs for Section 2}

\noindent
{\bf Proof of Theorem \ref{thm: clt for mam}.}
Take any rectangle $A\in\mathcal A$. Then there exist vectors $w_l = (w_{l 1},\dots,w_{l p})'$ and $w_r = (w_{r 1},\dots,w_{r p})'$ in $\mathbb R^p$ such that
$$
A = \Big\{ w = (w_1,\dots,w_p)'\in\mathbb R^p\colon w_{l j} \leq w_j\leq w_{r j}\text{ for all }j\in[p] \Big\}.
$$
Also, denote $\Delta_n := \delta_n / \sqrt{\log p}$ and let
$$
A^{+} := \Big\{w = (w_1,\dots,w_p)'\in\mathbb R^p\colon w_{j l} - \Delta_{n} \leq w_j \leq w_{j r} + \Delta_{n}\text{ for all }j\in[p]\Big\},
$$
so that $A^{+}\in\mathcal A$ is also a rectangle and $A\subset A^{+}$. Then, by Condition A and linearization \eqref{linearize},
\begin{equation}\label{eq: thm1 bound 1}
\Pr(\sqrt n(\hat\theta - \theta_0) \in A) \leq \left(\frac{1}{\sqrt n}\sum_{i=1}^n Z_i \in A^{+}\right) + \beta_{n}.
\end{equation}
In addition, under Conditions M and E, it follows from Theorem \ref{thm: hdclt} that
\begin{equation}\label{eq: thm1 bound 2}
\Pr\left( \frac{1}{\sqrt n}\sum_{i=1}^n Z_i \in A^{+} \right) \leq \Pr(N(0,V) \in A^{+}) + K_1\delta_n,
\end{equation}
where $K_1$ is a constant that depends only on $q$.

Further, for any vector $a = (a_1,\dots,a_p)'\in\mathbb R^p$ and any number $b \in \mathbb R$, let $a + b$ and $a - b$ be vectors in $\mathbb R^p$ defined by $(a+b)_j = a_j + b$ and $(a-b)_j = a_j - b$ for all $j\in[p]$, respectively. Note that $w = (w_1,\dots,w_p)'\in A^{+}\backslash A$ implies that either $w_j \leq w_{j r} + \Delta_{n}$ for all $j$ but $w_j > w_{j r}$ for some $j$ or $w_{j} \geq w_{j l} - \Delta_{n}$ for all $j$ but $w_j < w_{j l}$ for some $j$. Hence, by the union bound,
\begin{align*}
\Pr\left( N(0,V) \in A^{+}\backslash A \right)
& \leq \Pr\left( N(0,V) \leq w_r + \Delta_n \right) - \Pr\left( N(0,V) \leq w_r \right) \\
&\quad + \Pr\left( N(0,V) \geq w_l - \Delta_n \right) - \Pr\left( N(0,V) \geq w_l \right).
\end{align*}
In turn, by the anti-concentration inequality, Lemma \ref{lem: anticoncentration 2},
$$
\Pr\left( N(0,V) \leq w_r + \Delta_n \right) - \Pr\left( N(0,V) \leq w_r \right) \leq K_2\Delta_{n} \sqrt{\log p} \leq K_2\delta_n
$$
and
\begin{align*}
&\Pr\left( N(0,V) \geq w_l - \Delta_n \right) - \Pr\left( N(0,V) \geq w_l \right)\\
&\qquad = \Pr\left( - N(0,V) \leq - w_l + \Delta_n \right) - \Pr\left( - N(0,V) \leq - w_l \right) \leq K_2\delta_n
\end{align*}
for some universal constant $K_2$. Therefore,
$$
\Pr\left( N(0,V) \in A^{+} \right) \leq \Pr\left( N(0,V) \in A\right) + 2K_2\delta_n.
$$
Combining this inequality with \eqref{eq: thm1 bound 1} and \eqref{eq: thm1 bound 2} gives
\begin{equation}\label{eq: thm 1 upper bound}
\Pr(\sqrt n(\hat\theta - \theta_0) \in A) \leq \Pr\left(N(0,V) \in A\right) + \beta_n + (K_1 + 2K_2)\delta_n,
\end{equation}
which is the upper bound for $\Pr(\sqrt n(\hat\theta - \theta_0)\in A)$. To establish the lower bound, one can use the same argument with the rectangle
$$
A^{-} := \Big\{w = (w_1,\dots,w_p)'\in\mathbb R^p\colon w_{l j} + \Delta_{n} \leq w_j \leq w_{r j} - \Delta_{n}\text{ for all }j\in[p]\Big\},
$$
and obtain
\begin{equation}\label{eq: thm 1 lower bound}
\Pr(\sqrt n(\hat\theta - \theta_0) \in A) \geq \Pr\left(N(0,V) \in A\right) - \beta_n - (K_1 + 2K_2)\delta_n.
\end{equation}
Combining \eqref{eq: thm 1 upper bound} and \eqref{eq: thm 1 lower bound} gives the asserted claim with $C := 1 + K_1 +2 K_2$. \qed

\noindent
{\bf Proof of Theorem \ref{thm: bootstrap for mam}.}
First, we consider the case of E.1. Take any rectangle $A\in\mathcal A$. Then there exist vectors in $\mathbb R^p$, $w_l = (w_{l 1},\dots,w_{l p})'$ and $w_r = (w_{r 1},\dots,w_{r p})'$, such that
$$
A = \Big\{ w = (w_1,\dots,w_p)'\in\mathbb R^p\colon w_{l j} \leq w_j\leq w_{r j}\text{ for all }j\in[p] \Big\}.
$$
Also, denote $\Delta_n := \delta_n / \sqrt{\log (p n)}$. Now, observe that conditional on $(Z_i,\hat Z_i)_{i=1}^n$, for all $j\in[p]$, the random variable $n^{-1/2}\sum_{i=1}^n e_i (\hat Z_{i j} - Z_{i j})$ is zero-mean Gaussian with variance $n^{-1}\sum_{i=1}^n (\hat Z_{i j} - Z_{i j})^2$. Hence, by the Borell-Sudakov-Tsirel'son inequality, for any $\alpha\in(0,1)$,
$$
\Pr_e\left( \max_{j\in[p]}\left| \frac{1}{\sqrt n}\sum_{i=1}^n e_i(\hat Z_{i j} - Z_{i j}) \right| > \max_{j\in[p]}\sqrt{\En[(\hat Z_{i j} - Z_{i j}])^2}\left(\sqrt{2\log p} + \sqrt{2\log(1/\alpha)}\right)  \right)
$$
is bounded from above by $2\alpha$. Therefore, setting $\alpha = \delta_n/2\geq 1/(2 n)$ and using Condition A shows that there exists a universal constant $K_1$ such that with probability at least $1 - \beta_n$,
\begin{equation}\label{eq: proof of thm 2 main event}
\Pr_e\left( \max_{j\in[p]}\left| \frac{1}{\sqrt n}\sum_{i=1}^n e_i(\hat Z_{i j} - Z_{i j}) \right| > K_1\Delta_n \right) \leq \delta_n.
\end{equation}
Hence, letting
$$
A^{+} := \Big\{w = (w_1,\dots,w_p)'\in\mathbb R^p\colon w_{l j} - K_1\Delta_{n} \leq w_j \leq w_{r j} + K_1\Delta_{n}\text{ for all }j\in[p]\Big\},
$$
so that $A^{+}\in\mathcal A$ is also a rectangle and $A\subset A^{+}$, implies that with probability at least $1 - \beta_n$,
\begin{equation}\label{eq: proof of thm 2 bound 1}
\Pr_e\left( \frac{1}{\sqrt n}\sum_{i=1}^n e_i \hat Z_i \in A \right) \leq \Pr_e\left( \frac{1}{\sqrt n}\sum_{i=1}^n e_i Z_i \in A^{+} \right) + \delta_n.
\end{equation}
Further, by Theorem \ref{thm: multiplier bootstrap}, there exists a universal constant $K_2$ such that
\begin{equation}\label{eq: proof of thm 2 bound 2}
\Pr_e\left( \frac{1}{\sqrt n}\sum_{i=1}^n e_i Z_i \in A^{+} \right) \leq \Pr(N(0,V) \in A^{+}) + K_2\delta_n
\end{equation}
holds with probability at least $1 - (2 n)^{-1}$. Also, like in the proof of Theorem \ref{thm: clt for mam},
$$
\Pr(N(0,V) \in A^{+}) \leq \Pr(N(0,V)\in A) + K_1 K_3\Delta_n \sqrt{\log p} \leq \Pr(N(0,V)\in A) + K_1 K_3\delta_n,
$$
where $K_3$ is a universal constant. Combining this inequality with \eqref{eq: proof of thm 2 bound 1} and \eqref{eq: proof of thm 2 bound 2} and noting that $\sqrt n(\hat\theta^* - \hat\theta) = n^{-1/2}\sum_{i=1}^n e_i \hat Z_i$ shows that with probability at least $1 - \beta_n - (2 n)^{-1}$,
\begin{equation}\label{eq: thm 2 upper bound}
\Pr_e(\sqrt n(\hat\theta^* - \hat \theta) \in A) \leq \Pr(N(0,V) \in A) + C\delta_n,
\end{equation}
where $C = 1 + K_2 + K_1 K_3$, which is the upper bound for $\Pr_e(\sqrt n(\hat\theta^* - \hat\theta) \in A)$. To establish the lower bound, one can use the same argument with the rectangle
$$
A^{-} := \Big\{w = (w_1,\dots,w_p)'\in\mathbb R^p\colon w_{l j} + K_1 \Delta_{n} \leq w_j \leq w_{r j} -  K_1\Delta_{n}\text{ for all }j\in[p]\Big\},
$$
and obtain that with probability at least $1 - \beta_n - (2 n)^{-1}$,
\begin{equation}\label{eq: thm 2 lower bound}
\Pr_e(\sqrt n(\hat\theta^* - \hat\theta) \in A) \geq \Pr\left(N(0,V) \in A\right) - C\delta_n.
\end{equation}
Combining \eqref{eq: thm 2 upper bound} and \eqref{eq: thm 2 lower bound} and noting that the same event \eqref{eq: proof of thm 2 main event} is used to establish both \eqref{eq: thm 2 upper bound} and \eqref{eq: thm 2 lower bound} gives the asserted claim for the Gaussian bootstrap in the case of E.1.

In the case of E.2, we proceed through the same steps but we note that \eqref{eq: proof of thm 2 bound 2} only holds with probability at least $1 - (\log n)^{-2}/2$. This completes the proof of the theorem.
\qed

\noindent
{\bf Proof of Theorem \ref{thm: empirical bootstrap for mam}.}
Note that we can assume without loss of generality that $\delta_n \leq 1$ since otherwise the claims of the theorem are trivial. Therefore, since $(B_n^2 \log^7(p n)/n)^{1/6} \leq \delta_n$ by Condition E and $B_n\geq 1$, it follows that $\log^7(p n)/n \leq 1$, and so $\delta_n \geq \log^3(p n)/\sqrt n$. This bound will be useful later in the proof.

Now, conditional on $(Z_i,\hat Z_i)_{i=1}^n$, the random vector
$$
\frac{1}{\sqrt n}\sum_{i=1}^n (e_i - 1)(\hat Z_i - Z_i)
$$
is equal in distribution to
$$
\frac{1}{\sqrt n}\sum_{i=1}^n\left( (\hat Z_i - Z_i)^* - \frac{1}{n}\sum_{l=1}^n(\hat Z_l - Z_l) \right),
$$
where $(\hat Z_i - Z_i)^*$, $i\in[n]$, are i.i.d. random vectors from the empirical distribution of $\hat Z_1 - Z_1,\dots,\hat Z_n - Z_n$. Also, by Condition A,
\begin{equation}\label{eq: bernstein 1}
\max_{j\in[p]}( \En[(\hat Z_{i j} - Z_{i j})^2] )^{1/2} \leq \delta_n/\log(p n)
\end{equation}
holds with probability at least $1 - \beta_n$. In addition, by the assumption of the theorem,
\begin{equation}\label{eq: bernstein 2}
\max_{i\in[n]}\max_{j\in[p]}\left| \hat Z_{i j} - Z_{i j} - \frac{1}{n}\sum_{l=1}^n(\hat Z_{l j} - Z_{l j}) \right| \leq 2
\end{equation}
holds with probability at least $1 - \beta_n$. Therefore, applying the union bound and Bernstein's inequality (cf. \cite{VW96}, Lemma 2.2.9 or \cite{BLM13}, p.36) on the intersection of events \eqref{eq: bernstein 1} and \eqref{eq: bernstein 2} shows that with probability at least $1 - 2\beta_n$, for any $t > 0$,
$$
\Pr_e\left(\max_{j\in[p]}\left| \frac{1}{\sqrt n}\sum_{i=1}^n (e_i - 1)(\hat Z_i - Z_i) \right| > t \right) \leq 2p\exp\left( - \frac{n t^2/2}{n\delta_n^2/\log^2(p n) + 2\sqrt n t/3} \right).
$$
Hence, since $\delta_n \geq  \log^3(p n)/ \sqrt n$, there exists a universal constant $K_1>0$ such that for $\Delta_n := \delta_n /\sqrt{\log(p n)}$, with probability at least $1 - 2\beta_n$,
\begin{equation}\label{eq: proof of thm 3 main event}
\Pr_e\left(\max_{j\in[p]}\left| \frac{1}{\sqrt n}\sum_{i=1}^n (e_i - 1)(\hat Z_i - Z_i) \right| > K_1 \Delta_n \right) \leq \delta_n.
\end{equation}
The rest of the proof proceeds through the same steps as those in the proof of Theorem \ref{thm: bootstrap for mam} with \eqref{eq: proof of thm 3 main event} replacing \eqref{eq: proof of thm 2 main event} and applying Theorem \ref{thm: empirical bootstrap} instead of \ref{thm: multiplier bootstrap}. Note that in the case of E.2, we obtain $\delta_n \log^{1/3} n$ on the right-hand side of the bound \eqref{eq: bootstrap for mam e2} instead of $\delta_n$, as we had for the Gaussian bootstrap in Theorem \ref{thm: bootstrap for mam}, because the bound in Theorem \ref{thm: empirical bootstrap} is slightly worse than that in Theorem \ref{thm: multiplier bootstrap}.\qed

\noindent
{\bf Proof of Theorem \ref{thm: moderate deviations for mam}.}
We split the proof into three steps.

{\bf Step 1.} Here we show that there exists $n_{1}$ depending only on $\bar C$ such that for all $n\geq n_{1}$,
\begin{equation}\label{eq: thm 4 variance lower bound}
\Pr\left( \frac{1}{n}\sum_{i=1}^n Z_{i j}^2 \leq \frac{1}{2}\text{ for some }j\in[p]\right) \leq \frac{1}{p n}.
\end{equation}
To prove \eqref{eq: thm 4 variance lower bound}, we will use the following inequality: If $X_1,\dots,X_n$ are independent nonnegative random variables, then for any $t > 0$,
\begin{equation}\label{eq: massart}
\Pr\left( \sum_{i=1}^n (X_i - \Ep[X_i]) \leq - t \right) \leq \exp\left( -\frac{t^2}{2\sum_{i=1}^n \Ep[X_i^2]}\right),
\end{equation}
which can be found in Exercise 2.9 of \cite{BLM13}. Specifically, since
$$
\frac{(2\bar C)^3 B_n \log^{3/2}(p n)}{\sqrt n} \leq \delta_n\leq 1,
$$
it follows that there exists $n_{1}$ depending only on $\bar C$ such that for all $n\geq n_{1}$,
\begin{equation}\label{eq: thm 4 step 1 n1}
\frac{16 B_n^2\log(p n)}{n} = \frac{16 B_n^2\log^3(p n)}{n \log^2(p n)} \leq \frac{16}{(2\bar C)^6 \log^2(p n)} \leq 1.
\end{equation}
Then for all $n\geq n_{1}$ and $j=1,\dots,p$,
\begin{align}
&\Pr\left( \frac{1}{n}\sum_{i=1}^n Z_{i j}^2 \leq \frac{1}{2} \right) \leq \Pr\left( \frac{1}{n}\sum_{i=1}^n Z_{i j}^2 \leq \frac{1}{2 n}\sum_{i=1}^n \Ep[Z_{i j}^2] \right) = \Pr\left( \sum_{i=1}^n Z_{i j}^2 \leq \frac{1}{2}\sum_{i=1}^n \Ep[Z_{i j}^2] \right)\nonumber\\
&\qquad = \Pr\left( \sum_{i=1}^n(Z_{i j}^2 - \Ep[Z_{i j}^2]) \leq -\frac{1}{2}\sum_{i=1}^n \Ep[Z_{i j}^2] \right) \leq \exp\left( -\frac{(\sum_{i=1}^n \Ep[Z_{i j}^2])^2}{ 8\sum_{i=1}^n \Ep[Z_{i j}^4] } \right)\nonumber\\
&\qquad \leq \exp\left( - \frac{n^2}{8 n B_n^2}\right) \leq \exp(-2\log(p n)) = (p n)^{-2},\label{eq: thm 4 step 1}
\end{align}
where the inequality in the first line follows from Condition M, the inequality in the second line from \eqref{eq: massart} with $X_i = Z_{i j}^2$ and $t = 2^{-1}\sum_{i=1}^n\Ep[Z_{i j}^2]$, the first inequality in the third line from Condition M, and the second inequality in the third line from \eqref{eq: thm 4 step 1 n1}. Combining \eqref{eq: thm 4 step 1} with the union bound gives \eqref{eq: thm 4 variance lower bound}.

{\bf Step 2.} Here we show that for all $n\geq n_{1}$,
\begin{equation}\label{eq: thm 4 step 2}
\Pr\left(\max_{j\in[p]} \left| \frac{(\sum_{i=1}^n \hat Z_{i j}^2)^{1/2}}{(\sum_{i=1}^n Z_{i j}^2)^{1/2}} - 1\right|  > \frac{\delta_n\sqrt 2}{\log(p n)} \right)  \leq \beta_n + (p n)^{-1}.
\end{equation}
To show \eqref{eq: thm 4 step 2}, note that for all $n\geq n_{1}$, the left-hand side of \eqref{eq: thm 4 step 2} is equal to
\begin{align*}
& \Pr\left(\max_{j\in[p]} \frac{|(\sum_{i=1}^n \hat Z_{i j}^2)^{1/2} - (\sum_{i=1}^n Z_{i j}^2)^{1/2}|}{(\sum_{i=1}^n Z_{i j}^2)^{1/2}}  > \frac{\delta_n \sqrt 2}{\log(p n)} \right)\\
&\qquad
\leq \Pr\left(\max_{j\in[p]} \frac{(\sum_{i=1}^n (\hat Z_{i j} - Z_{i j})^2)^{1/2}}{(\sum_{i=1}^n Z_{i j}^2)^{1/2}}  > \frac{\delta_n \sqrt 2}{\log(p n)} \right)\\
&\qquad
\leq \Pr\left( \max_{j\in[p]} \left( \frac{1}{n}\sum_{i=1}^n (\hat Z_{i j} - Z_{i j})^2 \right)^{1/2} > \frac{\delta_n }{\log(p n)} \right) + (p n)^{-1} \leq \beta_n + (p n)^{-1},
\end{align*}
where the second line follows from the triangle inequality, and the third one from Step 1 and Condition A. This gives \eqref{eq: thm 4 step 2}.

{\bf Step 3.} Here we complete the proof. Fix $1 \leq x \leq \bar C\log^{1/2}(p n)$ and denote
\begin{equation}\label{eq: thm4 step 3-0}
\Delta_n(x):= K B_n (1 + x)^3 / \sqrt n \leq K(2\bar C)^3 B_n \log^{3/2}(p n)/\sqrt{n} \leq K\delta_n,
\end{equation}
where $K$ is a universal constant from Lemma \ref{lem: moderate deviations}. Now, fix $j\in[p]$ and observe that for all $n\geq n_{1}$,
\begin{align}
\Pr\left( \frac{n(\hat\theta_j - \theta_{0 j})}{( \sum_{i=1}^n \hat Z_{i j}^2 )^{1/2}} > x \right)
& \leq \Pr\left( \frac{n(\hat\theta_j - \theta_{0 j})}{( \sum_{i=1}^n Z_{i j}^2 )^{1/2}} >x - x\left| \frac{(\sum_{i=1}^n \hat Z_{i j}^2)^{1/2}}{( \sum_{i=1}^n Z_{i j}^2 )^{1/2}} - 1 \right| \right) \nonumber\\
& \leq \Pr\left(  \frac{n(\hat\theta_j - \theta_{0 j})}{( \sum_{i=1}^n Z_{i j}^2 )^{1/2}} >x - \frac{\sqrt 2 x \delta_n}{\log(p n)} \right) + \beta_n + (p n)^{-1},\label{eq: thm 4 step 3-1}
\end{align}
where the second inequality follows from Step 2. Further, by linearization \eqref{linearize},
$$
n(\hat\theta_j - \theta_{0 j}) = \sum_{i=1}^n Z_{i j} + \sqrt n r_{n j},
$$
and by Condition A,
$$
\Pr\left( \max_{j\in[p]}|r_{n j}| > \delta_n/\sqrt{\log (p n)} \right) \leq \beta_n.
$$
Hence, for all $n\geq n_{1}$, the probability in \eqref{eq: thm 4 step 3-1} is bounded from above by
\begin{equation}\label{eq: thm4 step 3-2}
\Pr\left( \frac{\sum_{i=1}^n Z_{i j}}{(\sum_{i=1}^n Z_{i j}^2)^{1/2}} > x - \frac{\sqrt 2 x \delta_n}{\log(p n)} - \frac{\sqrt 2 \delta_n}{\sqrt{\log (p n)}} \right) + \beta_n + (p n)^{-1},
\end{equation}
where we used Step 1 to bound $\sum_{i=1}^n Z_{i j}^2$. Next, since $\delta_n \leq 1$ and $x\geq 1$, there exists universal $n_{2}$ such that for all $n\geq n_2$, we have $2\sqrt 2\delta_n \leq \log(p n)$ and $x \geq 2\sqrt 2 \delta_n / \sqrt{\log (p n)}$. Then for all $n\geq n_{2}$,
$$
0 \leq x - \frac{\sqrt 2 x \delta_n}{\log(p n)} - \frac{\sqrt 2\delta_n}{\sqrt{\log (p n)}} \leq x \leq \bar C\log^{1/2}(p n) \leq n^{1/6}/B_n^{1/3},
$$
and so by Lemma \ref{lem: moderate deviations}, for all $n\geq n_{2}$, the probability in \eqref{eq: thm4 step 3-2} is bounded from above by
\begin{equation}\label{eq: thm4 step 3-3}
\left( 1 - \Phi\left( x - \frac{\sqrt 2 x \delta_n}{\log(p n)} - \frac{\sqrt 2 \delta_n}{\sqrt{\log p}} \right) \right)(1 + \Delta_n(x)).
\end{equation}
Further, for any $y,z \geq 0$, we have $\Phi(y + z) - \Phi(y) \leq z\phi(y)$, where $\phi$ is the pdf of the standard normal distribution, and for any $y \geq 1$, $\phi(y) \leq 2y(1 - \Phi(y))$; see Proposition 2.5 in \cite{D14}. Hence, denoting $\gamma_n := \sqrt 2 x \delta_n / \log(p n) + \sqrt 2\delta_n / \sqrt{\log (p n)}$, we have for all $n\geq n_{2}$ that
\begin{align*}
-\Phi\left( x - \frac{\sqrt 2 x \delta_n}{\log(p n)} - \frac{\sqrt 2 \delta_n}{\sqrt{\log (p n)}} \right)
&= -\Phi(x - \gamma_n) \leq - \Phi(x) + \gamma_n\phi(x - \gamma_n)\\
& \leq -\Phi(x) + \gamma_n\phi(x)e^{x\gamma_n} \leq -\Phi(x) + 2x\gamma_n e^{x\gamma_n}(1 - \Phi(x)).
\end{align*}
Therefore, for all $n\geq n_{2}$, the expression in \eqref{eq: thm4 step 3-3} is bounded from above by
$$
(1 - \Phi(x))(1 + 2 x \gamma_n e^{x \gamma_n})(1 + \Delta_n(x)).
$$
Thus, using
$$
x\gamma_n = \frac{\sqrt 2 x^2\delta_n}{\log(p n)} + \frac{ \sqrt 2 x \delta_n}{\sqrt{\log (p n)}} \leq  \sqrt 2(\bar C^2 + \bar C)\delta_n \leq  \sqrt 2(\bar C^2 + \bar C),
$$
and \eqref{eq: thm4 step 3-0} shows that there exists a constant $C$ depending only on $\bar C$ such that for all $n\geq n_0:=n_{1}\vee n_{2}$,
\begin{align*}
&\Pr\left( \frac{n(\hat\theta_j - \theta_{0 j})}{( \sum_{i=1}^n \hat Z_{i j}^2 )^{1/2}} > x \right) - (1 - \Phi(x))\leq C\Big(\delta_n (1 - \Phi(x)) + \beta_n + (p n)^{-1} \Big).
\end{align*}
This provides the upper bound in \eqref{eq: SNMD for mam 1}. To establish the lower bound, we use the same argument and note that for all $n\geq n_{2}$, we have
$$
0 \leq x + \frac{\sqrt 2 x \delta_n}{\log(p n)} + \frac{\sqrt 2\delta_n}{\sqrt{\log (p n)}} \leq 2 x \leq 2\bar C\log^{1/2}(p n) \leq n^{1/6}/B_n^{1/3},
$$
which ensures that we can use Lemma \ref{lem: moderate deviations}. This completes the proof of \eqref{eq: SNMD for mam 1}.

To prove \eqref{eq: thm 4 part 2}, we proceed like in the beginning of this step to show that for all $n\geq n_0$ and all $1\leq x\leq \bar C\log^{1/2}(p n)$,
$$
\Pr\left( \max_{j\in[p]}\frac{n(\hat\theta_j - \theta_{0 j})}{( \sum_{i=1}^n \hat Z_{i j}^2 )^{1/2}} > x \right) \leq \Pr\left(\max_{j\in[p]} \frac{\sum_{i=1}^n Z_{i j}}{( \sum_{i=1}^n Z_{i j}^2 )^{1/2}} > x \right) + C(\beta_n + (p n)^{-1}).
$$
In addition, by the union bound,
$$
\Pr\left(\max_{j\in[p]} \frac{\sum_{i=1}^n Z_{i j}}{( \sum_{i=1}^n Z_{i j}^2 )^{1/2}} > x \right) \leq p\max_{j\in[p]} \Pr\left(\frac{\sum_{i=1}^n Z_{i j}}{( \sum_{i=1}^n Z_{i j}^2 )^{1/2}} > x \right),
$$
and, by the arguments above, for all $n\geq n_0$, $j \in[p]$, and $1 \leq x\leq \bar C\log^{1/2}(p n)$,
$$
\Pr\left(\frac{\sum_{i=1}^n Z_{i j}}{( \sum_{i=1}^n Z_{i j}^2 )^{1/2}} > x \right) \leq (1 - \Phi(x))(1 + 2 x \gamma_n e^{x \gamma_n})(1 + \Delta_n(x)) \leq 1 - \Phi(x) + C\delta_n(1 - \Phi(x)),
$$
so that
$$
\Pr\left( \max_{j\in[p]}\frac{n(\hat\theta_j - \theta_{0 j})}{( \sum_{i=1}^n \hat Z_{i j}^2 )^{1/2}} > x \right) \leq p(1 - \Phi(x)) + C\Big(p\delta_n (1 - \Phi(x)) + \beta_n + (p n)^{-1} \Big).
$$
Setting $x = \Phi(1 - \alpha/p)$ here gives \eqref{eq: thm 4 part 2} and completes the proof of the theorem.
\qed

\noindent
{\bf Proof of Theorem \ref{lem: quantile comparison}.}
Without loss of generality, we can assume that $\delta_n \leq 1$ since otherwise the result is trivial (by choosing $C$ large enough). Also, there exists a universal constant $n_1$ such that $\log^4(p n)\geq (2\sqrt 2)^6$ for all $n\geq n_1$, and so under Condition E, for all $n\geq n_1$,
$$
\frac{(2\sqrt 2)^6 B_n^2 \log^3(p n)}{n} \leq \frac{(2\sqrt 2)^6 \delta_n^6}{\log^4(p n)} \leq \frac{(2\sqrt 2)^6 \delta_n^2}{\log^4(p n)} \leq \delta_n^2.
$$
In addition, let $n_0$ be a universal constant from Corollary \ref{cor: maximal inequality for mam}. Clearly, it suffices to prove the result for $n\geq n_0\vee n_1$ since the result for $n < n_0\vee n_1$ is trivial. Then, by Corollary \ref{cor: maximal inequality for mam},
$$
\Pr\left( \max_{j\in[p]} \frac{|\sqrt n(\hat\theta_j - \theta_{0 j})|}{(\En[\hat Z_{i j}^2])^{1/2}} > \sqrt{2\log(p n)}\right) \leq 2C(\beta_n + n^{-1}),
$$
where $C$ is a universal constant, and so with probability at least $1 - \beta_n - 2C(\beta_n + n^{-1})$, for all $j \in[p]$,
\begin{align*}
|\sqrt n(\hat w_j - w_j)(\hat\theta_j - \theta_{0 j})|
&= \frac{|\sqrt n(\hat\theta_j - \theta_{0 j})|}{(\En[(\hat Z_{i j})^2])^{1/2}} \times |\hat w_j - w_j|(\En[(\hat Z_{i j})^2])^{1/2}\\
&\leq \sqrt{2\log(p n)} \times (\delta_n/\log(p n)) = \sqrt 2\delta_n/\sqrt{\log(p n)},
\end{align*}
where we used Condition W. Also, by Conditions A and W, with probability at least $1 - \beta_n$, for all $j \in[p]$,
$$
\left| \sqrt n w_j(\hat\theta_j - \theta_{0 j}) - \frac{1}{\sqrt n}\sum_{i=1}^n w_j Z_{i j} \right| = w_j|r_{n j}| \leq C_W|r_{n j}| \leq C_W\delta_n / \sqrt{\log(p n)}.
$$
Conclude that
\begin{equation}\label{eq: linearization bands}
\sqrt n \hat W(\hat\theta - \theta_0) = \frac{1}{\sqrt n}\sum_{i= 1}^n W Z_i + \bar r_n,
\end{equation}
where $\bar r_n = (\bar r_{n 1},\dots\bar r_{n p})'$ is such that
\begin{equation}\label{eq: a1}
\Pr(\|\bar r_n\|_{\infty} > (C_W + \sqrt 2)\delta_n/\sqrt{\log(p n)}) \leq 2\beta_n + 2C(\beta_n + n^{-1}).
\end{equation}
Next, by the triangle inequality,
$$
(\En[(\hat w_j \hat Z_{i j} - w_j Z_{i j})^2])^{1/2} \leq |\hat w_j - w_j|(\En[(\hat Z_{i j})^2])^{1/2} + w_j(\En[(\hat Z_{i j} - Z_{i j})^2])^{1/2},
$$
and so by Conditions A and W,
\begin{equation}\label{eq: a2}
\Pr\left(\max_{j\in[p]} \En[(\hat w_j \hat Z_{i j} - w_j Z_{i j})^2] > (C_W + 1)^2 \delta_n^2/\log^2(p n)\right) \leq 2\beta_n.
\end{equation}
Thus, applying Theorem \ref{thm: clt for mam} for linearization \eqref{eq: linearization bands}, with \eqref{eq: a1} and \eqref{eq: a2} playing the role of Condition A, shows that
\begin{equation}\label{eq: conf thm 1 bands}
\sup_{A\in\mathcal A}\Big| \Pr(\sqrt n \hat W(\hat\theta - \theta_0) \in A) - \Pr(W N(0,V) \in A) \Big| \leq (C/2)(\delta_n + \beta_n + n^{-1}) \leq C(\delta_n + \beta_n)
\end{equation}
for some constant $C$ depending only on $C_W$, and applying Theorem \ref{thm: bootstrap for mam} shows that
\begin{equation}\label{eq: conf thm 2 bands}
\sup_{A\in\mathcal A}\Big| \Pr_e(\sqrt n \hat W(\hat\theta^* - \hat\theta) \in A) - \Pr(W N(0,V) \in A) \Big| \leq C\delta_n
\end{equation}
holds with probability at least $1 - 2\beta_n - n^{-1}$ in the case of E.1 and $1 - 2\beta_n - (\log n)^{-2}$ in the case of E.2 since the proof of Theorem \ref{thm: bootstrap for mam} only requires the second part of Condition A.

We are now able to prove \eqref{eq: quantile comparison 1} and \eqref{eq: quantile comparison 2}. To prove \eqref{eq: quantile comparison 1}, denoting $\epsilon_n := C(\delta_n + \beta_n)$, we have by \eqref{eq: conf thm 1 bands} that
\begin{align*}
\Pr(\| \sqrt n \hat W (\hat \theta - \theta_0) \|_{\infty} \leq \lambda^g(1 - \alpha - \epsilon_n))
& \leq \Pr(\|N(0,V)\|_{\infty} \leq \lambda^g(1 - \alpha - \epsilon_n)) + \epsilon_n\\
& \leq 1 - \alpha - \epsilon_n + \epsilon_n = 1 - \alpha,
\end{align*}
and, similarly,
$$
\Pr(\| \sqrt n \hat W (\hat \theta - \theta_0) \|_{\infty} \leq \lambda^g(1 - \alpha + \epsilon_n)) \geq 1 - \alpha.
$$
Thus,
$$
\lambda^g(1 - \alpha - \epsilon_n) \leq \lambda(1 - \alpha) \leq \lambda^g(1 - \alpha + \epsilon_n),
$$
which gives \eqref{eq: quantile comparison 1}. To prove \eqref{eq: quantile comparison 2}, we use the same argument but apply \eqref{eq: conf thm 2 bands} instead of \eqref{eq: conf thm 1 bands}.

Finally, we prove \eqref{eq: gaussian quantile bound 1}. Let $\xi$ be a $N(0,1)$ random variable. Then for any $a\in(0,1)$,
\begin{align*}
\Pr(\| W N(0,V) \|_{\infty} > \bar\sigma\Phi^{-1}(1 - a/(2p)))
&\leq\sum_{j=1}^p \Pr(|w_j V_{j j}^{1/2} \xi | > \bar\sigma\Phi^{-1}(1 - a/(2p)))\\
& \leq \sum_{j=1}^p \Pr(|\xi | > \Phi^{-1}(1 - a/(2 p))) = a,
\end{align*}
which implies that $\lambda^g(1 - a) \leq \bar\sigma \Phi^{-1}(1 - a/(2 p))$. To prove that $\Phi^{-1}(1 - a/(2 p)) \leq (2\log(2p/a))^{1/2}$, we apply the inequality $1 - \Phi(x) \leq \exp(-x^2/2)$, which holds for all $x>0$ as discussed in Proposition 2.5 of \cite{D14}, with $x = (2\log(2p /a))^{1/2}$. This completes the proof of the lemma.
\qed

\noindent
{\bf Proof of Theorem \ref{thm: simultaneous conf intervals}.}
By Theorem \ref{lem: quantile comparison} and its proof,
\begin{align*}
&\Pr(\| \sqrt n \hat W(\hat\theta - \theta_0) \|_{\infty} \leq \hat\lambda(1 - \alpha))
\leq \Pr(\| \sqrt n \hat W(\hat\theta - \theta_0) \|_{\infty} \leq \lambda^g(1 - \alpha + \epsilon_n)) + o(1)\\
&\quad = \Pr(\| W N(0,V) \|_{\infty} \leq \lambda^g(1 - \alpha + \epsilon_n)) + o(1) = 1 - \alpha + \epsilon_n + o(1) = 1 - \alpha + o(1),
\end{align*}
and, similarly,
\begin{align*}
&\Pr(\| \sqrt n \hat W(\hat\theta - \theta_0) \|_{\infty} \leq \hat\lambda(1 - \alpha))
\geq \Pr(\| \sqrt n \hat W(\hat\theta - \theta_0) \|_{\infty} \leq \lambda^g(1 - \alpha - \epsilon_n)) + o(1)\\
&\quad = \Pr(\|W N(0,V) \|_{\infty} \leq \lambda^g(1 - \alpha - \epsilon_n)) + o(1) = 1 - \alpha - \epsilon_n + o(1) = 1 - \alpha + o(1).
\end{align*}
Hence,
$$
\Pr(\| \sqrt n \hat W(\hat\theta - \theta) \|_{\infty} \leq \hat\lambda(1 - \alpha)) = 1 - \alpha + o(1),
$$
which implies that the confidence intervals in \eqref{eq: simultaneous conf intervals} satisfy \eqref{eq: conf intervals main property}, which is the first asserted claim. To prove the second asserted claim, simply note that by Theorem \ref{lem: quantile comparison}, with probability $1 - o(1)$, $\hat\lambda(1 - \alpha) \leq \lambda^g(1 - \alpha + \epsilon_n)$. This completes the proof of the theorem.
\qed

\noindent
{\bf Proof of Theorem \ref{thm: fwer bonferroni}.}
Since the set of null hypotheses $H_j$ rejected by the Bonferroni-Holm procedure always contains the set of null hypotheses rejected by the Bonferroni procedure, it suffices to prove the result for the Bonferroni-Holm procedure. To do so, note that whenever $w'\subset w''$, it follows that $|w'| \leq |w''|$, and so
$$
c_{1-\alpha,w'} = \Phi^{-1}(1 - \alpha/|w'|) \leq \Phi^{-1}(1 - \alpha/|w''|) = c_{1-\alpha,w''}.
$$
Thus, \eqref{eq: critical value property 1} is satisfied, and we only need to verify \eqref{eq: conditions MHT}. To this end, take any $w\subset \mathcal W$ and $P\in\mathcal P^w$.
Denote $\bar p := |w|$. By discussion immediately after Theorem \ref{thm: moderate deviations for mam},
$$
\Phi^{-1}(1 - \alpha/\bar p) \leq (2\log(\bar p/\alpha))^{1/2} \leq (2\log(\bar p n))^{1/2}
$$
for all $n\geq 1/\alpha$. Together with $(2\sqrt 2)^3B_n\log^{3/2}(\bar p n)/\sqrt n \leq \delta_n$, this makes it possible to apply Theorem \ref{thm: moderate deviations for mam}, \eqref{eq: thm 4 part 2}, with $\bar C = \sqrt 2$, to show that
\begin{align}
&\Pr_P\left(\max_{j\in w} \frac{\sqrt n(\hat\theta_j - \bar\theta_{0 j})}{(\frac{1}{n}\sum_{i=1}^n \hat Z_{i j}^2)^{1/2}} > \Phi^{-1}(1-\alpha/\bar p) \right) \nonumber\\
&\quad \leq \Pr_P\left(\max_{j\in w} \frac{\sqrt n(\hat\theta_j - \theta_{0 j})}{(\frac{1}{n}\sum_{i=1}^n \hat Z_{i j}^2)^{1/2}} > \Phi^{-1}(1-\alpha/\bar p) \right) \leq \alpha + C \Big(\alpha \delta_n + \beta_n + n^{-1} \Big)\label{eq: bonferroni procedure proof}
\end{align}
for some universal constant $C$ and all $n \geq n_0\vee n_1$, where $n_0$ is a universal constant appearing in Theorem \ref{thm: moderate deviations for mam} and $n_1$ is such that for all $n\geq n_1$, we have $\delta_n \leq 1$ (recall that $\delta_n\searrow0$). Since the right-hand side of \eqref{eq: bonferroni procedure proof} and $n_0\vee n_1$ do not depend on $(w,P)$ and $C(\alpha\delta_n + \beta_n + n^{-1}) = o(1)$, \eqref{eq: conditions MHT} holds, and the asserted claim follows.
\qed

\noindent
{\bf Proof of Theorem \ref{thm: MHT}.}
Since \eqref{eq: critical value property 1} holds trivially, it suffices to prove \eqref{eq: conditions MHT}. To do so, we will use the following lemma.

\begin{lemma}\label{lem: verification of w}
For $j \in[p]$, denote $\hat w_j := (n^{-1}\sum_{i=1}^n \hat Z_{i j}^2)^{-1/2}$ and $w_j := V_{j j}^{-1/2}$. Then Conditions M, E, and A imply that there exists universal constants $n_0\in\mathbb N$ and $C\geq 1$ such that for all $n\geq n_0$, Condition W holds with $C_W = 1$ and with $\delta_n$ and $\beta_n$ replaced by $C\delta_n$ and $\beta_n + \delta_n + (p n)^{-1}$, respectively.
\end{lemma}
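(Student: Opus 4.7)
The plan is to verify Condition W part by part, with $C_W=1$ and with the stated replacements of $\delta_n$ and $\beta_n$. Part (i) is immediate: Condition M gives $V_{jj}\ge 1$, so $w_j = V_{jj}^{-1/2}\le 1 = C_W$, and trivially $w_j \ge C_W^{-1}V_{jj}^{-1/2}$. The whole content is therefore in part (ii).

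For part (ii), set $A_j := \En[\hat Z_{ij}^2]$ and $B_j := V_{jj}$. Using the factorization
\[
A_j^{-1/2} - B_j^{-1/2} = \frac{B_j - A_j}{A_j^{1/2}B_j^{1/2}(A_j^{1/2}+B_j^{1/2})},
\]
one obtains
\[
(\hat w_j - w_j)^2 (1+A_j) = \frac{(A_j-B_j)^2 (1+A_j)}{A_j B_j (A_j^{1/2}+B_j^{1/2})^2}.
\]
On the event $\mathcal E_1 := \{A_j \ge B_j/2 \text{ for all } j\}$, and using $B_j\ge 1$, one has $(A_j^{1/2}+B_j^{1/2})^2 \ge 2B_j$ and $1+A_j\le 1+2B_j\le 3B_j$, so the right-hand side is bounded by $3(A_j-B_j)^2/B_j^2 \le 3(A_j-B_j)^2$. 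Hence it suffices to prove: (a) on an event of high probability $\max_j|A_j-B_j|\le c\delta_n/\log(pn)$ for a small universal constant $c$; then $\mathcal E_1$ is automatic for $n$ large enough that $c\delta_n/\log(pn)<1/2$.

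To prove (a), decompose $A_j - B_j = (\En[\hat Z_{ij}^2]-\En[Z_{ij}^2]) + (\En[Z_{ij}^2]-V_{jj})$. The first piece is handled via Condition A: writing $\hat Z_{ij}^2 - Z_{ij}^2 = (\hat Z_{ij}-Z_{ij})^2 + 2Z_{ij}(\hat Z_{ij}-Z_{ij})$ and using Cauchy--Schwarz gives $|\En[\hat Z_{ij}^2-Z_{ij}^2]| \le (\delta_n/\log(pn))^2 + 2(\En[Z_{ij}^2])^{1/2}\delta_n/\log(pn)$ on an event of probability $\ge 1-\beta_n$; combining with a (bootstrap-style) bound on $(\En[Z_{ij}^2])^{1/2}$ obtained from the second piece yields the needed smallness. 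For the second piece, apply the Fuk--Nagaev bound (Lemma \ref{lem: fuk-nagaev}) and the maximal inequality (Lemma \ref{lem: maximal ineq}) to the random variables $Z_{ij}^2$, using $\mathrm{Var}(Z_{ij}^2)\le \Ep[Z_{ij}^4]\le B_n^2$ from Condition M to bound the relevant $\sigma^2$ by $nB_n^2$, and controlling $\Ep[\max_i\max_j Z_{ij}^4]$ by $O(B_n^4\log^4(pn))$ under E.1 (via sub-exponential moments) or by the direct $q$-th moment bound under E.2. The growth condition in E, which gives $B_n^2\log^7(pn)/n\le \delta_n^6$, is precisely what renders these empirical-process bounds much smaller than $\delta_n/\log(pn)$, yielding $\max_j|\En[Z_{ij}^2]-V_{jj}|\le c'\delta_n/\log(pn)$ with probability at least $1-\delta_n-(pn)^{-1}$. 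A union bound then completes the proof.

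The main obstacle is the uniform concentration of the empirical second moments $\En[Z_{ij}^2]$ around $V_{jj}$ under the weak moment conditions of E.1 or E.2; here the two cases must be treated in parallel and the precise choice of $s$ (respectively $q$) in the Fuk--Nagaev inequality matters. Once this is established, everything else is elementary algebra, the triangle inequality, and a union bound over the three failure events (Condition A's failure, failure of $\mathcal E_1$, and failure of the concentration step), which contribute $\beta_n$, part of $\delta_n$, and $\delta_n+(pn)^{-1}$ respectively, matching the allowed replacement $\beta_n + \delta_n + (pn)^{-1}$.
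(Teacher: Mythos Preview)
Your overall strategy is close to the paper's, but there is a genuine gap in the step where you pass from $3(A_j-B_j)^2/B_j^2$ to $3(A_j-B_j)^2$ and then announce that it suffices to establish the \emph{absolute} bound $\max_j|A_j-B_j|\le c\,\delta_n/\log(pn)$. The problem is the first piece of your decomposition. Your Cauchy--Schwarz estimate gives
\[
|\En[\hat Z_{ij}^2]-\En[Z_{ij}^2]|\ \le\ \Big(\tfrac{\delta_n}{\log(pn)}\Big)^{\!2}\;+\;2\,(\En[Z_{ij}^2])^{1/2}\,\frac{\delta_n}{\log(pn)},
\]
and your ``bootstrap-style'' bound on $(\En[Z_{ij}^2])^{1/2}$ from the second piece only yields $(\En[Z_{ij}^2])^{1/2}\lesssim V_{jj}^{1/2}$. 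Conditions M and E do \emph{not} bound $V_{jj}$ by a universal constant; they only give $V_{jj}\le B_n$ (via $n^{-1}\sum_i\Ep[Z_{ij}^4]\le B_n^2$), and $B_n$ is allowed to diverge. Hence your first piece is of order $B_n^{1/2}\delta_n/\log(pn)$, and there is no way to get a \emph{universal} $C$ in the replacement $\delta_n\mapsto C\delta_n$ along this route. (A secondary sloppiness: $\mathcal E_1=\{A_j\ge B_j/2\}$ is only a lower bound and does not by itself justify $1+A_j\le 3B_j$; you implicitly need the two-sided event $|A_j-B_j|\le B_j/2$.)

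The paper sidesteps this by never trying to control $|A_j-B_j|$ in absolute terms. It works instead with the quantity $|\hat w_j-w_j|\,A_j^{1/2}$, using the algebraic identity $|\hat w_j-w_j|\,A_j^{1/2}=|A_j^{1/2}-B_j^{1/2}|/B_j^{1/2}$ and then the reverse triangle inequality
\[
\big|(\En[\hat Z_{ij}^2])^{1/2}-(\En[Z_{ij}^2])^{1/2}\big|\ \le\ (\En[(\hat Z_{ij}-Z_{ij})^2])^{1/2},
\]
which produces $\delta_n/\log(pn)$ directly, with no factor of $V_{jj}^{1/2}$ anywhere. The second piece $|\En[Z_{ij}^2]-V_{jj}|$ is then bounded by the maximal inequality plus Markov (yielding the $\delta_n$ in the probability), and the lower bound $\En[Z_{ij}^2]\ge 1/2$---which is where the $(pn)^{-1}$ actually comes from---is obtained separately via a one-sided exponential inequality (this is Step~1 of the proof of Theorem~\ref{thm: moderate deviations for mam}). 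Your approach is easily repaired: do not drop the $B_j^2$ in the denominator, and show instead the \emph{relative} bound $|A_j-B_j|/B_j\le c\,\delta_n/\log(pn)$; then the offending $V_{jj}^{1/2}$ is cancelled by the extra $B_j^{1/2}$ you retain.
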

\begin{proof}
The first part of Condition W holds trivially with $C_W = 1$. To prove the second part, we will assume, without loss of generality, that $\delta_n \leq 1$. Observe that by the triangle inequality and Condition M, for all $j \in [p]$,
\begin{align}
& |\hat w_j - w_j| (\En[\hat Z_{i j}^2])^{1/2} = \frac{| (\frac{1}{n}\sum_{i=1}^n \hat Z_{i j}^2)^{1/2} - (\frac{1}{n}\sum_{i=1}^n \Ep[Z_{i j}^2])^{1/2}|}{(\frac{1}{n}\sum_{i=1}^n \Ep[Z_{i j}^2])^{1/2}}\nonumber\\
&\qquad \leq \frac{| (\frac{1}{n}\sum_{i=1}^n \hat Z_{i j}^2)^{1/2} - (\frac{1}{n}\sum_{i=1}^n Z_{i j}^2)^{1/2} |}{(\frac{1}{n}\sum_{i=1}^n \Ep[Z_{i j}^2])^{1/2}} + \frac{| (\frac{1}{n}\sum_{i=1}^n Z_{i j}^2)^{1/2} - (\frac{1}{n}\sum_{i=1}^n \Ep[Z_{i j}^2])^{1/2} |}{(\frac{1}{n}\sum_{i=1}^n \Ep[Z_{i j}^2])^{1/2}}\nonumber\\
&\qquad \leq \left( \frac{1}{n}\sum_{i=1}^n (\hat Z_{i j} - Z_{i j})^2 \right)^{1/2} + \left| \frac{1}{n}\sum_{i=1}^n Z_{i j}^2 - \frac{1}{n}\sum_{i=1}^n \Ep[Z_{i j}^2] \right|.\label{eq: thm 7 proof of w}
\end{align}
The first term in \eqref{eq: thm 7 proof of w} is bounded from above by $\delta_n/\log(p n)$ with probability at least $1 - \beta_n$ uniformly over all $j \in[p]$ by Condition A. Also, by Condition E and Lemma \ref{lem: maximal ineq},
$$
\Ep\left[ \max_{j\in[p]}\left| \frac{1}{n}\sum_{i=1}^n Z_{i j}^2 - \frac{1}{n}\sum_{i=1}^n \Ep[Z_{i j}^2] \right| \right] \leq K_1\left( \sqrt{\frac{B_n^2\log p}{n}} + \left( \Ep\left[ \max_{i\in[n]}\max_{j\in[p]} Z_{i j}^4 \right] \right)^{1/2}\frac{\log p}{n} \right),
$$
where $K_1$ is a universal constant. In addition, in the case of E.1, for some universal constant $K_2$,
$$
\left( \Ep\left[ \max_{i\in[n]}\max_{j\in[p]} Z_{i j}^4 \right] \right)^{1/2} \leq K_2 B_n^2\log^2(p n),
$$
and in the case of E.2,
$$
\left( \Ep\left[ \max_{i\in[n]}\max_{j\in[p]} Z_{i j}^4 \right] \right)^{1/2} \leq B_n^2\sqrt n.
$$
Thus, in both cases, since $\delta_n \leq 1$, by Condition E,
$$
\Ep\left[ \max_{j\in[p]}\left| \frac{1}{n}\sum_{i=1}^n Z_{i j}^2 - \frac{1}{n}\sum_{i=1}^n \Ep[Z_{i j}^2] \right| \right] \leq C\delta_n^2/\log(p n),
$$
for some universal constant $C$, so that by Markov's inequality, with probability at least $1 - \delta_n$,
$$
\max_{j\in[p]}\left| \frac{1}{n}\sum_{i=1}^n Z_{i j}^2 - \frac{1}{n}\sum_{i=1}^n \Ep[Z_{i j}^2] \right| \leq C\delta_n/\log(p n).
$$
Hence,
$$
\Pr\left(\max_{j\in[p]}|\hat w_j - w_j|^2\En[\hat Z_{i j}^2] > (1 + C)^2\delta_n^2/\log^2(p n)\right)\leq \beta_n + \delta_n.
$$
Moreover, by Conditions E and A, as in Step 1 of the proof of Theorem \ref{thm: moderate deviations for mam}, there exists a universal constant $n_0$ such that for all $n\geq n_0$,
$$
\left(\frac{1}{n}\sum_{i=1}^n \hat Z_{i j}^2\right)^{1/2} \geq \left(\frac{1}{n}\sum_{i=1}^n Z_{i j}^2\right)^{1/2} - \left(\frac{1}{n}\sum_{i=1}^n (\hat Z_{i j} - Z_{i j})^2\right)^{1/2} \geq \frac{1}{\sqrt 2} - \frac{\delta_n}{\log(p n)} \geq \frac{1}{2}
$$
for all $j\in[p]$ with probability at least $1 - \beta_n - (p n)^{-1}$. Thus, for all $n\geq n_0$,
$$
\max_{j\in[p]}|\hat w_j - w_j|^2(1 + \En[\hat Z_{i j}^2]) \leq 5 \max_{j\in[p]}|\hat w_j - w_j|^2 \En[\hat Z_{i j}^2] \leq 5(1+C)^2\delta_n^2/\log^2(p n)
$$
with probability at least $1 - \beta_n - \delta_n - (p n)^{-1}$, where we have $\beta_n$ instead of $2\beta_n$ since the same event $\max_{j\in[p]}(\En[(\hat Z_{i j} - Z_{i j})^2])^{1/2} \leq \delta_n/\log(p n)$ is used twice. This gives the asserted claim.
\end{proof}
Getting back to the proof of Theorem \ref{thm: MHT}, take any $w\subset\mathcal W$ and $P\in\mathcal P^w$. Also, for $j \in[p]$, denote $\hat w_j := (n^{-1}\sum_{i=1}^n \hat Z_{i j}^2)^{-1/2}$, $w_j := V_{j j}^{-1/2}$, and
$$
\bar t_j := \frac{\sqrt n(\hat\theta_j - \theta_{0 j})}{(\frac{1}{n}\sum_{i=1}^n \hat Z_{i j})^{1/2}} = \sqrt n \hat w_j (\hat\theta_j - \theta_{0 j}).
$$
Then
$$
\max_{j\in w} t_j \leq \max_{j\in w}\bar t_j,
$$
and Condition W holds by Lemma \ref{lem: verification of w}. Now, under conditions M, E, A, and W, we can proceed like in the proof of Theorems \ref{lem: quantile comparison} and \ref{thm: simultaneous conf intervals}, with probabilities like $\Pr_P(\max_{j\in w}\bar t_j \leq c_{1 - \alpha,w})$ replacing probabilities like $\Pr_P(\max_{1\leq j\leq p}|\bar t_j| \leq c_{1 - \alpha,w})$, to obtain
$$
\Pr_P\left(\max_{j\in w}\bar t_j \leq c_{1 - \alpha,w}\right) \geq 1 - \alpha + o(1),
$$
where the term $o(1)$ depends only on $(\delta_n,\beta_n)$ and on whether E.1 or E.2 is used. In particular, the term $o(1)$ does not depend on $(w,P)$. This gives \eqref{eq: conditions MHT} and completes the proof of the theorem.
\qed

\noindent
{\bf Proof of Theorem \ref{thm: fdp}.}
In this proof, all convergence results hold uniformly over $P\in\mathcal P$, and so we fix $P\in\mathcal P$, and drop the index $P$, i.e. we write, for example, $\Pr$ instead of $\Pr_P$. Also, without loss of generality, we assume that $|\mathcal H| \leq \sqrt{\log p}$ (if $|\mathcal H| > \sqrt{\log p}$, we can redefine $\mathcal H$ by keeping only the first $\sqrt{\log p}$ elements of it). We split the proof into six steps.

{\bf Step 1.} Here we show that
\begin{equation}\label{eq: thm 6 step 1}
\Pr\left(\sum_{j=1}^p 1\{t_j > \sqrt{2\log p}\} \geq |\mathcal H|\right) \to 1.
\end{equation}
To show \eqref{eq: thm 6 step 1}, note that by Chebyshev's inequality, for all $j\in \mathcal H$ and $\epsilon > 0$,
$$
\Pr\left( \left| \frac{1}{n}\sum_{i=1}^n Z_{i j}^2 - V_{j j} \right| > \epsilon \right)\leq \frac{\sum_{i=1}^n\Ep[Z_{i j}^4]}{\epsilon^2 n^2} \leq \frac{B^2}{\epsilon^2 n}.
$$
Setting $\epsilon = V_{j j} \delta_n$ and recalling that by Condition $M$, $V_{j j} \geq 1$ for all $j\in\mathcal H$ gives
$$
\Pr\left( \left|\frac{\sum_{i=1}^n Z_{i j}^2}{n V_{j j}} - 1\right| > \delta_n \right) \leq \frac{B^2}{V_{j j}^2 \delta_n^2 n} \leq \frac{B^2}{\delta_n^2 n},
$$
and so by the union bound,
$$
\Pr\left( \max_{j\in\mathcal H} \left| \frac{\sum_{i=1}^n Z_{i j}^2}{n V_{j j}} - 1 \right| > \delta_n \right) \leq \frac{B^2\sqrt{\log p}}{\delta_n^2 n} = o(1).
$$
Thus, since $|x/y - 1| \leq |(x/y)^2 - 1|$ for all $x,y > 0$, it follows that
$$
\Pr\left( \max_{j\in\mathcal H}\left| \frac{( \sum_{i=1}^n Z_{i j}^2 )^{1/2}}{( n V_{j j})^{1/2}} - 1 \right| > \delta_n \right) \leq \Pr\left( \max_{j\in\mathcal H} \left| \frac{\sum_{i=1}^n Z_{i j}^2}{n V_{j j}} - 1 \right| > \delta_n \right) \leq \frac{B^2\sqrt{\log p}}{\delta_n^2 n} = o(1).
$$
Also, by Step 2 of the proof of Theorem \ref{thm: moderate deviations for mam},
$$
\Pr\left( \max_{j\in\mathcal H}\left| \frac{( \sum_{i=1}^n \hat Z_{i j}^2 )^{1/2}}{( \sum_{i=1}^n Z_{i j}^2 )^{1/2}} - 1 \right| > \delta_n \right) = o(1).
$$
Therefore, recalling that $\hat T_j = n| \hat\theta_j |/( \sum_{i=1}^n \hat Z_{i j}^2 )^{1/2}$,
\begin{align}
&\Pr\left(\textstyle\sum_{j=1}^p 1\{ t_j > \sqrt{2\log p}\} \geq |\mathcal H|\right)\nonumber\\
&\qquad \geq \Pr\left(\textstyle\sum_{j\in\mathcal H} 1\{ t_j > \sqrt{2\log p}\} \geq |\mathcal H|\right)
 = \Pr\left(t_j > \sqrt{2\log p}\text{ for all }j\in\mathcal H\right)\nonumber\\
& \qquad \geq \Pr\left(\sqrt n(\hat \theta_j - \bar\theta_{0 j}) / V_{j j}^{1/2} > \sqrt{2\log p}(1 + \delta_n)^2\text{ for all }j\in\mathcal H\right) - o(1)\label{eq: thm 6 step 1 - 2}.
\end{align}
Next, since $\delta_n \to 0$ and $\sqrt n(\theta_{0 j} - \bar\theta_{0 j} )/V_{j j}^{1/2} > \sqrt{4\log p}$ for all $j\in\mathcal H$ by Condition L, it follows that the probability in \eqref{eq: thm 6 step 1 - 2} is bounded from below by
\begin{align*}
&\Pr\left( \sqrt n|\hat \theta_j - \theta_{0 j}| / V_{j j}^{1/2} \leq (1/2)\sqrt{\log p}\text{ for all }j\in\mathcal H \right)\\
&\qquad \geq \Pr\left( \left|(n V_{j j})^{-1/2}\textstyle\sum_{i=1}^n Z_{i j}\right| \leq (1/2)\sqrt{\log p} - \delta_n / \sqrt{\log p}\text{ for all }j\in\mathcal H \right) - o(1)\\
&\qquad \geq \Pr\left( \left|(n V_{j j})^{-1/2}\textstyle\sum_{i=1}^n Z_{i j}\right| \leq (1/3)\sqrt{\log p}\text{ for all }j\in\mathcal H \right) - o(1)\\
&\qquad \geq 1 - \sqrt{\log p}/((1/3)\sqrt{\log p})^2 - o(1) = 1 - 9/\sqrt{\log p} = 1 - o(1),
\end{align*}
where the second line follows from Conditions A and M, the third from $\delta_n \to 0$, and the fourth from Chebyshev's inequality, the union bound, and the fact that $p  \to \infty$. This gives \eqref{eq: thm 6 step 1} and completes the first step.

{\bf Step 2.} Here we show that for any $j = 1,\dots,p$, the inequality $t_{j} \geq t_{(\hat k)}$ holds if and only if $\hat T_j \geq \hat t$, where
\begin{equation}\label{eq: thm 6 step 2}
\hat t := \inf\left\{ t \in \mathbb R\colon 1 - \Phi(t) \leq \frac{\alpha\max\{\sum_{j=1}^p 1\{t_j \geq t\},1\}}{p} \right\}.
\end{equation}
To prove this equivalence, we consider two cases separately: $\hat k = 0$ and $\hat k \geq 1$. First, suppose that $\hat k = 0$. Then for all $l \in [p]$,
$$
1 - \Phi(t_{(l)}) > \frac{\alpha l}{p} = \frac{\alpha\max\{\sum_{j=1}^p 1\{t_j \geq t_{(l)}\}, 1 \}}{p}.
$$
Therefore, denoting $t_{(p + 1)} : = -\infty $, it follows that for all $t\leq t_{(1)}$, there exists $l \in [p]$ such that $t_{(l+1)} < t \leq t_{(l)}$ and
$$
1 - \Phi(t) \geq 1 - \Phi(t_{(l)}) > \frac{\alpha\max\{\sum_{j=1}^p 1\{t_j \geq t_{(l)}\},1\}}{p} = \frac{\alpha\max\{\sum_{j=1}^p 1\{t_j \geq t\},1\}}{p}.
$$
Hence, $\hat t > t_{(1)}$; so for all $j \in[p]$, we have $t_j < t_{(\hat k)}$ and $t_j < \hat t$, so that equivalence holds. Next, suppose that $\hat k \geq 1$. Then, by the same argument as above, $\hat t > t_{(\hat k + 1)}$. Also,
$$
1 - \Phi(t_{(\hat k)}) \leq \frac{\alpha\hat k}{p} = \frac{\alpha\max\{ \sum_{j=1}^p1\{ t_j \geq t_{(\hat k)} \} , 1\}}{p}
$$
by definition of $\hat k$, and so $\hat t \leq t_{(\hat k)}$. Therefore, $t_{(\hat k + 1)} < \hat t \leq t_{(\hat k)}$.  Thus, for all $j \in[p]$, the inequality $t_{j} \geq t_{(\hat k)}$ holds if and only if $t_j \geq \hat t$, so that equivalence holds again. This completes the second step.

{\bf Step 3.} Here we show that $\hat t$ satisfies
\begin{equation}\label{eq: thm 6 step 3 - 1}
\frac{p G(\hat t)}{\max\{\sum_{j=1}^p 1\{t_j \geq \hat t \} , 1\}} = \alpha
\end{equation}
and
\begin{equation}\label{eq: thm 6 step 3 - 2}
\Pr(\hat t \leq G^{-1}(\alpha|\mathcal H|/p) \to 1,
\end{equation}
where $G(t) := 1 - \Phi(t)$ for all $t\geq 0$. Indeed, \eqref{eq: thm 6 step 3 - 1} follows immediately from \eqref{eq: thm 6 step 2} since $\Phi$ is continuous. To prove \eqref{eq: thm 6 step 3 - 2}, note that for all all $x>0$, $1 - \Phi(x) \leq e^{-x^2/2}$ by Proposition 2.5 in \cite{D14}. Therefore, as long as $|\mathcal H| \geq 1/\alpha$, which holds for all $n$ large enough,
\begin{equation}\label{eq: PHI bound}
\alpha|\mathcal H|/p
\geq 1/p = \exp\left(-\frac{(\sqrt{2\log p})^2}{2}\right)\geq \left( 1 - \Phi(\sqrt{2\log p}) \right) = G(\sqrt{2\log p}).
\end{equation}
Hence, since $G$ is strictly decreasing,
\begin{equation}\label{eq: G bound thm 6}
G^{-1}(\alpha|\mathcal H|/p) \leq \sqrt{2\log p}.
\end{equation}
Therefore, setting $t = G^{-1}(\alpha|\mathcal H|/p)$, it follows from Step 1 that wp $\to 1$,
$$
1 - \Phi(t) = G(t) = \frac{\alpha|\mathcal H|}{p} \leq \frac{\alpha\max\{\sum_{j=1}^p1\{ t_j \geq t \} , 1\}}{p},
$$
and so by definition of $\hat t$, wp $\to 1$, $\hat t \leq t = G^{-1}(\alpha|\mathcal H|/p)$. This gives \eqref{eq: thm 6 step 3 - 2} and completes the third step.

{\bf Step 4.} Here we show that for any sequence of constants $(\gamma_n)_{n\geq 1}$ such that $\gamma_n \to 0$,
\begin{equation}\label{eq: thm 6 technical}
\sup_{0\leq t\leq (2\log p)^{1/2}}\left|1 - \frac{G(t + \gamma_n/\sqrt{\log p})}{G(t)}\right| = o(1).
\end{equation}
To prove \eqref{eq: thm 6 technical}, note that it is immediate that
$$
\sup_{0\leq t\leq 1}\left|1 - \frac{G(t + \gamma_n/\sqrt{\log p})}{G(t)}\right| = o(1).
$$
Also, as long as $|\gamma_n|/\sqrt{\log p} \leq 1$, which holds for all $n$ large enough, uniformly over $1< t\leq (2\log p)^{1/2}$,
\begin{align*}
|G(t) - G(t + \gamma_n/\sqrt{\log p})| &= |\Phi(t + \gamma_n/\sqrt{\log p}) - \Phi(t)| = \frac{|\gamma_n|(\phi(t)\vee\phi(t + \gamma_n/\sqrt{\log p}))}{\sqrt{\log p}}\\
& = \frac{(1+o(1))|\gamma_n|\phi(t)}{\sqrt{\log p}}\leq \frac{2(1+o(1))t|\gamma_n|G(t)}{\sqrt{\log p}} = o(1)G(t),
\end{align*}
where the last inequality follows from the fact that for all $x\geq 1$, $\phi(x) \leq 2x(1 - \Phi(x)) = 2xG(x)$; see Proposition 2.5 in \cite{D14}. This gives \eqref{eq: thm 6 technical}.

{\bf Step 5.} Here we show that
\begin{equation}\label{eq: thm 6 lln}
\sup_{0\leq t \leq G^{-1}(\alpha|\mathcal H|/p)}\frac{\sum_{j\in\mathcal H_0}1\{t_j \geq t\}}{p_0G(t)}  \leq 1 + o_p(1),
\end{equation}
where $p_0 := p - p_1$ is the number of true null hypotheses. To prove \eqref{eq: thm 6 lln}, for $j \in [p]$, let $\bar t_j:=\sqrt n(\hat\theta_j - \theta_{0 j})/(\En[\hat Z_{i j}^2])^{1/2}$, so that $\bar t_j \geq t_j $ for all $j\in\mathcal H_0$. We will show below that
\begin{equation}\label{eq: thm 6 lln - 2}
\sup_{0\leq t \leq G^{-1}(\alpha|\mathcal H|/p)}\left|\frac{\sum_{j\in\mathcal H_0}1\{\bar t_j \geq t\}}{p_0G(t)} - 1\right| = o_p(1),
\end{equation}
which implies \eqref{eq: thm 6 lln}. To prove \eqref{eq: thm 6 lln - 2}, for all $j \in [p]$, denote
$$
T_j := \frac{\sum_{i=1}^n Z_{i j}}{(\sum_{i=1}^n Z_{i j}^2)^{1/2}}\text{ and }\bar T_j := \frac{\frac{1}{\sqrt n}\sum_{i=1}^n Z_{i j}}{\left( \frac{1}{n}\sum_{i=1}^n Z_{i j}^2 - \left(\frac{1}{n}\sum_{i=1}^n Z_{i j}\right)^2 \right)^{1/2}},
$$
so that
$$
\bar T_j = \frac{T_j}{\sqrt{1 - T_j^2 / n}}.
$$
Equation (13) on page 2016 of \cite{LS14} shows that for any sequence of positive constants $(d_n)_{n\geq 1}$ such that $d_n \to \infty$ and $d_n = o(p)$ as $n\to\infty$,
\begin{equation}
\sup_{0\leq t\leq G_{\kappa}^{-1}(d_n/p)}\left|\frac{\sum_{j\in\mathcal H_0}1\{\bar T_j \geq t\}}{p_0 G_{\kappa}(t)} - 1\right| = o_p(1),
\end{equation}
where $G_{\kappa}$ is some function such that $G_{\kappa}(t)\geq G(t)$ for all $t\in\mathbb R$ and, given that $\log^3 p / n = o(1)$, $G_\kappa(t) = G(t)(1 + o(1))$ uniformly over $0\leq t\leq \sqrt{2\log p}$ (in fact, \cite{LS14} studied the case with absolute values, $|\bar T_j|$, but their argument works for $\bar T_j$ instead of $|\bar T_j|$ as well). Therefore, since we have $G^{-1}(\alpha|\mathcal H|/(2p)) \leq \sqrt{2\log p}$ for all $n$ large enough, which can be established by the same arguments as those leading to \eqref{eq: G bound thm 6},
\begin{equation}\label{eq: thm 6 Liu Shao bound}
\sup_{0\leq t\leq G^{-1}(\alpha|\mathcal H|/(2 p))}\left|\frac{\sum_{j\in\mathcal H_0}1\{\bar T_j \geq t\}}{p_0 G(t)} - 1\right| = o_p(1).
\end{equation}
Next, using the inequality $1 - \Phi(x) \leq \exp(-x^2/2)$ with $x = (4\log p)^{1/2}$ gives $\Phi^{-1}(1 - 1/p^2) \leq (4\log p)^{1/2}$. Therefore, using Theorem \ref{thm: moderate deviations for mam}, \eqref{eq: thm 4 part 2}, with $\alpha = 1/p = o(1)$ shows that
$$
\max_{j\in [p]}|\bar t_j| = O_p(\sqrt{\log p})\text{ and }\max_{1\leq j\leq p}|T_j| = O_p(\sqrt{\log p}).
$$
Thus,
\begin{align*}
\Big|\bar t_j - T_j\Big|
& = \left| \frac{n(\hat \theta_j - \theta_{0 j})}{(\sum_{i=1}^n \hat Z_{i j}^2)^{12}} - \frac{\sum_{i=1}^n Z_{i j}}{(\sum_{i=1}^n Z_{i j}^2)^{1/2}} \right|\\
&\leq |\bar t_j| \times \left| 1 - \frac{(\sum_{i=1}^n \hat Z_{i j}^2)^{1/2}}{(\sum_{i=1}^n Z_{i j}^2)^{1/2}} \right| + \frac{|\sqrt n r_{n j}|}{(\sum_{i=1}^n Z_{i j}^2)^{1/2}} = o_p(1/\sqrt{\log p})
\end{align*}
uniformly over $j \in [p]$ by Steps 1 and 2 in the proof of Theorem \ref{thm: moderate deviations for mam} and Condition A. Also,
\begin{align*}
|\bar T_j  - T_j|
& = |T_j|\times \left| \frac{1}{\sqrt{1 - T_j^2/n}} - 1 \right| \leq |T_j|\times \left| \frac{1}{1 - T_j^2/n} - 1 \right| = \frac{|T_j|^3/n}{1 - T_j^2/n} = o_p(1/\sqrt{\log p})
\end{align*}
uniformly over $j \in [p]$ since $\log^4 p / n = o(1)$. Hence, there exists a sequence of positive constants $(\gamma_n)_{n\geq 1}$ such that wp $\to 1$,
$$
\max_{j\in[p]}|\bar t_j - \bar T_j| \leq \gamma_n/\sqrt{\log p}.
$$
Further, by Step 4 and \eqref{eq: G bound thm 6}, for all $0 \leq t \leq G^{-1}(\alpha|\mathcal H|/p)$ and all $n$ large enough,
$$
G(t + \gamma_n/\sqrt{\log p}) = G(t)(1 + o(1)) \geq \alpha|\mathcal H|/(2 p),
$$
and so
$$
t + \gamma_n/\sqrt{\log p} \leq G^{-1}(\alpha|\mathcal H|/(2 p)).
$$
Hence, uniformly over $0\leq t\leq G^{-1}(\alpha |\mathcal H| / p)$, wp $\to 1$,
$$
\frac{\sum_{j\in\mathcal H_0}1\{\bar t_j \geq t\}}{p_0 G(t)} \geq \frac{\sum_{j\in\mathcal H_0}1\{ \bar T_j \geq t + \gamma_n/\sqrt{\log p}\}}{p_0 G(t)} = \frac{\sum_{j\in\mathcal H_0}1\{ \bar T_j \geq t + \gamma_n/\sqrt{\log p}\}}{p_0 G(t + \gamma_n/\sqrt{\log p})(1 + o(1))}
$$
and
$$
\frac{\sum_{j\in\mathcal H_0}1\{\bar t_j \geq t\}}{p_0 G(t)} \leq \frac{\sum_{j\in\mathcal H_0}1\{ \bar T_j \geq t - \gamma_n/\sqrt{\log p}\}}{p_0 G(t)} = \frac{\sum_{j\in\mathcal H_0}1\{ \bar T_j \geq t - \gamma_n/\sqrt{\log p}\}}{p_0 G(t - \gamma_n/\sqrt{\log p})(1 + o(1))}.
$$
Therefore, wp $\to 1$,
$$
\sup_{0\leq t \leq G^{-1}(\alpha|\mathcal H|/p)}\left|\frac{\sum_{j\in\mathcal H_0}1\{\bar t_j \geq t\}}{p_0G(t)} - 1\right| \leq \sup_{0\leq t \leq G^{-1}(\alpha|\mathcal H|/(2p))}2\times\left|\frac{\sum_{j\in\mathcal H_0}1\{\bar T_j \geq t\}}{p_0G(t)} - 1\right|.
$$
Combining this bound with \eqref{eq: thm 6 Liu Shao bound} gives \eqref{eq: thm 6 lln - 2} and so \eqref{eq: thm 6 lln}.

{\bf Step 6.} Here we complete the proof. By \eqref{eq: thm 6 step 3 - 2} in Step 3 and Step 5,
$$
\frac{\sum_{j\in\mathcal H_0}1\{t_j \geq \hat t\}}{p_0G(\hat t)} \leq 1 + o_p(1),
$$
and so
\begin{equation}\label{eq: thm 6 step 5}
\sum_{j\in\mathcal H_0} 1\{t_j \geq \hat t\} \leq p_0 G(\hat t)(1 + o_p(1)).
\end{equation}
Therefore,
$$
FDP = \frac{\sum_{j\in\mathcal H_0}1\{t_j \geq \hat t\}}{\max\{ \sum_{j=1}^p 1\{t_j \geq \hat t\} , 1\}} \leq \frac{p_0G(\hat t)(1 + o_p(1))}{p G(\hat t)/\alpha} = \frac{\alpha p_0}{p} + o_p(1),
$$
where the first equality follows from Step 2 and the second from \eqref{eq: thm 6 step 5} above and \eqref{eq: thm 6 step 3 - 1} in Step 3. Finally, since FDP is bounded between 0 and 1, it follows that
$$
FDR = \Ep[FDP] \leq \alpha p_0/p + o(1) \leq \alpha + o(1).
$$
The asserted claim follows.
\qed

\section{Proofs for Section \ref{sec: many moments}}

\noindent
{\bf Proof of Lemma \ref{lem: sparse singular values}.}
The proof follows immediately from Lemma \ref{thm: bchn theorem} in Appendix \ref{app: technical results}.
\qed

\noindent
{\bf Proof of Lemma \ref{lemma:EMC}.} By the triangle inequality,
$$
 \sup_{\theta \in \mathcal R(\theta_0)} \|\hat g(\theta)-g(\theta)\|_\infty \leq \sup_{\theta \in \mathcal R(\theta_0)} \|\hat g(\theta)-g(\theta)-\hat g(\theta_0)\|_\infty + \|\hat g(\theta_0)\|_\infty,
$$
where the second term on the right-hand side satisfies
\begin{equation}\label{B11}  \|\hat g(\theta_0)\|_\infty = \|n^{-1/2}\Gn(g(X,\theta_0))\|_\infty\leq n^{-1/2}\ell_n\end{equation}
with probability $1-\delta_n/6$ by Condition ENM. Thus, it remains to bound
$$
\sup_{\theta\in\mathcal R(\theta_0)}\|\hat g(\theta) - g(\theta) - \hat g(\theta_0)\|_{\infty}.
$$
To do so, we apply Lemma \ref{lem:MaxIneqContraction} in Appendix \ref{app: technical results}.
Denote
$$
h_j(X,t) = \tilde g_j(X,Z_{u(j)}(X)'\vartheta_{0 u(j)} + t) - \tilde g_j(X,Z_{u(j)}(X)'\vartheta_{0 u(j)}),\quad j\in[m],
$$
$$
\eta = \theta - \theta_0 = ((\vartheta_1 - \vartheta_{0 1})',\dots,(\vartheta_{\bar u} - \vartheta_{0 \bar u})')',
$$
and $\Delta = \{\theta - \theta_0\colon \theta\in\mathcal R(\theta_0)\}$. Then $\|\Delta\|_1 = \sup_{\eta\in\Delta} \| \eta \|_{1} \leq 2K$ by Condition DM and
$$
\sup_{\theta\in\mathcal R(\theta_0)}\|\hat g(\theta) - g(\theta) - \hat g(\theta_0)\|_{\infty} = n^{-1/2} \max_{j \in [m]} \sup_{\eta \in \Delta} |\Gn(h_j(X,Z_{u(j)}(X)'(\vartheta_{u(j)} - \vartheta_{0 u(j)}))) |.
$$
Also, by Condition ENM, the functions $h_j(X,t)$ have the contractive structure as required in Lemma \ref{lem:MaxIneqContraction} and
$$
\sup_{\eta\in\Delta, j\in[m]}{\rm Var}(h_j(X,Z_{u(j)}(X)'(\vartheta_{u(j)} - \vartheta_{0 u(j)}))) \leq B_{1 n}^2.
$$
Finally, with probability at least $1 - \delta_n/6$,
$$
\max_{j\in[m],k\in[p_z]} \En[L_j^2(X)Z_{\uu(j)k}^2(X)] \leq B_{2 n}^2,
$$
again by Condition ENM. Therefore, applying Lemma \ref{lem:MaxIneqContraction} shows that
$$
\Pr\left(\max_{j \in [m]} \sup_{\eta \in \Delta} |\Gn(h_j(X,Z_{u(j)}(X)'(\vartheta_{u(j)} - \vartheta_{0 u(j)}))) | > t\right) \leq 5\delta_n/6
$$
for all $t \geq \{4B_{1 n}\} \vee \{16\sqrt{2} B_{2 n} K \log^{1/2}(48p m/\delta_n)\} =\tilde\ell_n$. Combining this bound with \eqref{B11} gives the asserted claim.
\qed


\noindent
{\bf Proof of Lemma \ref{lem:HL-OmegaG}.}
We will verify conditions L and ELM to invoke Theorem \ref{thm:BoundLinearRGMM} since  Conditions LID$(\gamma_{0j},\Omega)$ and DM ($\|\gamma_{0j}\|_1\leq K$) are assumed. By definition of the estimator $\hat \gamma_j$ in (\ref{def:hatgamma}), it follows that $\hat\gamma_j$ is an RGMM estimator with a linear score function since $$g(\gamma_j) = \gamma_j \Omega + (G')_j \ \ \ \mbox{and} \ \ \ \hat g(\gamma_j) = \gamma_j \hat \Omega + (\hat G')_j.$$ In this case we have that ELM holds by considering $\hat G := \hat \Omega$, $\hat g(0) := (\hat G')_j$, and  $\ell_n=\ell_n^\Omega \vee \ell_n^G$ by assumption.

Next we verify that the choice of penalty satisfies Condition L for some $\alpha = \delta_n$. Note that with probability $1-\delta_n$
$$ \begin{array}{rl}
\|\gamma_{0j}\hat\Omega - (\hat G')_j \|_\infty & \leq \|\gamma_{0j}\hat\Omega - ( G')_j \|_\infty + n^{-1/2} \ell_n^G\\
& \leq \|\gamma_{0j}\Omega - ( G')_j \|_\infty + \|\gamma_{0j}\|_1\|\hat\Omega - \Omega\|_\infty + n^{-1/2} \ell_n^G\\
& \leq 0 + K n^{-1/2}\ell_n^\Omega + n^{-1/2}\ell_n^G
\end{array}
 $$ so that $\gamma_{0j}$ is feasible for all $j\in[p]$ if $\lambda_j^\gamma \geq K n^{-1/2} \ell_n^\Omega + n^{-1/2}\ell_n^G$. Thus the result follows from Theorem \ref{thm:BoundLinearRGMM}.

For the second result, we will verify conditions L, DM, LID$(\mu_{0j},G'\Omega^{-1}G)$ and ELM to invoke Theorem \ref{thm:BoundLinearRGMM}. Conditions DM and LID$(\mu_{0j},G'\Omega^{-1}G)$ are assumed. By the definition of the estimator $\hat \mu_j$, given in (\ref{def:hatmu}), we have that $\hat \mu_j$ is a RGMM estimator  associated with a  linear score as we can write $$g(\mu_j) = \mu_j \gamma_{0}G - e_j' \ \ \mbox{and} \ \ \hat g(\mu_j) = \mu_j \hat \gamma \hat G -e_j'.$$

To verify Condition ELM, note that $g(0)=\hat g(0)=-e_j$ and by assumption $\|\mu_{0j}\|_1 \leq K$ so we need to bound $\|\hat\gamma\hat G - \gamma_0 G\|_\infty$. Using that $\gamma_0 G = G'\gamma_0'$ since $\gamma_0 = G'\Omega^{-1}$, we have with probability $1-2\delta_n$ that
$$\begin{array}{rl}
 \|\hat \gamma \hat G - \gamma_0 G \|_\infty & \leq \|\hat \gamma (\hat G - G)\|_\infty + \|\hat \gamma G - G'\gamma_0'  \|_\infty \\
& \leq \max_{j\in[p]}\|\hat \gamma_j\|_1 \|\hat G - G\|_\infty  + \|\hat \gamma \Omega \Omega^{-1} G - G'\Omega^{-1} G \|_\infty \\
& \leq \max_{j\in[p]}\| \gamma_{0j}\|_1 \|\hat G - G\|_\infty  + \|\hat \gamma (\Omega - \hat\Omega) \gamma_0'\|_\infty +  \|\hat \gamma \hat\Omega \gamma_0' - G'\gamma_0' \|_\infty \\
& \leq K \|\hat G - G\|_\infty  + \|\hat \gamma (\Omega - \hat\Omega)\|_\infty \max_{j\in[p]}\|\gamma_{0j}\|_1  +  \|\hat \gamma \hat\Omega \gamma_0' - G'\gamma_0' \|_\infty \\
& \leq K \|\hat G - G\|_\infty  + K^2 \|\Omega - \hat\Omega\|_\infty   + \| (\hat G' - G')\gamma_0' \|_\infty + \|\hat \gamma \hat\Omega \gamma_0' - \hat G'\gamma_0' \|_\infty \\
& \leq K \|\hat G - G\|_\infty  + K^2 \|\Omega - \hat\Omega\|_\infty   + K \| \hat G' - G'\|_\infty + K\|\hat \gamma \hat\Omega  - \hat G' \|_\infty \\
& \leq K n^{-1/2}\ell_n^G  + K^2 n^{-1/2}\ell_n^\Omega + K n^{-1/2}\ell_n^G + K\max_{j\in[p]}\lambda_j^\gamma =:r^* \\
\end{array}$$
where we used that $\|\hat \gamma_j\|_1 \leq \|\gamma_{0j}\|_1$ for all $j\in[p]$ with probability $1-\delta_n$ and that \{$\|\hat G - G\|_\infty \leq n^{-1/2}\ell_n^G$ and $\|\hat \Omega - \Omega\|_\infty \leq n^{-1/2} \ell_n^\Omega$\} occurs with probability $1-\delta_n$. Thus Condition ELM holds with $\ell_n = n^{1/2}r^*$.

Next we verify Condition L. We have that with probability at least $1-2\delta_n$
$$\| \mu_0 \hat \gamma \hat G - I \|_\infty \leq  \| \mu_0  \gamma_0  G - I \|_\infty + \|\mu_0 (\hat\gamma \hat G - \gamma_0 G)\|_\infty \leq 0 + \max_{j\in[p]}\|\mu_{0j}\|_1\|\hat\gamma \hat G - \gamma_0 G\|_\infty\leq K r^*.$$
Therefore, if $\lambda_j^\mu \geq K r^*$ we have that $(\mu_{0j})_{j\in[p]}$ is feasible for the optimization problem (\ref{def:hatmu}) with probability at least $1-2\delta_n$ (i.e., Condition L holds with $\alpha=2\delta_n$).  Then the result follows by Theorem \ref{thm:BoundLinearRGMM}.\qed

\noindent
{\bf Proof of Lemma \ref{lem:PrimitiveGandOmegaLinear}.}
Let $M_n = \Ep[\max_{i\in[n]}\|G(X_i,\theta_0)\|_\infty^2] \vee \Ep[\max_{i\in[n]}\|g(X_i,\theta_0)\|_\infty^4]$.

To bound $\|\hat G - G\|_\infty$ we apply Lemma \ref{lem:m2bound}(4) with $t=\delta_n^{-1}$, $\bar q = 2$. Indeed, under the assumed condition $n^{-1/2}M_n\{\delta_n^{-1}+\log(2m)\}\leq c$, with probability $1-2\delta_n$ we have
$$ \max_{j\in[m],k\in[p]} |\Bbb{G}_n G_{jk}(X)| \leq C \sqrt{\sigma^2 \log(2m)} \ \ \ \mbox{and} \ \  \max_{k\in[m]} |\Bbb{G}_n g_{k}(X,0)| \leq C \sqrt{\sigma^2 \log(2m)}. $$

Next we establish the bound on $\|\hat\Omega - \Omega\|_\infty$. Using the triangle inequality we have
\begin{equation}\label{eq:0011} \begin{array}{rl}
n^{1/2}\|\hat \Omega - \Omega\|_\infty  &= \displaystyle \max_{j\in[m],k\in[m]} n^{1/2}| \En[g_j(X,\hat\theta)g_k(X,\hat\theta)] - \Ep[g_j(X,\theta_0)g_k(X,\theta_0)]| \\
& \leq  \displaystyle \max_{j\in[m],k\in[m]} n^{1/2}|   \En[ \{g_j(X,\hat\theta)-g_j(X,\theta_0)\}\{g_k(X,\hat\theta)-g_k(X,\theta_0)\}]| \\
& \qquad + \displaystyle 2\max_{j\in[m],k\in[m]} n^{1/2}| \En[ g_j(X,\theta_0)\{g_k(X,\hat\theta)-g_k(X,\theta_0)\}]|\\
 & \qquad + \displaystyle \max_{j\in[m],k\in[m]} | \Gn(g_j(X,\theta_0)g_k(X,\theta_0))|
 \end{array} \end{equation}

To bound the last term of the right-hand-side (RHS) in (\ref{eq:0011}) we apply Lemma \ref{lem:m2bound}(4) with $q=2$ and $t=\delta_n$, so that with probability $1-\delta_n$
$$\begin{array}{rl}
 \max_{k,j\in[m]}|\Gn(g_k(X,\theta_0)g_j(X,\theta_0))| & \leq C\max_{k\in[m]}\Ep[g_k^4(X,\theta_0)]^{1/2} \sqrt{\log(2m)} \\
 & \quad + Cn^{-1/2}\Ep[\max_{i\in[n]}\|g(X,\theta_0)\|_\infty^4]\{ \delta_n^{-1} + \log(m) \}\\
 & \leq C' \sqrt{\log(2m)} \end{array}
 $$
under the growth condition (iv).

To bound the first term of the RHS in (\ref{eq:0011}) we note that by linearity of the score, $g(X,\hat\theta)-g(X,\theta_0)= G(X)'(\hat\theta-\theta_0)$, and applying the Cauchy-Schwarz inequality  we have
$$ \begin{array}{rl} n^{1/2}\En[ \{g_j(X,\hat\theta)-g_j(X,\theta_0)\}\{g_k(X,\hat\theta)-g_k(X,\theta_0)\}]| &{\displaystyle \leq n^{1/2}\max_{k\in[m]}} \En[ \{G_k(X)(\hat\theta-\theta_0)\}^2] \\
& \leq n^{1/2}\Delta_{2n}^2\end{array}$$
where the last inequality holds with probability $1-\delta_n$ by condition (iii).

To bound the remaining term in the RHS in (\ref{eq:0011}), again using linearity, the Cauchy-Schwarz inequality, and condition (iii),  with probability $1-\delta_n$ $$ \begin{array}{rl}
 &|\En[ g_j(X,\theta_0)\{g_k(X,\hat\theta)-g_k(X,\theta_0)\}]| = |\En[g_j(X,\theta_0)G_k(X)](\hat\theta-\theta_0)| \\
 & \qquad \qquad \leq \max_{j\in[m]}\{\En[g_j^2(X,\theta_0)]\}^{1/2}\max_{k\in[m]} \En[ \{G_k(X)(\hat\theta-\theta_0)\}^2]^{1/2}\\
 & \qquad \qquad \leq \max_{j\in[m]}\{\En[g_j^2(X,\theta_0)]\}^{1/2} \Delta_{2n}\end{array}$$

To bound $\max_{j\in[m]}\{\En[g_j^2(X,\theta_0)]\}^{1/2}$, we will apply Lemma \ref{lem:m2bound}(3). We have
$$ \begin{array}{rl} \Ep[\max_{j\in[m]} |\En[g_j^2(X,\theta_0)] - \Ep[g_j^2(X,\theta_0)]|] &\leq n^{-1} \Ep[\max_{i\in[n]}\|g(X,\theta_0)\|_\infty^2] \log (2m) \\
& \quad + n^{-1/2}\Ep[\max_{i\in[n]}\|g(X,\theta_0)\|_\infty^2]^{1/2}\log^{1/2}(2m)\\
 & \leq C\delta_n \end{array} $$
under $n^{-1/2}\Ep[\max_{i\in[n]}\|g(X,\theta_0)\|_\infty^2]^{1/2}\leq \delta_n\log^{-1/2}(2m)$ assumed in condition (iv). Thus, by Markov's inequality we have that  with probability $1-\delta_n$
$$   \max_{j\in[m]} \En[g_j^2(X,\theta_0)] \leq \max_{j\in[m]}\Ep[g_j^2(X,\theta_0)] + C. $$
\qed

\noindent
{\bf Proof of Lemma \ref{lem:PrimitiveGandOmega}.}
Recall that $\partial_j g_k(X,\theta)=G(X,\theta)$.

We first bound $\|\hat G - G\|_\infty$.  We have that
\begin{equation}\label{eq:000} \begin{array}{rl}
\displaystyle  \max_{j\in[p],k\in[m]} n^{1/2}|\hat G_{kj} - G_{kj}|
& \displaystyle  =  \max_{j\in[p],k\in[m]} n^{1/2}|\En[\partial_{j}g_k(X,\hat\theta)]-\Ep[\partial_{j}g_k(X,\theta_0)]| \\
& \displaystyle \leq    \max_{j\in[p],k\in[m]} |\Gn(\partial_{j}g_k(X,\hat\theta)-\partial_{j}g_k(X,\theta_0))| \\
& \qquad + \max_{j\in[p],k\in[m]}|\Gn(\partial_{j}g_k(X,\theta_0))| \\
& \qquad + \max_{j\in[p],k\in[m]}n^{1/2}|\Ep[\partial_{j}g_k(X,\hat\theta)-\partial_{j}g_k(X,\theta_0)]|\\
\end{array}
\end{equation}

To control the second term in the RHS of (\ref{eq:000}) we apply Lemma \ref{lem:m2bound}(4) with $q=2$ and $t=\delta_n$, so that with probability $1-\delta_n$
$$\max_{j\in[p],k\in[m]}|\Gn(\partial_{j}g_k(X,\theta_0))| = \max_{j\in[p],k\in[m]}|\Gn(G_{kj}(X,\theta_0))| \leq C\sqrt{\log(2m)} $$
under condition (v).

To bound the last term in the RHS of (\ref{eq:000}),  we have
$$\begin{array}{rl}
\max_{j\in[p],k\in[m]} & n^{1/2}|\Ep[\partial_{j}g_k(X,\hat\theta)-\partial_{j}g_k(X,\theta_0)]|\\
& \leq_{(1)} \max_{j\in[p],k\in[m]}n^{1/2}\Ep[|\tilde L_{kj}(X)\tilde Z_{jk}(X)'(\hat\theta-\theta_0)|]\\
& \leq_{(2)} \max_{j\in[p],k\in[m]}n^{1/2}\Ep[|\tilde L_{kj}(X)\tilde Z_{jk}(X)'(\hat\theta-\theta_0)|^2]^{1/2}\\
& \leq_{(3)} C n^{1/2}\|\hat\theta-\theta_0\|_2 \\
& \leq_{(4)} Cn^{1/2} \Delta_{2n}
\end{array}$$
where (1) holds by the ENM condition $|G_{kj}(X,\tilde \theta)-G_{kj}(X,\theta)|\leq L_{kj}(X)|\tilde Z_{kj}'(\tilde \theta-\theta)|$, (2) holds by monotonicity of $L_q$ norms,  (3) holds by condition (i), and (4) holds with probability $1-\delta_n$  under condition (iii).

Finally, to control the first term in the RHS of (\ref{eq:000}),  we have by Lemma \ref{lem:MaxIneqContraction} in Appendix \ref{app: technical results} with $B_{1n} = B_n\Delta_{1n} $ and $B_{2n} = B_n$ so that
$$ \max_{j\in[p],k\in[m]} |\Gn(\partial_{j}g_k(X,\hat\theta)-\partial_{j}g_k(X,\theta_0))| \leq CB_n\Delta_{1n} \log^{1/2}(m/\delta_n)$$
with probability $1-7\delta_n$ where we used that $\|\hat\theta-\theta_0\|_1 \leq \Delta_{1n}$ with probability at least $1-\delta_n$ by condition (iii).

To bound $\|\hat G - \tilde G\|_\infty$ we note that
$$\|\hat G - \tilde G\|_\infty \leq \|\hat G -  G\|_\infty + \|\tilde G -  G\|_\infty $$
and that Lemma \ref{lem:MaxIneqContraction} in Appendix \ref{app: technical results} allows for $\bar m = m$ and different $\tilde \theta_j^*$, $j\in[m]$. Since for every $j\in[m]$ we have $\|\tilde \theta_j^* - \theta_0 \|_1 \leq \|\hat \theta - \theta_0 \|_1$ and $\|\tilde \theta_j^* - \theta_0 \|_2 \leq \|\hat \theta - \theta_0 \|_2$, the bound we derived on $\|\hat G - G\|_\infty$ applies to  $\|\tilde G -  G\|_\infty$ as well.

Next we establish the bound on $\|\hat\Omega - \Omega\|_\infty$. By the triangle inequality, we have
\begin{equation}\label{eq:001} \begin{array}{rl}
n^{1/2}\|\hat \Omega - \Omega\|_\infty & = \displaystyle \max_{j\in[m],k\in[m]} n^{1/2}| \En[g_j(X,\hat\theta)g_k(X,\hat\theta)] - \Ep[g_j(X,\theta_0)g_k(X,\theta_0)]| \\
& \leq \displaystyle \max_{j\in[m],k\in[m]}| \Gn(g_j(X,\hat\theta)g_k(X,\hat\theta)
 - g_j(X,\theta_0)g_k(X,\theta_0))| \\
&\qquad +\displaystyle \max_{j\in[m],k\in[m]}| \Gn(g_j(X,\theta_0)g_k(X,\theta_0))| \\
&\qquad + \displaystyle \max_{j\in[m],k\in[m]} n^{1/2}|\Ep[g_j(X,\hat\theta)g_k(X,\hat\theta)
 - g_j(X,\theta_0)g_k(X,\theta_0)]| \\
 \end{array} \end{equation}

To control the first term of the RHS in (\ref{eq:001}), further apply the triangle inequality to obtain
$$
\begin{array}{rl}
&\max_{j\in[m],k\in[m]} | \Gn(g_j(X,\hat\theta)g_k(X,\hat\theta)
 - g_j(X,\theta_0)g_k(X,\theta_0))| \\
 & = \max_{j\in[m],k\in[m]}| \Gn( \{g_j(X,\hat\theta)-g_j(X,\theta_0)\}g_k(X,\hat\theta)
 - g_j(X,\theta_0)\{g_k(X,\theta_0)-g_k(X,\hat\theta)\})|\\
& \leq \max_{j\in[m],k\in[m]}| \Gn( \{g_j(X,\hat\theta)-g_j(X,\theta_0)\}\{g_k(X,\hat\theta)-g_k(X,\theta_0)\})|\\
& \qquad + 2\max_{j\in[m],k\in[m]}| \Gn( \{g_j(X,\hat\theta)-g_j(X,\theta_0)\}g_k(X,\theta_0))|\\
& = (I) + 2(II)\end{array}
$$
Therefore,  we obtain with probability $1-C\delta_n$
$$\begin{array}{rl}
(I) & \leq_{(1)} \max_{j\in[m],k\in[m]}  n^{1/2}\En[\{g_j(X,\hat\theta)-g_j(X,\theta_0)\}^2]^{1/2}\En[\{g_k(X,\hat\theta)-g_k(X,\theta_0)\}^2]^{1/2} \\
& \qquad  + \max_{j\in[m],k\in[m]} n^{1/2}\Ep[\{g_j(X,\hat\theta)-g_j(X,\theta_0)\}^2]^{1/2}\Ep[\{g_k(X,\hat\theta)-g_k(X,\theta_0)\}^2]^{1/2} \\
&\leq_{(2)} \max_{j\in[m],k\in[m]}  n^{1/2}\En[\{L_j(X)Z_j'(\hat\theta-\theta_0)\}^2]^{1/2}\En[\{L_k(X)Z_k'(\hat\theta-\theta_0)\}^2]^{1/2} \\
& \qquad  + \max_{j\in[m],k\in[m]} n^{1/2}\Ep[\{L_j(X)Z_j'(\hat\theta-\theta_0)\}^2]^{1/2}\Ep[\{L_k(X)Z_k'(\hat\theta-\theta_0)\}^2]^{1/2} \\
&\leq_{(3)} \max_{j\in[m]}  n^{1/2}\{\|\En[L_j^2(X)Z_jZ_j']\|_\infty+\|\Ep[L_j^2(X)Z_jZ_j']\|_\infty\}\|\hat\theta-\theta_0\|_1^2 \\
& \leq_{(4)} 2n^{1/2}B_n^2\Delta_{1n}^2\\
\end{array}
$$ where (1) follows from triangle inequality and Cauchy-Schwarz inequality, (2) follows from the ENM condition $|g_j(X,\tilde\theta)-g_j(X,\theta)|\leq L_j(X)|Z_j'(\tilde\theta-\theta)|$, (3) follows from $|v'Mv|\leq \|v\|_1\|M\|_\infty$, and (4) follows from $\|\hat\theta-\theta_0\|_1 \leq \Delta_{1n}$ with probability $1-\delta_n$ by condition (iii), $\|\Ep[L_j^2(X)Z_jZ_j']\|_\infty\leq \max_{k\in [p]}\Ep[L_j^2(X)Z_{jl}^2]\leq B_n^2$ by condition (i) and $\|\En[L_j^2(X)Z_jZ_j']\|_\infty\leq \max_{k\in [p]}\En[L_j^2(X)Z_{jl}^2]\leq B_n^2$ by condition (ii) with probability $1-\delta_n$.\footnote{An alternative bound can be obtained  by $\{{\rm maxeig}(\En[ L_j^2(X)ZZ']) \vee {\rm maxeig}(\Ep[ L_j^2(X)ZZ']) \} n^{1/2}\|\hat \theta - \theta_0\|_2^2$.}

Next we will apply the contraction principle in Lemma \ref{lem:MaxIneqContraction} in Appendix \ref{app: technical results} to control (II) with $\tilde L_{jk}(X) : = |L_j(X)g_k(X,\theta_0)|$. By condition (ii) and because for any $\theta$ such that $\|\theta-\theta_0\|_\ell\leq \Delta_{\ell n}$, $\ell \in\{\ell_1,\ell_2\}$,
$$\begin{array}{rl}
{\rm Var}(\Gn( \{g_j(X, \theta)-g_j(X,\theta_0)\}g_k(X,\theta_0))) & \leq {\rm Var}(\Gn( \{L_j(X)Z_j'( \theta-\theta_0)\}g_k(X,\theta_0)))\\
& \leq C\| \theta-\theta_0\|^2 \wedge B_n^2\|\theta-\theta_0\|_1^2 \\
& \leq C\Delta_{2n}^2 \wedge B_n^2 \Delta_{1n}^2
\end{array}$$
by condition (i), we have that the condition of Lemma \ref{lem:MaxIneqContraction} hold.
Therefore, by Lemma \ref{lem:MaxIneqContraction} we have with probability $1-7\delta_n$
$$ (II) \leq C' \{ B_n\Delta_{1n} \vee  B_n\Delta_{1n} \log^{1/2}(m  p/\delta_n)\}$$
where we used that $\|\hat\theta-\theta_0\|_1\leq \Delta_{1n}$ with probability $1-\delta_n$ by condition (iii).

To control the last term of the RHS in (\ref{eq:001}), we use that
$$ \begin{array}{rl}
&|\Ep[g_j(X,\hat\theta)g_k(X,\hat\theta)- g_j(X,\theta_0)g_k(X,\theta_0)]|  \\
&  \qquad \qquad \leq |\Ep[g_j(X,\hat\theta)\{g_k(X,\hat\theta)
 - g_k(X,\theta_0)\}]| + |\Ep[\{g_j(X,\hat\theta)-
  g_j(X,\theta_0)\}g_k(X,\theta_0)]|\\
& \qquad \qquad \leq B_n^2\|\hat \theta - \theta_0\|_1^2 + 2\max_{j,k\in[m]}|\Ep[\{g_j(X,\hat\theta)-
  g_j(X,\theta_0)\}g_k(X,\theta_0)]|\\
& \qquad \qquad \leq B_n^2\Delta_{1n}^2 + 2 C\Delta_{2n}.
\end{array}$$

To bound the second term of the RHS in (\ref{eq:001}) we use Lemma \ref{lem:m2bound}(4) so that with probability $1-\delta_n$
$$\begin{array}{rl}
 \max_{k,j\in[m]}|\Gn(g_k(X,\theta_0)g_j(X,\theta_0))| & \leq C\max_{k\in[m]}\Ep[g_k^4(X,\theta_0)]^{1/2} \sqrt{\log(2m)} \\
 & \quad + Cn^{-1/2}\Ep[\max_{i\in[n]}\|g(X_i,\theta_0)\|_\infty^4]\{ \delta_n^{-1} + \log(m) \}\\
 & \leq C' \sqrt{\log(2m)} \end{array}
 $$
under the growth condition (iv).
\qed

\noindent
{\bf Proof of Theorem \ref{thm:MainInference:Linear}.} We will establish that $\bar r_1  + \bar r_2 + \bar r_3 = o_P(\log^{-1/2}p)$. Throughout the proof we assume $s\geq 1$.

Because we are considering a special case, a few simplifications occur. Because this is a linear case, we can take $\tilde G = \hat G$ so that $\bar r_2 = 0$. Moreover, due to the homoskedastic setting, we note that $\mu_0\gamma_0$ is independent of $\sigma^2$ so we do not need to estimate $\sigma^2$ in the construction of $\hat\mu$ and $\hat\gamma$.

By Lemma \ref{lem:m2bound}(4) with probability $1-\delta_n$ we have
$$\begin{array}{rl}
\|\sqrt{n}\hat g(\theta_0)\|_\infty & = \max_{k\in[m]}|\Gn(g_k(X,\theta_0))| \\
 & \leq C\max_{k\in[m]}\Ep[g_k^2(X,\theta_0)]^{1/2} \sqrt{\log(2m)} \\
 & \qquad + Cn^{-1/2}\Ep[\max_{i\in[n]}\|g(X_i,\theta_0)\|_\infty^2]\{ \delta_n^{-1} + \log(2m) \}\\
 & \leq C' \sqrt{\log(2m)} \end{array}
 $$
under the growth condition $n^{-1/2}\Ep[\max_{i\in[n]}\|g(X,\theta_0)\|_\infty^2]\{ \delta_n^{-1} + \log(2m) \} \leq c\log^{1/2}(2m)$. Thus Condition L holds for Step 1 under the proposed choice of penalty parameter. Moreover, since the score is linear, we have that Condition ELM holds since $\hat G = -\En ZW'$ and $\hat g(0) = \En YW$; and we have by Lemma \ref{lem:PrimitiveGandOmegaLinear} 
that with probability $1-C\delta_n$
$$
\|\hat G - G\|_\infty   \leq C \sqrt{\log (2m)}, \ \ \mbox{and} \ \ \
\| \hat g(0) - g(0)\|_\infty   \leq C \sqrt{\log (2m)} $$
under Conditions (3) and (4).

Since Conditions DM ($\|\theta_0\|_1\leq K$) and LID$(\theta_0,G)$ hold by assumption, and by Theorem \ref{thm:BoundLinearRGMM} with $\ell_n = C' \sqrt{\log(2m)}$, we have for $q\in\{1,2\}$ that \begin{equation}\label{eq:RateHatTheta}\|\hat\theta-\theta_0\|_q \leq \Delta_{q n} := Cn^{-1/2} s^{1/q} \ell_n \{(1+L)\mu_n^{-1}+C\} \leq C'n^{-1/2}s^{1/q}\log^{1/2}(2m)\end{equation}
 where we used that $\mu_n \geq c$ and $L\leq C$.

Moreover by Lemma \ref{lem:PrimitiveGandOmegaLinear} we have that with probability $1-C\delta_n$
$$
\|\hat G - G\|_\infty   \leq C \sqrt{\log (2m)}, \ \ \mbox{and} \ \ \
\| \En ZZ' - \Ep ZZ'\|_\infty   \leq C \sqrt{\log (2m)} $$
under where we used that we do not need to estimate $\sigma$.

In order to apply  Lemma \ref{lem:HL-OmegaG}, recall the choices of penalty \begin{equation}\label{eq:lambdaFinal}\bar \lambda = \lambda_j^\gamma = \frac{1}{2}\lambda_j^\mu= n^{-1/2 }C(1\vee K^2)(1+K)\Phi^{-1}(1-(mpn)^{-1}),\end{equation}
so that we consider $\ell_n = n^{1/2}\bar \lambda \geq C(1\vee K^2)(1+K)\sqrt{\log(2m)}$ for some constant $C>0$ chosen sufficiently large so they satisfy the requirements on the penalty choices of Lemma \ref{lem:HL-OmegaG}. Therefore, Lemma \ref{lem:HL-OmegaG} yields with probability $1-C\delta_n$ that
$$ \max_{j\in[p]}\|\hat \gamma_j - \sigma^2\gamma_{0j}\|_1 \leq C\bar\lambda s\{1+\mu_n^{-1}\} \ \ \mbox{and} \ \ \|\hat\gamma_j\|_1 \leq \|\sigma^2\gamma_{0j}\|_1 \leq CK \ \ \mbox{for all} \ \ j\in[p]$$
$$ \max_{j\in[p]}\|\hat \mu_j - \sigma^{-2}\mu_{0j}\|_1 \leq C'\bar\lambda s\{1+\mu_n^{-1}\} \ \ \mbox{and} \ \ \|\hat\mu_j\|_1 \leq \|\sigma^{-2}\mu_{0j}\|_1 \leq CK \ \ \mbox{for all} \ \ j\in[p]$$

Now we are in position to bound $\bar r_1, \bar r_2$ and $\bar r_3$. By Lemma \ref{lemma:linearize} and using the definition of the estimators (\ref{def:hatmu}) we have
$$
\begin{array}{rl}
\bar r_1 & =  \sqrt{n} \| I - \hat \mu \hat \gamma \hat G\|_\infty \| \hat \theta- \theta_0\|_1 \leq \sqrt{n} \bar \lambda \Delta_{1n} \\
&  \leq C(1\vee K^2)(1+K)\Phi^{-1}(1-(pmn)^{-1})\Delta_{1n}\\
\bar r_2 & = 0 \\
\bar r_3 & \leq K C \bar \lambda s\{1+\mu_n^{-1}\} \|\sqrt{n}\hat g(\theta_0)\|_\infty\\
&  \leq K C(1\vee K^2)(1+K) n^{-1/2}\Phi^{-1}(1-(pmn)^{-1}) s \log^{1/2}(2m)\\
\end{array}
$$
Under $K \leq C$, $\mu_n \geq c$, and $n^{-1/2}s\log (2pmn) \leq u_n$, with probability $1-C\delta_n$ we have
$$ \bar r_1 + \bar r_2 + \bar r_3 \leq Cu_n. $$
\qed

\noindent
{\bf Proof of Theorem \ref{thm:MainInference:NonLinear}.} We will establish that $\bar r_1  + \bar r_2 + \bar r_3 = o_P(\log^{-1/2}p)$. Throughout the proof we assume $s\geq 1$.
%
%

By Lemma \ref{lem:m2bound}(4) with probability $1-\delta_n$ we have
$$\begin{array}{rl}
\|\sqrt{n}\hat g(\theta_0)\|_\infty & = \max_{k\in[m]}|\Gn(g_k(X,\theta_0))| \\
 & \leq C\max_{k\in[m]}\Ep[g_k^2(X,\theta_0)]^{1/2} \sqrt{\log(2m)} \\
 & \qquad + Cn^{-1/2}\Ep[\max_{i\in[n]}\|g(X,\theta_0)\|_\infty^2]\{ \delta_n^{-1} + \log(2m) \}\\
 & \leq C' \sqrt{\log(2m)} \end{array}
 $$
under the growth condition $n^{-1/2}\Ep[\max_{i\in[n]}\|g(X,\theta_0)\|_\infty^2]\{ \delta_n^{-1} + \log(2m) \} \leq c\log^{1/2}(2m)$.

By Theorem \ref{thm:NonLinear} with $\ell_n = C' \sqrt{\log(2m)}$, we have for $q\in\{1,2\}$ that \begin{equation}\label{eq:RateHatTheta}\|\hat\theta-\theta_0\|_q \leq \Delta_{q n} := Cn^{-1/2} s^{1/q} \ell_n \{(1+L)\mu_n^{-1}+C\} \leq C'n^{-1/2}s^{1/q}\log^{1/2}(2m)\end{equation}
 where we used that $\mu_n \geq c$ and $L\leq C$.

Moreover by Lemma \ref{lem:PrimitiveGandOmega} we have that
$$
\|\hat G - G\|_\infty   \leq C\Delta_{2n}, \ \ \ \|\hat G - \tilde G\|_\infty   \leq C\Delta_{2n} \ \ \ \mbox{and} \ \ \
\|\hat\Omega - \Omega\|_\infty   \leq C\Delta_{2n}
$$
under $B_n\Delta_{1n}\log^{1/2}(m/\delta_n) \leq Cn^{1/2}\Delta_{2n}$ (implied by $B_ns^{1/2}\log(mn)\leq Cn^{1/2}\log(2m)$), and $s\geq 1$.

In order to apply  Lemma \ref{lem:HL-OmegaG}, recall the choices of penalty \begin{equation}\label{eq:lambdaFinal}\bar \lambda = \lambda_j^\gamma = \frac{1}{2}\lambda_j^\mu= n^{-1/2+\bar a}\Phi^{-1}(1-(mpn)^{-1})\end{equation}
so that we consider $\ell_n = n^{1/2}\bar \lambda \geq C(1\vee K^2)(1+K)n^{1/2}\Delta_{2n}$ for some constant $C>0$ chosen sufficiently large so they satisfy the requirements on the penalty choices of Lemma \ref{lem:HL-OmegaG}. Therefore, Lemma \ref{lem:HL-OmegaG} yields with probability $1-C\delta_n$ that
$$ \max_{j\in[p]}\|\hat \gamma_j - \gamma_{0j}\|_1 \leq C\bar\lambda s\{1+\mu_n^{-1}\} \ \ \mbox{and} \ \ \|\hat\gamma_j\|_1 \leq \|\gamma_{0j}\|_1 \leq K \ \ \mbox{for all} \ \ j\in[p]$$
$$ \max_{j\in[p]}\|\hat \mu_j - \mu_{0j}\|_1 \leq C'\bar\lambda s\{1+\mu_n^{-1}\} \ \ \mbox{and} \ \ \|\hat\mu_j\|_1 \leq \|\mu_{0j}\|_1 \leq K \ \ \mbox{for all} \ \ j\in[p]$$

Now we are in position to bound $\bar r_1, \bar r_2$ and $\bar r_3$. By Lemma \ref{lemma:linearize} and using the definition of the estimators (\ref{def:hatmu}) we have
$$
\begin{array}{rl}
\bar r_1 & =  \sqrt{n} \| I - \hat \mu \hat \gamma \hat G\|_\infty \| \hat \theta- \theta_0\|_1 \leq \sqrt{n} \bar \lambda \Delta_{1n} \\
&  \leq n^{\bar a}\Phi^{-1}(1-(pmn)^{-1})\Delta_{1n}\\
\bar r_2 & = \sqrt{n} \max_{j\in[p]}\|\hat\mu_j\|_1 \max_{j\in[p]}\|\hat\gamma_j\|_1 \|\hat G - \tilde G\|_\infty \|\hat\theta-\theta_0\|_1 \\
& \leq CK^2 n^{1/2}\Delta_{2n}\Delta_{1n}\\
\bar r_3 & \leq K C \bar \lambda s\{1+\mu_n^{-1}\} \|\sqrt{n}\hat g(\theta_0)\|_\infty\\
&  \leq K C \bar n^{-1/2+\bar a}\Phi^{-1}(1-(pmn)^{-1}) s \log^{1/2}(2m)\\
\end{array}
$$
Under $K + \mu_n^{-1}\leq C$, and $n^{-1/2+\bar a}s\log (2pmn) \leq u_n$, with probability $1-C\delta_n$ we have
$$ \bar r_1 + \bar r_2 + \bar r_3 \leq Cu_n. $$
\qed

\begin{lemma}\label{lem:m2bound}
Let $X_i, i=1,\ldots,n,$ be independent random vectors in $\mathbb{R}^p$, $p\geq 3$. Define $\bar m_k^k := \max_{j\in [p]}\frac{1}{n}\sum_{i=1}^n\mathbb{E}[|X_{ij}|^k]$ and $M_{k}^k \geq \mathbb{E}[ {\displaystyle \max_{i\leq n}}\|X_i\|_\infty^k]$. Then we have the following bounds:
\begin{eqnarray*}
&& (1) \ \mathbb{E}\left[\max_{j\in [p]}\frac{1}{n}\sum_{i=1}^n|X_{ij}|\right] \leq CM_{1} n^{-1}\log p+ C\bar m_1 \\
&& (2) \ \mathbb{E}\left[\max_{j\in [p]}\frac{1}{n}\left|\sum_{i=1}^n X_{ij}-\mathbb{E}[X_{ij}]\right|\right] \leq C  \bar m_2\sqrt{n^{-1}\log p} + CM_2 n^{-1}\log p\\
&& (3) \ \mathbb{E}\left[\max_{j\in [p]}\frac{1}{n}\left|\sum_{i=1}^n|X_{ij}|^k-\mathbb{E}[|X_{ij}|^k]\right|\right] \leq \frac{C M_{k}^k\log p}{n}+C\sqrt{\frac{M_{k}^k\bar m_k^k \log p}{n}}\\
\end{eqnarray*}
for some universal constant $C$. Moreover, for $\bar q \geq 2$, we have with probability $1-t^{-\bar q/2}$
$$ (4) \ \max_{j\in[p]} |\Gn(X_j)| \leq C \bar m_2\sqrt{\log p} +  n^{-1/2}C_{\bar q}\{ M_{\bar q} t^{1/2} + M_2 (t+\log p)\}.
$$
\end{lemma}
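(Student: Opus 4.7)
The four claims are a menu of maximal and deviation inequalities for sums of independent random vectors in $\mathbb{R}^p$, and the plan is to treat each part separately using the tools assembled in Tool Set 2 of the Appendix (Lemmas \ref{lem: fuk-nagaev}--\ref{lem: maximal inequality for positive rvs}). Parts (1) and (2) are immediate; (3) requires exploiting positivity to get the correct variance scaling; (4) combines a Fuk--Nagaev-type tail bound with the expectation bound from (2).

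For (1), apply Lemma \ref{lem: maximal inequality for positive rvs} to the non-negative variables $|X_{ij}|$: we get $\max_j E[\sum_i |X_{ij}|] = n\bar m_1$ and $E[\max_{i,j}|X_{ij}|] \le E[\max_i \|X_i\|_\infty] \le M_1$, and division by $n$ yields the claim. For (2), apply Lemma \ref{lem: maximal ineq} to the centered variables: $\sigma^2 = \max_j \sum_i E X_{ij}^2 \le n\bar m_2^2$ and $\sqrt{EM^2}\le M_2$, again followed by division by $n$.

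For (3), apply a maximal inequality to the non-negative variables $Y_{ij} := |X_{ij}|^k$. The key step is the variance bound
\[
\max_j \sum_i E Y_{ij}^2 \;\lesssim\; n\, M_k^k\, \bar m_k^k,
\]
which exploits positivity: from the pointwise estimate $Y_{ij}^2 \le Y_{ij}\cdot M_*$ with $M_* := \max_{l,j'} Y_{lj'}$, and the decoupling $M_* \le \|X_i\|_\infty^k + M_*^{(i)}$ (where $M_*^{(i)}$ depends only on $\{X_l\}_{l\ne i}$ and is thus independent of $X_i$), we get $E[Y_{ij}M_*] \le EY_{ij}\cdot M_k^k + E\|X_i\|_\infty^{2k}$; iterating the same decoupling on the residual $\sum_i E\|X_i\|_\infty^{2k}$ controls it by the same product $n M_k^k \bar m_k^k$ up to constants. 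Combined with the bound $\sqrt{E\max_{i,j}Y_{ij}^2}\lesssim M_k^k$ in the ``envelope'' term, Lemma \ref{lem: maximal ineq} yields $K(\sqrt{n M_k^k \bar m_k^k \log p} + M_k^k\log p)$, and dividing by $n$ gives the stated bound.

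For (4), combine Lemma \ref{lem: fuk-nagaev} with $s=\bar q$ and the expectation bound from (2): for any $u>0$,
\[
P\!\left(\max_j\Big|\sum_i(X_{ij}-EX_{ij})\Big| > 2E\max_j\Big|\sum_i(X_{ij}-EX_{ij})\Big| + u\right) \le e^{-u^2/(3\sigma^2)} + K_{\bar q}u^{-\bar q}\sum_i E\max_j|X_{ij}-EX_{ij}|^{\bar q}.
\]
Using $\sigma^2\le n\bar m_2^2$ and $\sum_i E\max_j|X_{ij}-EX_{ij}|^{\bar q} \le 2^{\bar q}\sum_i E\|X_i\|_\infty^{\bar q}\le 2^{\bar q} n M_{\bar q}^{\bar q}$, split $u=u_1+u_2$: pick $u_1 = C_{\bar q}\sqrt n M_{\bar q} t^{1/2}$ so the polynomial piece is $\le t^{-\bar q/2}/2$ (using $n^{1/\bar q}\le \sqrt n$ since $\bar q\ge 2$), and pick $u_2 \asymp \bar m_2\sqrt{n\bar q\log t}$ so the Gaussian piece is $\le t^{-\bar q/2}/2$. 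Dividing by $\sqrt n$, use $\bar m_2\le M_2$ and $\sqrt{\log t}\le t$ to absorb $u_2/\sqrt n$ into $C_{\bar q}M_2 t$; the mean term from (2) contributes $C\bar m_2\sqrt{\log p} + CM_2\log p/\sqrt n$, and the latter piece is absorbed into $C_{\bar q}n^{-1/2}M_2(t+\log p)$, delivering the asserted high-probability bound on $\max_j|\mathbb G_n(X_j)|$.

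The main obstacle is the variance bound in (3): a direct application of Lemma \ref{lem: maximal ineq} would produce $M_{2k}^k$ rather than $M_k^k$ (since $\|Z\|_{L^{2k}}\ge\|Z\|_{L^k}$), so the decoupling described above, or equivalently a Bousquet/Klein--Rio-type concentration inequality adapted to non-negative summands, is essential to recover the sharper scaling in terms of $M_k^k\bar m_k^k$.
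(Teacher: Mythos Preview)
Your treatment of (1), (2), and (4) is correct and matches the paper: (1) and (2) are Lemmas~\ref{lem: maximal inequality for positive rvs} and~\ref{lem: maximal ineq} after dividing by $n$, and (4) follows from a Fuk--Nagaev bound (Lemma~\ref{lem: fuk-nagaev} here; the paper invokes Theorem~5.1 of \cite{CCK14}) combined with the mean bound (2), exactly as you outline.

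The gap is in (3). Both ingredients of your proposed application of Lemma~\ref{lem: maximal ineq} to $Y_{ij}=|X_{ij}|^k$ are false in general. For the variance claim $\max_j\sum_i \Ep Y_{ij}^2\lesssim nM_k^k\bar m_k^k$, take $n=2$, $p=3$, $X_{i1}=X_{i2}=X_{i3}=Z_i$ with $Z_1,Z_2$ i.i.d., $\Pr(Z=A)=A^{-k}$, $\Pr(Z=0)=1-A^{-k}$: then $\bar m_k^k=1$ and $\Ep[\max_i\|X_i\|_\infty^k]\to 2$, so the minimal admissible $M_k^k$ is $\approx 2$, yet $\sum_i\Ep|X_{ij}|^{2k}=2A^k\to\infty$. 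Your ``iteration'' does not close: applying the same decoupling to $\Ep\|X_i\|_\infty^{2k}\le\Ep[\|X_i\|_\infty^k(\|X_i\|_\infty^k+M_*^{(i)})]$ just reproduces $\Ep\|X_i\|_\infty^{2k}$ on the right-hand side. The envelope claim $\sqrt{\Ep\max_{i,j}Y_{ij}^2}\lesssim M_k^k$ fails in the same example ($\Ep[\max_i\|X_i\|_\infty^{2k}]\approx 2A^k$ versus $(M_k^k)^2\approx 4$); it is Jensen in the wrong direction.

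The repair is to delay taking expectations. Symmetrize, so that conditionally on the data
\[
\Ep_\epsilon\Big[\max_j\Big|\sum_i\epsilon_i|X_{ij}|^k\Big|\Big]\le C\sqrt{\log p}\,\max_j\Big(\sum_i|X_{ij}|^{2k}\Big)^{1/2},
\]
then use the \emph{pointwise} bound $\sum_i|X_{ij}|^{2k}\le M_*\sum_i|X_{ij}|^k$ with $M_*=\max_l\|X_l\|_\infty^k$, and apply Cauchy--Schwarz in the outer expectation to separate $\sqrt{M_*}$ (contributing $\sqrt{M_k^k}$) from $\big(\max_j\sum_i|X_{ij}|^k\big)^{1/2}$ (bounded by $\sqrt{C(n\bar m_k^k+M_k^k\log p)}$ via part~(1), i.e.\ Lemma~\ref{lem: maximal inequality for positive rvs}). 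Multiplying out and dividing by $n$ gives exactly the two terms in (3). This is what the paper's joint citation of Lemmas~8 and~9 of \cite{CCK15} is pointing to.
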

\begin{proof}
The proofs of the first inequalities are given in Lemmas 8 and 9 of \cite{CCK15}. The proof of the last inequality can be found in \cite{BRT:coniceiv}.

The last result follows from the second inequality and Theorem 5.1 in\cite{CCK14} with $\alpha = 1$. Indeed we have with probability $1-t^{\bar q/2}$
$$ \max_{j\in[p]} |\Gn(X_j)| \leq 2\left\{\bar m_2\sqrt{\log p} + n^{-1/2}M_2 \log p\right\}
+ K(\bar q) \Big [ ( \bar m_2 + n^{-1/2} M_{\bar q}) \sqrt{t}
+  n^{-1/2} M_{2} t \Big ]$$
The result follows by collecting the terms.
\end{proof}

\section{Technical Lemmas}\label{app: technical results}
\begin{lemma}\label{lem: summation-integration}
For any $q>1/a$,
$$
A^q\sum_{j=1}^p j^{-a q}1\{j \geq (A/\lambda)^{1/a}\} \leq \frac{2^{a q}s \lambda^q}{a q - 1},
$$
where $s := \lceil (A/\lambda)^{1/a}\rceil$.
\end{lemma}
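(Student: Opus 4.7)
The plan is to reduce the left-hand side to a tail sum $A^q\sum_{j\geq s}j^{-aq}$, bound that tail by integral comparison, and then use the defining property of $s$ to convert powers of $A$ into powers of $\lambda$.

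First, I would set $j_0 := (A/\lambda)^{1/a}$, so that $s = \lceil j_0\rceil$ is the smallest integer exceeding $j_0$. Since $j$ ranges over positive integers, the indicator $1\{j \geq j_0\}$ equals $1\{j \geq s\}$, so the left-hand side is bounded above by $A^q\sum_{j=s}^{\infty}j^{-aq}$. The key estimate linking $A$ and $\lambda$ is $s^a \geq j_0^a = A/\lambda$, which rearranges to
\[
A^q s^{-aq} \leq \lambda^q.
\]
This is what ultimately converts the prefactor $A^q$ into the desired $\lambda^q$ on the right-hand side.

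Next, I would estimate the tail sum $\sum_{j=s}^{\infty}j^{-aq}$ for $s \geq 2$ using the fact that $x\mapsto x^{-aq}$ is decreasing on $(0,\infty)$ (recall $aq > 1$ by the hypothesis $q > 1/a$). Grouping each term with the integral over the preceding unit interval gives
\[
\sum_{j=s}^{\infty}j^{-aq} \leq \int_{s-1}^{\infty} x^{-aq}\,dx = \frac{(s-1)^{1-aq}}{aq-1} \leq \frac{2^{aq-1}s^{1-aq}}{aq-1},
\]
where the last step uses $s-1 \geq s/2$, valid for $s\geq 2$. Multiplying by $A^q$ and applying $A^q s^{-aq}\leq \lambda^q$ yields
\[
A^q\sum_{j=s}^{\infty}j^{-aq} \leq \frac{2^{aq-1}s\lambda^q}{aq-1} \leq \frac{2^{aq}s\lambda^q}{aq-1},
\]
which is the claim.

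The only mild obstacle is the boundary case $s=1$ (where the estimate $s-1\geq s/2$ fails and the integral $\int_0^\infty x^{-aq}dx$ diverges at zero). In that case $A\leq \lambda$, and I would instead write
\[
A^q\sum_{j=1}^{\infty}j^{-aq} \leq A^q\Bigl(1 + \int_1^{\infty}x^{-aq}\,dx\Bigr) = \frac{aq}{aq-1}A^q \leq \frac{aq}{aq-1}\lambda^q \leq \frac{2^{aq}s\lambda^q}{aq-1},
\]
using $s=1$ and the elementary inequality $aq \leq 2^{aq}$ (valid for $aq\geq 1$). Combining the two cases completes the argument.
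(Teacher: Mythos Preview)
Your proof is correct and follows essentially the same approach as the paper: both split into the cases $s=1$ (equivalently $A\le\lambda$) and $s\ge 2$ (equivalently $A>\lambda$), use the same integral comparison for the tail sum, and then invoke $2(s-1)\ge s\ge (A/\lambda)^{1/a}$ (your $s-1\ge s/2$ together with $s^a\ge A/\lambda$ is the same inequality rearranged) to convert $A^q$ into $\lambda^q$. One trivial wording slip: $s=\lceil j_0\rceil$ is the smallest integer \emph{at least} $j_0$, not strictly exceeding it, but this does not affect the argument.
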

\noindent
{\bf Proof of Lemma \ref{lem: summation-integration}.}
We consider two cases separately: $A \leq \lambda$ and $A > \lambda$. When $A \leq \lambda$, we have
\begin{align*}
\sum_{j=1}^p j^{-a q}1\{j \geq (A/\lambda)^{1/a}\}
&\leq \sum_{j=1}^p j^{-a q} = 1 + \sum_{j=2}^p j^{-a q} \leq 1 + \int_1^{\infty}x^{-a q}d x = 1 + \frac{1}{a q - 1},
\end{align*}
so that
$$
A^q \sum_{j=1}^p j^{-a q}1\{j \geq (A/\lambda)^{1/a}\} \leq \left(1 + \frac{1}{a q - 1}\right) \lambda^q = \left(1 + \frac{1}{a q - 1}\right) s \lambda^q \leq \frac{2^{a q}s \lambda^q}{a q - 1}.
$$
When $A > \lambda$, we have $s = \lceil(A/\lambda)^{1/a}\rceil \geq 2$, and
$$
\sum_{j=1}^p j^{-a q}1\{j \geq (A/\lambda)^{1/a}\} = \sum_{j = s}^p j^{-a q} \leq \int_{s - 1}^{\infty} x^{- a q}d x = \frac{(s - 1)^{1 - a q}}{a q - 1},
$$
so that
$$
A^q \sum_{j=1}^p j^{-a q}1\{j \geq (A/\lambda)^{1/a}\} \leq \frac{s}{a q - 1}\left(\frac{A}{(s - 1)^a}\right)^q \leq \frac{2^{a q}s\lambda^q}{a q - 1}
$$
since $2 (s - 1) \geq (A/\lambda)^{1/a}$. Conclude that the asserted claim holds in both cases.
\qed

\begin{lemma}[General Lower Bound for $k(\theta_0,\ell)$ in Exactly Sparse Models]\label{thm: bchn theorem}
Recall the definition of $l$-sparse smallest and $l$-sparse largest singular values $\sigma_{\min}(l)$ and $\sigma_{\max}(l)$ in \eqref{eq: sparse singular values def} and assume that Condition ES is satisfied. Then
\begin{equation}\label{eq: alexbound2}
k (\theta_0, \ell_q)  \geq  \max_{l \geq s } \frac{\sigma_{\min}(l)}{\sqrt{l}}
\left(1 - \frac{\sigma_{\max}(l)}{\sigma_{\min}(l)}\sqrt{\frac{4 s}{l}}\right)\frac{s^{1/2-1/q}}{2+4\sqrt{s/l}},\quad q\in\{1,2\}.
\end{equation}
\end{lemma}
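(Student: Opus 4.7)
The plan is to carry out a restricted eigenvalue-type argument adapted to the sparse singular value framework. The starting point is the cone constraint implied by membership in the restricted set $\mathcal{R}(\theta_0)$: for any $\theta \in \mathcal{R}(\theta_0)$ with $\delta := \theta - \theta_0 \neq 0$, the inequality $\|\theta\|_1 \leq \|\theta_0\|_1$ together with $\theta_{0,T^c} = 0$ expands via the triangle inequality as $\|\theta_{0,T}\|_1 - \|\delta_T\|_1 + \|\delta_{T^c}\|_1 \leq \|\theta_{0,T}\|_1$, yielding the $\ell_1$-cone condition $\|\delta_{T^c}\|_1 \leq \|\delta_T\|_1$. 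So the analysis reduces to vectors $\delta$ satisfying this condition, and it suffices to lower bound $\|G\delta\|_\infty/\|\delta\|_q$ under it.

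Fix any $l \geq s$ (the case $l = s$ is degenerate and handled for free). First, I would partition the off-support indices $T^c$ into blocks $T_1, T_2, \ldots$ of size $l-s$ each (except possibly the last), arranged in decreasing order of $|\delta_j|$, and set $T_{01} := T \cup T_1$, an $l$-sparse index set containing the support of $\theta_0$ and the $l-s$ largest-magnitude off-support entries of $\delta$. A standard piecewise-sparsity argument then gives $\|\delta_{T_k}\|_2 \leq \|\delta_{T_{k-1}}\|_1/\sqrt{l-s}$ for $k \geq 2$, and summing with the cone condition,
\begin{equation*}
\sum_{k\geq 2}\|\delta_{T_k}\|_2 \leq \|\delta_{T^c}\|_1/\sqrt{l-s} \leq \|\delta_T\|_1/\sqrt{l-s} \leq \sqrt{s/(l-s)}\,\|\delta_{T_{01}}\|_2 .
\end{equation*}

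The core step combines $\sigma_{\min}$ and $\sigma_{\max}$ through a single choice of row support. By the min-max definition of $\sigma_{\min}(l)$, there exists $J_0$ with $|J_0|\leq l$ achieving $\sigma_{\min}(G_{J_0,T_{01}}) \geq \sigma_{\min}(l)$. Since $\sqrt{|J_0|}\,\|G\delta\|_\infty \geq \|G_{J_0,:}\delta\|_2$, the reverse triangle inequality combined with $\sigma_{\max}(G_{J_0,T_k}) \leq \sigma_{\max}(l)$ for each $(l-s)$-sparse block $T_k$ with $k\geq 2$ and $|J_0|\leq l$ yields
\begin{equation*}
\sqrt{l}\,\|G\delta\|_\infty \geq \sigma_{\min}(l)\|\delta_{T_{01}}\|_2 - \sigma_{\max}(l)\sum_{k\geq 2}\|\delta_{T_k}\|_2 \geq \sigma_{\min}(l)\left(1 - \tfrac{\sigma_{\max}(l)}{\sigma_{\min}(l)}\sqrt{\tfrac{s}{l-s}}\right)\|\delta_{T_{01}}\|_2 .
\end{equation*}

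To close the argument, I would convert $\|\delta_{T_{01}}\|_2$ into $\|\delta\|_q$ using $\|\delta\|_1 \leq 2\|\delta_T\|_1 \leq 2\sqrt{s}\,\|\delta_{T_{01}}\|_2$ for $q=1$ (from the cone condition and Cauchy–Schwarz) and $\|\delta\|_2 \leq (1+\sqrt{s/(l-s)})\|\delta_{T_{01}}\|_2$ for $q=2$ (by triangle inequality applied to the block decomposition together with the bound from the preceding paragraph). Taking the infimum over $\delta$ and the supremum over $l$ produces the claimed bound, with the sharper denominators $\sqrt{s/(l-s)}$ and $1+\sqrt{s/(l-s)}$ loosened to the uniform constants $\sqrt{4s/l}$ and $2+4\sqrt{s/l}$ appearing in the statement; this loosening only costs in the small-$l$ regime where the parenthetical factor is already non-positive, so the bound remains trivially valid there. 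The only subtlety is to keep all constants consistent across $q\in\{1,2\}$; conceptually nothing is hard once $J_0$ is chosen so that the lower and upper sparse singular value bounds are evaluated against the same set of rows.
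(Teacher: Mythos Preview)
Your argument is correct. You establish the cone condition $\|\delta_{T^c}\|_1 \le \|\delta_T\|_1$, carry out the standard Cand\`es--Tao block decomposition of $T^c$ into pieces of size $l-s$, pick the row set $J_0$ realizing $\sigma_{\min}(l)$ for the column set $T_{01}$, and then combine the lower and upper sparse singular value bounds against the same $J_0$. The conversion from $\|\delta_{T_{01}}\|_2$ to $\|\delta\|_q$ and the loosening of $\sqrt{s/(l-s)}$ to $\sqrt{4s/l}$ (valid whenever the parenthetical factor is positive, trivial otherwise) are handled cleanly.

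The paper, by contrast, does not carry out this computation at all: after deriving the same cone condition, it simply observes that $k(\theta_0,\ell_q) \ge \kappa_q^G(s,1)$ and invokes Theorem~1 of \cite{BCHN17} for the stated lower bound on $\kappa_q^G(s,1)$. So your proof is a self-contained unpacking of what that cited result does, using the same restricted-eigenvalue machinery. The advantage of your route is that it is complete within the present paper and makes transparent where the constants $\sqrt{4s/l}$ and $2+4\sqrt{s/l}$ come from; the paper's route is shorter but leans entirely on an external reference.
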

\noindent
{\bf Proof of Lemma \ref{thm: bchn theorem}}
By Condition ES, there exists $T\subset\{1,\dots,p\}$ such that $|T| = s$ and $\theta_{0 j} = 0$ for all $j\in T^c$. Also, for any $\theta\in\mathcal R(\theta_0)$, we have $\|\theta_T\|_1 + \|\theta_{T^c}\|_1 = \|\theta\|_1 \leq \|\theta_0\|_1$, and so $\|\theta_{T^c}\|_1 \leq \|\theta_0\|_1 - \|\theta_T\|_1 \leq \|(\theta - \theta_0)_T\|_1$ by the triangle inequality. Hence, $k(\theta_0,\ell_q) \geq \kappa_q^G(s,1)$ for $\kappa_q^G(s,1)$ defined in front of Theorem 1 in \cite{BCHN17}. Thus, given that by Theorem 1 in \cite{BCHN17}, $\kappa_q^G(s,1)$ is bounded from above by the right-hand side of \eqref{eq: alexbound2}, the asserted claim follows.
\qed

\begin{lemma}[Maximal Inequality Based on Contraction Principle]\label{lem:MaxIneqContraction}
Let $(X_i)_{i=1}^n$ be independent random vectors with common support $\mathcal X$, and let $\mathbb G_n$ be the corresponding empirical process. Consider the maximum over suprema of  empirical processes with contractive structure:
$$
\max_{j \in [m]} \sup_{\eta \in \Delta} |\Gn(h_j(X,Z_{u(j)}(X)'v_{u(j)})) |,
$$
where (1) $u(j)\in[\bar u]$ for all $j\in[m]$; (2) $v_u$ is a parameter vector in $\mathbb R^{p_u}$ for all $u \in [\bar u]$; (3) $\Delta$ is a set in $\mathbb R^p$ with $p = p_1 + \dots + p_{\bar u}$; (4) $\eta = (v_1',\dots,v_{\bar u}')'$; (5) for all $j\in[m]$, the link function $h_j$
is a measurable map from $\mathcal{X} \times \Bbb{R}$ to $\Bbb{R}$, is pointwise Lipschitz: $|h_j(x,t)-h_j(x,s)|\leq L_j(x)|t-s|$ for all $x\in\mathcal X$ and $t,s \in \mathbb R$, and is passing through the origin $h_j(x,0)=0$, for all $x \in \mathcal{X}$; (6) for all $j\in[m]$, the function $L_j$ is a measurable map from $\mathcal X$ to $\mathbb R$; and (7) for all $u\in[\bar u]$, the function $Z_u$ is a measurable map from $\mathcal X$ to $\mathbb R^{p_u}$.

Suppose that $$
\sup_{\eta\in\Delta, j\in[m]}\En{\rm Var}(h_j(X,Z_{u(j)}(X)'v_{u(j)})) \leq B_{1 n}^2,   \quad \max_{j\in[m],k\in[p_{u(j)}]} \En[L_{j}^2(X)Z_{u(j)k}^2(X)]  \leq B^2_{2 n}
$$
with probability at least $1-\delta_n$. Then for all $t\geq 4 B_{1 n}$,
$$
\Pr\left(\sup_{\eta \in \Delta,j\in[m]} |\Gn(h_j(X,Z_{u(j)}(X)'v_{u(j)}))| > t \right) \leq 4\delta_n + 8p m \exp\left(-\frac{t^2}{128B_{2 n}^2 \|\Delta\|_{1,v}^2}\right),
$$
where $\|\Delta\|_{1,v} := \max_{u\in[\bar u]}\sup_{\eta\in \Delta}\|v_u\|_1$. Thus,
$$
 \Pr\left(\sup_{\eta \in \Delta,j\in[m]} |\Gn(h_j(X,Z_{u(j)}(X)'v_{u(j)}))| > t \right) \leq 5\delta_n
 $$
as long as $t\geq \{4B_{1 n}\} \vee \{8\sqrt{2} B_{2 n} \|\Delta\|_{1,v} \log^{1/2}(8p m/\delta_n)\}$.
\end{lemma}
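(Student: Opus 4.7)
\noindent\textbf{Proof plan for Lemma \ref{lem:MaxIneqContraction}.}
The plan is to reduce the problem, via symmetrization and the Ledoux--Talagrand contraction principle, to a Rademacher-weighted linear process that can be controlled by H\"older's inequality and a Hoeffding/Bernstein bound, and then to upgrade from expectation to a high-probability statement using Talagrand's concentration inequality. First, I would condition on the event $E_n$ of probability at least $1-\delta_n$ on which both the variance bound $B_{1n}^2$ and the empirical second-moment bound $B_{2n}^2$ hold; throughout the argument all probabilities are computed conditionally on $E_n$, and the final $\delta_n$ terms absorb this conditioning.

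Next I would apply the standard symmetrization inequality (e.g.\ Lemma 2.3.1 in van der Vaart--Wellner) to the process indexed by $(j,\eta)\in[m]\times\Delta$, replacing $\mathbb{G}_n h_j(X,Z_{u(j)}(X)'v_{u(j)})$ by $n^{-1/2}\sum_i \varepsilon_i h_j(X_i,Z_{u(j)}(X_i)'v_{u(j)})$ with i.i.d.\ Rademacher $\varepsilon_i$ independent of the data. Since each $h_j(x,\cdot)$ is $L_j(x)$-Lipschitz and vanishes at zero, the Ledoux--Talagrand contraction principle (applied conditionally on $X_1,\dots,X_n$, and jointly over the $m$ link functions via a union-type argument or the vector-valued version of contraction) bounds the symmetrized process, up to a universal constant, by the Rademacher process
\[
\sup_{j\in[m],\eta\in\Delta}\Bigl|\frac{1}{\sqrt n}\sum_{i=1}^n \varepsilon_i L_j(X_i)Z_{u(j)}(X_i)'v_{u(j)}\Bigr|.
\]
By H\"older's inequality this is at most $\|\Delta\|_{1,v}$ times $\max_{j\in[m],k\in[p_{u(j)}]} |n^{-1/2}\sum_i\varepsilon_i L_j(X_i)Z_{u(j)k}(X_i)|$, and conditionally on the $X_i$'s each entry is a sub-Gaussian sum with proxy variance bounded by $\mathbb{E}_n[L_j^2 Z_{u(j)k}^2]\le B_{2n}^2$ on $E_n$. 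A standard Hoeffding bound together with a union bound over the $\le pm$ coordinates then yields, for any $s>0$,
\[
\Pr\Bigl(\text{Rademacher sup}>s \,\Big|\,X_1^n\Bigr)\le 2pm\exp\!\Bigl(-\frac{s^2}{2B_{2n}^2\|\Delta\|_{1,v}^2}\Bigr)
\]
on $E_n$, and integrating out gives the same tail with an extra $\delta_n$ for $E_n^c$.

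To convert this into the statement of the lemma, I would combine the expectation bound coming from the tail above with Talagrand's inequality for the supremum of the original (non-symmetrized) empirical process, using the variance envelope $B_{1n}^2$ to control the variance term and the envelope $L_j(X)\|Z_{u(j)}(X)\|_\infty\|\Delta\|_{1,v}$ to control the bounded part; the familiar constants in Talagrand's inequality produce the $4B_{1n}$ threshold in the statement and the exponential factor $\exp(-t^2/(128B_{2n}^2\|\Delta\|_{1,v}^2))$. The second bullet then follows by choosing $t$ so that $8pm\exp(-t^2/(128B_{2n}^2\|\Delta\|_{1,v}^2))\le\delta_n$, i.e.\ $t\ge 8\sqrt{2}B_{2n}\|\Delta\|_{1,v}\log^{1/2}(8pm/\delta_n)$, plus the $4B_{1n}$ floor, with the $4\delta_n+\delta_n=5\delta_n$ accounting for $E_n$ and the symmetrization/concentration slack. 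The main obstacle is bookkeeping: the link functions $h_j$ depend on $j$ and the indices $u(j)$ partition coordinates in a nontrivial way, so one has to verify that contraction applies uniformly in $j$ (most cleanly by applying it separately on each $\{j\}\times\Delta$ slice and then taking the maximum, which costs only a union-bound factor already absorbed in the $pm$ term), and that the constants in Talagrand's inequality match the form stated; no step requires more than standard empirical process machinery once these details are aligned.
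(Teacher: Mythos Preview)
Your overall architecture (symmetrize, apply Ledoux--Talagrand contraction, H\"older to reduce to a maximum of Rademacher sums, then a sub-Gaussian tail with a union bound over $pm$ coordinates) matches the paper's, but two points diverge from the paper's argument and one of them is a genuine gap.

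First, the paper does \emph{not} use symmetrization in expectation (your Lemma~2.3.1 in van der Vaart--Wellner) followed by Talagrand's inequality. Instead it invokes symmetrization \emph{for probabilities} (Lemma~2.3.7 in van der Vaart--Wellner): for $t^2\ge 16\sup_{j,\eta}\En\mathrm{Var}(h_j(X,Z_{u(j)}'v_{u(j)}))$, one has
\[
\Pr\Big(\sup_{j,\eta}|\Gn h_j|>t\Big)\le 4\,\Pr\Big(\sup_{j,\eta}|\Gn(\sigma h_j)|>t/4\Big).
\]
This is precisely where the threshold $t\ge 4B_{1n}$ enters; it is not a by-product of constants in Talagrand's inequality. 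After that, the paper bounds the symmetrized tail directly by a Chernoff argument: Markov on $\exp(\psi\mathcal A)$, then the contraction principle applied to the convex increasing function $G(t)=\exp(\psi t/\sqrt n)$ (Theorem~4.12 in Ledoux--Talagrand), then H\"older and the conditional sub-Gaussianity of Rademacher sums. No concentration inequality for the unsymmetrized process is used.

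Second, your proposed detour through Talagrand's inequality does not go through under the stated hypotheses: Talagrand (and the Bousquet/Klein--Rio refinements) requires a uniform bound on the functions in the class, but here you only have second-moment control $\En[L_j^2 Z_{u(j)k}^2]\le B_{2n}^2$ and no envelope bound on $L_j(X)\|Z_{u(j)}(X)\|_\infty$. The paper's route avoids this entirely because the Chernoff step only needs the Rademacher sub-Gaussian proxy variance $\En[L_j^2 Z_{u(j)k}^2]$, which is exactly what is assumed on the event of probability $1-\delta_n$. If you replace your ``expectation symmetrization $+$ Talagrand'' step by ``probability symmetrization $+$ Chernoff on the symmetrized process,'' the rest of your outline (contraction per $j$, H\"older, union bound over $\le pm$ coordinates) is correct and yields the stated constants.
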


\noindent
{\bf Proof of Lemma \ref{lem:MaxIneqContraction}.}
The proof is a variant of the argument given by \cite{BC11}. Since the second asserted claim follows immediately from the first one, it suffices to prove the first one. To do so, we split the proof into two steps.

{\bf Step 1}. We first invoke a symmetrization lemma for probabilities. Let $(\sigma_i)_{i=1}^n$ be i.i.d. copies of  the Rademacher random variable $\sigma$, which takes on values $\{-1,1\}$ with equal probabilities. Fix any $t$ such that
$$
t^2 \geq 16 B^2_{1 n} \geq 16\sup_{\eta\in\Delta, j\in[m]} \En  {\rm Var}(h_j(X,Z_{u(j)}(X)'v_{u(j)})).
$$
Then by Lemma 2.3.7 in \cite{VW96} and Chebyshev's inequality,
\begin{align*}
(\star):  & =  \Pr \left ( \sup_{\eta \in \Delta,j\in[m]} |\Gn(h_j(X,Z_{u(j)}(X)'v_{u(j)}))| > t  \right ) \\
& \leq    4 \Pr \left (\sup_{\eta \in \Delta, j\in[m]} \left|\Gn(\sigma h_j(X,Z_{u(j)}(X_i)'v_{u(j)}))\right|> t/4 \right ). \end{align*}
Next, let
$$
\mathcal{A}:= \sup_{\eta \in \Delta, j\in[m]} \left|\Gn(\sigma h_j(X,Z_{u(j)}(X)'v_{u(j)}))\right|
$$
and define the event $\Omega := \{ \max_{j\in[m],k\in[p_{u(j)}]} \En[L_j^2(X)Z^2_{u(j)k}(X)] \leq B_{2 n}^2\}$. Then
$$
\Pr( \mathcal{A} > t /4) \leq \Pr(\mathcal{A} > t/4 \mid \Omega) + \Pr(\Omega^c) \leq \Ep[ \Pr ( \mathcal{A} > t /4 \mid (X_i)_{i=1}^n, \Omega)] + \delta_n,
$$
where we used that $P(\Omega^c)\leq \delta_n$ by assumption.

Now, by Markov's inequality, for $\psi := t/ (16B_{2 n}^2\|\Delta\|_{1,v}^2)$,
\begin{align}
\Pr ( \mathcal{A} > t/4 \mid (X_i)_{i=1}^n, \Omega) & \leq   \exp(-\psi t/4)\Ep[\exp(\psi \mathcal{A})\mid (X_i)_{i=1}^n, \Omega]\nonumber\\
&\leq   2p m \exp(-\psi t/4) \exp\left( 2\psi^2 B_{2 n}^2 \|\Delta\|_{1,v}^2\right)\label{eq: key inequality contraction}\\
&= 2p m \exp(- t^2/\{128 B_{2 n}^2\|\Delta\|_{1,v}^2\}),\nonumber
\end{align}
where \eqref{eq: key inequality contraction} is established in Step 2 below. Therefore,
$$
(\star) \leq 4\delta_n + 8p m \exp\left(-\frac{t^2}{128B_{2 n}^2 \|\Delta\|_{1,v}^2}\right),
$$
which is the first asserted claim. It remains to establish \eqref{eq: key inequality contraction}.

{\bf Step 2}. Here, we bound $\Ep[\exp(\psi\mathcal{A})\mid (X_i)_{i=1}^n,\Omega]$ and establish \eqref{eq: key inequality contraction}. In this step, we will condition throughout on $\{(X_i)_{i=1}^n,\Omega\}$ but omit explicit notation for this conditioning to keep the notation lighter.  

We have
\begin{align*}
 &  \Ep\left[ \exp\left( \psi\sup_{\eta\in \Delta,j\in[m]} \left| \Gn(\sigma h_{j}(X, Z_{u(j)}(X)'v_{u(j)})) \right|  \right) \right] \\
 &\qquad = \Ep\left[ \exp\left( \frac{\psi}{\sqrt n}\sup_{\eta\in \Delta,j\in[m]} \left| \sum_{i=1}^n \sigma_i h_{j}(X_i, Z_{u(j)}(X_i)'v_{u(j)}) \right|  \right) \right] \\
 &\qquad \leq m\max_{j\in[m]}\Ep\left[ \exp\left( \frac{2 \psi}{\sqrt n}\sup_{\eta\in \Delta} \left| \sum_{i=1}^n \sigma_i L_j(X_i)Z_{u(j)}(X_i)'v_{u(j)} \right|  \right) \right] \\
&   \qquad =   m\max_{j\in[m]} \Ep\left[ \exp\left( 2\psi\sup_{\eta\in \Delta} \left| \Gn(\sigma L_j(X)Z_{u(j)}(X)'v_{u(j)}) \right|  \right) \right]
\end{align*}
by Lemma \ref{lem:contraction} in this appendix with
$$
h_{i j}(t) = h_j(X_i,t), \ \gamma_{i j} = L_j(X_i),  \ G(t) = \exp(\psi t / \sqrt n), \quad i\in[n],\ j\in[m], \ t\in\mathbb R
$$
and
$$
T_j = \{ t = (t_1,\dots,t_n)' \in \mathbb{R}^n : t_{i} = Z_{u(j)}(X_i)'v_{u(j)} \ \mbox{for all $i\in[n]$ and some $\eta\in\Delta$}\},\  j\in[m].
$$
Further, for all $j\in[m]$,
\begin{align*}
& \Ep\left[ \exp\left( 2\psi\sup_{\eta\in \Delta} \left| \Gn(\sigma L_j(X)Z_{u(j)}(X)'v_{u(j)}) \right|  \right) \right] \\
&  \qquad  \leq \Ep\left[ \exp\left( 2\psi \max_{k\in[p_{u(j)}]}|\Gn(\sigma L_j(X) Z_{u(j)k}(X))| \|\Delta\|_{1,v}\right)\right]\\
&  \qquad  \leq p \max_{k\in[p_{u(j)}]}\Ep\Big[ \exp\left( 2\psi |\Gn(\sigma L_j(X) Z_{u(j)k}(X))| \|\Delta\|_{1,v}\right)\Big]
\end{align*}
by H\"{o}lder's inequality and the observation that $\exp(t) >0$ for all $t\in\mathbb R$. Moreover, for all $j\in[m]$ and $k\in[p_{u(j)}]$,
\begin{align*}
&  \Ep\left[ \exp\left( 2\psi |\Gn(\sigma L_j(X) Z_{u(j)k}(X))| \|\Delta\|_{1,v}\right)\right]\\
& \qquad \leq   2 \Ep\left[ \exp\left( 2\psi \Gn(\sigma L_j(X)Z_{u(j)k}(X)) \|\Delta\|_{1,v} \right)\right]\\
&  \qquad \leq  2 \exp\left( 2\psi^2 \En[L_j(X)^2Z_{u(j)k}(X)^2] \|\Delta\|_1^2\right) \leq 2\exp(2\psi^2 B_{2 n}^2\|\Delta\|_{1,v}^2),
\end{align*}
where the first inequality follows from observing  that $\Gn(\sigma L_j(X) Z_{u(j)k}(X))$'s are symmetrically distributed, the second from sub-Gaussianity of $\Gn(\sigma L_j(X) Z_{u(j) k}(X))$ (see the proof of Lemma 2.2.7 in \cite{VW96}, for example), and the third from the definition of $\Omega$ (recall that we implicitly condition on $\Omega$). Combining the inequalities above gives \eqref{eq: key inequality contraction} and completes the proof of the lemma.
 \qed

\begin{lemma}[Contraction Principle for Maxima]\label{lem:contraction}
For $i\in[n]$ and $j\in[m]$, let the functions $h_{ij}:\mathbb{R} \to \mathbb{R}$ be such that $$|h_{ij}(t)-h_{ij}(s)|\leq \gamma_{ij}|t-s|, \text{ for all } (t, s) \in \mathbb R^2 \ \ \mbox{and} \ \ h_{ij}(0)=0.$$
Also, let  $G:\mathbb{R}_+\to\mathbb{R}_+$ be a non-decreasing convex function and for each $j\in[m]$, let $T_j$ be a bounded set in $\mathbb R^n$. Then
$$
\Ep\left[ {G}\left( \sup_{j\in[m], t\in T_j} \left| \sum_{i=1}^n \sigma_i h_{ij}(t_i) \right|  \right) \right] \leq  m\max_{j\in[m]}\Ep\left[ {G}\left( 2\sup_{t\in T_j} \left| \sum_{i=1}^n \sigma_i \gamma_{ij}t_i \right|  \right) \right],
$$
where $(\sigma_{i})_{i=1}^n$ are i.i.d. Rademacher random variables, i.e. random variables taking values $-1$ and $+1$ with probability $1/2$ each.
\end{lemma}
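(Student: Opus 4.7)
The plan is to reduce this to the classical Ledoux--Talagrand contraction principle applied separately to each index $j\in[m]$, combined with a union bound inside the expectation that exploits non-negativity of $G$. Write $Y_j := \sup_{t\in T_j}|\sum_i \sigma_i h_{ij}(t_i)|$, so that the left-hand side is $\Ep[G(\max_{j\in[m]}Y_j)]$.

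First I would handle the maximum over $j$. Since $G$ takes values in $\mathbb{R}_+$, the random variable $G(\max_j Y_j)$ equals $G(Y_{j^*})$ for the (random) maximizing index $j^*$, and is therefore bounded above by $\sum_{j=1}^m G(Y_j)$. Taking expectations yields
\begin{equation*}
\Ep\!\left[G\!\left(\max_{j\in[m]}Y_j\right)\right] \;\le\; \sum_{j=1}^{m}\Ep[G(Y_j)] \;\le\; m\,\max_{j\in[m]}\Ep[G(Y_j)].
\end{equation*}
This is the step that accounts for the factor $m$ on the right-hand side of the lemma.

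Next I would bound $\Ep[G(Y_j)]$ for each fixed $j$ via the Ledoux--Talagrand contraction principle (Theorem~4.12 of Ledoux and Talagrand, \emph{Probability in Banach Spaces}): if $\phi_i:\RR\to\RR$ are $1$-Lipschitz with $\phi_i(0)=0$ and $F:\RR_+\to\RR_+$ is convex and non-decreasing, then for any bounded $S\subset\RR^n$,
\begin{equation*}
\Ep\!\left[F\!\left(\sup_{s\in S}\Big|\sum_{i=1}^n \sigma_i \phi_i(s_i)\Big|\right)\right] \;\le\; \Ep\!\left[F\!\left(2\sup_{s\in S}\Big|\sum_{i=1}^n \sigma_i s_i\Big|\right)\right],
\end{equation*}
where the factor $2$ has been moved inside $F$ using monotonicity. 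To apply this per $j$, I would rescale: when $\gamma_{ij}>0$ set $\phi_i(s):=h_{ij}(s/\gamma_{ij})$, which is $1$-Lipschitz with $\phi_i(0)=0$ by the hypotheses on $h_{ij}$; coordinates with $\gamma_{ij}=0$ force $h_{ij}\equiv 0$ and contribute nothing. Introduce the rescaled set $S_j:=\{(\gamma_{ij}t_i)_{i=1}^n:(t_i)\in T_j\}$, so that $\phi_i(\gamma_{ij}t_i)=h_{ij}(t_i)$ and the two suprema in the contraction inequality become $\sup_{t\in T_j}|\sum_i \sigma_i h_{ij}(t_i)|$ and $\sup_{t\in T_j}|\sum_i \sigma_i \gamma_{ij} t_i|$ respectively. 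This yields
\begin{equation*}
\Ep[G(Y_j)] \;\le\; \Ep\!\left[G\!\left(2\sup_{t\in T_j}\Big|\sum_{i=1}^n \sigma_i \gamma_{ij} t_i\Big|\right)\right],
\end{equation*}
and combining with the first display delivers the claim.

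The main obstacle is largely bookkeeping rather than substance: one must verify that the rescaling $\phi_i(s)=h_{ij}(s/\gamma_{ij})$ produces genuine contractions with value zero at the origin (so that the Ledoux--Talagrand hypothesis is met), and cleanly deal with the degenerate case $\gamma_{ij}=0$. The rest is a straightforward chaining of the standard contraction inequality with the elementary ``max $\le$ sum'' step permitted by $G\ge 0$; no new concentration or entropy estimate is needed.
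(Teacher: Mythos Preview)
Your proposal is correct and follows essentially the same route as the paper: first the union bound $G(\max_j Y_j)\le\sum_j G(Y_j)$ using $G\ge 0$, then Theorem~4.12 of Ledoux--Talagrand applied separately for each fixed $j$. The paper simply cites that theorem without spelling out the rescaling $\phi_i(s)=h_{ij}(s/\gamma_{ij})$ or the degenerate case $\gamma_{ij}=0$, so your write-up is, if anything, slightly more explicit about the bookkeeping.
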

\noindent
{\bf Proof of Lemma \ref{lem:contraction}.}
Since $G(x)\geq 0$ for all $x\in\mathbb R_{+}$, we have
\begin{align*}
\Ep\left[ {G}\left( \sup_{j\in[m], t\in T_j} \left| \sum_{i=1}^n \sigma_i h_{ij}(t_i) \right|  \right) \right]
&\leq \Ep\left[\sum_{j=1}^m {G}\left( \sup_{t\in T_j} \left| \sum_{i=1}^n \sigma_i h_{ij}(t_i) \right|  \right) \right]\\
&\leq m\max_{j\in[m]}\Ep\left[{G}\left( \sup_{t\in T_j} \left| \sum_{i=1}^n \sigma_i h_{ij}(t_i) \right|  \right) \right].
\end{align*}
Also, by Theorem 4.12 in \cite{LD91}, for each $j\in[m]$,
$$
\Ep\left[{G}\left( \sup_{t\in T_j} \left| \sum_{i=1}^n \sigma_i h_{ij}(t_i) \right|  \right) \right] \leq \Ep\left[ {G}\left( 2\sup_{t\in T_j} \left| \sum_{i=1}^n \sigma_i \gamma_{ij}t_i \right|  \right) \right].
$$
Combining these inequalities gives the asserted claim.\qed

\bibliographystyle{plain}

\end{document}